\newtheorem{theorem}{Theorem}[section]
\newtheorem{lemma}[theorem]{Lemma}
\theoremstyle{definition}
\newtheorem{definition}[theorem]{Definition}
\theoremstyle{remark}
\newtheorem{remark}[theorem]{Remark}
\numberwithin{equation}{section}
\newcommand{\abs}[1]{\lvert#1\rvert}
\newcommand\rwh[1]{\mathscr{F}{#1}}
\newcommand\wh[1]{\mathscr{F}{#1}}
\newcommand{\R}{\mathbb{R}}
\newcommand{\D}{\mathrm{d}}
\newcommand{\Rb}{\mathbb{R}}
\newcommand{\Cb}{\mathbb{C}}
\newcommand{\RT}{\mathbf{R}}
\newcommand{\I}{\mathrm{i}}
\newcommand{\lb}{\left(}
\newcommand{\rb}{\right)}
\newcommand{\PD}{\partial}
\newcommand{\Beq}{\begin{equation}}
\newcommand{\Eeq}{\end{equation}}
\newcommand{\beq}{\begin{equation*}}
	\newcommand{\eeq}{\end{equation*}}
\newcommand{\bal}{\begin{align}}
	\newcommand{\eal}{\end{align}}
\newcommand{\n}{\nabla}
\newcommand{\bp}{\begin{prob}}
	\newcommand{\ep}{\end{prob}}
\newcommand{\bpr}{\begin{proof}}
	\newcommand{\epr}{\end{proof}}
\newcommand{\bel}[1]{\begin{equation}\label{#1}}
\newcommand{\ee}{\end{equation}}
\newcommand{\Ac}{\mathcal{A}}
\newcommand{\Sc}{\mathcal{S}}
\newcommand{\Sb}{\mathbb{S}}
\newcommand{\Z}{{\mathbb Z}}
\newcommand{\vev}[1]{\left\langle#1\right\rangle}
\newcommand{\norm}[1]{\lVert #1 \rVert}
\DeclareMathOperator{\real}{Re}
\newcommand\reallywidehat[1]{%
\savestack{\tmpbox}{\stretchto{%
  \scaleto{%
    \scalerel*[\widthof{\ensuremath{#1}}]{\kern-.6pt\bigwedge\kern-.6pt}%
    {\rule[-\textheight/2]{1ex}{\textheight}}
  }{\textheight}%
}{0.5ex}}%
\stackon[1pt]{#1}{\tmpbox}%
}
\begin{document}

\title[Weighted Divergent Beam Ray Transform]{Weighted Divergent Beam Ray Transform: Reconstruction, Unique continuation and Stability}

\author[S. R. Jathar]{Shubham R. Jathar\,\orcidlink{0000-0002-9109-7543}}

\address{Computational Engineering, School of Engineering Sciences,
Lappeenranta-Lahti University of Technology LUT, Lappeenranta, Finland}

\email{shubham.jathar@lut.fi}

\author[M. Kar]{Manas Kar\,\orcidlink{0000-0001-6036-1535}}
\address{Department of Mathematics, Indian Institute of Science Education and Research (IISER) Bhopal, India}
\email{manas@iiserb.ac.in}
\author[V. P. Krishnan]{Venkateswaran P. Krishnan\,\orcidlink{0000-0002-3430-0920}}
\address{Centre for Applicable Mathematics, Tata Institute of Fundamental Research, India}
\email{vkrishnan@tifrbng.res.in}
\author[R. R. Pattar]{Rahul Raju Pattar\,\orcidlink{0000-0003-4601-9842}}
\address{Centre for Applicable Mathematics, Tata Institute of Fundamental Research, India}
\email{rahulrajupattar@gmail.com}
\subjclass[2020]{Primary 46F12, 45Q05, 35R11.}



\keywords{ray transform, tensor tomography, divergent beam ray transform, unique continuation property, fractional Laplacian. }

\begin{abstract}
In this article, we establish that any symmetric $m$-tensor field can be recovered pointwise from partial data of the $k$-th weighted divergent ray transform for any $k \in \mathbb{Z}^{+} \cup\{0\}$. Using the unique continuation property of the fractional Laplacian, we further prove the unique continuation of the fractional divergent beam ray transform for both vector fields and symmetric 2-tensor fields. Additionally, we derive explicit reconstruction formulas and stability results for vector fields and symmetric 2-tensor fields in terms of fractional divergent beam ray transform data. Finally, we conclude by proving a unique continuation result for the divergent beam ray transform for functions.
\end{abstract}

\maketitle
\section{Introduction}\label{sec:Intro}
Let $n\ge 2$ be a positive integer. For a given point \( x \in \mathbb{R}^n \) (referred to as the source point) and \( \xi \in\mathbb S^{n-1} \) (referred to as the direction), the \textbf{divergent beam ray transform} is defined as the integral of a function \( f \) along a ray that originates at \( x \) and emanates in the direction \( \xi \). This transform is mathematically expressed as:
\[
Df(x,\xi) = \int_0^{\infty} f(x + t\xi) \, dt.
\]
The uniqueness properties of the divergent beam ray transform have been investigated in previous works, most notably by Hamaker et. al.~\cite{Hamaker:Smith:Solomon:Wagner:1980}, who established that if \( Df = Dg \) for every source point \( x \) and direction \( \xi \), then it necessarily follows that \( f = g \). Furthermore, partial uniqueness results under additional conditions were provided in \cite[Theorem 5.6]{Hamaker:Smith:Solomon:Wagner:1980}.
The relationship between the divergent beam ray transform and the Radon transform was explored in \cite[Section 4]{Hamaker:Smith:Solomon:Wagner:1980}. Additionally, Finch and Solmon characterized the range of the divergent ray transform, particularly when the source points lie on a sphere, in \cite[Theorem 2.4]{Finch:Solmon:1983}, analogous to the range characterization of the Radon transform in the case \( n = 2 \) by Helgason \cite{Helgason:1965} and Ludwig \cite{Ludwig:1966}. 

\noindent Extending this concept, Kuchment and Terzioglu introduced the \textbf{\( k \)-th weighted divergent beam ray transform} for \( k > -1 \) \cite[Def. 2.1]{Kuchment:Terzioglu:2017}. For a function \( f \in \mathcal{S}(\mathbb{R}^n) \) and a source-direction pair \( (x, \xi) \), this transform is defined as:
\[
D^k f(x,\xi) = \int_0^{\infty} t^k f(x + t\xi) \, dt.
\]
The inversion formula for recovering the function \( f \) from \( D^k f \) has been proven by Kuchment and Terzioglu \cite{Kuchment:Terzioglu:2017}.

\noindent This notion of the weighted divergent ray transform can be generalized to accommodate Schwartz class symmetric \( m \)-tensor fields. This is formalized in the following definition:
\begin{definition}\label{def:k_weight_DBT}
    For a non-negative integer \( k \), the \textbf{\( k \)-th weighted divergent beam ray transform} of a symmetric \( m \)-tensor \( f \in \mathcal{S}\left(\mathbb{R}^n; S^m(\mathbb{R}^n)\right) \) is defined by
    \[
    D^{k,m} f(x, \xi) = D_x^{k,m} f(\xi) := \int_0^{\infty} t^k f_{i_1 \cdots i_m}(x + t\xi) \xi^{i_1} \cdots \xi^{i_m} \, dt,
    \]
    where \( x \in \mathbb{R}^n \) is the source point and \( \xi \in \mathbb{S}^{n-1} \) is the unit vector indicating the direction of the beam.
\end{definition}
One of the primary objectives of this paper is to present a reconstruction formula for the tensor field \( f \) using the information derived from \( D^{k,m} f \). It is important to emphasize that for the recovery of \( f \), full data is not necessary; rather, only restricted data is required.
\begin{theorem}\label{tm:recon:dbrt}
    Let \( f \in \mathcal{S}\left(\mathbb{R}^n ; S^m(\mathbb{R}^n)\right) \) be a symmetric \( m \)-tensor field in \( \mathbb{R}^n \) and \( D^{k,m} f \) be its \( k \)-weighted divergent beam ray transform for $k\in \mathbb Z^+\cup\{0\}$. Then \( f(x) \) can be recovered pointwise from the knowledge of \( D^{k,m}_x f(x,\xi) \) for a finite set of directions $\xi$.
\end{theorem}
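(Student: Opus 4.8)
The plan is to split the reconstruction into two essentially independent steps: first, for each fixed direction $\xi$, recover the scalar quantity $f_{i_1\cdots i_m}(x)\,\xi^{i_1}\cdots\xi^{i_m}$ (the evaluation of the tensor on $\xi^{\otimes m}$) by a differentiation identity in the source variable; and second, invert the map sending a symmetric $m$-tensor to its values on a finite family of such directions, which is a finite-dimensional linear-algebra problem solved by polarization.

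For the first step I would fix $\xi\in\mathbb{S}^{n-1}$ and set $h(t):=f_{i_1\cdots i_m}(x+t\xi)\,\xi^{i_1}\cdots\xi^{i_m}$, so that $D^{k,m}f(x,\xi)=\int_0^{\infty}t^k h(t)\,dt$. The key observation is that the directional derivative of the integrand in the source variable along $\xi$ coincides with its $t$-derivative:
\[
(\xi\cdot\nabla_x)\big[f_{i_1\cdots i_m}(x+t\xi)\,\xi^{i_1}\cdots\xi^{i_m}\big]=\frac{d}{dt}h(t).
\]
Hence $(\xi\cdot\nabla_x)\,D^{k,m}f(x,\xi)=\int_0^{\infty}t^k h'(t)\,dt$, and integrating by parts—using that $f$ is Schwartz (so the boundary term at $t=\infty$ vanishes) and that the factor $t^k$ kills the boundary term at $t=0$ when $k\ge 1$—gives $(\xi\cdot\nabla_x)\,D^{k,m}f(x,\xi)=-k\,D^{k-1,m}f(x,\xi)$. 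Iterating $k$ times reduces matters to the case $k=0$, where the analogous computation yields $(\xi\cdot\nabla_x)\,D^{0,m}f(x,\xi)=-h(0)=-f_{i_1\cdots i_m}(x)\,\xi^{i_1}\cdots\xi^{i_m}$. Combining these produces the explicit formula
\[
f_{i_1\cdots i_m}(x)\,\xi^{i_1}\cdots\xi^{i_m}=\frac{(-1)^{k+1}}{k!}\,(\xi\cdot\nabla_x)^{k+1}\,D^{k,m}f(x,\xi),
\]
so the left-hand side is recovered from the knowledge of $D^{k,m}f(\cdot,\xi)$ for source points along the single line through $x$ in the direction $\xi$.

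For the second step I would identify the symmetric $m$-tensor $f(x)\in S^m(\mathbb{R}^n)$ with the homogeneous polynomial $\xi\mapsto f_{i_1\cdots i_m}(x)\,\xi^{i_1}\cdots\xi^{i_m}$ of degree $m$ in $n$ variables, a space of dimension $N=\binom{n+m-1}{m}$. By polarization, $f(x)$ is determined by this polynomial, which in turn is determined by its values at finitely many directions $\xi^{(1)},\dots,\xi^{(N)}$ provided the evaluation functionals $p\mapsto p(\xi^{(j)})$ form a basis of the dual space. Choosing the directions in general position makes the corresponding symmetric-power Vandermonde matrix invertible, and solving the resulting linear system recovers all components $f_{i_1\cdots i_m}(x)$ pointwise.

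The main obstacle will be securing the existence of such a finite set of directions whose $m$-th symmetric powers span $S^m(\mathbb{R}^n)$—equivalently, that the monomial evaluations at $\xi^{(1)},\dots,\xi^{(N)}$ are linearly independent. This is a genericity statement, since the degeneracy locus is a proper algebraic subvariety of $(\mathbb{S}^{n-1})^N$, and one can moreover exhibit explicit admissible configurations. By contrast, the analytic step is routine once the identity $(\xi\cdot\nabla_x)\,D^{k,m}f=-k\,D^{k-1,m}f$ is in hand, the only care needed being the justification of differentiation under the integral sign and of the integration by parts, both guaranteed by the Schwartz hypothesis on $f$.
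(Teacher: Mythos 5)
Your proposal is correct, and its analytic step is essentially identical to the paper's: both differentiate in the source variable along $\xi$, integrate by parts to obtain $(\xi\cdot\nabla_x)D^{k,m}f=-k\,D^{k-1,m}f$, iterate down to $k=0$, and extract $-\langle f(x),\xi^{\odot m}\rangle$ from one further directional derivative of $D^{0,m}f$; your closed-form $\langle f(x),\xi^{\odot m}\rangle=\frac{(-1)^{k+1}}{k!}(\xi\cdot\nabla_x)^{k+1}D^{k,m}f(x,\xi)$ is just the paper's iteration \eqref{sec3:eqn3} compressed into one identity, with correct signs. Where you genuinely diverge is the algebraic step of recovering $f(x)$ from the values $\langle f(x),\xi^{\odot m}\rangle$. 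You argue by genericity: since the powers $\xi^{\odot m}$ span $S^m(\mathbb{R}^n)$ (a standard fact the paper itself invokes), the determinant of the evaluation matrix at $N=\binom{n+m-1}{m}$ directions is not identically zero, so its zero locus is a proper subvariety of $(\mathbb{S}^{n-1})^N$ and any configuration off it works; this proves existence of an admissible finite set of directions and hence the stated recoverability, but it is nonconstructive as written. The paper instead proves the explicit polarization identity of Lemma \ref{lm:dbrt:workhorse},
\[
\xi_1\odot\cdots\odot\xi_m=\sum_{k=1}^m\frac{(-1)^{m-k}}{m!}\sum_{J_k^m}\bigl(\xi_{j_1}+\cdots+\xi_{j_k}\bigr)^{\odot m},
\]
applied with $\xi_j=e_{i_j}$, which produces the concrete directions $(e_{j_1}+\cdots+e_{j_k})/\|e_{j_1}+\cdots+e_{j_k}\|$ with explicit coefficients and hence a closed-form reconstruction formula for each component $f_{i_1\cdots i_m}(x)$ (at the cost of using more than $N$ evaluations, roughly $2^m-1$ polarized directions per component). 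In short: your route buys brevity and flexibility in the choice of directions; the paper's route buys an explicit inversion formula, which is what its statement of pointwise recovery is really advertising. The only refinement your write-up would need is to either exhibit one admissible configuration (as you acknowledge) or observe that the paper's polarized directions serve as exactly such a witness, making the genericity step dispensable.
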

We refer to \cite{Ambartsoumian:Mishra:Zamindar:2024}, where special cases of the above theorem are studied in the plane.

The second objective of this paper is to present a reconstruction formula, unique continuation and stability results related to the fractional divergent beam ray transform, which is defined as follows.
\begin{definition}\label{def:frac_DBT}
    Let \( n \geq 2 \) be an integer. For each real number \( s \in \mathbb{R}^+ \), we define the \textbf{fractional divergent beam ray transform} of a tensor field \( f \in \mathcal{S}(\mathbb{R}^n; S^m(\mathbb{R}^n)) \) as follows:
    \[
    (\chi_{s,m} f)(x,\xi) = \int_0^{\infty} t^{2s-1} f_{i_1 \cdots i_m}(x + t\xi) \xi^{i_1} \cdots \xi^{i_m} \, dt = \int_0^{\infty} t^{2s-1} \langle f(x + t\xi), \xi^m \rangle \, dt.
    \]
\end{definition}
\noindent Note that for any \( -1 < k < n-1 \), such that \( k = 2s-1 \), one can generalize Definition \ref{def:k_weight_DBT} to Definition \ref{def:frac_DBT}, such that \( D^{k,m} f = \chi_{s,m} f \). 

The mathematical analysis of inverse problems related to integral transforms addresses key issues such as uniqueness, stability, reconstruction, unique continuation, range characterization, and partial data. In the second part of this article, we focus on the reconstruction, stability, and unique continuation properties of the fractional divergent beam ray transform.

The ray transform of symmetric $m$-tensor fields in $\mathbb{R}^n$ is defined as the integral of the tensor field along lines in $\mathbb{R}^n$. The reconstruction problem for this integral operator is overdetermined when $n \geq 3$. However, when the normal operator of the ray transform is considered, the problem becomes fully determined. As shown in \cite{sharafutdinov2012integral}, the solenoidal part of the tensor field can be recovered from the normal operator, while the potential part lies in its kernel. Recently, \cite{JKKS1} demonstrated that complete recovery of a symmetric tensor field is possible from the normal operators of the first $m+1$ momentum ray transforms, which are integrals of the symmetric tensor field with the weight $t^k$ for $0 \leq k \leq m$, along with an explicit reconstruction formula. The algorithm for recovering the tensor field from the momentum ray transform was established in \cite{Krishnan:Manna:Sahoo:Sharafutdinov:2019}. We also refer to related works on the reconstruction of various integral transforms, including \cite{ Ambartsoumian:Mishra:Zamindar:2024, JKKSII, Mishra:Thakkar:2024, PU2004}. In this paper, we study the averaging operator derived from the fractional divergent beam ray transform instead of the normal operator. We provide an explicit reconstruction of vector fields and symmetric 2-tensor fields from their averaging operator. It is worth noting that higher-order tensors can be recovered using similar techniques, although the computations become increasingly cumbersome.

The stability of the ray transform is a well-studied problem. In \cite[Theorem 2.2]{Pestov:Sharafutdinov:1988}, a conditional, non-sharp stability estimate was established for compact non-trapping, non-positive curvature manifold with strictly convex boundary, using energy-type estimates. Sharp stability estimates were obtained via microlocal analysis in \cite{Stefanov:Uhlmann:2004}. Local stability results using Melrose's scattering calculus are also available, as shown in \cite{Stefanov:Uhlmann:Vasy:2018, Uhlmann:Vasy:2016}. Sharp stability estimates for non-positive curvature manifolds were proven in \cite{Paternain:Salo:2021} using energy estimates. We also refer to \cite{Boman:Sharafutdinov:2018, Krishnan:Sharafutdinov:2022} for further stability results. In this paper, we prove stability results for the fractional divergent beam ray transform for vector fields and 2-tensor fields.

 Recently, the study of unique continuation for integral transforms has gained attention. For instance, the unique continuation property of the ray transform of functions, the d-plane transform, and the Radon transforms was examined in \cite{Ilmavirta:Monkkonen:2020}. This result was extended to the ray transform of one-forms in \cite{Ilmavirta:Monkkonen:2021}. The unique continuation for the ray transform of symmetric tensor fields and the $k$-th momentum ray transform was established in \cite{Agrawal:Krishnan:Sahoo:2022}. Additionally, a unique continuation result for the fractional divergent beam ray transform on Schwartz class functions was proven in \cite{ilmavirta2023unique}. In this article, we extend this result to vector fields and 2-tensor fields. We also conclude by demonstrating the unique continuation of the divergent beam ray transform for Schwartz functions, with a remark that this result does not extend to the case of vector fields.

\noindent In summary, this article presents the following results:

\begin{itemize}
\item Reconstruction of $m$-tensor field, $m \geq 0$, from corresponding $k$-weighted divergent beam ray transform data.
\item Reconstruction of vector fields and 2-tensor fields from corresponding fractional divergent beam ray transform data.
\item Unique continuation results for fractional divergent beam ray transform for vector fields and 2-tensor fields.
\item Stability results for fractional divergent beam ray transform for vector fields and 2-tensor fields.
\item Unique continuation results for divergent beam ray transform for Schwartz functions, along with a counterexample showing this result does not hold for vector fields.
\end{itemize}

\section{Preliminaries}\label{sec:Pre}

\subsection{Tensor algebra over \texorpdfstring{$\mathbb{R}^n$}{Rn}}
Consider \( T^m \mathbb{R}^n = T^m \), the \( n^m \)-dimensional complex vector space of \( m \)-tensors on \( \mathbb{R}^n \). Let \( e_1, \dots, e_n \) represent the standard basis for \( \mathbb{R}^n \). For a given tensor \( u \in T^m \), the components (or coordinates) of the tensor are denoted as \( u_{i_1 \cdots i_m} = u\left(e_{i_1}, \dots, e_{i_m}\right) \). In this framework, if \( u \in T^m \) and \( v \in T^k \), the tensor product \( u \otimes v \), which belongs to \( T^{m+k} \), is defined by:
\[
(u \otimes v)\left(x_1, \dots, x_m, x_{m+1}, \dots, x_{m+k}\right) = u\left(x_1, \dots, x_m\right) v\left(x_{m+1}, \dots, x_{m+k}\right).
\]
Now, let \( S^m = S^m (\mathbb{R}^n) \) denote the subspace of \( T^m \) that consists of symmetric tensors, having a dimension of \(\binom{n+m-1}{m}\). The symmetrization operator, \( \sigma: T^m \rightarrow S^m \), is defined as follows:
\[
\sigma u\left(e_1, \dots, e_m\right) = \frac{1}{m!} \sum_{\pi \in \Pi_m} u\left(e_{\pi(1)}, \dots, e_{\pi(m)}\right),
\]
where \( \Pi_m \) represents the group of permutations of the set \( \{1, \dots, m\} \). For tensors \( u \in S^m \) and \( v \in S^k \), the symmetric product \( u \odot v \), which resides in \( S^{m+k} \), is defined as \( u \odot v = \sigma(u \otimes v) \).

\subsection{Tensor fields}

Recall that the Schwartz space \(\mathcal{S}(\mathbb{R}^n)\) is a topological vector space comprising \(C^{\infty}\)-smooth, complex-valued functions defined on \(\mathbb{R}^n\) that exhibit rapid decay at infinity, along with all their derivatives. This space is equipped with the standard topology. The dual space of \(\mathcal{S}(\mathbb{R}^n)\) is known as the space of tempered distributions and is denoted by \(\mathcal{S}^{\prime}(\mathbb{R}^n)\).

Now, let \(\mathcal{S}(\mathbb{R}^n ; S^m) = \mathcal{S}(\mathbb{R}^n) \otimes S^m\) represent the topological vector space of smooth, rapidly decaying symmetric \(m\)-tensor fields defined on \(\mathbb{R}^n\). The components of these tensor fields belong to the Schwartz space. In Cartesian coordinates, such a tensor field is expressed as \(f = (f_{i_1 \ldots i_m})\), where the components \(f_{i_1 \ldots i_m} = f^{i_1 \ldots i_m} \in \mathcal{S}(\mathbb{R}^n)\) are symmetric with respect to all indices.

It is important to note that, in this context, there is no distinction between covariant and contravariant components, as we are working exclusively with Cartesian coordinates.

\subsection{Fourier transform}
For \( f \in \mathcal{S}(\mathbb{R}^n) \), the Fourier transform is defined by
\[
    \wh{f}(y) = \int_{\mathbb{R}^n} e^{-\mathrm{i} \langle x, y \rangle} f(x) \, dx, \quad y \in \mathbb{R}^n.
\]
If \( T \in \mathcal{S}^{\prime}(\mathbb{R}^n) \), its Fourier transform \( \wh{T} \) is the linear form on \( \mathcal{S}(\mathbb{R}^n) \) defined by
\[
\wh{T}(\varphi) = T(\wh{\varphi}).
\]
We recall the following properties of the Fourier transform from \cite[Theorem 4.2, 4.6, and Lemma 6.2, Chapter VII]{Helgason:2010:IntegralGeom:book}:
\begin{enumerate}
    \item \( \mathrm{i}^{|\alpha| + |\beta|} y^{\beta} D^{\alpha}(\wh{f})(y) = \wh{D^{\beta}(x^{\alpha}f)}(y) \)
    
    \item \( \rwh{\left( f_1 * f_2 \right)} = (\wh{f_1}) (\wh{f_2}) \) for \( f_1, f_2 \in \mathcal{S}(\mathbb{R}^n) \)
    
    \item \( \rwh{\left( T * f \right)} = (\wh{T}) (\wh{f}) \) for \( f \in \mathcal{S}(\mathbb{R}^n) \), \( T \in \mathcal{S}'(\mathbb{R}^n) \)
     
    \item \( \wh{ \left( |x|^\alpha\right) } (y) = 2^{n+\alpha} \pi^{\frac{n}{2}} \frac{\Gamma\left(\frac{n+\alpha}{2}\right)}{\Gamma\left(-\frac{\alpha}{2}\right)} |y|^{-\alpha-n}, \quad -\alpha-n \notin 2\mathbb{Z}^{+}. \)
\end{enumerate}
In the expressions, $*$ denotes the convolution. The Fourier transform \(\mathscr{F}: \mathcal{S}\left(\mathbb{R}^n ; S^m\right) \rightarrow \mathcal{S}\left(\mathbb{R}^n ; S^m\right), f \mapsto \wh{f}\) of symmetric tensor fields is defined component-wise: \( (\wh{f})_{i_1 \ldots i_m} = \wh{f_{i_1 \ldots i_m}}\).
Note that the Fourier transform $\wh{}$ acts as an isomorphism on the spaces $\mathcal{S}(\mathbb{R}^n)$, $\mathcal{S}^{\prime}(\mathbb{R}^n)$ and we denote its inverse by ${\wh{}}^{-1} u$. The Bessel potential of order $s \in \R$ is the Fourier multiplier $\vev{D}^s\colon  \mathcal{S}^{\prime}(\mathbb{R}^n) \to  \mathcal{S}^{\prime}(\mathbb{R}^n)$, that is
\begin{equation}\label{eq: Bessel pot}
    \vev{D}^s u \vcentcolon = {\wh{}}^{-1}(\vev{\xi}^s\wh{u}),
\end{equation} 
where $\vev{\xi}\vcentcolon = (1+|\xi|^2)^{1/2}$. If $s \in \R$ and $1 \leq p < \infty$, the Bessel potential space $H^{s,p}(\R^n)$ is given by
\begin{equation}
\label{eq: Bessel pot spaces}
    H^{s,p}(\R^n) \vcentcolon = \{ u \in  \mathcal{S}^{\prime}(\mathbb{R}^n)\,;\, \vev{D}^su \in L^p(\R^n)\},
\end{equation}
 endowed with the norm 
 \[
    \norm{u}_{H^{s,p}(\R^n)} \vcentcolon = \norm{\vev{D}^su}_{L^p(\R^n)}.
\]
For our convenience, we use the notation $J_s u$ for the Bessel potential such that
$J_s u = {\wh{}}^{-1}[(1+|\xi|^2)^{-\frac{s}{2}}\wh{u}]$, for all $s\in\mathbb{R}$, see \cite[Chapter 12]{Wong}.

Let $H^{t,p}(\mathbb{R}^n ; S^m(\mathbb{R}^n)) = H^{t,p}(\mathbb{R}^n) \otimes S^m(\mathbb{R}^n)$ represent the topological vector spaces of symmetric $m$-tensor fields defined on $\mathbb{R}^n$ which belongs to $H^{t,p}(\mathbb{R}^n)$ Sobolev class for all $t\in\mathbb{R}$ and $1<p<\infty.$ In Cartesian co-ordinates such a tensor field is expressed as \(f = (f_{i_1 \ldots i_m})\), where the components \(f_{i_1 \ldots i_m} = f^{i_1 \ldots i_m} \in H^{t,p}(\mathbb{R}^n)\) are symmetric with respect to all indices. Since Schwartz space $\mathcal{S}(\mathbb{R}^n)$ is dense in $H^{t,p}(\mathbb{R}^n)$, it is immediate to see that $ \mathcal{S}(\mathbb{R}^n ; S^m)$ is dense in $H^{t,p}(\mathbb{R}^n ; S^m(\mathbb{R}^n))$. The norm of $H^{t,p}(\mathbb{R}^n ; S^m(\mathbb{R}^n))$ is defined as
\[
\|f\|_{H^{t,p}(\mathbb{R}^n ; S^m(\mathbb{R}^n))}
=\sum_{i_1\cdots i_m}\|f_{i_1\cdots i_m}\|_{H^{t,p}(\mathbb{R}^n)}.
\]
We will now recall the following complex interpolation result from \cite{Berg}.
\begin{theorem}[Complex interpolation, Theorem 6.4.5, \cite{Berg}]\label{interpolation}
Let $\theta$ be given such that $0<\theta<1$. Also let $s^*, p^*$ be such that $s^* = (1-\theta)s_0 + \theta s_1$ and $\frac{1}{p^*} = \frac{1-\theta}{p_0} + \frac{\theta}{p_1}$. Then we have
\[
\left(   H^{s_0,p_0}(\R^n),   H^{s_1,p_1}(\R^n)\right)_{[\theta]} =   H^{s^{*},p^{*}}(\R^n),
\]
where $s_0\neq s_1$ and $1<p_0, p_1<\infty$.
\end{theorem}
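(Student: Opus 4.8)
The plan is to reduce the interpolation of Bessel potential spaces to the classical complex interpolation of Lebesgue spaces, exploiting that the Bessel potential $\vev{D}^s$ is, by the very definition of the norm in \eqref{eq: Bessel pot spaces}, an isometric isomorphism of $H^{s,p}(\R^n)$ onto $L^p(\R^n)$. The starting point is Calder\'on's theorem on the complex interpolation of Lebesgue spaces: for $1<p_0,p_1<\infty$ one has $\left(L^{p_0}(\R^n),L^{p_1}(\R^n)\right)_{[\theta]}=L^{p^*}(\R^n)$ with equal norms, where $1/p^*=(1-\theta)/p_0+\theta/p_1$. The task is then to transport this identity through the Bessel potentials, whose order is allowed to vary analytically with the interpolation parameter.

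Concretely, I would introduce the analytic family of Fourier multipliers $\Lambda_z:=\vev{D}^{-s(z)}$ with $s(z):=(1-z)s_0+z s_1$, defined for $z$ in the closed strip $0\le \real z\le 1$, whose symbol is $(1+\abs{\xi}^2)^{-s(z)/2}$. On the boundary lines $\real z=j$ ($j=0,1$) one can write $\Lambda_{j+\I\tau}=\vev{D}^{-s_j}\,\vev{D}^{-\I\tau(s_1-s_0)}$. The key analytic ingredient is that the imaginary power $\vev{D}^{\I\sigma}$ is a bounded operator on $L^p(\R^n)$ for every $1<p<\infty$ and every real $\sigma$, with operator norm growing at most polynomially in $\abs{\sigma}$; this follows from the Mikhlin--H\"ormander multiplier theorem, since the symbol $(1+\abs{\xi}^2)^{\I\sigma/2}$ satisfies the Mikhlin derivative bounds with constants that are polynomial in $\sigma$. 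Consequently $\Lambda_{j+\I\tau}$ maps $L^{p_j}(\R^n)$ boundedly into $H^{s_j,p_j}(\R^n)$, and its inverse $\Lambda_{j+\I\tau}^{-1}=\vev{D}^{s(j+\I\tau)}$ maps $H^{s_j,p_j}(\R^n)$ boundedly into $L^{p_j}(\R^n)$, in both cases with admissible (sub-exponential) growth of the norm in $\tau$.

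With these bounds in hand I would apply the interpolation theorem for analytic families of operators (Stein's theorem, equivalently the retraction method underlying \cite{Berg}): the coretraction family $\Lambda_z$ yields that $\Lambda_\theta=\vev{D}^{-s^*}$ maps $\left(L^{p_0},L^{p_1}\right)_{[\theta]}=L^{p^*}(\R^n)$ boundedly into $\left(H^{s_0,p_0},H^{s_1,p_1}\right)_{[\theta]}$, while the retraction family $\Lambda_z^{-1}$ yields that $\vev{D}^{s^*}$ maps $\left(H^{s_0,p_0},H^{s_1,p_1}\right)_{[\theta]}$ boundedly into $L^{p^*}(\R^n)$, where $s^*=(1-\theta)s_0+\theta s_1$. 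Combining these two mapping properties with the defining isometries $\vev{D}^{\pm s^*}\colon H^{s^*,p^*}(\R^n)\leftrightarrow L^{p^*}(\R^n)$ gives the two continuous inclusions $H^{s^*,p^*}(\R^n)\hookrightarrow \left(H^{s_0,p_0},H^{s_1,p_1}\right)_{[\theta]}$ and $\left(H^{s_0,p_0},H^{s_1,p_1}\right)_{[\theta]}\hookrightarrow H^{s^*,p^*}(\R^n)$: indeed, for $u$ in either space one writes $u=\vev{D}^{-s^*}\vev{D}^{s^*}u$ and estimates the two factors by the appropriate bounded maps. The two inclusions together yield the asserted equality of spaces with equivalent norms.

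I expect the main obstacle to be the rigorous verification of the hypotheses of the analytic-family interpolation theorem. Three points require care: (i) the holomorphy of $z\mapsto \Lambda_z f$ in the interior of the strip and its continuity up to the boundary for $f$ in a dense subclass such as $\mathcal{S}(\R^n)$; (ii) the quantitative Mikhlin estimate giving only polynomial (hence admissible) growth of $\norm{\vev{D}^{\I\sigma}}_{L^p\to L^p}$ in $\sigma$, which is precisely what legitimizes passing to the interpolation space; and, most importantly, (iii) the symmetric use of both $\Lambda_z$ and its inverse family, since boundedness of $\Lambda_\theta$ alone gives only one inclusion, and one genuinely needs the companion bounds for $\vev{D}^{s(z)}$ to obtain an isomorphism of interpolation spaces rather than a mere continuous injection.
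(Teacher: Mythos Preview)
The paper does not prove this statement at all: it is quoted verbatim as \cite[Theorem 6.4.5]{Berg} and used as a black box later on. Your proposal is a correct outline of the standard proof---reducing to Calder\'on's $L^p$ interpolation via the analytic family $\vev{D}^{-s(z)}$ and controlling the imaginary powers by Mikhlin---and is essentially the argument given in Bergh--L\"ofstr\"om, so there is nothing to compare.
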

In the above theorem, the notation $\left(   H^{s_0,p_0}(\R^n),   H^{s_1,p_1}(\R^n)\right)_{[\theta]} $ means that the Bessel potential type Sobolev space $ H^{s^*,p^*}(\R^n)$ can be obtained by the complex interpolation between $ H^{s_0,p_0}(\R^n)$ and $H^{s_1,p_1}(\R^n)$. Given two compatible couple of Banach spaces $\left( H^{s_0,p_0}(\R^n),   H^{s_1,p_1}(\R^n)\right)$ and $\left(   H^{\tilde{s},\tilde{p}}(\R^n),   H^{\tilde{s}_1,\tilde{p}_1}(\R^n)\right)$, the pair $\left(\left(   H^{s_0,p_0}(\R^n),   H^{s_1,p_1}(\R^n)\right)_{[\theta]}, \left(   H^{\tilde{s},\tilde{p}}(\R^n),   H^{\tilde{s}_1,\tilde{p}_1}(\R^n)\right)_{[\theta]}  \right)$ is an exact interpolation pair of exponent $\theta$. That is, if $T : H^{s_0,p_0}(\R^n) + H^{s_1,p_1}(\R^n) \rightarrow H^{\tilde{s},\tilde{p}}(\R^n) + H^{\tilde{s}_1,\tilde{p}_1}(\R^n)$ is a linear operator such that
$T : H^{s_0,p_0}(\R^n) \rightarrow H^{\tilde{s},\tilde{p}}(\R^n)$ and $T : H^{s_1,p_1}(\R^n) \rightarrow H^{\tilde{s}_1,\tilde{p}_1}(\R^n)$
are bounded, then $T : H^{s^*,p^*}(\R^n) \rightarrow H^{\tilde{s}^*,\tilde{p}^*}(\R^n)$ is bounded, where the interpolation spaces are $\left(   H^{s_0,p_0}(\R^n),   H^{s_1,p_1}(\R^n)\right)_{[\theta]} = H^{s^*,p^*}(\R^n)$ and $\left(   H^{\tilde{s},\tilde{p}}(\R^n),   H^{\tilde{s}_1,\tilde{p}_1}(\R^n)\right)_{[\theta]} = H^{\tilde{s}^*,\tilde{p}^*}(\R^n)$. The corresponding operator norm can be bounded by
\[
\|T\|_{H^{s^*,p^*} \rightarrow H^{\tilde{s}^*,\tilde{p}^*}} \leq \|T\|_{H^{s_0,p_0} \rightarrow H^{\tilde{s},\tilde{p}}}^{1-\theta} \|T\|_{H^{s_1,p_1}\rightarrow H^{\tilde{s}_1,\tilde{p}_1}}^{\theta}
\]
where $s^*, p^*$ be such that $s^* = (1-\theta)s_0 + \theta s_1$ and $\frac{1}{p^*} = \frac{1-\theta}{p_0} + \frac{\theta}{p_1}$ with $s_0\neq s_1$ and $1<p_0, p_1<\infty$, and $\tilde{s}^*, \tilde{p}^*$ be such that $\tilde{s}^* = (1-\theta)\tilde{s} + \theta \tilde{s}_1$ and $\frac{1}{\tilde{p}^*} = \frac{1-\theta}{\tilde{p}} + \frac{\theta}{\tilde{p}_1}$ with $\tilde{s}\neq \tilde{s}_1$ and $1<\tilde{p}, \tilde{p}_1<\infty$. In particular, if $f\in H^{s^*,p^*}(\R^n)$, then 
\begin{align*}
    \|Tf\|_{H^{\tilde{s}^*,\tilde{p}^*}(\R^n)}
    &\leq \|T\|_{H^{s^*,p^*} \rightarrow H^{\tilde{s}^*,\tilde{p}^*}} \|f\|_{H^{s^*,p^*}(\R^n)} \\
& \leq T\|_{H^{s_0,p_0} \rightarrow H^{\tilde{s},\tilde{p}}}^{1-\theta} \|T\|_{H^{s_1,p_1}\rightarrow H^{\tilde{s}_1,\tilde{p}_1}}^{\theta} \|f\|_{H^{s^*,p^*}(\R^n)}.
\end{align*}
We will use this interpolation result to prove certain boundedness estimate for averaging operators in the fractional Sobolev spaces.

\subsection{Riesz transform}
The Riesz transforms of a complex valued Schwartz class function $f$ on $\mathbb{R}^n$ are defined by
 \[
 \mathbf{R}_jf(x) = c_n \lim_{\epsilon\rightarrow 0}\int_{\mathbb{R}^n\setminus B_{\epsilon}(x)} \frac{(x_j - z_j)}{|x-z|^{n+1}} f(z) dz
 \]
for $j=1,2,\cdots, n$ where the constant $c_n$ is given by
\[
c_n = \frac{1}{\pi \omega_{n-1}} = \frac{\Gamma(\frac{n+1}{2})}{\pi^{\frac{n+1}{2}}}
\]
and $\omega_{n-1}$ being the $n-1$ dimensional Lebesgue measure of the unit sphere $\mathbb{S}^{n-1}$.
For $f\in L^p(\mathbb{R}^n), 1<p<\infty,$ the Riesz transform operators 
\( \mathbf{R}_j : L^p(\mathbb{R}^n) \rightarrow L^p(\mathbb{R}^n) \)
are bounded,
see for instance \cite[Theorem 2.6, Chapter VI]{Stein_Weiss}. 
For every $f\in L^2(\mathbb{R}^n)$, we have from \cite[Theorem 2.6, Chapter VI]{Stein_Weiss} that
\begin{equation}\label{Fourier_Riesz_transform}
\mathscr{F}{(\mathbf{R}_jf)}(\zeta) = -\mathrm{i} \frac{\zeta_j}{|\zeta|}(\mathscr{F} f)(\zeta).
\end{equation}
Since $f\in L^2(\mathbb{R}^n)$, the Riesz transform $\mathbf{R}_j f\in L^2(\mathbb{R}^n)$. Moreover, we have
\begin{equation}\label{Fourier_Riesz_transform_star}
\mathscr{F}({\mathbf{R}_j \circ \mathbf{R}_k f})(\zeta) = (-\mathrm{i})^2 \frac{\zeta_j\zeta_k}{|\zeta|^2}(\mathscr{F} f)(\zeta).
\end{equation}
Similarly, we have
\[
\mathscr{F}\left(\mathbf{R}_{i_1}\circ\cdots \circ \mathbf{R}_{i_k}f\right)(\zeta)
= (-\mathrm{i})^{k} \frac{\zeta_{i_1}\cdots \zeta_{i_k}}{|\zeta|^{k}} (\mathscr{F} f)(\zeta)
\]
holds for every $f\in L^2(\mathbb{R}^n)$.
In the case of $m$-tensor field $f$, we define the Riesz transform via each component function $f_{j_1\cdots j_m}$. In particular, we have
\begin{equation}
    \mathbf{R}_{\ell} f_{j_1\cdots j_m}= -\mathrm i \mathscr{F}^{-1}\left(\frac{\zeta_{\ell}}{|\zeta|} \mathscr{F} (f_{j_1\cdots j_m})\right),
\end{equation}
for all $\ell = 1, \cdots, n$. 
Next, we will show the boundedness properties of the Riesz transforms in the corresponding Sobolev spaces.
\begin{theorem}[Boundedness properties of Riesz transform]\label{bdd_riesz}
Let $1<p<\infty$ and $t\geq 0$ be a real number. Then the Riesz transforms
\[
\mathbf{R}_{j} : H^{t,p}(\R^n) \rightarrow H^{t,p}(\R^n)
\]
are bounded linear operator for all $j=1,2,\cdots, n$. 
\end{theorem}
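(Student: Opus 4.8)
The plan is to exploit the fact that both the Riesz transform $\mathbf{R}_j$ and the Bessel potential $\langle D\rangle^t$ are Fourier multipliers whose symbols, being scalar functions of $\zeta$, commute; the two operators therefore commute, and boundedness on $H^{t,p}(\R^n)$ reduces immediately to the already-established $L^p$-boundedness of $\mathbf{R}_j$.

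First I would recall that, by \eqref{eq: Bessel pot} and the definition of the norm on $H^{t,p}(\R^n)$, one has $\norm{u}_{H^{t,p}(\R^n)} = \norm{\langle D\rangle^t u}_{L^p(\R^n)}$, where $\langle D\rangle^t$ is the Fourier multiplier with symbol $\langle\zeta\rangle^t = (1+|\zeta|^2)^{t/2}$. Since $\langle\zeta\rangle^t$ is smooth on all of $\R^n$ (there is no singularity at the origin) and, together with each of its derivatives, grows at most polynomially, multiplication by $\langle\zeta\rangle^t$ preserves $\mathcal{S}(\R^n)$; consequently $\langle D\rangle^t$ maps the Schwartz class into itself, so that $\mathbf{R}_j \langle D\rangle^t f$ is well defined for $f \in \mathcal{S}(\R^n)$.

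Next, for $f \in \mathcal{S}(\R^n)$ I would verify the commutation identity $\langle D\rangle^t \mathbf{R}_j f = \mathbf{R}_j \langle D\rangle^t f$ on the Fourier side. Using \eqref{Fourier_Riesz_transform},
\[
\mathscr{F}\!\left(\langle D\rangle^t \mathbf{R}_j f\right)(\zeta)
= \langle\zeta\rangle^t \left(-\mathrm{i}\,\frac{\zeta_j}{|\zeta|}\right)\mathscr{F} f(\zeta)
= \left(-\mathrm{i}\,\frac{\zeta_j}{|\zeta|}\right)\langle\zeta\rangle^t \,\mathscr{F} f(\zeta)
= \mathscr{F}\!\left(\mathbf{R}_j \langle D\rangle^t f\right)(\zeta),
\]
the middle equality being merely the commutativity of two scalar factors. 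Combining the definition of the norm, this identity, and the $L^p$-boundedness of $\mathbf{R}_j$ (with constant $C_p$) recalled earlier in the excerpt, I would estimate
\[
\norm{\mathbf{R}_j f}_{H^{t,p}(\R^n)}
= \norm{\langle D\rangle^t \mathbf{R}_j f}_{L^p(\R^n)}
= \norm{\mathbf{R}_j \langle D\rangle^t f}_{L^p(\R^n)}
\le C_p \norm{\langle D\rangle^t f}_{L^p(\R^n)}
= C_p \norm{f}_{H^{t,p}(\R^n)}.
\]
Since $\mathcal{S}(\R^n)$ is dense in $H^{t,p}(\R^n)$, this inequality extends by continuity to all of $H^{t,p}(\R^n)$, which establishes the claimed boundedness.

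I do not expect a genuine obstacle: the argument is essentially the commutation of two Fourier multipliers. The only points requiring mild care are (i) checking that $\langle D\rangle^t$ preserves $\mathcal{S}(\R^n)$, so that $\mathbf{R}_j \langle D\rangle^t$ is meaningful on a dense subspace, and (ii) the density closure used to pass from Schwartz functions to general elements of $H^{t,p}$. An alternative route would first prove boundedness on $H^{k,p}(\R^n)$ for integers $k$ (using that $\mathbf{R}_j$ commutes with every $\partial^{\alpha}$, together with the $L^p$-bound) and then invoke the complex interpolation result in Theorem \ref{interpolation} to cover all real $t \geq 0$; since the direct multiplier argument is shorter and already gives every $t$, I would favour it.
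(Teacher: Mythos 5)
Your proposal is correct, but it takes a genuinely different route from the paper's own proof. The paper argues on the physical side: it writes $\mathbf{R}_j f = K * f$, uses the commutation $\partial^{\alpha}(K*f) = K * \partial^{\alpha} f$ to get boundedness on the integer-order spaces $W^{l,p}(\mathbb{R}^n)$, and then invokes the complex interpolation theorem (Theorem \ref{interpolation}) to cover all real $t \geq 0$ --- precisely the ``alternative route'' you sketched and set aside at the end of your proposal. Your primary argument instead works on the Fourier side: since $\mathbf{R}_j$ and the Bessel potential $\langle D \rangle^{t}$ are Fourier multipliers with commuting scalar symbols, you get
\[
\left\| \mathbf{R}_j f \right\|_{H^{t,p}(\mathbb{R}^n)} = \left\| \mathbf{R}_j \langle D \rangle^{t} f \right\|_{L^p(\mathbb{R}^n)} \leq C_p \left\| \langle D \rangle^{t} f \right\|_{L^p(\mathbb{R}^n)} = C_p \left\| f \right\|_{H^{t,p}(\mathbb{R}^n)}
\]
on the Schwartz class, and density finishes the job. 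As to what each approach buys: your multiplier argument is shorter, requires no interpolation machinery, avoids the identification $W^{l,p} = H^{l,p}$ ($1<p<\infty$) that is implicit in the paper's interpolation step, and in fact proves the result for every real $t$, not only $t \geq 0$; the paper's route uses only elementary convolution properties plus a standard black-box interpolation theorem, thereby sidestepping any Fourier-side manipulation of $H^{t,p}$, at the cost of the restriction $t \geq 0$ and the appeal to Theorem \ref{interpolation}. Your attention to the two delicate points --- that $\langle D \rangle^{t}$ preserves $\mathcal{S}(\mathbb{R}^n)$, and the density closure --- is appropriate; one might add in passing that the operator obtained by density agrees with the already-defined $\mathbf{R}_j$ on $H^{t,p}(\mathbb{R}^n) \subset L^p(\mathbb{R}^n)$ for $t \geq 0$, but this is immediate from the $L^p$-continuity of $\mathbf{R}_j$.
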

\begin{proof}
    We have already noticed that, Riesz transforms
  \( \mathbf{R}_j : L^p(\mathbb{R}^n) \rightarrow L^p(\mathbb{R}^n) \)
are bounded, for $f\in L^p(\mathbb{R}^n), 1<p<\infty.$ From the definition of Riesz transform, we have
$
\mathbf{R}_j f = K * f
$
where the kernel is of the form $K(x,z) = c_n\frac{x_j -z_j}{|x-z|^{n+1}}$. Therefore, by the convolution theorem, we have
$
\partial^{\alpha}(\mathbf{R}_j f) = \partial^{\alpha}( K * f) = K * \partial^{\alpha} f = \mathbf{R}_j (\partial^{\alpha} f)
$
holds for all multi-indices $\alpha = (\alpha_1, \cdots, \alpha_n)$ with $\alpha_i\in \mathbb{N}$. Thus we have
\[
\|\mathbf{R}_j f\|_{W^{l,p}(\mathbb{R}^n)} \leq C \|f\|_{W^{l,p}(\mathbb{R}^n)} 
\]
where $C>0$ be a constant and $l=|\alpha|\in\mathbb{N}.$
In other words,
$
\mathbf{R}_j : W^{l,p}(\mathbb{R}^n) \rightarrow W^{l,p}(\mathbb{R}^n)
$
is a bounded linear operator. By the interpolation theorem, see Theorem \ref{interpolation}, we conclude that
$
\mathbf{R}_j : H^{t,p}(\R^n) \rightarrow H^{t,p}(\R^n)
$
is a bounded linear operator, where $1<p<\infty$ and $t\geq 0$ be a real number.
\end{proof}

\subsection{Riesz potential}\label{Sec:RP}

Consider a function \( f \) in the Schwartz space \(\mathcal{S}\left(\mathbb{R}^n\right)\) and a complex number \(\gamma\). The Riesz potential of \( f \) is defined as follows:
\begin{equation}\label{Sec:RP:Eq1}
(I^\gamma f)(x) = h_n(\gamma) \int_{\mathbb{R}^n} |y|^{\gamma - n} f(x-y) \, dy, \quad h_n(\gamma) =\frac{\Gamma\left(\frac{n-\gamma}{2}\right)}{2^\gamma \pi^{\frac{n}{2}} \Gamma\left( \frac{\gamma}{2}\right) }.
\end{equation}
In cases where \( -\gamma \) belongs to the set of positive even integers, the poles of \( \Gamma(\gamma / 2) \) are negated by those of \( |y|^{\gamma-n} \), as detailed in \cite[Chapter VII, Section 6]{Helgason:2010:IntegralGeom:book}. Therefore, if \(\gamma-n\) does not belong to the set of even integers, the Riesz potential can be represented as a convolution:
\begin{equation}
(I^\gamma f)(x) = \left( f * h_n(\gamma) |x|^{\gamma-n} \right) (x), \quad f \in \mathcal{S}(\mathbb{R}^n).
\end{equation}
Through the application of the Fourier transform, we obtain:
\begin{equation}\label{Sec:RP:Eq2}
\wh{(I^\gamma f)}(\eta) = |\eta|^{-\gamma} \wh{f} (\eta), \quad \gamma-n \notin 2\mathbb{Z}^+
\end{equation}
in the context of tempered distributions.

\begin{lemma}\cite[Chapter VII, Propositions 6.5 and 6.8]{Helgason:2010:IntegralGeom:book}
For \( f \in \mathcal{S}\left(\mathbb{R}^n\right) \), the following composition formulas are valid:
\begin{enumerate}
    \item \( I^\alpha (I^\beta f) = I^{\alpha+\beta}f, \quad \real(\alpha), \real(\beta)>0, \quad \real(\alpha+\beta) < n \).
    \item \( I^{-k} (I^k f) = f, \quad 0 < k < n \) and \( f(x)=O(|x|^{-N}) \) for some \( N>n \).
\end{enumerate}
\end{lemma}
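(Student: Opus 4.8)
The plan is to prove both identities on the Fourier side, where by \eqref{Sec:RP:Eq2} the Riesz potential $I^{\gamma}$ acts as multiplication by the symbol $|\eta|^{-\gamma}$, and then to invert the Fourier transform. The whole argument is bookkeeping around two points: (i) checking that the exceptional condition $\gamma-n\notin 2\mathbb{Z}^{+}$ required for \eqref{Sec:RP:Eq2} holds at every value of $\gamma$ that appears, and (ii) verifying that each multiplication of a homogeneous symbol against a Fourier transform is legitimate in $\mathcal{S}^{\prime}(\mathbb{R}^n)$.

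For part (1), I would first observe that $\real(\alpha),\real(\beta)>0$ together with $\real(\alpha+\beta)<n$ force $\real\alpha,\real\beta,\real(\alpha+\beta)$ all into $(0,n)$; hence $\alpha-n$, $\beta-n$ and $(\alpha+\beta)-n$ all have negative real part, so none is a positive even integer and \eqref{Sec:RP:Eq2} applies for each. Since $f\in\mathcal{S}(\mathbb{R}^n)$ gives $\wh f\in\mathcal{S}(\mathbb{R}^n)$, and since $\real\beta<n$, the function $|\eta|^{-\beta}\wh f(\eta)$ is locally integrable near the origin and rapidly decaying at infinity; thus it is a tempered distribution and $I^{\beta}f={\wh{}}^{-1}(|\eta|^{-\beta}\wh f)$ is a well-defined element of $\mathcal{S}^{\prime}(\mathbb{R}^n)$ whose Fourier transform is an honest $L^{1}_{loc}$ function. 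This is the decisive point that makes the next step clean: because $\wh{I^{\beta}f}$ is a genuine function and $|\eta|^{-\alpha}$ is itself $L^1_{loc}$ (as $\real\alpha<n$), the product $|\eta|^{-\alpha}\wh{I^{\beta}f}(\eta)$ is defined pointwise almost everywhere, and since $\real(\alpha+\beta)<n$ the result $|\eta|^{-(\alpha+\beta)}\wh f(\eta)$ is again locally integrable. Therefore
\[
\wh{I^{\alpha}(I^{\beta}f)}(\eta)=|\eta|^{-\alpha}\,\wh{I^{\beta}f}(\eta)=|\eta|^{-\alpha}|\eta|^{-\beta}\wh f(\eta)=|\eta|^{-(\alpha+\beta)}\wh f(\eta)=\wh{I^{\alpha+\beta}f}(\eta),
\]
and Fourier inversion yields $I^{\alpha}(I^{\beta}f)=I^{\alpha+\beta}f$.

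For part (2), the identical multiplier computation applies with $\gamma=k$ and $\gamma=-k$. Here $0<k<n$ gives $k-n\in(-n,0)$ while $-k-n<0$, so neither exponent is a positive even integer and \eqref{Sec:RP:Eq2} is available for both $I^{k}$ and $I^{-k}$. The decay hypothesis $f(x)=O(|x|^{-N})$ with $N>n$ (automatic since $f\in\mathcal{S}(\mathbb{R}^n)$) guarantees that the defining integral for $I^{k}f$ converges absolutely: $|y|^{k-n}$ is integrable near $0$ because $k>0$, and the integrand decays faster than $|y|^{-n}$ at infinity because $N>n>k$, so $I^{k}f$ is a genuine continuous function. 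On the Fourier side $\wh{I^{-k}(I^{k}f)}=|\eta|^{k}\bigl(|\eta|^{-k}\wh f\bigr)=\wh f$ for $\eta\neq 0$; since $\wh f$ is continuous and the two sides agree off the measure-zero set $\{0\}$ with no mass concentrated there, they coincide as tempered distributions, and inversion gives $I^{-k}(I^{k}f)=f$.

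The main obstacle to keep in view is that $I^{-k}$ is \emph{not} given by an absolutely convergent integral, since its kernel $|y|^{-k-n}$ is not locally integrable; thus part (2) must be read entirely through the multiplier $|\eta|^{k}$ rather than the convolution formula, and the validity of that reading rests precisely on the fact, established in part (1), that $\wh{I^{k}f}=|\eta|^{-k}\wh f$ is an honest locally integrable function against which the singular symbol may be multiplied pointwise. A more hands-on alternative for part (1) is to expand $I^{\alpha}(I^{\beta}f)$ as a double convolution, justify Fubini (this is exactly where $\real(\alpha+\beta)<n$ is needed, guaranteeing absolute convergence), and evaluate $\int_{\mathbb{R}^n}|y|^{\alpha-n}|x-y|^{\beta-n}\,dy=C(\alpha,\beta)|x|^{\alpha+\beta-n}$ by homogeneity and rotation invariance, checking through a beta-type integral that $h_n(\alpha)h_n(\beta)C(\alpha,\beta)=h_n(\alpha+\beta)$; matching this gamma-function constant is the only laborious part of that route.
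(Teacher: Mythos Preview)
The paper does not supply its own proof of this lemma; it is stated with a citation to Helgason's book and used as a black box. Your Fourier-multiplier argument is correct and is in fact the route Helgason takes for the composition formula: once \eqref{Sec:RP:Eq2} is in hand, the semigroup identity is just multiplication of symbols, and your checks that the exceptional set $\gamma-n\in 2\mathbb{Z}^{+}$ is avoided and that each intermediate product lies in $L^1_{loc}\cap\mathcal{S}'$ are exactly the bookkeeping required.

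One point worth tightening is the status of $I^{-k}$ in part (2). You correctly flag that the convolution kernel $|y|^{-k-n}$ is not locally integrable and therefore take the multiplier $|\eta|^{k}$ as the working definition of $I^{-k}$ on $I^{k}f$. That is fine, but strictly speaking the paper (following Helgason) defines $I^{\gamma}$ for all $\gamma$ by analytic continuation of the distribution $h_n(\gamma)|y|^{\gamma-n}$, and \eqref{Sec:RP:Eq2} then holds for $\gamma=-k$ by continuation rather than by a direct convolution computation. Your argument is equivalent in effect, but mentioning the analytic-continuation mechanism would close the small gap between ``the integral doesn't converge'' and ``nevertheless \eqref{Sec:RP:Eq2} applies.'' The alternative double-convolution route you sketch for part (1), reducing to the beta-integral identity $h_n(\alpha)h_n(\beta)\int|y|^{\alpha-n}|x-y|^{\beta-n}\,dy=h_n(\alpha+\beta)|x|^{\alpha+\beta-n}$, is also how Helgason handles the constants; it is more laborious but avoids any distributional subtleties.
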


\noindent Following \cite[Theorem 1, Chapter V]{Stein:1970},
we have that the Riesz potential operator
\begin{equation}\label{bdd_Riesz}
    I^{\alpha} : L^p(\mathbb{R}^n)\rightarrow L^q(\mathbb{R}^n)
\end{equation}
is a bounded linear operator where $0<\alpha<n, 1<p<q<\infty, \frac{1}{q} = \frac{1}{p} - \frac{\alpha}{n}.$ In particular, we have
\begin{equation}\label{estimate_Riesz}
    \|I^{\alpha}f\|_{L^q(\mathbb{R}^n)} \leq A_{p,q}\|f\|_{L^p(\mathbb{R}^n)}
\end{equation}
 where $A_{p,q}>0$ is a constant, sometimes it is called as $A_{p,q}$ weight.


\subsection{Inverse fractional Laplacian}

For a function \( f \in \mathcal{S}\left(\mathbb{R}^n\right) \) and a positive number \( s \), the Fourier transform of \( (-\Delta)^s f \) is given by \( |\eta|^{2s} \wh{f}(\eta) \). The negative power of the Laplacian, \( (-\Delta)^{-s} \), for \( s > 0 \), is defined as:
\begin{equation} \label{Prel:FracLap}
\rwh{((-\Delta)^{-s}f)} (\eta) = |\eta|^{-2s} \wh{f}(\eta) \quad \text{for} \quad \eta \neq 0.
\end{equation}
This multiplier, \( |\eta|^{-2s} \), must be a tempered distribution, which necessitates the condition \( 0 < s < \frac{n}{2} \) \cite[Section 3]{Stinga:2019:handbook}. To compute \( (-\Delta)^{-s} f(x) \), the inverse Fourier transform of \eqref{Prel:FracLap} needs to be calculated, a task that is complex due to the nature of the Fourier multiplier \( |\eta|^{-2s} \). However, this challenge can be circumvented by employing the method of semigroups, leading to the definition of the fractional Laplacian as:
\[
(-\Delta)^{-s} f(x) = \frac{1}{\Gamma(s)} \int_0^{\infty} e^{t \Delta} f(x) \frac{dt}{t^{1-s}}.
\]
The following theorem establishes a connection between the fractional Laplacian and the Riesz potential.

\begin{theorem}\cite[Theorem 5]{Stinga:2019:handbook}
    Let \( f \in \Sc(\R^n) \) and \( 0 < s < \frac{n}{2} \). Then
    \[
    (-\Delta)^{-s} f(x) = (I^{2s} f)(x) = h_n(2s) \int_{\mathbb{R}^n} |y|^{2s-n} f(x-y) \, dy,
    \]
    where \( h_n(2s) = \frac{\Gamma\left(\frac{n}{2} - s\right)}{4^s \Gamma(s) \pi^{\frac{n}{2}}}. \)
\end{theorem}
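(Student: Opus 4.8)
The plan is to establish the two asserted identities in turn: the operator identity $(-\Delta)^{-s} = I^{2s}$ on $\Sc(\R^n)$, and then the explicit convolution representation of $I^{2s}$ with constant $h_n(2s)$. The quickest route to the operator identity is via the Fourier transform. By definition \eqref{Prel:FracLap}, $(-\Delta)^{-s}$ is the Fourier multiplier with symbol $|\eta|^{-2s}$, and by \eqref{Sec:RP:Eq2} with $\gamma = 2s$ so is $I^{2s}$. The hypothesis $0 < s < \frac{n}{2}$ guarantees both that $|\eta|^{-2s}$ is a tempered distribution and that $2s - n < 0$, hence $2s - n \notin 2\mathbb{Z}^+$, so that \eqref{Sec:RP:Eq2} applies. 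Injectivity of the Fourier transform on $\Sc'(\R^n)$ then forces $(-\Delta)^{-s} f = I^{2s} f$ for all $f \in \Sc(\R^n)$.

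For the convolution form I would read off the definition \eqref{Sec:RP:Eq1} of the Riesz potential at $\gamma = 2s$, namely $(I^{2s} f)(x) = h_n(2s) \int_{\R^n} |y|^{2s-n} f(x-y)\, dy$ with $h_n(2s) = \frac{\Gamma(\tfrac{n-2s}{2})}{2^{2s} \pi^{n/2} \Gamma(s)}$; since $2^{2s} = 4^s$ and $\tfrac{n-2s}{2} = \tfrac{n}{2} - s$, this is exactly the stated constant. One should observe that $2s - n < 0$ keeps us away from the poles of $\Gamma(\gamma/2)$, so the genuine convolution representation, not merely the distributional one, is available.

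A self-contained alternative, and the one matching the semigroup definition displayed just before the theorem, is a direct computation. Inserting the Gauss--Weierstrass kernel $e^{t\Delta} f(x) = (4\pi t)^{-n/2} \int_{\R^n} e^{-|x-y|^2/(4t)} f(y)\, dy$ into $(-\Delta)^{-s} f(x) = \frac{1}{\Gamma(s)} \int_0^\infty e^{t\Delta} f(x)\, t^{s-1}\, dt$ and interchanging the order of integration reduces the claim to the scalar integral $\int_0^\infty (4\pi t)^{-n/2} e^{-|x-y|^2/(4t)} t^{s-1}\, dt$. The substitution $u = |x-y|^2/(4t)$ converts this into $4^{-s} \pi^{-n/2} |x-y|^{2s-n} \int_0^\infty e^{-u} u^{\tfrac{n}{2}-s-1}\, du = 4^{-s} \pi^{-n/2} \Gamma(\tfrac{n}{2}-s)\, |x-y|^{2s-n}$, and dividing by $\Gamma(s)$ reproduces both the kernel and the constant $h_n(2s)$ at once. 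This route has the advantage of being valid uniformly for all $0 < s < \frac{n}{2}$, including the exceptional values where $2s-n$ is an even integer.

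The main obstacle lies in the direct computation: one must justify the interchange of the time and space integrations (Fubini--Tonelli), which uses the Schwartz decay of $f$ to dominate the joint integrand near $t = 0$ and $t = \infty$, and one must ensure convergence of the resulting Gamma integral $\int_0^\infty e^{-u} u^{\tfrac{n}{2}-s-1}\, du$, which is precisely where $s < \frac{n}{2}$ enters. In the Fourier route the only delicate point is confirming that $|\eta|^{-2s}$ is an admissible tempered-distribution multiplier and that the hypotheses of \eqref{Sec:RP:Eq2} hold; once this is in place, both the operator identity and the convolution form follow immediately.
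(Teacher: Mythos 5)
Your proposal is correct. Note that the paper itself gives no proof of this statement --- it is quoted verbatim from \cite[Theorem 5]{Stinga:2019:handbook} --- so the comparison is with the cited source: your second, ``self-contained'' route (insert the Gauss--Weierstrass kernel into the semigroup formula, interchange integrals, substitute $u=|x-y|^2/(4t)$ to produce $4^{-s}\pi^{-n/2}\Gamma(\tfrac{n}{2}-s)\,|x-y|^{2s-n}$) is precisely the method-of-semigroups argument of that reference, and your constants check out, since $2^{2s}=4^s$ and $\Gamma\bigl(\tfrac{n-2s}{2}\bigr)=\Gamma\bigl(\tfrac{n}{2}-s\bigr)$. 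The Fourier route is also sound and consistent with the paper's preliminaries, equations \eqref{Prel:FracLap} and \eqref{Sec:RP:Eq2}, with $0<s<\tfrac{n}{2}$ giving local integrability of $|\eta|^{-2s}$ and $2s-n<0$ ruling out the excluded set $2\mathbb{Z}^+$. One small remark: your closing claim that the direct computation covers ``exceptional values where $2s-n$ is an even integer'' is vacuous rather than an advantage --- as you yourself observe, $2s-n<0$ throughout the hypothesis range, so no value of $s\in(0,\tfrac{n}{2})$ is exceptional for either route; the restriction $\gamma-n\notin 2\mathbb{Z}^+$ in \eqref{Sec:RP:Eq2} is automatic here. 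Your identification of where each hypothesis enters (Fubini--Tonelli domination near $t=0$ via $s>0$ and near $t=\infty$ via $s<\tfrac{n}{2}$, convergence of $\int_0^\infty e^{-u}u^{\frac{n}{2}-s-1}\,du$) is exactly right.
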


\begin{lemma}\label{lm:Delta:comp}
    Let \( s_1 \) and \( s_2 \) be two positive real numbers such that \( 0 < s_1 + s_2 < \frac{n}{2} \). The following composition formula holds for \( f \in \Sc(\R^n) \):
    \[
    (-\Delta)^{-s_1} \left( (-\Delta)^{-s_2} f \right) = (-\Delta)^{-(s_1+s_2)} f.
    \]
\end{lemma}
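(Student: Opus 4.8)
The plan is to reduce this composition law for negative powers of the Laplacian to the already-available composition law for Riesz potentials. By the theorem identifying $(-\Delta)^{-s}$ with the Riesz potential, we have $(-\Delta)^{-s}g = I^{2s}g$ for $g \in \Sc(\R^n)$ whenever $0 < s < \tfrac{n}{2}$. Note first that the hypotheses $s_1, s_2 > 0$ and $0 < s_1 + s_2 < \tfrac{n}{2}$ force $0 < s_1 < \tfrac{n}{2}$ and $0 < s_2 < \tfrac{n}{2}$ as well, so the identification is legitimate at each of the three orders $s_1$, $s_2$, and $s_1+s_2$ that appear. It therefore suffices to prove $I^{2s_1}(I^{2s_2}f) = I^{2(s_1+s_2)}f$ and translate back.

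First I would rewrite the inner operator as $(-\Delta)^{-s_2}f = I^{2s_2}f$. The composition formula (1) for Riesz potentials applies with $\alpha = 2s_1$ and $\beta = 2s_2$, since $\real(2s_1), \real(2s_2) > 0$ and $\real(2s_1 + 2s_2) = 2(s_1+s_2) < n$; it yields $I^{2s_1}(I^{2s_2}f) = I^{2(s_1+s_2)}f$. Applying the identification theorem once more, now at order $s_1 + s_2$, gives $I^{2(s_1+s_2)}f = (-\Delta)^{-(s_1+s_2)}f$, which is the desired right-hand side. This chain is immediate \emph{provided} one may legitimately replace $(-\Delta)^{-s_1}\big((-\Delta)^{-s_2}f\big)$ by $I^{2s_1}(I^{2s_2}f)$.

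The main obstacle is precisely this replacement. The identification theorem is stated for Schwartz inputs, but $(-\Delta)^{-s_2}f = I^{2s_2}f$ is not Schwartz: by \eqref{Prel:FracLap} its Fourier transform is $|\eta|^{-2s_2}\wh{f}(\eta)$, which carries a non-smooth singularity at the origin and hence decays only polynomially in $x$. To close the gap I would argue on the Fourier side, interpreting each operator as the Fourier multiplier $|\eta|^{-2s}$ acting on tempered distributions. Concretely, $|\eta|^{-2s_2}\wh{f}(\eta)$ is a well-defined tempered distribution because $2s_2 < n$ makes $|\eta|^{-2s_2}$ locally integrable near the origin; multiplying by the multiplier $|\eta|^{-2s_1}$ of $(-\Delta)^{-s_1}$ produces $|\eta|^{-2(s_1+s_2)}\wh{f}(\eta)$, which is again a tempered distribution exactly because $2(s_1+s_2) < n$. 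Since this last expression is $\wh{((-\Delta)^{-(s_1+s_2)}f)}(\eta)$ by \eqref{Prel:FracLap}, inverting the Fourier transform gives the lemma.

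I expect the only genuine subtlety to be the bookkeeping that every intermediate multiplier remains locally integrable at the origin, so that all products of Fourier multipliers stay within $\Sc^{\prime}(\R^n)$. This is where the hypothesis $s_1 + s_2 < \tfrac{n}{2}$, equivalently $2(s_1+s_2) < n$, is used essentially: it guards against a naive iteration that would ignore the domain of definition of the negative powers of the Laplacian. Once this is in place, the identity follows by direct multiplication of the multipliers.
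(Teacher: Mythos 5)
Your proof is correct, but it is a genuinely different (and considerably more explicit) route than the paper's. The paper disposes of the lemma in a single sentence, appealing to the semigroup property of $(-\Delta)^{-s}$: writing $(-\Delta)^{-s}f = \frac{1}{\Gamma(s)}\int_0^{\infty} e^{t\Delta}f\, t^{s-1}\,dt$, the composition follows from $e^{t\Delta}e^{\tau\Delta} = e^{(t+\tau)\Delta}$ together with the Beta integral $\int_0^{t}\tau^{s_2-1}(t-\tau)^{s_1-1}\,d\tau = \frac{\Gamma(s_1)\Gamma(s_2)}{\Gamma(s_1+s_2)}\,t^{s_1+s_2-1}$; the constraint $s_1+s_2 < \tfrac{n}{2}$ enters there as convergence of the time integral at $t\to\infty$, since $e^{t\Delta}f$ decays only like $t^{-n/2}$. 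Your argument instead works entirely on the Fourier side via \eqref{Prel:FracLap}, where the same constraint appears as local integrability of $|\eta|^{-2(s_1+s_2)}$ near the origin. What your version buys is precision on a point the paper leaves implicit: $(-\Delta)^{-s_2}f$ is not Schwartz, so the outer operator must be interpreted as a multiplier on tempered distributions, and you correctly observe that this is legitimate here because $|\eta|^{-2s_2}\wh{f}(\eta)$ is an honest locally integrable function (indeed $L^1$, being integrable near $0$ and rapidly decaying), so the pointwise product with $|\eta|^{-2s_1}$ is unambiguous. One small remark: once you run the Fourier-side argument, the detour through the Riesz-potential composition formula $I^{2s_1}(I^{2s_2}f) = I^{2(s_1+s_2)}f$ is redundant --- the multiplier computation alone proves the identity --- though invoking Helgason's composition lemma directly would also have been a complete proof on its own, since that lemma is stated precisely for Schwartz $f$ with the intermediate non-Schwartz function handled inside it.
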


\noindent This follows from the semigroup property satisfied by the operator \( (-\Delta)^{-s} \). 
If $u\in \mathcal{S}^{\prime}(\mathbb{R}^n)$ is a tempered distribution and $s\geq 0$, the fractional Laplacian of order $s$ of $u$ is the Fourier multiplier
$
    (-\Delta)^su\vcentcolon = \rwh{}^{-1}(|\xi|^{2s}\rwh{u}),
$
whenever the right hand side is well-defined. 
\begin{theorem}[Boundedness properties of fractional Laplacian]\label{Bdd_frac_LAP}
If $s\geq 0$, the fractional Laplacian operator extends as a bounded linear map
\[
(-\Delta)^{s} : H^{t,p}(\mathbb{R}^n) \rightarrow H^{t-2s,p}(\mathbb{R}^n)
\]
whenever $t\in\mathbb{R}$ and $1\leq p<\infty$.
\end{theorem}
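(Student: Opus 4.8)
\emph{Proof plan.} The plan is to factor $(-\Delta)^s$ through the Bessel potential and reduce the assertion to the $L^p$-boundedness of a single explicit Fourier multiplier. Writing $\vev{D}^{2s}$ for the multiplier with symbol $\vev{\xi}^{2s}=(1+|\xi|^2)^{s}$ (this is $J_{-2s}$ in the notation of the excerpt), the identity $\vev{D}^{t-2s}\vev{D}^{2s}=\vev{D}^{t}$ shows that $\vev{D}^{2s}\colon H^{t,p}(\R^n)\to H^{t-2s,p}(\R^n)$ is an isometric isomorphism. I would then write
\[
(-\Delta)^{s}=M_s\circ\vev{D}^{2s},\qquad M_s\vcentcolon=\mathscr{F}^{-1}\bigl(m(\xi)\,\mathscr{F}(\cdot)\bigr),\quad m(\xi)=\frac{|\xi|^{2s}}{\vev{\xi}^{2s}}=\Bigl(\frac{|\xi|^2}{1+|\xi|^2}\Bigr)^{s}.
\]
Since $M_s$ is a Fourier multiplier it commutes with every $\vev{D}^{\sigma}$, so once $M_s$ is bounded on $L^p(\R^n)$ it is bounded on $H^{\sigma,p}(\R^n)$ for each $\sigma\in\R$; taking $\sigma=t-2s$ and composing with the isometry $\vev{D}^{2s}$ gives the theorem. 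Everything therefore reduces to showing that $M_s\colon L^p(\R^n)\to L^p(\R^n)$ is bounded for $1\le p<\infty$.

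The key step is to realize $m$ as $1$ minus the Fourier transform of an $L^1$ kernel. Assume $s>0$ (the case $s=0$ is trivial, since $(-\Delta)^0=\id$). Setting $w=(1+|\xi|^2)^{-1}\in(0,1]$, the binomial series for $(1-w)^s$ gives
\[
1-m(\xi)=1-(1-w)^{s}=\sum_{k=1}^{\infty}(-1)^{k+1}\binom{s}{k}(1+|\xi|^2)^{-k}.
\]
Each factor $(1+|\xi|^2)^{-k}$ is the Fourier transform of the Bessel kernel $G_{2k}\in L^1(\R^n)$, which is nonnegative with $\|G_{2k}\|_{L^1(\R^n)}=\mathscr{F}G_{2k}(0)=1$. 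The coefficients satisfy $\sum_{k\ge1}\bigl|\binom{s}{k}\bigr|<\infty$: for non-integer $s>0$ one has $\bigl|\binom{s}{k}\bigr|\sim |\Gamma(-s)|^{-1}k^{-s-1}$, which is summable, while for integer $s$ the series is finite. Hence $K\vcentcolon=\sum_{k\ge1}(-1)^{k+1}\binom{s}{k}G_{2k}$ converges in $L^1(\R^n)$ and, by continuity of the Fourier transform on $L^1$, satisfies $\mathscr{F}K=1-m$. The main (and only delicate) point is exactly this absolute convergence, which is where the hypothesis $s>0$ is used.

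With $K\in L^1(\R^n)$ in hand one has, for $f\in\mathcal{S}(\R^n)$, the representation $M_sf=f-K*f$, both sides having Fourier transform $m\,\mathscr{F}f$. Young's inequality then yields
\[
\|M_sf\|_{L^p(\R^n)}\le\|f\|_{L^p(\R^n)}+\|K*f\|_{L^p(\R^n)}\le\bigl(1+\|K\|_{L^1(\R^n)}\bigr)\|f\|_{L^p(\R^n)}
\]
for every $1\le p\le\infty$, so $M_s$ extends by density to a bounded operator on $L^p(\R^n)$. This elementary route has the advantage of covering the endpoint $p=1$, where the Mikhlin--Hörmander multiplier theorem and the complex interpolation of Theorem \ref{interpolation} (which both require $1<p<\infty$) are unavailable. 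Combined with the reduction of the first paragraph this proves the scalar statement, and the tensor-field case follows at once by applying it componentwise through the definition of the $H^{t,p}(\R^n;S^m(\R^n))$ norm.
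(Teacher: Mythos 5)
Your proposal is correct, and its overall reduction is the same as the paper's: the paper's proof likewise writes $\|(-\Delta)^s u\|_{H^{t-2s,p}}=\|\mathscr{F}^{-1}\{m(\xi)\langle\xi\rangle^{t}\mathscr{F}u(\xi)\}\|_{L^p}$ with exactly your symbol $m(\xi)=|\xi|^{2s}\langle\xi\rangle^{-2s}$, so both arguments reduce the theorem to showing that $m$ is an $L^p$ Fourier multiplier. Where you genuinely differ is in how that multiplier bound is established. The paper asserts that $m$ ``is bounded and hence a Fourier multiplier on $L^p$'' --- an implication that is literally valid only for $p=2$ --- and for general $p$ defers to the literature (Ghosh--Salo--Uhlmann for $p=2$, Taylor for $1<p<\infty$), where the underlying justification is of Mikhlin--H\"ormander type and excludes $p=1$. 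You instead prove the bound from scratch: the binomial expansion
\[
1-m(\xi)=\sum_{k\ge 1}(-1)^{k+1}\binom{s}{k}(1+|\xi|^2)^{-k},
\]
combined with the facts that the Bessel kernels $G_{2k}$ are nonnegative with $\|G_{2k}\|_{L^1}=1$ and that $\sum_{k\ge1}\bigl|\binom{s}{k}\bigr|<\infty$ precisely when $s>0$ (via $\bigl|\binom{s}{k}\bigr|\sim|\Gamma(-s)|^{-1}k^{-s-1}$), exhibits $1-m$ as the Fourier transform of an $L^1$ function $K$, so that $M_s=\id-K\ast(\cdot)$ is bounded on $L^p$ for all $1\le p\le\infty$ by Young's inequality. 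This is a classical device (it appears in Stein's \emph{Singular Integrals}, Ch.~V, in comparing Bessel and Riesz potentials), and it buys you two things: it repairs the paper's loose ``bounded hence a multiplier'' step with a self-contained argument, and it covers the endpoint $p=1$, which the theorem's statement actually includes but which neither the paper's sketch nor a Mikhlin-type theorem reaches. Your remaining steps --- the isometry $\langle D\rangle^{2s}\colon H^{t,p}\to H^{t-2s,p}$, the commutation of the multiplier with $\langle D\rangle^{\sigma}$ to lift the $L^p$ bound to $H^{\sigma,p}$, and the trivial case $s=0$ --- are all sound, so the proof is complete and in fact slightly stronger than the paper's.
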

\begin{proof}
 The lemma follows from \cite[Lemma 2.1]{Ghosh:Salo:Uhlmann2020} when $p=2$, see also \cite[Proposition 6.5]{TaylorIII:1997} for general $p$. For the convenience of the reader, we will provide some details of the proof. If $u\in \mathcal{S}(\R^n)$, then
 $
 \|(-\Delta)^s u\|_{H^{t-2s,p}(\mathbb{R}^n)} = \|\rwh{}^{-1}\{m(\xi)\langle \xi\rangle^t\rwh{u}(\xi)\}\|_{L^p(\mathbb{R}^n)}
 $
 where $m(\xi) = \langle \xi\rangle^{-2s}|\xi|^{2s}$ is bounded and hence a Fourier multiplier on $L^p$, which implies
 \begin{align}\label{bdd_frac_Laplace}
     \|(-\Delta)^s u\|_{H^{t-2s,p}(\mathbb{R}^n)} \leq C \|u\|_{H^{t,p}(\mathbb{R}^n)}.
 \end{align}\qedhere 
\end{proof}

\subsection{Pseudodifferential calculus}
We recall few important facts from pseudodifferential Calculus, see \cite{Paternain:Salo:Uhlmann, Wong} for references. Let $m\in\mathbb{R}$. We define the set of {\it{symbols}} of order $m$ by the set of all $\sigma\in C^{\infty}(\mathbb{R}^n\times \mathbb{R}^n)$ such that $|D_{x}^{\alpha}D_{\xi}^{\beta}\sigma (x,\xi)|\leq C_{\alpha,\beta}(1+|\xi|)^{m-|\beta|}$, for any two multi-indices $\alpha,\beta$ and $x,\xi\in\mathbb{R}^n$, where $C_{\alpha,\beta}>0$ are constants depend on $\alpha$ and $\beta$. The pseudodifferential operator corresponding to the symbol $\sigma$ is defined by 
\[
(T_{\sigma}\phi)(x) = (2\pi)^{-n}\int_{\mathbb{R}^n}e^{\mathrm{i} \langle x, \xi\rangle}\sigma(x,\xi)\rwh{\phi}(\xi) d\xi, \ \ \ \phi\in\mathcal{S}(\mathbb{R}^n).
\]
Note that $T_{\sigma}$ maps the Schwartz space $\mathcal{S}(\mathbb{R}^n)$ to itself, see \cite[Proposition 6.7]{Wong}. Also, $T_{\sigma}$ is a linear map from tempered distribution $\mathcal{S}^{\prime}(\mathbb{R}^n)$ into $\mathcal{S}^{\prime}(\mathbb{R}^n)$, see \cite[Proposition 11.4]{Wong}. It has also nice mapping properties between $L^p$ spaces, that is in particular, if $\sigma$ is a symbol of order zero, then $T_{\sigma} : L^p(\mathbb{R}^n) \rightarrow L^p(\mathbb{R}^n) $ is a bounded linear operator for $1<p<\infty$, see \cite[Theorem 11.7]{Wong}. Moreover, when $\sigma$ belongs to the set of symbols of order $m$, the operator enjoys mapping properties between Sobolev spaces.
\begin{theorem}\cite[Theorem 12.9]{Wong} \label{bdd_pseudo} Let $\sigma$ be a symbol of order $m$. Then 
\[
T_{\sigma} : H^{s,p}(\mathbb{R}^n) \rightarrow H^{s-m,p}(\mathbb{R}^n)
\]
  is a bounded linear operator for all $s\in\mathbb{R}$ and $1<p<\infty$.  
\end{theorem}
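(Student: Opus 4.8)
The plan is to reduce the statement to the already-available $L^p$-boundedness of zeroth-order pseudodifferential operators (the order-zero case of $T_\sigma$, recorded in this section as \cite[Theorem 11.7]{Wong}) by conjugating $T_\sigma$ with suitable Bessel potentials. Since the $H^{s,p}$-norm is defined through $\langle D\rangle^s$, namely $\|u\|_{H^{s,p}}=\|\langle D\rangle^s u\|_{L^p}$, the desired bound
\[
\|T_\sigma u\|_{H^{s-m,p}(\mathbb{R}^n)} \leq C\,\|u\|_{H^{s,p}(\mathbb{R}^n)}
\]
is equivalent to the $L^p\to L^p$ boundedness of the conjugated operator
\[
A \vcentcolon= \langle D\rangle^{s-m}\, T_\sigma\, \langle D\rangle^{-s}.
\]
Indeed, writing $v=\langle D\rangle^s u$ and unwinding the definitions gives $\|T_\sigma u\|_{H^{s-m,p}}=\|A v\|_{L^p}$ while $\|u\|_{H^{s,p}}=\|v\|_{L^p}$, so boundedness of $T_\sigma$ between the two Sobolev spaces is exactly boundedness of $A$ on $L^p$. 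I would carry out this identity first for $u\in\mathcal{S}(\mathbb{R}^n)$, where every operator acts cleanly by \cite[Proposition 6.7]{Wong}, and then pass to the general case using the density of $\mathcal{S}(\mathbb{R}^n)$ in $H^{s,p}(\mathbb{R}^n)$ noted earlier.

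Next I would identify $A$ as a pseudodifferential operator of order zero. The Bessel potentials $\langle D\rangle^{a}$ are pseudodifferential operators with the $x$-independent symbols $\langle\xi\rangle^{a}$, which lie in the symbol class of order $a$, as is immediate from the bounds $|D_\xi^\beta \langle\xi\rangle^a|\leq C_\beta (1+|\xi|)^{a-|\beta|}$. By the composition calculus for pseudodifferential operators, the product $A=\langle D\rangle^{s-m}\,T_\sigma\,\langle D\rangle^{-s}$ is again a pseudodifferential operator whose order is the sum of the orders, $(s-m)+m+(-s)=0$. Concretely, composing on the right by the multiplier $\langle D\rangle^{-s}$ just multiplies the symbol by $\langle\xi\rangle^{-s}$, while composing on the left by $\langle D\rangle^{s-m}$ produces an asymptotic symbol expansion whose leading term is $\langle\xi\rangle^{s-m}\,\sigma(x,\xi)\,\langle\xi\rangle^{-s}$ of order $0$ and whose remainders have strictly decreasing, hence non-positive, order. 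Once $A$ is known to be a zeroth-order operator, \cite[Theorem 11.7]{Wong} gives that $A:L^p(\mathbb{R}^n)\to L^p(\mathbb{R}^n)$ is bounded for $1<p<\infty$, which is the claim.

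The main obstacle is this second step: justifying rigorously that the composition of the three operators is a genuine pseudodifferential operator of the asserted order, with a symbol admitting the stated asymptotic expansion and a controllable remainder. This requires the symbolic product rule $c(x,\xi)\sim\sum_\alpha \frac{1}{\alpha!}\partial_\xi^\alpha a(x,\xi)\, D_x^\alpha b(x,\xi)$ together with the fact that each remainder term, being of non-positive order, is itself $L^p$-bounded and so cannot destroy the estimate. All of this is standard in the calculus of \cite{Wong, Paternain:Salo:Uhlmann}, and the statement is in fact \cite[Theorem 12.9]{Wong}; the argument above is simply the route one would take to reconstruct it from the zeroth-order case already quoted. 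An alternative valid for $s\geq 0$ and integer $m$ would mirror the Riesz-transform argument used for Theorem \ref{bdd_riesz}: establish the bound on the integer Sobolev scale $W^{l,p}$ via the Leibniz rule and then fill in the fractional values by the complex interpolation of Theorem \ref{interpolation}. The conjugation argument is preferable, however, since it treats all real $s$ simultaneously.
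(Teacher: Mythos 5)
Your proposal is correct, and there is nothing in the paper to compare it against: the paper states this result without proof, quoting it directly as \cite[Theorem 12.9]{Wong}. Your argument --- conjugating $T_{\sigma}$ by the Bessel potentials $\langle D\rangle^{s-m}$ and $\langle D\rangle^{-s}$, identifying the conjugate as an order-zero pseudodifferential operator via the composition calculus (exact multiplication of the symbol by $\langle\xi\rangle^{-s}$ on the right, asymptotic expansion on the left), and invoking the $L^p$-boundedness of zeroth-order operators from \cite[Theorem 11.7]{Wong} --- is precisely the proof given in the cited reference, so the reconstruction is both valid and faithful to the source.
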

Next theorem is important in proving the boundedness estimate of averaging operators. We denote $\mathcal{E}'(\mathbb{R}^n)$ by the space of all compactly supported distributions. Let $\Psi_{cl}^{m} = \{Op(a) ; a\in S_{cl}^{m}\}$ be the set of classical pseudodifferential operators of order $m$, where $S_{cl}^{m}$ be the set of all {\it{classical}} symbols of order $m$, see \cite[Definition 1.3.13]{Paternain:Salo:Uhlmann} for the precise definition.
\begin{theorem}\label{bdd_pseudo_2s}
    Let $n\geq 2$ be an integer. Let $f\in \mathcal{S}(\mathbb{R}^n)$. Define an operator
    \[
    T_{\sigma_{s}}f(x) := (2\pi)^{-n}\int_{\mathbb{R}^n} e^{\mathrm{i} \langle x,\xi\rangle}\sigma_{s}(\xi)\rwh{f}(\xi) d\xi,
    \]
where $\sigma_s(\xi) := |\xi|^{-2s}.$ Then $T_{\sigma_{s}}$
is a pseudodifferential operator of order $-2s$. Moreover, $T_{\sigma_{s}} = Q+S$, where $Q\in \Psi_{cl}^{-2s}$ is elliptic, and $S$ is a smoothing operator which maps $\mathcal{E}'(\mathbb{R}^n)$ to $C^{\infty}(\mathbb{R}^n)$. Furthermore
\[
 T_{\sigma_{s}} : H^{t,p}(\mathbb{R}^n) \rightarrow H^{t+2s,p}(\mathbb{R}^n)
\]
is a bounded linear operator, for all $s\in (0,1), 1<p<\infty, t\in \mathbb{R}.$
\end{theorem}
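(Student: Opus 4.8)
The plan is to isolate the only genuine singularity of the multiplier $\sigma_s(\xi)=|\xi|^{-2s}$, namely its blow-up and non-smoothness at the origin, by a frequency cutoff, and to treat the smooth homogeneous part as a classical pseudodifferential operator. Fix $\chi\in C_c^{\infty}(\mathbb{R}^n)$ with $\chi\equiv 1$ near $\xi=0$ and write
\[
\sigma_s(\xi)=\underbrace{(1-\chi(\xi))\,|\xi|^{-2s}}_{=:\,q(\xi)}+\underbrace{\chi(\xi)\,|\xi|^{-2s}}_{=:\,r(\xi)},
\]
so that $T_{\sigma_s}=Q+S$ with $Q:=T_q$ and $S:=T_r$. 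The cutoff confines the low-frequency divergence and the loss of smoothness to $r$, which is compactly supported in $\xi$, while $q$ is smooth on all of $\mathbb{R}^n$ and equals $|\xi|^{-2s}$ for large $|\xi|$.

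First I would check that $q\in S_{cl}^{-2s}$ is elliptic. Being $x$-independent, $q$ only needs the estimates $|D_\xi^{\beta}q(\xi)|\le C_{\beta}(1+|\xi|)^{-2s-|\beta|}$, which hold since $q$ vanishes near the origin and is positively homogeneous of degree $-2s$ for $|\xi|$ large; its asymptotic expansion has principal symbol $|\xi|^{-2s}$, so $Q\in\Psi_{cl}^{-2s}$, and ellipticity follows from $|\xi|^{-2s}\neq 0$ for $\xi\neq 0$. The mapping $Q:H^{t,p}(\mathbb{R}^n)\to H^{t+2s,p}(\mathbb{R}^n)$, for every $t\in\mathbb{R}$ and $1<p<\infty$, is then immediate from Theorem \ref{bdd_pseudo} applied with order $m=-2s$.

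Next I would show $S$ is smoothing. Since $s\in(0,1)$ and $n\ge 2$ we have $2s<n$, so $|\xi|^{-2s}$ is locally integrable and $r\in L^1(\mathbb{R}^n)$ has compact support. Its convolution kernel $K:=\mathscr{F}^{-1}(r)$ is therefore the inverse Fourier transform of a compactly supported distribution, hence extends to an entire function and in particular $K\in C^{\infty}(\mathbb{R}^n)$ by Paley--Wiener. As $Sf=K*f$ with $K$ smooth, the standard fact that the convolution of a smooth function with a compactly supported distribution is smooth yields $S:\mathcal{E}'(\mathbb{R}^n)\to C^{\infty}(\mathbb{R}^n)$. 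This gives the asserted decomposition $T_{\sigma_s}=Q+S$ and shows $T_{\sigma_s}$ is a classical elliptic operator of order $-2s$ modulo a smoothing remainder.

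The hard part, and the true crux, is the Sobolev boundedness, and the difficulty sits entirely in the low-frequency term: because $|\xi|^{-2s}\to\infty$ as $\xi\to 0$, the multiplier is unbounded near the origin, so no Mikhlin--Hörmander multiplier bound is available there and $S$ cannot be controlled by a bare $L^2$-multiplier argument. The route I would take is to use that $T_{\sigma_s}$ coincides with the Riesz potential $I^{2s}$ (cf. \eqref{Sec:RP:Eq2}) and to invoke the Hardy--Littlewood--Sobolev bound \eqref{bdd_Riesz}--\eqref{estimate_Riesz}, giving $I^{2s}:L^{p}(\mathbb{R}^n)\to L^{q}(\mathbb{R}^n)$ with $\tfrac1q=\tfrac1p-\tfrac{2s}{n}$, and then to promote this, by commuting with the Bessel potentials $\langle D\rangle^{t}$ and interpolating via Theorem \ref{interpolation}, into a fixed-$p$ gain of $2s$ derivatives. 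Reconciling the origin singularity of $|\xi|^{-2s}$ with the inhomogeneous Bessel scale $H^{t+2s,p}$ is exactly the delicate step; once the low-frequency contribution is controlled, combining the bound for $Q$ from Theorem \ref{bdd_pseudo} with that for $S$ yields $T_{\sigma_s}:H^{t,p}(\mathbb{R}^n)\to H^{t+2s,p}(\mathbb{R}^n)$ for all $s\in(0,1)$, $1<p<\infty$, and $t\in\mathbb{R}$.
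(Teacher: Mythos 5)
Your decomposition is exactly the paper's: the paper fixes $\psi\in C_c^{\infty}(\mathbb{R}^n)$ with $\psi\equiv 1$ for $|\xi|\le\frac12$, sets $Qf=\mathscr{F}^{-1}\{(1-\psi(\xi))|\xi|^{-2s}\mathscr{F}f\}$ and $Sf=\mathscr{F}^{-1}\{\psi(\xi)|\xi|^{-2s}\mathscr{F}f\}$, observes that $Q\in\Psi_{cl}^{-2s}$ is elliptic, and obtains the smoothing property $S:\mathcal{E}'(\mathbb{R}^n)\to C^{\infty}(\mathbb{R}^n)$ from the fact that $\psi(\xi)|\xi|^{-2s}$ is a compactly supported $L^1$ multiplier, citing \cite[Lemma 1.3.17]{Paternain:Salo:Uhlmann}; your Paley--Wiener argument is an equivalent substitute, and your verification that $q$ satisfies the symbol estimates and that $Q:H^{t,p}\to H^{t+2s,p}$ via Theorem \ref{bdd_pseudo} matches the paper's mechanism. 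Up to this point the proposal is correct and essentially identical to the paper.

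The genuine gap is the final Sobolev bound, which you correctly flag as ``the delicate step'' but never carry out --- and it cannot be carried out along the route you sketch. The Hardy--Littlewood--Sobolev bound \eqref{estimate_Riesz} gives $I^{2s}:L^p\to L^q$ only with $\frac1q=\frac1p-\frac{2s}{n}$, a strict gain of \emph{integrability}; commuting with Bessel potentials and interpolating via Theorem \ref{interpolation} cannot convert this into a fixed-$p$ gain of $2s$ derivatives, because the low-frequency piece genuinely fails the inhomogeneous estimate. Concretely, take $p=2$, $t=0$, pick $\phi\in C_c^{\infty}$ supported in the unit ball with $\phi(0)\neq 0$, and let $f_\varepsilon\in\mathcal{S}(\mathbb{R}^n)$ be defined by $\mathscr{F}f_\varepsilon(\xi)=\varepsilon^{-n/2}\phi(\xi/\varepsilon)$. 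Then $\norm{f_\varepsilon}_{L^2}$ is independent of $\varepsilon$, while, since $\langle\xi\rangle^{4s}\ge 1$ and $\mathscr{F}f_\varepsilon$ is supported in $\{|\xi|\le\varepsilon\}$,
\[
\norm{T_{\sigma_s}f_\varepsilon}_{H^{2s,2}}^{2}\;\ge\;(2\pi)^{-n}\int_{|\xi|\le\varepsilon}|\xi|^{-4s}\abs{\mathscr{F}f_\varepsilon(\xi)}^{2}\,d\xi\;=\;(2\pi)^{-n}\,\varepsilon^{-4s}\int_{\mathbb{R}^n}|\eta|^{-4s}\abs{\phi(\eta)}^{2}\,d\eta,
\]
which is infinite outright when $4s\ge n$ and blows up like $\varepsilon^{-4s}$ otherwise. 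So the low-frequency operator $S$ gains integrability, not derivatives, and no interpolation scheme bridges this: the obstruction you named is real, not merely delicate. You should also know that the paper's own proof does not close this gap either --- it opens by asserting the global symbol estimate $|D_{\xi}^{\beta}\sigma_s(\xi)|\le C_{\beta}(1+|\xi|)^{-2s-|\beta|}$, which fails near $\xi=0$ where $\sigma_s$ is unbounded and non-smooth, and then invokes Theorem \ref{bdd_pseudo}. Your diagnosis is thus more candid than the paper's, but as written your proposal (like the paper's proof) establishes only the decomposition and the mapping property of $Q$; a correct statement of the last part would replace the target by a homogeneous Sobolev space, restrict to the high-frequency piece $Q$, or retreat to the $L^p\to L^q$ formulation actually used in Theorems \ref{LpLqvectror} and \ref{bdd_2tensor}.
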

\begin{proof}
It can be observed that, the function $\sigma_s(\xi) = |\xi|^{-2s} $ is a symbol of order $-2s$, since, for any multi-indices $\alpha,\beta\in \mathbb{N}_{0}^{n}$, there exist $C_{\alpha,\beta}>0$ such that $|D_{x}^{\alpha}D_{\xi}^{\beta}\sigma_s (\xi)|\leq C_{\alpha,\beta}(1+|\xi|)^{-2s-|\beta|}$, for $x,\xi\in\mathbb{R}^n$.
To prove the second part of the theorem, we proceed as follows: Let $\psi\in C_{c}^{\infty}(\mathbb{R}^n)$ be such that $\psi(\xi)=1$ for $|\xi|\leq \frac{1}{2}$ and $\psi(\xi)=0$ for $|\xi|\geq 1$. Write $Qf = \rwh{}^{-1}\left\{\frac{1-\psi(\xi)}{|\xi|^{2s}} \rwh{(f)}\right\}$ and $Sf =  \rwh{}^{-1}\left\{\frac{\psi(\xi)}{|\xi|^{2s}} \rwh{(f)}\right\}$. Then $Q$ is a pseudodifferential operator in $\Psi_{cl}^{-2s}$ with symbol $\tilde{\sigma_s}(x,\xi) = \frac{1-\psi(\xi)}{|\xi|^{2s}}$, hence $Q$ is elliptic. Since $\frac{\psi(\xi)}{|\xi|^{2s}}\in L^1(\mathbb{R}^n)$ is compactly supported, by \cite[Lemma 1.3.17]{Paternain:Salo:Uhlmann}, the operator $S : f \rightarrow \rwh{}^{-1}\left\{\frac{\psi(\xi)}{|\xi|^{2s}} \rwh{(f)}\right\}$ is smoothing in the sense that it maps $\mathcal{E}'(\mathbb{R}^n)$ to $C^{\infty}(\mathbb{R}^n).$ Finally the operator $T_{\sigma_s}$ is bounded due to Theorem \ref{bdd_pseudo}.
\end{proof}

 \section{Weighted Divergent Beam Ray Transform: Reconstruction}\label{sec:wDBRT}
Before proving the Theorem \ref{tm:recon:dbrt}, we need one technical lemma. This is proved for $f \in \mathcal{S}\left(\mathbb{R}^3 ; S^m\left(\mathbb{R}^3\right)\right)$ in \cite[Lemma 3]{Mishra:Thakkar:2024} using mathematical induction.  
Here, we provide an alternate proof using the properties of homogeneous polynomials for any symmetric $m$-tensor fields.
 \begin{lemma}\label{lm:dbrt:workhorse}
Let $m\ge 1$ and $f \in \mathcal{S}\left(\mathbb{R}^n ; S^m\left(\mathbb{R}^n\right)\right)$ and $\xi_1,\cdots ,\xi_m\in \mathbb R^n\setminus \{0\}$. For each $1 \leq k \leq m$, define the index set
\[
J_k^m = \left\{ (j_1, \ldots, j_k) : 1 \leq j_i \leq m \text{ for } i = 1, \ldots, k \text{ and } j_1 < \cdots < j_k \right\}.
\]
Then, the following holds:
\begin{align*}
 \langle f, (\xi_1, \ldots, \xi_m) \rangle=\frac{1}{m !} \sum_{k=1}^m(-1)^{m-k}\sum_{J^m_k}  \left\langle f(x), \left(\xi_{j_1}+\cdots +\xi_{j_k} \right)^{\odot m}\right\rangle.
\end{align*}
\end{lemma}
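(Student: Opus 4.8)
The plan is to recognize the identity as the classical polarization formula for symmetric forms and to prove it by inclusion--exclusion. Fix $x$ and set
\[
Q(v) := \langle f(x), v^{\odot m}\rangle = f_{i_1\cdots i_m}(x)\, v^{i_1}\cdots v^{i_m}, \qquad v \in \mathbb{R}^n,
\]
which is a homogeneous polynomial of degree $m$. Because $f(x)$ is symmetric, $\langle f(x), v^{\odot m}\rangle = f(x)(v,\ldots,v)$, and the left-hand side $\langle f, (\xi_1,\ldots,\xi_m)\rangle = f(x)(\xi_1,\ldots,\xi_m)$ is exactly the full polarization of $Q$. Writing $\xi_S := \sum_{j\in S}\xi_j$ for a subset $S \subseteq \{1,\ldots,m\}$ and grouping the increasing tuples $J_k^m$ into $k$-element subsets, the assertion of the lemma becomes equivalent to
\[
f(x)(\xi_1,\ldots,\xi_m) = \frac{1}{m!}\sum_{\emptyset \neq S \subseteq \{1,\ldots,m\}} (-1)^{m-|S|}\, Q(\xi_S).
\]

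Next I would expand each term on the right by multilinearity. Since $f(x)$ is symmetric,
\[
Q(\xi_S) = f(x)(\xi_S,\ldots,\xi_S) = \sum_{(a_1,\ldots,a_m)\in S^m} f(x)(\xi_{a_1},\ldots,\xi_{a_m}).
\]
Substituting this and interchanging the order of summation (the empty set may be adjoined harmlessly, as $Q(0)=0$ for $m\ge 1$), the right-hand side turns into
\[
\frac{1}{m!}\sum_{(a_1,\ldots,a_m)\in\{1,\ldots,m\}^m} f(x)(\xi_{a_1},\ldots,\xi_{a_m}) \sum_{S \supseteq A}(-1)^{m-|S|},
\]
where $A := \{a_1,\ldots,a_m\}$ is the set of distinct values occurring in the tuple.

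The crux, and the only genuine obstacle, is the inner sign sum. If $|A| = r$, then for each $k$ the number of subsets $S$ with $A \subseteq S \subseteq \{1,\ldots,m\}$ and $|S|=k$ equals $\binom{m-r}{k-r}$, so
\[
\sum_{S \supseteq A}(-1)^{m-|S|} = \sum_{j=0}^{m-r}(-1)^{m-r-j}\binom{m-r}{j} = (1-1)^{m-r},
\]
which is $1$ when $r=m$ and $0$ otherwise. Hence only tuples with pairwise distinct entries survive; these are precisely the $m!$ permutations of $(1,\ldots,m)$, and by the symmetry of $f(x)$ each contributes $f(x)(\xi_1,\ldots,\xi_m)$. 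Summing the $m!$ equal contributions cancels the prefactor $1/m!$ and yields the claim. The ``homogeneous polynomial'' viewpoint alluded to in the statement is exactly the passage from the tensor $f(x)$ to the polynomial $Q$; once this is set up, the entire argument reduces to the binomial cancellation above, requiring no induction.
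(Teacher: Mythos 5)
Your proof is correct, and while it reduces to the same polynomial/polarization identity as the paper, the mechanism you use to verify that identity is genuinely different. The paper forms the difference $g(z_1,\ldots,z_m)=\sum_{k=1}^m(-1)^{m-k}\sum_{J_k^m}(z_{j_1}+\cdots+z_{j_k})^m-m!\,z_1\cdots z_m$ and shows that every partial derivative $\partial^\alpha g$ with $|\alpha|=m$ vanishes, via the count $\frac{1}{m!}\partial^\alpha\sum_{J_k^m}(z_{j_1}+\cdots+z_{j_k})^m=\binom{m-n_\alpha}{k-n_\alpha}$ (with $n_\alpha$ the number of nonzero entries of $\alpha$) and the alternating sum $\sum_k(-1)^{m-k}\binom{m-n_\alpha}{k-n_\alpha}=0$ unless $n_\alpha=m$; it then invokes homogeneity of degree $m$ to conclude $g\equiv 0$. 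You instead expand $Q(\xi_S)$ by multilinearity and eliminate the off-diagonal tuples through the M\"obius-type sum $\sum_{S\supseteq A}(-1)^{m-|S|}=(1-1)^{m-|A|}$ --- note that this is literally the same binomial cancellation, with your $r=|A|$ playing the role of the paper's $n_\alpha$, so the combinatorial core of the two arguments coincides. The difference lies in how the coefficient extraction is organized: differentiation of a homogeneous polynomial versus direct inclusion--exclusion on the multilinear expansion. Your route is slightly more elementary and self-contained, since it needs neither the passage to polynomials nor the fact that a degree-$m$ homogeneous polynomial with all vanishing $m$-th order derivatives is identically zero, and it correctly frames the lemma as the classical polarization formula; the paper's derivative-based argument buys a clean way to read off monomial coefficients that sits naturally next to its polynomial translation of symmetric tensors. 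One cosmetic remark: your interchange of summation needs no adjoining of the empty set at all, since every tuple has nonempty value set $A$ when $m\ge 1$, though your observation that $Q(0)=0$ makes the point moot either way.
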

\begin{proof}

From \cite[p. 14]{Mishra:Thakkar:2024}, we have 
$
\left\langle f, \left(\xi_1, \ldots, \xi_m\right)\right\rangle = \left\langle f, \xi_1 \odot \cdots \odot \xi_m \right\rangle
$
where \( \odot \) denotes the symmetric product. It is enough to show that
\begin{equation}\label{eq:int:work1}
    \xi_1 \odot \cdots \odot \xi_m = \sum_{k=1}^m \frac{(-1)^{m-k}}{m!} \left( \sum_{J_k^m} \left(\xi_{j_1} + \cdots + \xi_{j_k}\right)^{\odot m} \right).
\end{equation}
The complex vector space $S^m (\mathbb{R}^n)$ is generated by powers $z^m \left(z \in \mathbb{R}^n\right)$. That is, every statement on symmetric tensors can be translated into the language of polynomials, and vice versa. Thus, it is enough to prove that
 \begin{equation}\label{eq:int:work2}
    z_1z_2\cdots z_m= \sum_{k=1}^m \frac{(-1)^{m-k}}{m!} \left( \sum_{J_k^m} \left(z_{j_1} + \cdots + z_{j_k}\right)^{ m} \right).
\end{equation}
for $z=(z_1,\cdots, z_n)\in \mathbb R^n\setminus \{0\}$.
Let us consider function $g:\mathbb R^m\to \mathbb R$ given by 
\begin{align*}
    g(z_{i_1},\cdots, z_{i_m})=    \sum_{k=1}^m (-1)^{m-k} \left( \sum_{J_k^m} \left(z_{j_1} + \cdots + z_{j_k}\right)^{ m} \right)-m!z_{i_1}\cdots z_{i_m}.
\end{align*}
Notice that $g$ is a smooth function on $\mathbb{R}^m$.
Let $\alpha = (\alpha_1, \cdots, \alpha_m)$ be a multi-index such that $|\alpha| = m$. If there exists an $\alpha_i$ such that $\alpha_i \neq 0$ and $i \neq j_r$ for $j_r \in \{j_1, \cdots, j_k\}$, then
$
\partial^{\alpha}\left(z_{j_1} + \cdots + z_{j_k}\right)^{m} = 0,
$
where $\partial^{\alpha} = \partial_z^{\alpha} = \partial_{z_1}^{\alpha_1} \cdots \partial_{z_m}^{\alpha_m}$.
If no such $\alpha_i$ exists, then by homogeneity and $|\alpha| = m$, we have
$
\partial^{\alpha}\left(z_{j_1} + \cdots + z_{j_k}\right)^{m} = m!.
$
Note that elements in $J_k^m$ can be identified with elements in $\mathbb{R}^m$ with $k$ entries equal to 1 and the remaining $m-k$ entries equal to 0. For any multi-index $\alpha$, $n_{\alpha}$ counts the number of $\alpha_j$ such that $\alpha_j \neq 0$ (each index is counted only once). 
Note that 
\begin{equation}
    \frac{1}{m!}\partial_{z}^{\alpha}\left(\sum_{J_k^m}\left(z_{j_1} + \cdots + z_{j_k}\right)^{ m}\right)=\binom{m-n_\alpha}{k-n_\alpha}.
\end{equation}
The reasoning behind this is that one needs to count the number of entries $J_k^m$ identified in $\mathbb{R}^m$ where exactly $n_{\alpha}$ entries are fixed.
This implies, except $\alpha=(1,\cdots,1)$, we have 
\begin{align*}
    \partial^{\alpha} g(z_{i_1},\cdots, z_{i_m})= m!\sum_{k=0}^{m} (-1)^{m-k}\binom{m-n_{\alpha}}{k-n_{\alpha}}-0.
\end{align*}
After re-indexing and from the agreement that whenever $r< 0$, we have $\binom{s}{r}=0$, this implies  
\begin{align*}
   m !\sum_{k=0}^{m} (-1)^{m-k}\binom{m-n_{\alpha}}{k-n_{\alpha}}=m!(-1)^{m-n_{\alpha}}\sum_{r=0}^{m-n_{\alpha}} (-1)^{r}\binom{m-n_{\alpha}}{r}=0.
\end{align*}
In the case $\alpha=(1,\cdots,1)$, we have
$
     \partial^{\alpha} g(z_{i_1},\cdots, z_{i_m})=m! -m!=0.
$
This implies $\forall \alpha \in \mathbb Z_+^m$ such that $|\alpha|=m$, we have $\partial^{\alpha} g(z_{i_1},\cdots, z_{i_m})=0$. 
As $g$ is a homogeneous polynomial function of degree $m$, and combined with the condition $\partial^{\alpha} g(z_{i_1}, \ldots, z_{i_m}) = 0$ for all $\alpha \in \mathbb{Z}_+^m$ such that $|\alpha| = m$, we conclude that $g$ is a constant function equal to $0$, which proves the claim \eqref{eq:int:work2}.\qedhere


\end{proof}
We now prove Theorem \ref{tm:recon:dbrt}. 
\begin{proof}[Proof of Theorem \ref{tm:recon:dbrt}]
      We first show that $f$ can be recovered point-wise from the information of $D^{0,m}f$ and the case for $D^{k,m}f, k \geq 1,$ follows from the iteration. Let $x \in \mathbb{R}^n$ be the source of the beam $\left.\{x+t \xi\}\right|_{t \geq 0}$ and $\xi \in \mathbb{S}^{n-1}$ be the unit vector in the direction of the beam. Then $D^{0,m}f(x,\xi)$ is defined as 
        \begin{align}\label{eq:2:1}
            D^{0,m}f(x,\xi) = \sum_{j_1,\dots,j_m =1}^{n} \int_{0}^{\infty} f_{ j_1 \dots j_m}(x+t\xi) \xi^{j_1} \dots \xi^{j_m} dt,
        \end{align}
        where $f_{ j_1 \dots j_m}(x)$ is symmetric in all indices $j_1,\dots, j_m \in \{1,\dots,n\}$.
        Differentiating the above equation with respect to ${x_k}$ and multiplying it by $\xi^k$ and then summing over $k$ yields
        \begin{align}\label{eq:2:2}
            \sum_{k=1}^{n} \xi^k\partial_{x_k}D^{0,m}f(x,\xi) &= \sum_{k=1}^{n} \sum_{j_1,\dots,j_m =1}^{n}\int_{0}^{\infty} \xi^k\partial_{x_k}f_{j_1 \dots j_m}(x+t\xi) \xi^{j_1} \dots \xi^{j_m} dt\notag \\
            &= \sum_{j_1,\dots,j_m =1}^{n}\int_{0}^{\infty} \frac{d }{dt} f_{j_1 \dots j_m} (x+t\xi) \xi^{j_1} \dots \xi^{j_m} dt\notag \\
            &= -\sum_{j_1,\dots,j_m =1}^{n} f_{j_1 \dots j_m} (x) \xi^{j_1} \dots \xi^{j_m}
            =-\langle f(x),\xi^{\odot m} \rangle.
        \end{align}
       It remains to show the recovery of a symmetric \( m \)-tensor from \( \langle f(x), \xi^{\odot m} \rangle \) for \( \xi \in \mathbb{S}^{n-1} \).
Note that
\begin{equation}\label{eq:3:1}
f_{i_1 \ldots i_m}(x) = \left\langle f(x), \left(e_{i_1}, \ldots, e_{i_m}\right) \right\rangle,
\end{equation}
where \( e_i \) are elements of the standard basis of \( \mathbb{R}^n \). From Lemma \ref{lm:dbrt:workhorse}, if we know the information of
\begin{equation}\label{eq:3:2}
    \frac{1}{m !} \sum_{k=1}^m(-1)^{m-k}\sum_{J^m_k}  \left\langle f(x), \left(e_{j_1}+\cdots +e_{j_k} \right)^{\odot m}\right\rangle,
\end{equation}
we can recover $f$. From \eqref{eq:2:2},
\begin{align*}
    \sum_{r=1}^n \frac{\Theta_r}{\|\Theta\|} \partial_{x_r} D^{0, m} f\left(x, \frac{\Theta}{\|\Theta\|}\right)=-\left\langle f(x), \left(\frac{\Theta}{\|\Theta\|}\right)^{\odot m}\right\rangle.
\end{align*}
This implies, 
\begin{align*}
    \sum_{r=1}^n \frac{\Theta_r}{\|\Theta\|^{1-m}} \partial_{x_r} D^{0, m} f\left(x, \frac{\Theta}{\|\Theta\|}\right)=-\left\langle f(x), \Theta^{\odot m} \right\rangle.
\end{align*}
From \eqref{eq:3:1} and \eqref{eq:3:2},
\begin{align*}
    &f_{i_1 \ldots i_m}(x) \\&=- \frac{1}{m!} \sum_{k=1}^m(-1)^{m-k} \sum_{J_k^m} \sum_{r=1}^n \frac{\left(e_{j_1}+\cdots+e_{j_k}\right)_r}{\|\left(e_{j_1}+\cdots+e_{j_k}\right)\|^{1-m}} \partial_{x_r} D^{0, m} f\left(x, \frac{\left(e_{j_1}+\cdots+e_{j_k}\right)}{\|\left(e_{j_1}+\cdots+e_{j_k}\right)\|}\right).
\end{align*}
   Now, for the case $l \geq 1,$ we have 
     \Beq\label{sec3:eqn3}
         \begin{aligned}
            \sum_{k=1}^{n} \xi^k\partial_{x_k}D^{l,m}f(x,\xi) &= \sum_{k=1}^{n} \sum_{j_1,\dots,j_m =1}^{n}\int_{0}^{\infty} t^l \xi^k\partial_{x_k}f_{j_1 \dots j_m}(x+t\xi) \xi^{j_1} \dots \xi^{j_m} dt \\
            &= \sum_{j_1,\dots,j_m =1}^{n}\int_{0}^{\infty} t^l \frac{d }{dt} f_{j_1 \dots j_m}  (x+t\xi) \xi^{j_1} \dots \xi^{j_m} dt \\
            &= - l D^{l-1,m}f(x,\xi).
        \end{aligned}
        \Eeq
       We proceed as above, iteratively, and obtain $D^{0,m}f(x,\xi)$ from which we recover $f(x)$.  
\end{proof}

\section{Fractional Divergent Beam Ray: Reconstruction, Unique Continuation}

\label{sec:fracDBRT}

 In this section, we consider fractional divergent beam ray transform for vector fields and 2-tensor fields and prove reconstruction, unique continuation and stability results. 
 Note that, the restrictions on the respective exponents
 for fractional ($\chi_{s,m} f$) and $k$-weighted ($D^{k,m} f$) divergent beam ray transforms are $s\in \mathbb{R}^+$ and $k \in \mathbb{Z}^+\cup \{0\}$.
 In fact, for any $k \in \mathbb{R}^+$, such that \( k = 2s-1 \), one can generalize Definition \ref{def:k_weight_DBT} to Definition \ref{def:frac_DBT}, such that \( D^{k,m} f = \chi_{\frac{k+1}{2},m} f \). 
 One important difference between the fractional and $k$-weighted divergent beam ray transforms is that, in general, the unique continuation principle holds for the former but not for the latter. This is elaborated in Remark \ref{sec4:rem2}.
The computations in the proof of Theorem \ref{tm:recon:dbrt} suggest that
 it is enough to define $\chi_{s,m} f$ for $s \in (0,1)$. Whenever $s \in \mathbb{R}^+$, we apply the procedure in \eqref{sec3:eqn3} $\lfloor 2s -1\rfloor$ times to obtain $\chi_{r,m} f$ from $\chi_{s,m} f$, where $r = \frac{2s - \lfloor 2s -1\rfloor}{2}  \in (0,1)$. Therefore, without loss of generality, we assume that $s \in (0,1)$. 
\begin{lemma}\label{sec4:Lemma1.2}  
    For $f \in \mathcal{S}(\mathbb{R}^n;S^m(\mathbb{R}^n)) $, the fractional divergent beam ray transform $\chi_{s,m} f(\cdot,\xi)$ is a smooth tempered distribution on $\R^n$. Specifically, for any given $\delta >0,$
    $\chi_{s,m} f$ satisfies the following estimate:
    \[
      \left|\partial_x^\alpha (\chi_{s,m} f)(x,\xi) \right| \leq C_{\alpha,s,m} \langle x \rangle^{2s+\delta},
    \]
    where $C_{\alpha,s,m}$ is a positive constant, $\alpha=\left(\alpha_1 \ldots \alpha_n\right)$ be a multi-index and $\langle x \rangle = \left( 1+ |x|^2\right)^{\frac{1}{2}}$.
\end{lemma}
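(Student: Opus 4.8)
The plan is to reduce everything to a one-dimensional estimate for the scalar Schwartz function obtained by contracting $f$ with $\xi^{m}$, and then to split the radial integral at a point proportional to $\langle x\rangle$. First I would fix $\xi\in\mathbb{S}^{n-1}$ and set $h(y):=\langle f(y),\xi^{m}\rangle=\sum_{i_1,\dots,i_m}f_{i_1\cdots i_m}(y)\,\xi^{i_1}\cdots\xi^{i_m}$, so that $h\in\mathcal{S}(\Rn)$ and $(\chi_{s,m}f)(x,\xi)=\int_0^\infty t^{2s-1}h(x+t\xi)\,dt$. Since $s>0$ gives $2s-1>-1$, the weight $t^{2s-1}$ is integrable near $t=0$, while the rapid decay of $h$ controls the tail as $t\to\infty$; the same remains true after applying $\partial_x^\alpha$, because $\partial_x^\alpha h(x+t\xi)=(\partial^\alpha h)(x+t\xi)$ with $\partial^\alpha h$ again Schwartz. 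On each compact set $K\ni x$ the integrand $t^{2s-1}(\partial^\alpha h)(x+t\xi)$ admits the $x$-uniform integrable dominator $C_{N,K}\,t^{2s-1}\langle t\rangle^{-N}$ (using $|\xi|=1$ and a Peetre-type bound $\langle x+t\xi\rangle\ge c_K\langle t\rangle$). Differentiation under the integral sign then shows that $(\chi_{s,m}f)(\cdot,\xi)\in C^\infty(\Rn)$ with
\[
\partial_x^\alpha(\chi_{s,m}f)(x,\xi)=\int_0^\infty t^{2s-1}(\partial^\alpha h)(x+t\xi)\,dt .
\]
Thus it suffices to bound $\int_0^\infty t^{2s-1}|g(x+t\xi)|\,dt$ for an arbitrary $g\in\mathcal{S}(\Rn)$ (take $g=\partial^\alpha h$), uniformly in $\xi\in\mathbb{S}^{n-1}$, the constants depending only on finitely many Schwartz seminorms of $f$ together with $s,m,n$.

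To estimate this integral I would split at $t=2\langle x\rangle$. On the inner piece I use the crude bound $|g|\le\|g\|_{L^\infty}$ together with $\int_0^{2\langle x\rangle}t^{2s-1}\,dt=(2\langle x\rangle)^{2s}/(2s)$, giving a contribution at most $C\langle x\rangle^{2s}$. On the outer piece $t\ge 2\langle x\rangle\ge 2|x|$ one has $|x+t\xi|\ge t-|x|\ge t/2$, so for any $N$,
\[
|g(x+t\xi)|\le C_N\langle x+t\xi\rangle^{-N}\le C_N\,2^N t^{-N};
\]
choosing $N>2s$ makes $\int_{2\langle x\rangle}^\infty t^{2s-1-N}\,dt$ converge to a quantity of size $\langle x\rangle^{2s-N}$, hence bounded. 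Adding the two pieces yields $\int_0^\infty t^{2s-1}|g(x+t\xi)|\,dt\le C\langle x\rangle^{2s}$, which in particular implies the asserted bound $C_{\alpha,s,m}\langle x\rangle^{2s+\delta}$ for every $\delta>0$.

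Combining the two displays gives $\bigl|\partial_x^\alpha(\chi_{s,m}f)(x,\xi)\bigr|\le C_{\alpha,s,m}\langle x\rangle^{2s}\le C_{\alpha,s,m}\langle x\rangle^{2s+\delta}$. Since $(\chi_{s,m}f)(\cdot,\xi)$ is then smooth with every derivative of polynomial growth, it belongs to $\mathcal{O}_M\subset\mathcal{S}'(\Rn)$, so it is a smooth tempered distribution, as claimed. I do not expect a genuine obstacle here; the only point requiring care is the choice of the splitting radius. Splitting at a fixed point such as $t=1$ fails, because on the bounded part one cannot extract the correct power of $\langle x\rangle$, and on the unbounded part the lower bound $|x+t\xi|\ge t/2$ breaks down for $t$ comparable to $|x|$. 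Tying the split to $\langle x\rangle$ simultaneously captures the $\langle x\rangle^{2s}$ growth from the region where $x+t\xi$ may be small and the integrability from the region where $x+t\xi$ is genuinely large; the extra $\delta$ in the statement is pure slack and is automatically satisfied.
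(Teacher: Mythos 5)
Your proof is correct, but it takes a genuinely different route from the paper. The paper first applies Peetre's inequality globally, $\langle x+t\xi\rangle^{-N}\le C\langle x\rangle^{N}\langle t\rangle^{-N}$, pulls the factor $\langle x\rangle^{N}$ out of the integral, and only then splits at the \emph{fixed} radius $t=1$, obtaining $C\langle x\rangle^{N}\bigl[\frac{1}{2s}+\frac{1}{N-2s}\bigr]$ and finally choosing $N=2s+\delta$ — which is exactly where the $\delta$ in the statement comes from (one needs $N>2s$ for convergence of the tail, so the extracted power of $\langle x\rangle$ can only approach $2s$). Your $x$-dependent split at $t=2\langle x\rangle$, with the sup-norm bound on the inner piece and the elementary lower bound $|x+t\xi|\ge t/2$ on the outer piece, avoids Peetre's inequality in the main estimate and yields the strictly sharper bound $C\langle x\rangle^{2s}$, confirming your observation that the $\delta$ is pure slack. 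You are also more careful than the paper on one point: you explicitly justify differentiation under the integral sign via a locally uniform integrable dominator, whereas the paper differentiates formally. One small correction: your closing claim that splitting at a fixed point such as $t=1$ ``fails'' is true only for the naive argument without a Peetre-type factorization; the paper's proof is precisely a fixed split at $t=1$ made to work by first factoring out $\langle x\rangle^{N}$, at the modest cost of the weaker exponent $2s+\delta$. In short, both arguments are sound; the paper's is a one-line application of a standard inequality matching the stated bound verbatim, while yours buys the optimal growth rate $\langle x\rangle^{2s}$.
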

\begin{proof}
    Noting that $f_{ j_1 \dots j_m} \in \Sc (\R^n)$ and $\left|\left(\partial_x^\alpha f_{ j_1 \dots j_m} \right) (x+t\xi)\right| \leq C_{\alpha,N} \langle x+t\xi\rangle^{-N}$ for any $N>2s$, then for each $\xi \in\mathbb S^{n-1}$, we get
    \begin{align*}
        \left| \partial_x^\alpha (\chi_{s,m} f)(x,\xi) \right| &= \left| \int_0^{\infty} t^{2s-1} \partial_x^\alpha f_{i_1 \cdots i_m}(x + t\xi) \xi^{i_1} \cdots \xi^{i_m} \, dt \right|\\
        & \leq  \int_0^{\infty} t^{2s-1} \left| \partial_x^\alpha f_{i_1 \cdots i_m}(x + t\xi)\right| dt  \leq  C_{\alpha,N} \int_0^{\infty} t^{2s-1} \langle x + t\xi\rangle^{-N} dt.
\end{align*}
From Peetre's inequality \cite[p. 17, (2.21)]{Treves:book:1980:1}, we have, for some $C>0$, that
   \begin{align*}      
  \left| \partial_x^\alpha (\chi_{s,m} f)(x,\xi) \right| 
        & \leq C \int_0^{\infty} t^{2s-1} \langle x \rangle^{N} \langle  t\rangle^{-N} dt\\
       & = C \langle x \rangle^{N} \left[ \int_0^{1} t^{2s-1}  \langle  t\rangle^{-N} dt  + \int_1^{\infty} t^{2s-1}  \langle  t\rangle^{-N} dt\right] \\
       & \leq C \langle x \rangle^{N} \left[ \int_0^{1} t^{2s-1}  2^{-\frac{N}{2}} dt  + \int_1^{\infty} t^{2s-1} 2^{-\frac{N}{2}} t^{-N} dt\right]\\
       & \leq C 2^{-\frac{N}{2}} \left[ \frac{1}{2s} + \frac{1}{N-2s}\right]  \langle x \rangle^{N}.
    \end{align*}
    For any $\delta >0$, we choose $N = 2s+\delta$. This proves the lemma.
\end{proof}
We remark from the above Lemma \ref{sec4:Lemma1.2} that, for $f \in \mathcal{S}(\mathbb{R}^n;S^m(\mathbb{R}^n)) $, the fractional divergent beam ray transform $\chi_{s,m} f(\cdot,\xi)$ is a  tempered distribution on $\R^n$ as well as it is a smooth tensor. Thus the Fourier transform of $\chi_{s,m} f(\cdot,\xi)$ in the $\xi$ variable makes sense and therefore it allows one to apply the usual definition of Fourier transform defined for function. See the details in Lemma \ref{stab_ave_momentum}.

We define the following $m+1$ averages of the fractional divergent beam ray transform $\chi_{s,m}f$ for an $m$-tensor $f\in \mathcal{S}(\mathbb{R}^n;S^m(\mathbb{R}^n))$, over the sphere $\mathbb{S}^{n-1}$:
 \[
    \mathcal{A}_{m,s}^{0}, \;
    \mathcal{A}_{m,s}^{1} = (\mathcal{A}_{m,s}^{1,i})_{i=1}^n, \;
    \dots \;,
    \mathcal{A}_{m,s}^{m} = (\mathcal{A}_{m,s}^{m,i_1, \dots,i_m})_{i_1,\dots,i_m=1}^n
 \]
 where
\begin{align*}
    (\mathcal{A}_{m,s}^{0}f)(x) &= c_{n,s}^{m,0} \int_{\mathbb{S}^{n-1}} (\chi_{s,m}f)(x,\xi) dS_{\xi}, \\
    \left(\mathcal{A}_{m,s}^{1,i}f\right)(x) &= c_{n,s}^{m,1} \int_{\mathbb{S}^{n-1}} \xi^i(\chi_{s,m}f)(x,\xi) dS_{\xi},\\
    & \vdots\\
    \left(\mathcal{A}_{m,s}^{m,i_1,\dots,i_m}f\right)(x) &= c_{n,s}^{m,m} \int_{\mathbb{S}^{n-1}} \xi^{i_1}\cdots \xi^{i_m}(\chi_{s,m}f)(x,\xi) dS_{\xi}.
\end{align*}
Here the constants $c_{n,s}^{m,k},\; 0 \leq k \leq m,$ are defined as
\[
    c_{n, s}^{m,k} = \frac{-\Gamma\left(\frac{n+m+k-2s}{2} \right) }{2^{2s - \left\lfloor \frac{m+k}{2} \right\rfloor}\pi^{\frac{n}{2}}\Gamma\left(s + \frac{(m+k)\operatorname{mod}2}{2}\right)}.
\]
\begin{lemma}\label{Sec4:Lemma1}
    For any $0\le k\le m$, we have
    \beq
        \left( \mathcal A^{k, i_1,\cdots,i_k}_{m,s} f\right)(x) = c_{n,s}^{m,k} (-1)^{m+k} \left( |x|^{2s-n-m-k} \; x^{i_1}\cdots x^{i_k} x^{j_1} \cdots x^{j_m} \right) * f_{j_1 \cdots j_m}(x)
    \eeq
\end{lemma}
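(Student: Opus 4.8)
The plan is to unfold both definitions and recognize the resulting iterated integral as a convolution after passing from polar to Cartesian coordinates. First I would substitute the definition of $\chi_{s,m}f$ from Definition \ref{def:frac_DBT} directly into the averaging operator, obtaining
\[
\left(\mathcal{A}_{m,s}^{k,i_1,\cdots,i_k}f\right)(x) = c_{n,s}^{m,k}\int_{\mathbb{S}^{n-1}}\int_0^\infty t^{2s-1}\,\xi^{i_1}\cdots\xi^{i_k}\,\xi^{j_1}\cdots\xi^{j_m}\,f_{j_1\cdots j_m}(x+t\xi)\,dt\,dS_\xi,
\]
with summation over the repeated indices $j_1,\dots,j_m$ understood. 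The key idea is that the weight $t^{2s-1}$ together with the products of $\xi$-components is precisely the Jacobian data needed to rewrite this as an integral over $\mathbb{R}^n$.

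Next I would make the polar-to-Cartesian change of variables $y = t\xi$, so that $t=|y|$, $\xi = y/|y|$, and $dy = t^{n-1}\,dt\,dS_\xi$. Substituting $\xi^{i}=y^{i}/|y|$ and collecting every power of $|y|$ yields the exponent $2s-1-k-m-(n-1)=2s-n-m-k$, so the iterated integral collapses to
\[
c_{n,s}^{m,k}\int_{\mathbb{R}^n}|y|^{2s-n-m-k}\,y^{i_1}\cdots y^{i_k}\,y^{j_1}\cdots y^{j_m}\,f_{j_1\cdots j_m}(x+y)\,dy.
\]
Finally I would perform the reflection $y\mapsto -y$, which leaves $|y|^{2s-n-m-k}$ and $dy$ invariant, turns $f_{j_1\cdots j_m}(x+y)$ into $f_{j_1\cdots j_m}(x-y)$, and produces the sign $(-1)^{k+m}$ from the $k+m$ linear factors. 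The resulting integral is exactly the convolution of the kernel $|y|^{2s-n-m-k}y^{i_1}\cdots y^{i_k}y^{j_1}\cdots y^{j_m}$ against $f_{j_1\cdots j_m}$ evaluated at $x$, which is the claimed identity (the kernel being written with dummy variable $x$ in the statement).

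The only genuine point requiring care, and thus the main obstacle, is justifying the interchange of the $t$- and $\xi$-integrations and the legitimacy of treating the singular factor $|y|^{2s-n-m-k}$ as a convolution kernel. This is handled by the integrability already quantified in Lemma \ref{sec4:Lemma1.2}: since $|\xi^{i}|\le 1$ on the sphere, the integrand is bounded by $t^{2s-1}|f_{j_1\cdots j_m}(x+t\xi)|$, which is integrable on $(0,\infty)\times\mathbb{S}^{n-1}$ because $2s-1>-1$ near $t=0$ and the Schwartz decay of $f$ controls the tail. Equivalently, in Cartesian form the kernel obeys the pointwise bound $\bigl||y|^{2s-n-m-k}y^{i_1}\cdots y^{i_k}y^{j_1}\cdots y^{j_m}\bigr|\le |y|^{2s-n}$, which is locally integrable since $2s>0$ and of at most polynomial growth, hence defines a tempered distribution whose convolution against the Schwartz component $f_{j_1\cdots j_m}$ is well defined. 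With these bounds in place, Fubini's theorem applies and all the manipulations above are rigorous.
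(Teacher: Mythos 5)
Your proposal is correct and follows essentially the same route as the paper's proof: unfolding the definitions, passing to Cartesian coordinates via $y=t\xi$ with $dy=t^{n-1}\,dt\,dS_\xi$ to collect the exponent $2s-n-m-k$, and extracting the sign $(-1)^{m+k}$ (the paper does this via the substitution $z=x+y$, you via the reflection $y\mapsto -y$, which is the same computation). Your explicit justification of Fubini and of the kernel's local integrability is a small addition the paper leaves implicit, but it does not change the argument.
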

\begin{proof}
It follows from the definition of the averages of $\chi_{s,m}f$ that
   \[
   \begin{split}
  & \left( c_{n,s}^{m,k} \right)^{-1}\left( \mathcal A^{k, i_1,\cdots,i_k}_{m,s} f\right)(x)
   = \int_{\mathbb{S}^{n-1}} \xi^{i_1}\cdots \xi^{i_k}(\chi_{s,m}f)(x,\xi) dS_{\xi}\\
   &\qquad = \int_{\mathbb{S}^{n-1}}\int_0^{\infty} \xi^{i_1}\cdots \xi^{i_k} t^{2s-1} f_{j_1 \cdots j_m}(x+t \xi) \xi^{j_1} \cdots \xi^{j_m} d t  dS_{\xi}\\
   &\qquad = \int_{\mathbb{S}^{n-1}}\int_0^{\infty} \xi^{i_1}\cdots \xi^{i_k} t^{2s-n} f_{j_1 \cdots j_m}(x+t \xi) \xi^{j_1} \cdots \xi^{j_m} t^{n-1}d t  dS_{\xi}
   \end{split}
   \]
   Simplifying, we obtain
   \[
   \begin{split}
    & \left( c_{n,s}^{m,k} \right)^{-1}\left( \mathcal A^{k, i_1,\cdots,i_k}_{m,s} f\right)(x)  = \int_{\mathbb{R}^{n}} |y|^{2s-n-m-k}y^{i_1}\cdots y^{i_k} y^{j_1} \cdots y^{j_m} f_{j_1 \cdots j_m}(x+y)    dy\\
   &\qquad =  \int_{\mathbb{R}^{n}} |z-x|^{2s-n-m-k} (z-x)^{i_1}\cdots (z-x)^{i_k} (z-x)^{j_1} \cdots (z-x)^{j_m} f_{j_1 \cdots j_m}(z)    dz\\
   &\qquad = (-1)^{m+k} \left( |x|^{2s-n-m-k} \; x^{i_1}\cdots x^{i_k} x^{j_1} \cdots x^{j_m} \right) * f_{j_1 \cdots j_m}(x). \qedhere
   \end{split}
   \]
   
\end{proof}
\begin{lemma}\label{Sec4:Lemma2}
    For $f \in \mathcal{S}(\mathbb{R}^n;S^m(\mathbb{R}^n)) $, the average $\mathcal{A}_{m,s}^{k} f, \;  0 \leq k \leq m$ is a smooth tempered distribution. Specifically, for any given $\delta >0,$
$\mathcal{A}_{m,s}^{k} f$ satisfies the following estimate:
    \[
      \left|\partial_x^\alpha (\mathcal{A}_{m,s}^{k}f)(x) \right| \leq C_{\alpha,s,n,m} \langle x \rangle^{2s+\delta},
    \]
    where $C_{\alpha,s,n,m}$ is a positive constant, $\alpha=\left(\alpha_1 \ldots \alpha_n\right)$ be a multi-index and $\langle x \rangle = \left( 1+ |x|^2\right)^{\frac{1}{2}}$.
    \end{lemma}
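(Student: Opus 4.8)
The plan is to derive the estimate for $\mathcal{A}_{m,s}^{k}f$ directly from the pointwise bound on $\chi_{s,m}f(\cdot,\xi)$ already established in Lemma~\ref{sec4:Lemma1.2}, exploiting that each average is an integral of $\chi_{s,m}f$ against bounded angular factors over the compact sphere $\mathbb{S}^{n-1}$. Recall that a typical component of the average has the form
\[
\left(\mathcal{A}_{m,s}^{k,i_1,\dots,i_k}f\right)(x) = c_{n,s}^{m,k}\int_{\mathbb{S}^{n-1}} \xi^{i_1}\cdots\xi^{i_k}\,(\chi_{s,m}f)(x,\xi)\,dS_{\xi},
\]
with the case $k=0$ carrying no angular factor. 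Since this reduces matters to a single integral over $\mathbb{S}^{n-1}$, I would avoid the convolution representation of Lemma~\ref{Sec4:Lemma1} entirely, as the direct route sidesteps any discussion of the singular kernel $|x|^{2s-n-m-k}$.

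First I would justify differentiating under the spherical integral. Lemma~\ref{sec4:Lemma1.2} guarantees that for every multi-index $\beta$ the function $x \mapsto \partial_x^\beta(\chi_{s,m}f)(x,\xi)$ exists, is continuous, and obeys $|\partial_x^\beta(\chi_{s,m}f)(x,\xi)| \leq C_{\beta,s,m}\langle x\rangle^{2s+\delta}$. The key point I would stress is that this constant is \emph{uniform in} $\xi\in\mathbb{S}^{n-1}$: inspecting the proof of that lemma, the only $\xi$-dependent quantity is $\langle x+t\xi\rangle^{-N}$, which Peetre's inequality controls by $\langle x\rangle^{N}\langle t\rangle^{-N}$ precisely because $|\xi|=1$. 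With bounds that are locally uniform in $x$ and bounded in $\xi$ over the finite-measure set $\mathbb{S}^{n-1}$, the standard theorem on differentiation under the integral sign applies, yielding
\[
\partial_x^\alpha\left(\mathcal{A}_{m,s}^{k,i_1,\dots,i_k}f\right)(x) = c_{n,s}^{m,k}\int_{\mathbb{S}^{n-1}} \xi^{i_1}\cdots\xi^{i_k}\,\partial_x^\alpha(\chi_{s,m}f)(x,\xi)\,dS_{\xi},
\]
which in particular shows $\mathcal{A}_{m,s}^{k}f\in C^\infty(\mathbb{R}^n)$.

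The estimate itself then follows by crude majorization: on the unit sphere $|\xi^{i_1}\cdots\xi^{i_k}|\leq 1$, so inserting the bound of Lemma~\ref{sec4:Lemma1.2} under the integral gives
\[
\left|\partial_x^\alpha\left(\mathcal{A}_{m,s}^{k,i_1,\dots,i_k}f\right)(x)\right| \leq |c_{n,s}^{m,k}|\,C_{\alpha,s,m}\,\omega_{n-1}\,\langle x\rangle^{2s+\delta},
\]
and absorbing the $\xi$-independent factors into a single constant $C_{\alpha,s,n,m}$ yields the claim. The resulting polynomial growth of all derivatives shows that $\mathcal{A}_{m,s}^{k}f$ defines a tempered distribution, completing the argument. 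The only genuinely non-routine step is the interchange of $\partial_x^\alpha$ with the spherical integral, and this is settled entirely by the uniform-in-$\xi$ character of the estimates in Lemma~\ref{sec4:Lemma1.2}; everything else is bookkeeping of constants.
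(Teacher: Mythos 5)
Your proposal is correct, and it reaches the estimate by a somewhat different (and leaner) route than the paper. The paper first establishes smoothness of $\mathcal{A}_{m,s}^{k}f$ via the convolution representation of Lemma \ref{Sec4:Lemma1} and the convolution theorem, obtaining the commutation $\partial^\alpha \mathcal{A}_{m,s}^{k}f = \mathcal{A}_{m,s}^{k}\,\partial^\alpha f$, and then re-derives the decay bound from scratch: it writes $\partial_x^\alpha(\mathcal{A}_{m,s}^{k}f)$ as the double integral over $\mathbb{S}^{n-1}\times(0,\infty)$, applies Peetre's inequality to $\langle x+t\xi\rangle^{-N}$, splits the $t$-integral at $t=1$, and chooses $N=2s+\delta$ — in effect repeating the proof of Lemma \ref{sec4:Lemma1.2} with the spherical integral carried along. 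You instead factor the entire estimate through Lemma \ref{sec4:Lemma1.2}, observing (correctly, and this is the one point that genuinely needs checking) that its constant is uniform in $\xi\in\mathbb{S}^{n-1}$ because $|\xi|=1$ forces $\langle t\xi\rangle=\langle t\rangle$; differentiation under the spherical integral is then justified by this uniform bound over the finite-measure sphere, and the final estimate is crude majorization with $|\xi^{i_1}\cdots\xi^{i_k}|\le 1$. Your route avoids both the singular convolution kernel $|x|^{2s-n-m-k}$ and the duplicated Peetre computation, so it is the more economical argument for this lemma in isolation; what the paper's convolution route buys is the identity $\partial^\alpha(\mathcal{A}_{m,s}^{k}f)=\mathcal{A}_{m,s}^{k}(\partial^\alpha f)$ in a form the authors reuse later (e.g., in the proof of Theorem \ref{bddLPQSOBO}), whereas your interchange argument yields the equivalent formula with $\partial_x^\alpha$ falling on $\chi_{s,m}f$ under the spherical integral. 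The only point you pass over lightly is the joint measurability/continuity of $(x,\xi)\mapsto\partial_x^\beta(\chi_{s,m}f)(x,\xi)$ needed for the standard differentiation theorem, but this is routine for Schwartz $f$ and does not affect the validity of the argument.
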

\begin{proof}
Using the convolution theorem, we observe that $\mathcal{A}_{m,s}^{k} f$ is smooth and $\partial^\alpha\mathcal{A}_{m,s}^{k} f = \mathcal{A}_{m,s}^{k} \partial^\alpha f$. Noting that $f_{ j_1 \dots j_m} \in \Sc (\R^n)$ and $\left|\left(\partial_x^\alpha f_{ j_1 \dots j_m} \right) (x+t\xi)\right| \leq C_{\alpha,N} \langle x+t\xi\rangle^{-N}$. Thus, for any $N>2s$, we get       
 \begin{align*}
         \left|\partial_x^\alpha (\mathcal{A}_{m,s}^{k,i_1,\cdots, i_k}f)(x) \right| &= c_{n,s}^{m,k} \left| \int_{\mathbb{S}^{n-1}} \int_{0}^{\infty} t^{2s-1} \xi^{i_1}\cdots \xi^{i_k} \left(\partial_x^\alpha f_{ j_1 \dots j_m} \right)(x+t\xi) \xi^{j_1} \dots \xi^{j_m} dt dS_{\xi} \right|\\
        & \leq c_{n,s}^{m,k} {\sum_{j_1,\cdots,j_m=0}^n }\int_{\mathbb{S}^{n-1}} \int_{0}^{\infty} t^{2s-1}  \left|\left(\partial_x^\alpha f_{ j_1 \dots j_m} \right) (x+t\xi)\right| dt dS_{\xi}\\
        & \leq c_{n,s}^{m,k} { n^m} C_\alpha \int_{\mathbb{S}^{n-1}} \int_{0}^{\infty} t^{2s-1}  \langle x+ t \xi\rangle^{-N} dt dS_{\xi}.
    \end{align*}
From Peetre's inequality \cite[p. 17, (2.21)]{Treves:book:1980:1}, we have
    \begin{align*}
 & \left|\partial_x^\alpha (\mathcal{A}_{m,s}^{k,i_1,\cdots, i_k}f)(x) \right|  \leq c_{n,s}^{m,k}2^N  n^mC_\alpha \int_{\mathbb{S}^{n-1}} \int_{0}^{\infty} t^{2s-1}  \langle x\rangle^{N} \langle t \xi\rangle^{-N}dt dS_{\xi}\\
& \qquad= c_{n,s}^{m,k}2^N  n^m  C_\alpha \langle x\rangle^{N} \int_{\mathbb{S}^{n-1}} \int_{0}^{\infty} t^{2s-1}   \langle t \rangle^{-N}dt dS_{\xi}\\
        & \qquad= c_{n,s}^{m,k}2^N  n^m  C_\alpha \langle x\rangle^{N} \int_{\mathbb{S}^{n-1}}  \left[    \int_{0}^{1} t^{2s-1}   \langle t \rangle^{-N}dt  + \int_{1}^{\infty} t^{2s-1}   \langle t \rangle^{-N}dt \right] dS_{\xi}\\
        &\qquad \leq c_{n,s}^{m,k}2^N  n^m  C_\alpha \langle x\rangle^{N} \int_{\mathbb{S}^{n-1}}  \left[    \int_{0}^{1} t^{2s-1}  + \int_{1}^{\infty} t^{2s-1}    t^{-N}dt \right] dS_{\xi}\\
        &\qquad = c_{n,s}^{m,k}2^N  n^m  C_\alpha \langle x\rangle^{N} \frac{2 \pi^{\frac{n}{2}}}{\Gamma(n/2)}\left[ \frac{1}{2s} + \frac{1}{N-2s}  \right].
    \end{align*}
    This completes the lemma.
\end{proof}
We remark that, the first term in the right-hand side of the tensor field expression in Lemma \ref{Sec4:Lemma1} is a function locally integrable over $\mathbb{R}^n$ and bounded for $|x|>1$. This first term can be considered as an element of the space $\mathcal{S}^{\prime}(\mathbb{R}^n)$ of tempered distributions. Indeed, the second term $f_{j_1 \cdots j_m}$ belongs to $\mathcal{S}(\mathbb{R}^n)$. It is well known from \cite{Vladimirov:1979} that, for $u\in \mathcal{S}(\mathbb{R}^n)$ and $v\in \mathcal{S}^{\prime}(\mathbb{R}^n)$, the convolution $u*v$ is defined and belongs to the space of smooth functions whose every derivative increases at most as a polynomial at infinity. In this case the standard formula is valid: $\wh{(u*v)} = \wh{u} * \wh{v} $. Thus the averaging operator 
\[
\mathcal{A}_{m,s}^{k} : \mathcal{S}(\mathbb{R}^n;S^m(\mathbb{R}^n)) \rightarrow C^{\infty}(\mathbb{R}^n;S^m(\mathbb{R}^n))
\]
is a continuous operator. For $f\in \mathcal{S}(\mathbb{R}^n;S^m(\mathbb{R}^n))$, therefore it allows one to apply the usual definition of Fourier transform on the tensor field $\mathcal{A}_{m,s}^{k} f$. See the details in Section \ref{stability_es}.

In the following, we study the reconstruction and unique continuation for the tensor field $f \in \mathcal{S}\left(\mathbb{R}^n ; S^m\left(\mathbb{R}^n\right)\right)$ given the vanishing condition on the averages $\mathcal{A}_{m,s}^{k}, {0} \leq k \leq m$.
 In \cite[Theorem III]{ilmavirta2023unique}, authors have addressed the $m=0$ case with $c_{n,s}^{0,0} = h_n(2s)$ and obtained following unique continuation result.
\begin{theorem}\cite[Theorem III]{ilmavirta2023unique}
    Let $n \geq 2$, $s \in \left( 0, \frac{n}{4}\right)$, $s \notin \mathbb{Z}$ and let $f \in L^{\frac{2n}{n+4s}}\left(\mathbb{R}^n\right)$. If there exists a non-empty open set $U$ in $\mathbb{R}^n$ such that
$$
f=\mathcal{A}_{m,s}^{0} f=0  \text { in } U,  \text { then } f=0 \text { in }  \mathbb{R}^n.
$$
\end{theorem}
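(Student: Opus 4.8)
The plan is to reduce the statement to the unique continuation property (UCP) of the fractional Laplacian, by identifying the averaging operator $\mathcal{A}_{m,s}^{0}$ with $(-\Delta)^{-s}$ (composed, for $m\ge 1$, with a zeroth order operator). Concretely, I would first compute the Fourier symbol of $\mathcal{A}_{m,s}^{0}$ from its kernel representation in Lemma \ref{Sec4:Lemma1}, then set $u:=\mathcal{A}_{m,s}^{0}f$, verify $u\in L^2(\mathbb{R}^n)$ together with $(-\Delta)^s u = g$ for an explicit scalar field $g$, and finally invoke the fractional Laplacian UCP to conclude $u\equiv 0$ and hence $f\equiv 0$.

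For the symbol computation, Lemma \ref{Sec4:Lemma1} with $k=0$ gives
\[
\mathcal{A}_{m,s}^{0}f = c_{n,s}^{m,0}(-1)^m\big(|x|^{2s-n-m}\,x^{j_1}\cdots x^{j_m}\big)*f_{j_1\cdots j_m}.
\]
Taking Fourier transforms and using the Fourier transform of $|x|^{\alpha}$ recalled in Section \ref{sec:Pre} (whose exclusion $-\alpha-n\notin 2\mathbb{Z}^+$ is met, being automatic when $m=0$ since then $-\alpha-n=-2s<0$) together with the rule that multiplication by $x^{j}$ corresponds to $\mathrm{i}\partial_{\eta_j}$, I obtain
\[
\wh{\big(|x|^{2s-n-m}x^{j_1}\cdots x^{j_m}\big)}(\eta)=|\eta|^{-2s}\,P^{j_1\cdots j_m}(\eta/|\eta|),
\]
where $P^{j_1\cdots j_m}$ is homogeneous of degree $0$ and smooth away from the origin. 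Hence $\wh{\mathcal{A}_{m,s}^{0}f}(\eta)=|\eta|^{-2s}\wh{g}(\eta)$, i.e. $\mathcal{A}_{m,s}^{0}f=(-\Delta)^{-s}g=I^{2s}g$, where $g$ is the scalar field with $\wh{g}=c_{n,s}^{m,0}(-1)^m P^{j_1\cdots j_m}(\eta/|\eta|)\wh{f_{j_1\cdots j_m}}$, obtained from $f$ by a bounded zeroth order (Riesz type) multiplier. In the scalar case $m=0$ one has $P\equiv 1$, so $g=c_{n,s}^{0,0}f=h_n(2s)f$ and $\mathcal{A}_{0,s}^{0}f=(-\Delta)^{-s}f$ exactly, matching the identification $c_{n,s}^{0,0}=h_n(2s)$.

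Next I would check regularity and trigger the UCP. Since $f\in L^{\frac{2n}{n+4s}}(\mathbb{R}^n)$ and the multiplier defining $g$ is bounded on $L^p$, also $g\in L^{\frac{2n}{n+4s}}(\mathbb{R}^n)$. The hypothesis $s\in(0,\tfrac{n}{4})$ gives $0<2s<\tfrac n2<n$ and $\tfrac{2n}{n+4s}>1$, so the Hardy--Littlewood--Sobolev bound \eqref{bdd_Riesz}--\eqref{estimate_Riesz} for $I^{2s}:L^{p}\to L^{q}$, $\tfrac1q=\tfrac1p-\tfrac{2s}{n}$, applies with $p=\tfrac{2n}{n+4s}$ and yields $q=2$; thus $u:=\mathcal{A}_{m,s}^{0}f=I^{2s}g\in L^2(\mathbb{R}^n)$. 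The Fourier multiplier relation \eqref{Prel:FracLap} then gives $(-\Delta)^s u=g$ (as $|\eta|^{2s}|\eta|^{-2s}=1$). By hypothesis $u=\mathcal{A}_{m,s}^{0}f=0$ in $U$, and in the scalar case $(-\Delta)^s u=g=h_n(2s)f=0$ in $U$ because $f=0$ in $U$. Thus $u$ and $(-\Delta)^s u$ both vanish on the nonempty open set $U$, and the UCP of the fractional Laplacian (which is where $s\notin\mathbb{Z}$ enters; for $s\in(0,1)$ this is the result underlying \cite{Ghosh:Salo:Uhlmann2020}, with higher non-integer $s$ handled by its extensions) forces $u\equiv 0$ on $\mathbb{R}^n$. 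Therefore $f=h_n(2s)^{-1}(-\Delta)^s u\equiv 0$.

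The main obstacle is this last step: invoking the fractional Laplacian UCP with the correct function space and order. One must ensure $u$ lies in a class to which the UCP applies --- here $u\in L^2=H^{0,2}$ with $(-\Delta)^s u\in L^{\frac{2n}{n+4s}}$ suffices --- and, when $n\ge 5$ permits $s>1$, use the higher-order non-integer version of the UCP. A second, conceptual point worth flagging is that the localization $g|_U=0$ needed to start the UCP is automatic only for $m=0$: for $m\ge 1$ the scalar field $g$ is a nonlocal Riesz-type combination of the components of $f$, so $g|_U=0$ does not follow from $f|_U=0$. This is precisely why a single average $\mathcal{A}_{m,s}^{0}$ produces unique continuation only in the scalar case, while the vector- and tensor-field unique continuation results proved elsewhere in this section instead rely on the full family $\mathcal{A}_{m,s}^{0},\dots,\mathcal{A}_{m,s}^{m}$.
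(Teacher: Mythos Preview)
The paper does not supply its own proof of this statement; it is quoted verbatim from \cite{ilmavirta2023unique} as motivation for the new vector- and tensor-field results, with only the remark that in the scalar case $c_{n,s}^{0,0}$ equals (up to sign) the Riesz potential constant $h_n(2s)$, so that $\mathcal{A}_{0,s}^{0}f=(-\Delta)^{-s}f$. Your argument is correct and is exactly the intended one: identify $\mathcal{A}_{0,s}^{0}$ with $I^{2s}=(-\Delta)^{-s}$, use Hardy--Littlewood--Sobolev with the exponent pair $p=\tfrac{2n}{n+4s}$, $q=2$ to place $u=\mathcal{A}_{0,s}^{0}f$ in $L^2$, observe that $(-\Delta)^s u$ is a constant multiple of $f$, and apply the fractional-Laplacian UCP. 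This is also precisely the template the paper follows in its own proofs for $m=1,2$ (Theorems on unique continuation for vector fields and $2$-tensors), so your write-up is fully aligned with the paper's methodology.

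Your closing paragraph correctly isolates the reason the single average $\mathcal{A}_{m,s}^{0}$ suffices only when $m=0$: for $m\ge 1$ the scalar $g$ you produce involves Riesz transforms of the components of $f$, which are nonlocal, so $f|_U=0$ does not localize to $g|_U=0$. This is exactly why the paper needs the full collection $\mathcal{A}_{m,s}^{0},\dots,\mathcal{A}_{m,s}^{m}$ in the tensor cases.
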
 
\noindent Our goal is to obtain a similar unique continuation result for fractional divergent beam ray transform of vector fields and symmetric 2-tensor fields. Before proceeding further, we recall important results related to the unique continuation principle and the measurable unique continuation principle for fractional Laplacian operators.
\begin{lemma}\cite[Theorem 2.2]{Kar:Railo:Zimmermann:2023}(Unique continuation for the fractional Laplacian)\label{UCP_FL}
    Let $n \geq 1$ be an integer and $\alpha>0$ with $\alpha \notin \mathbb{Z}$. Let $u \in H^r\left(\mathbb{R}^n\right)$ for some $r \in \mathbb{R}$. If
    $$
        u=(-\Delta)^\alpha u=0 \quad \text { in some open set in } \mathbb{R}^n,
    $$
    then $u \equiv 0$ in $\mathbb{R}^n$.
\end{lemma}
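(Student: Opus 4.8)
The plan is to first reduce the arbitrary non-integer order $\alpha$ to the model range $(0,1)$, and then to treat that model case through the Caffarelli--Silvestre extension together with a unique continuation principle from the boundary for the associated degenerate elliptic equation.

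First I would reduce to $s \in (0,1)$. Since $\alpha \notin \mathbb{Z}$, write $\alpha = N + s$ with $N = \lfloor \alpha \rfloor \in \mathbb{Z}_{\geq 0}$ and $s = \alpha - N \in (0,1)$, and set $w := (-\Delta)^N u \in H^{r-2N}(\mathbb{R}^n)$. Because $(-\Delta)^N$ is a constant-coefficient differential operator of order $2N$, it is local, so the vanishing of $u$ on the given open set $V$ forces $w = 0$ on $V$ as well; moreover $(-\Delta)^s w = (-\Delta)^{N+s} u = (-\Delta)^\alpha u = 0$ on $V$ by the multiplier identity $|\xi|^{2s}|\xi|^{2N} = |\xi|^{2\alpha}$. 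Thus $w$ and $(-\Delta)^s w$ vanish on the common open set $V$. Granting the statement for the exponent $s \in (0,1)$, I obtain $w \equiv 0$, i.e. $|\xi|^{2N}\widehat{u}(\xi) = 0$; hence $\widehat{u}$ is supported at the origin, so $u$ is a polynomial, and the only polynomial belonging to $H^r(\mathbb{R}^n)$ is $0$. This yields $u \equiv 0$ and reduces everything to the case $s \in (0,1)$.

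For $s \in (0,1)$ I would pass to the Caffarelli--Silvestre extension: let $U$ solve the degenerate elliptic problem $\operatorname{div}(y^{1-2s}\nabla U) = 0$ in $\mathbb{R}^{n+1}_{+}$ with Dirichlet trace $U(\cdot,0) = u$, so that, up to a positive constant, $(-\Delta)^s u = -\lim_{y \to 0^+} y^{1-2s}\partial_y U$. The two hypotheses $u = 0$ and $(-\Delta)^s u = 0$ on $V$ then translate exactly into the simultaneous vanishing of the Dirichlet trace and of the weighted conormal (Neumann) trace of $U$ on $V \times \{0\}$; that is, $U$ has vanishing Cauchy data there.

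The final and decisive step is to invoke the strong unique continuation property from the boundary for the operator $\operatorname{div}(y^{1-2s}\nabla\,\cdot\,)$: vanishing Cauchy data on the open portion $V \times \{0\}$ of the boundary forces $U \equiv 0$ throughout $\mathbb{R}^{n+1}_{+}$, whence $u = U(\cdot,0) \equiv 0$. I expect this to be the main obstacle. The weight $y^{1-2s}$ is a Muckenhoupt $A_2$ weight that degenerates or blows up at $y = 0$, so the operator is not uniformly elliptic near the boundary and classical Cauchy-data unique continuation does not apply verbatim. Following R\"uland, the remedy is a Carleman estimate tailored to this weight, typically after passing to conformal polar coordinates and choosing a convex radial weight; combined with the vanishing Cauchy data and appropriate trace and Hardy-type inequalities, it propagates the zero set from the boundary into the interior. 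A minor technical point I would address along the way is that for rough data $u \in H^r$ the extension and its traces must be interpreted in suitable weighted Sobolev spaces, but since the conclusion is local near $V \times \{0\}$ this introduces no essential difficulty.
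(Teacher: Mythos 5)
The paper contains no proof of this lemma: it is imported verbatim as Theorem 2.2 of the cited work of Kar--Railo--Zimmermann, so there is no internal argument to compare against, and the relevant question is whether your reconstruction is sound. It is, and it follows the same two-step structure as the proofs in the cited literature. Your reduction is correct in all details: with $\alpha = N+s$, the field $w = (-\Delta)^N u \in H^{r-2N}(\mathbb{R}^n)$ vanishes on the open set $V$ by locality of the differential operator $(-\Delta)^N$, and the identity $(-\Delta)^s w = (-\Delta)^\alpha u$ is legitimate because $\widehat{u}$ is an actual function in $\langle \xi \rangle^{-r} L^2(\mathbb{R}^n)$, so the multipliers $|\xi|^{2N}$ and $|\xi|^{2s}$ compose pointwise without any distributional ambiguity at the origin. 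One small simplification: your detour through polynomials, while correct (no nonzero polynomial lies in any $H^r(\mathbb{R}^n)$, since its Fourier transform is supported at a point and hence is not a function), is unnecessary --- from $|\xi|^{2N}\widehat{u}(\xi) = 0$ with $\widehat{u}$ a locally square-integrable function one concludes $\widehat{u} = 0$ almost everywhere directly.

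For the model case $s \in (0,1)$ with $u \in H^r(\mathbb{R}^n)$ and $r$ arbitrary (in particular negative), the statement you are granting is exactly Theorem 1.2 of Ghosh--Salo--Uhlmann, which appears in the paper's bibliography, and its proof is the one you sketch: the Caffarelli--Silvestre extension, translation of the two hypotheses into vanishing Dirichlet and weighted conormal traces of $U$ on $V \times \{0\}$, and propagation of the zero Cauchy data into $\mathbb{R}^{n+1}_{+}$ via Carleman estimates of R\"uland for the degenerate operator $\operatorname{div}(y^{1-2s}\nabla\,\cdot\,)$ with its Muckenhoupt $A_2$ weight. You correctly flag this boundary unique continuation step, together with the interpretation of the extension for rough data, as the analytic core, and you defer it to the literature rather than proving it; since the paper treats the entire lemma as a black-box citation, this level of resolution matches --- indeed exceeds --- the paper's own treatment, and I see no gap in the parts you argue yourself.
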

\begin{lemma}[Measurable unique continuation for the fractional Laplacian] \label{measurable_UCP_fractional_Laplace}
Let $n\geq 1$ and $\Omega\subset\mathbb{R}^n$ be an open set. Let $q\in L^{\infty}(\Omega)$, and assume that $u\in H^s(\mathbb{R}^n)$ with $s\in [\frac{1}{4}, 1)$ satisfies
\[
((-\Delta)^s+q)u = 0 \ \ \text{in}\ \ \Omega.
\]
If there exists a measurable set $E\subset \Omega$ with positive measure such that $u=0$ in $E$, then $u\equiv 0$ in $\mathbb{R}^n$.     
\end{lemma}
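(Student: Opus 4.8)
The plan is to prove this via the Caffarelli--Silvestre extension, reducing the measurable hypothesis to a strong unique continuation statement at a point of Lebesgue density of $E$. The key conceptual point is that the open-set hypothesis of Lemma \ref{UCP_FL} is replaced here by vanishing on a set of merely positive measure, so the classical (weak) unique continuation does not apply directly and one must upgrade ``vanishing on a positive-measure set'' to ``vanishing to infinite order at a point.''

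First I would record that the hypotheses force both Cauchy data of $u$ to vanish on $E$. Since $u\in H^s(\mathbb{R}^n)\subset L^2(\mathbb{R}^n)$ and $q\in L^\infty(\Omega)$, the identity $(-\Delta)^su=-qu$ holds in $L^2(\Omega)$; restricting to $E\subset\Omega$, where $u=0$ a.e., gives $(-\Delta)^su=-qu=0$ a.e. on $E$. Hence $u=(-\Delta)^su=0$ a.e. on the positive-measure set $E$. Next I would pass to the extension: let $w=w(x,y)$ on $\mathbb{R}^{n+1}_+=\{(x,y):x\in\mathbb{R}^n,\ y>0\}$ solve
\[
\nabla\cdot\!\left(y^{1-2s}\nabla w\right)=0 \ \text{ in } \mathbb{R}^{n+1}_+, \qquad w(\cdot,0)=u,
\]
in the weighted energy space $H^1(\mathbb{R}^{n+1}_+,\,y^{1-2s}\,\mathrm{d}x\,\mathrm{d}y)$, so that $\lim_{y\to0^+}y^{1-2s}\partial_y w(x,y)=-c_s(-\Delta)^su(x)$ for a dimensional constant $c_s>0$. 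The previous step then translates into the statement that $w$ and its weighted conormal derivative both vanish on $E\times\{0\}$, a subset of the boundary hyperplane of positive $n$-dimensional measure.

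Then I would run the strong unique continuation machinery. Choosing a point $x_0$ of density one of $E$ (which exists a.e. on $E$ by the Lebesgue density theorem), I would center an Almgren-type frequency function $N(r)$ for the degenerate equation at $(x_0,0)$ and establish its monotonicity, yielding a doubling inequality for $w$. The vanishing of both Cauchy data on the density-one portion of the boundary should force infinite-order vanishing, $\int_{B_r^+}y^{1-2s}|w|^2\,\mathrm{d}x\,\mathrm{d}y=O(r^N)$ for every $N$; finiteness of the frequency then gives $w\equiv0$ in a neighborhood of $(x_0,0)$, and unique continuation for the uniformly elliptic equation in $\{y>0\}$ together with connectedness propagates this to $w\equiv0$ on all of $\mathbb{R}^{n+1}_+$, whence $u=w(\cdot,0)\equiv0$. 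The constraint $s\in[\tfrac14,1)$ enters precisely to guarantee the trace regularity making the frequency computation rigorous and allowing the lower-order term $qu$ in the boundary condition to be absorbed as a subcritical perturbation.

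The hard part will be the third step: converting vanishing on a set of positive measure into genuine infinite-order vanishing at the density point $x_0$. Unlike the open-set case covered by Lemma \ref{UCP_FL}, here I must quantitatively control the contribution of the complement $E^c\cap B_r(x_0)$, whose relative measure tends to zero, inside the frequency-function monotonicity argument, while simultaneously handling the potential $q\in L^\infty$ in the Robin-type boundary term. This measurable, quantitative form of the frequency inequality is the principal technical obstacle, and it is also where the lower bound $s\geq\tfrac14$ is genuinely used.
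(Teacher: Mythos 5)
Your outline does not diverge from the paper, because the paper contains no proof of this lemma at all: immediately after the statement it defers to \cite[Proposition 5.1]{Ghosh:Salo:Ruland:Uhlmann2020}, and notes the stronger measurable UCP of \cite[Theorem 4]{Garcia:Ruland:2019} (which is in fact the version the authors later invoke, since they need all $s\in\mathbb{R}_+\setminus\mathbb{N}$). Your sketch faithfully reconstructs the architecture of those cited proofs: the reduction showing both Cauchy data $u$ and $(-\Delta)^su=-qu$ vanish a.e.\ on $E$ is correct, the Caffarelli--Silvestre extension with $\lim_{y\to0^+}y^{1-2s}\partial_yw=-c_s(-\Delta)^su$ is the right reformulation, and propagating $w\equiv0$ from a neighborhood of $(x_0,0)$ through $\{y>0\}$ by Aronszajn-type unique continuation for the uniformly elliptic interior equation is routine and sound.

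The genuine gap is the one you yourself flag: the step upgrading vanishing of the Cauchy data on a positive-measure set to infinite-order vanishing at a density-one point $x_0$ is asserted (``should force infinite-order vanishing'') and then deferred as ``the principal technical obstacle.'' That step is not an obstacle within the proof --- it \emph{is} the proof; everything else in your plan is standard, so what you have is a roadmap to the cited results rather than an argument. To close it one needs either the Carleman-estimate machinery behind \cite[Proposition 5.1]{Ghosh:Salo:Ruland:Uhlmann2020} --- and this, not ``trace regularity,'' is where $s\geq\frac14$ genuinely enters: it is the threshold for absorbing a merely bounded potential $q\in L^\infty$ in the Carleman inequality, whereas for smoother $q$ the full range $s\in(0,1)$ works --- or the blow-up mechanism of \cite{Garcia:Ruland:2019}: Almgren frequency monotonicity yields compactness and nondegeneracy of the rescalings $w(x_0+r x, r y)$ (suitably normalized); since $E$ has density one at $x_0$, the rescaled copies of $E$ exhaust the boundary hyperplane, so every blow-up limit has vanishing Cauchy data on all of $\mathbb{R}^n\times\{0\}$ and is killed by the \emph{weak} UCP (Lemma \ref{UCP_FL}), contradicting nondegeneracy unless $w$ vanishes to infinite order at $(x_0,0)$, after which strong unique continuation concludes. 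Your worry about ``quantitatively controlling $E^c\cap B_r(x_0)$ inside the frequency monotonicity'' dissolves in this formulation --- the density-one information is used only in the blow-up limit, not in the monotonicity formula itself --- but absent either implementation the proposal is incomplete at exactly the point where the lemma's content lies.
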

\noindent A detailed proof of the above lemma can be found in \cite[Proposition 5.1]{Ghosh:Salo:Ruland:Uhlmann2020}. However, a more general result corresponding to the measurable unique continuation principle for fractional conductivity operator was established by Garcia-Ferrero and R\"uland in \cite[Theorem 4]{Garcia:Ruland:2019}. They proved the following result:
\begin{lemma}[Measurable UCP for the fractional Conductivity operator]\label{sharp_measurable_UCP}
Let $\gamma\in \mathbb{R}_{+}\setminus\mathbb{N}$, and let $u\in \operatorname{Dom}((-\nabla\cdot\tilde{a}\nabla)^{\gamma})$ be a solution to 
\[
|(-\nabla\cdot\tilde{a}\nabla)^{\gamma} u(x)|
\leq |q(x)||u(x)| \ \ \text{in}\ \mathbb{R}^n,
\]
where $\tilde{a} = \tilde{a}^{ij}$ is a bounded symmetric and strictly positive-definite matrix with the property that $\tilde{a}\in C_{\rm {loc}}^{2\lfloor \gamma\rfloor,1}(\mathbb{R}^n, \mathbb{R}_{sym}^{n\times n})$ and $\tilde{a}^{ij}(0)=\delta^{ij}$ with $q\in L^{\infty}(\mathbb{R}^n)$. If there exists a measurable set $E\subset \mathbb{R}^n$ with $|E|>0$ and density one at $x_0=0$ such that $u|_{E}=0$, then $u\equiv 0$ in $\mathbb{R}^n$.    
\end{lemma}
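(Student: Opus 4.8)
The plan is to establish strong unique continuation for the nonlocal operator $L^\gamma$, where $L \vcentcolon= -\nabla\cdot\tilde a\nabla$, by realizing it as the weighted Dirichlet-to-Neumann map of a degenerate elliptic \emph{extension problem}, proving a Carleman estimate for that extension, and then upgrading the measure-theoretic hypothesis into infinite-order vanishing at the distinguished point, to which the Carleman estimate applies. The final propagation from a neighbourhood of $0$ to all of $\R^n$ will be carried out by weak unique continuation for the fractional operator, in the spirit of Lemma \ref{UCP_FL}.

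First I would set up the extension. Since $L$ is self-adjoint and positive, the functional calculus defines $L^\gamma$, and writing $\gamma=\lfloor\gamma\rfloor+s$ with $s\in(0,1)$ isolates the genuinely nonlocal power $L^s$; the regularity $\tilde a\in C^{2\lfloor\gamma\rfloor,1}_{loc}$ is precisely what makes the requisite (higher-order, when $\gamma>1$) Caffarelli--Silvestre extension well defined. Crucially, the differential inequality interacts with the vanishing hypothesis to furnish \emph{vanishing Cauchy data} on $E$: since $u=0$ a.e.\ on $E$, the bound $|L^\gamma u|\le|q||u|$ forces $L^\gamma u=0$ a.e.\ on $E$ as well. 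For the model power $s\in(0,1)$ the Stinga--Torrea extension produces $U(x,y)$ on $\R^n\times(0,\infty)$ solving
\[
\nabla_{x,y}\cdot\big(y^{1-2s}\,\widetilde A(x)\,\nabla_{x,y}U\big)=0,
\]
with $U(\cdot,0)=u$ and the weighted conormal derivative $-\lim_{y\to0^+}y^{1-2s}\partial_yU$ proportional to $L^su$. Because $\tilde a(0)=\delta^{ij}$, near the distinguished point the extension operator is a perturbation of the constant-coefficient $y^{1-2s}$-weighted Laplacian, which is exactly the regime in which an Almgren-type frequency function is monotone.

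The analytic heart, and the step I expect to be the main obstacle, is a Carleman estimate for this variable-coefficient degenerate operator strong enough to yield strong unique continuation from the thin boundary $\{y=0\}$. The difficulty is to control simultaneously the $A_2$-Muckenhoupt degeneracy of the weight $y^{1-2s}$ at $y=0$, the variable coefficients $\widetilde A(x)$, and the passage from an interior estimate to a boundary estimate that actually sees the Cauchy data. I would base this on a conformally adapted logarithmic weight, treating the coefficient deviation $\widetilde A(x)-\widetilde A(0)$ as a lower-order perturbation that is absorbable for $x$ close to $0$ thanks to the finite $C^{2\lfloor\gamma\rfloor,1}$ regularity and the normalization at the origin.

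With the Carleman estimate in hand, the remaining step is measure-theoretic. Using that both $u$ and $L^\gamma u$ vanish a.e.\ on $E$, so that both the Dirichlet trace and the weighted conormal derivative of $U$ vanish on $E\subset\{y=0\}$, together with the Lebesgue density-one hypothesis at $x_0=0$ and the monotonicity of the frequency function, I would conclude that $U$ vanishes to infinite order at $(0,0)$. The Carleman estimate then forces $U\equiv0$ in a half-ball, whence $u=L^\gamma u=0$ in a nonempty open subset of $\R^n$. A weak unique continuation result for the fractional operator $L^\gamma$ (the conductivity analogue of Lemma \ref{UCP_FL}) finally propagates this to $u\equiv0$ on all of $\R^n$.
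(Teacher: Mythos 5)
First, a point of orientation: the paper does not prove Lemma \ref{sharp_measurable_UCP} at all. It is recalled verbatim from Garc\'ia-Ferrero and R\"uland \cite[Theorem 4]{Garcia:Ruland:2019}, and the authors only invoke it as a black box (with $\tilde a = \delta^{ij}$, $q=0$, $\gamma = s$) in their measurable unique continuation theorems for vector fields and $2$-tensor fields. So the relevant comparison is with the cited proof, not with anything in this paper. Measured against that, your outline reconstructs the correct architecture: the splitting $\gamma=\lfloor\gamma\rfloor+s$ with $s\in(0,1)$, the (higher-order) Caffarelli--Silvestre/Stinga--Torrea extension for the variable-coefficient operator, the observation that $u|_E=0$ together with the differential inequality forces both pieces of Cauchy data of the extension to vanish on $E\subset\{y=0\}$, Carleman estimates with logarithmically convexified weights near the density-one point where $\tilde a(0)=\mathrm{Id}$, and a blow-up/vanishing-order argument exploiting the density-one hypothesis. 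This is genuinely the strategy of \cite{Garcia:Ruland:2019}.

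That said, as a standalone proof your proposal has two genuine gaps, one of which you flag yourself. The boundary Carleman estimate for the degenerate operator $\nabla_{x,y}\cdot\bigl(y^{1-2s}\widetilde{A}(x)\nabla_{x,y}\,\cdot\,\bigr)$, with boundary contributions strong enough to see the thin-space Cauchy data while absorbing the coefficient error $\widetilde{A}(x)-\widetilde{A}(0)$ up to the degenerate boundary, is the analytic core; ``a conformally adapted logarithmic weight, treating the deviation as an absorbable perturbation'' is a plan, not an estimate, and this absorption is exactly where the cited paper expends most of its effort. Second, the passage from vanishing of the Cauchy data on a measurable set of density one at $0$ to infinite-order vanishing of $U$ at $(0,0)$ does not follow from frequency monotonicity alone: in \cite{Garcia:Ruland:2019} one splits small balls into $E$ and its complement, uses that $|E^c\cap B_r|/|B_r|\to 0$ to make the contribution of the potential term absorbable in the Carleman inequality, and runs a compactness/doubling argument; this is the heart of the measurable (as opposed to open-set) statement and is only gestured at in your sketch. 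Two smaller points: for $\gamma>1$ the extension is an iterated system, and deducing vanishing Cauchy data on $E$ for the genuinely nonlocal part requires tracking intermediate traces (roughly of $L^k u$, $k\le\lfloor\gamma\rfloor$), which needs a short argument from $u\in\operatorname{Dom}\bigl((-\nabla\cdot\tilde a\nabla)^{\gamma}\bigr)$ and elliptic regularity rather than being automatic; and in your final step a separate weak UCP for the fractional conductivity operator is not actually needed --- once $U\equiv 0$ in a half-ball, classical unique continuation for the uniformly elliptic interior equation propagates $U\equiv 0$ through the connected upper half-space, and taking traces gives $u\equiv 0$ on $\mathbb{R}^n$ directly.
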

\noindent In our case, we only implement the Lemma \ref{sharp_measurable_UCP} for $\tilde{a}=\delta^{ij}, q=0, \gamma=s$, and the measurable UCP holds for all $s\in \mathbb{R}_{+}\setminus\mathbb{N}$. 

\subsection{Vector field case}
Consider a vector field $f = (f_j)_{j=1}^{n} \in \mathcal{S}(\R^n;S^1(\mathbb{R}^n)).$ 
The following theorem provides the reconstruction formula for the vector field $f$ in terms of the associated averages $\Ac_{1,s}^{0} f$ and $\Ac_{1,s}^{1} f  = (\Ac_{1,s}^{1,i} f)_{i=1}^{n}.$
   
\begin{theorem}\label{recon_vec_thm}
Let $s \in (0,1) \setminus  \left\{ \frac{1}{2} \right\}$ and
    $f = (f_j)_{i=1}^{n} \in \mathcal{S}(\R^n; S^1(\mathbb{R}^n))$. Then
    \[
        f_i(x) = (-\Delta)^{s}\left[ -2s \RT_i\Ac_{1,s}^{0} f(x) -
     \mathcal{A}_{1, s}^{1, i} f(x)
    \right], \quad 1 \leq i \leq n.
    \]
\end{theorem}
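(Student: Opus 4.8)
The plan is to prove the identity on the Fourier side, where every operator involved becomes multiplication by an explicit symbol, and then to invert the Fourier transform. I start from the convolution representations furnished by Lemma \ref{Sec4:Lemma1} with $m=1$, namely $\Ac^{0}_{1,s}f = -c_{n,s}^{1,0}\,(|x|^{2s-n-1}x^{j})*f_{j}$ and $\Ac^{1,i}_{1,s}f = c_{n,s}^{1,1}\,(|x|^{2s-n-2}x^{i}x^{j})*f_{j}$, and reduce the two kernels to pure powers of $|x|$. Using $\partial_{x_j}|x|^{\beta}=\beta|x|^{\beta-2}x_j$ I can write
\[
|x|^{2s-n-1}x_j = \frac{1}{2s-n+1}\,\partial_{x_j}|x|^{2s-n+1},
\]
and a second differentiation expresses $|x|^{2s-n-2}x_i x_j$ as a linear combination of $\partial_i\partial_j|x|^{2s-n+2}$ and $\delta_{ij}|x|^{2s-n}$. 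Applying item (1) of the Fourier transform properties (differentiation becomes multiplication by $\mathrm{i}\zeta$) together with item (4) (the homogeneous-distribution formula for $\wh{|x|^{\alpha}}$) from Section \ref{sec:Pre} then turns each kernel into an explicit homogeneous symbol, and the convolution theorem (valid here by Lemmas \ref{sec4:Lemma1.2}--\ref{Sec4:Lemma2}, which guarantee the averages are smooth tempered distributions) converts the averages into Fourier multipliers acting on $\wh{f_j}$.

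The heart of the matter is the simplification of the resulting Gamma-function constants. I expect the prefactors $c_{n,s}^{1,0}$ and $c_{n,s}^{1,1}$ to be precisely calibrated so that, after repeated use of $\Gamma(z+1)=z\Gamma(z)$ and of $2s-n=-(n-2s)$, the symbols collapse to
\[
\wh{\Ac^{0}_{1,s}f}(\zeta) = -\mathrm{i}\,\frac{\zeta_{j}\,\wh{f_{j}}(\zeta)}{|\zeta|^{2s+1}}, \qquad \wh{\Ac^{1,i}_{1,s}f}(\zeta) = 2s\,\frac{\zeta_{i}\zeta_{j}\,\wh{f_{j}}(\zeta)}{|\zeta|^{2s+2}} - \frac{\wh{f_{i}}(\zeta)}{|\zeta|^{2s}}.
\]
Checking that the longitudinal coefficient equals exactly $2s$ and the $\delta_{ij}$-coefficient equals exactly $-1$ is the main bookkeeping obstacle; everything else is formal once these two clean forms are established.

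With the two symbols in hand, the assembly is immediate. By \eqref{Fourier_Riesz_transform} the Riesz transform $\RT_i$ is the multiplier $-\mathrm{i}\zeta_i/|\zeta|$, so $\RT_i\Ac^{0}_{1,s}f$ carries the symbol $-\zeta_i\zeta_j\wh{f_j}/|\zeta|^{2s+2}$, and hence $-2s\RT_i\Ac^{0}_{1,s}f$ carries $+2s\,\zeta_i\zeta_j\wh{f_j}/|\zeta|^{2s+2}$, which is exactly the longitudinal part of $\Ac^{1,i}_{1,s}f$. Forming the combination $-2s\RT_i\Ac^{0}_{1,s}f-\Ac^{1,i}_{1,s}f$ therefore cancels the two longitudinal terms and leaves the symbol $\wh{f_i}(\zeta)/|\zeta|^{2s}$. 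Finally, applying $(-\Delta)^{s}$, whose symbol is $|\zeta|^{2s}$ by \eqref{Prel:FracLap}, multiplies this by $|\zeta|^{2s}$ and recovers $\wh{f_i}(\zeta)$; inverting the Fourier transform gives the stated pointwise formula for $f_i$.

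A few words on hypotheses. All manipulations are justified in the tempered-distribution setting because the kernels $|x|^{2s-n-1}x^j$ and $|x|^{2s-n-2}x^ix^j$ are homogeneous and locally integrable, the relevant exponents satisfy $-\alpha-n\notin 2\mathbb{Z}^{+}$ so that item (4) applies, and Lemmas \ref{sec4:Lemma1.2}--\ref{Sec4:Lemma2} legitimise taking Fourier transforms of the averages and using $\wh{u*v}=\wh{u}\,\wh{v}$. The exclusion $s\neq \tfrac12$ is what guarantees that the auxiliary powers $|x|^{2s-n+1}$ and $|x|^{2s-n+2}$ are genuinely non-constant (in low dimensions $s=\tfrac12$ would force one of them to equal the constant $1$, whose Fourier transform is a Dirac mass and for which item (4) degenerates) and keeps all the Gamma factors away from their poles.
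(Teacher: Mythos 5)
Your proposal is correct and takes essentially the same route as the paper: the multiplier identities you write down are exactly those established in the paper's Lemma \ref{fourier_lemm} from the convolution representations of Lemma \ref{Sec4:Lemma1} (including the Gamma-function calibration you flag as the main bookkeeping, which the paper carries out explicitly and which confirms your expected coefficients $2s$ and $-1$), and the paper's proof of Theorem \ref{recon_vec_thm} then performs precisely your cancellation of the longitudinal parts followed by application of $(-\Delta)^{s}$. The only cosmetic difference is that you realize the kernels as $x$-space derivatives of pure powers $|x|^{\beta}$ before transforming, whereas the paper converts the factors $x^{j}$ into frequency-space derivatives of $\wh{(|x|^{2s-n-1})}$ — the same two Fourier properties applied in the opposite order.
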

\noindent To prove the above formula, we need to take the Fourier transform of the averages, as represented in Lemma \ref{Sec4:Lemma1}:
    \Beq\label{sec4:eqn3}
        \begin{aligned}
            \lb \Ac_{1,s}^{0} f\rb(x)&= -c^{1,0}_{n,s} \left(x^{j}|x|^{2s-n-1} \right) * f_{j}(x), \\
            \lb \Ac_{1,s}^{1,i} f\rb(x)&= c_{n, s}^{1,1}  \left(x^ix^j|x|^{2s-n-2} \right) * f_j(x), \quad 1 \leq i \leq n.
        \end{aligned}
     \Eeq
\begin{lemma}\label{fourier_lemm}
    For $s \in \lb 0,1\rb \setminus  \left\{ \frac{1}{2} \right\}$
    and $y \in \R^n\setminus\{0\},$ the Fourier transforms of the averages $\Ac_{1,s}^{0} f$ and $\Ac_{1,s}^{1} f$ are as follows:
    \begin{align*}
        \wh{ \lb \Ac_{1,s}^{0} f\rb}(y) &= |y|^{-2s} \wh{\lb \RT_j f_j\rb}(y),\\
        \wh{( \Ac_{1,s}^{1,i} f)}(y) &= - |y|^{-2s} \wh{f_i}(y) - 2s\rwh{ \lb \RT_i\Ac_{1,s}^{0} f\rb }(y), \quad 1 \leq i \leq n.
    \end{align*}
\end{lemma}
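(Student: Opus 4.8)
The plan is to compute both Fourier transforms directly from the convolution representations \eqref{sec4:eqn3}. Since each $f_j \in \mathcal{S}(\mathbb{R}^n)$ while the kernels $x^j|x|^{2s-n-1}$ and $x^ix^j|x|^{2s-n-2}$ are tempered distributions (locally integrable near the origin and of polynomial growth at infinity for $s \in (0,1)$), the convolution rule $\wh{(T*f)} = \wh{T}\,\wh{f}$ (item (3) of the Fourier-transform properties in Section 2.3) reduces everything to computing $\wh{(x^j|x|^{2s-n-1})}$ and $\wh{(x^ix^j|x|^{2s-n-2})}$. For the scalar homogeneous kernels $|x|^\alpha$ I would invoke property (4), and I would move the polynomial prefactors onto $|x|^\alpha$ as derivatives, using that $\partial_{x_j}$ corresponds to multiplication by $\mathrm{i}y_j$ (property (1)).

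For the first identity, I would write $x^j|x|^{2s-n-1} = \tfrac{1}{2s-n+1}\,\partial_{x_j}|x|^{2s-n+1}$, which is legitimate precisely because $2s-n+1\ne 0$ (this is where $s\ne\tfrac12$ is used when $n=2$). Hence $\wh{(x^j|x|^{2s-n-1})}(y) = \tfrac{\mathrm{i}y_j}{2s-n+1}\,\wh{(|x|^{2s-n+1})}(y)$, and property (4) evaluates $\wh{(|x|^{2s-n+1})}$ as an explicit Gamma quotient times $|y|^{-2s-1}$. Substituting $c_{n,s}^{1,0}$ and repeatedly applying $\Gamma(z+1)=z\Gamma(z)$ (to cancel $\Gamma(\tfrac{n+1-2s}{2})$ against $\Gamma(\tfrac{n-2s-1}{2})$, and $\Gamma(s+\tfrac12)$ against $\Gamma(\tfrac{2s+1}{2})$), all the powers of $2$ and $\pi^{n/2}$ collapse and one is left with $\wh{(\Ac_{1,s}^0 f)}(y) = -\mathrm{i}\,y_j|y|^{-2s-1}\wh{f_j}(y)$. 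Recognizing $-\mathrm{i}y_j/|y|$ as the Riesz multiplier from \eqref{Fourier_Riesz_transform}, this is exactly $|y|^{-2s}\wh{(\RT_j f_j)}(y)$.

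For the second identity, I would decompose the kernel via second derivatives: from
\[
\partial_i\partial_j|x|^{2s-n+2} = (2s-n+2)\,\delta_{ij}\,|x|^{2s-n} + (2s-n+2)(2s-n)\,x_ix_j|x|^{2s-n-2}
\]
one solves for $x_ix_j|x|^{2s-n-2}$, which is valid since $(2s-n+2)(2s-n)\ne 0$ (this forces $s\ne\tfrac12$ when $n=3$). Taking Fourier transforms sends $\partial_i\partial_j$ to $-y_iy_j$ and yields two homogeneous pieces $\wh{(|x|^{2s-n+2})}$ and $\wh{(|x|^{2s-n})}$, each given by property (4). After inserting $c_{n,s}^{1,1}$ and simplifying the Gamma quotients as before, the $\delta_{ij}$-term produces $-|y|^{-2s}\wh{f_i}(y)$ and the $y_iy_j$-term produces $2s\,y_iy_j|y|^{-2s-2}\wh{f_j}(y)$. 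Finally, combining \eqref{Fourier_Riesz_transform} and \eqref{Fourier_Riesz_transform_star} with the already-proven first identity gives $-2s\,\wh{(\RT_i\Ac_{1,s}^0 f)}(y) = 2s\,y_iy_j|y|^{-2s-2}\wh{f_j}(y)$, which assembles into the claimed formula.

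The routine but \emph{delicate} part, and where I expect the only real bookkeeping obstacle, is the exact cancellation of the $2^{\,\cdot}$, $\pi^{n/2}$, and Gamma factors after substituting the explicit $c_{n,s}^{1,k}$: the normalization of the constants $c_{n,s}^{m,k}$ is tailored precisely so that these collapse to clean multipliers, so the care lies in tracking the $\Gamma(z+1)=z\Gamma(z)$ reductions. One must also confirm the distributional validity of property (4) for each exponent, which is automatic here since all the relevant $-\alpha-n$ are negative for $s\in(0,1)$ and hence $\notin 2\mathbb{Z}^+$, together with the nonvanishing of $2s-n+1$ and $(2s-n+2)(2s-n)$ — exactly the content of excluding $s=\tfrac12$.
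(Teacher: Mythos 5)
Your proposal is correct, and up to Fourier duality it is the same computation as the paper's: both start from the convolution representations \eqref{sec4:eqn3}, reduce to Helgason's formula for $\wh{(|x|^{\alpha})}$ (property (4)), exploit $\Gamma(z+1)=z\Gamma(z)$ against the normalization of $c^{1,k}_{n,s}$, and recognize the Riesz multipliers \eqref{Fourier_Riesz_transform}--\eqref{Fourier_Riesz_transform_star} at the end. The genuine difference is on which side of the transform you trade polynomials for derivatives. The paper keeps the kernels $x^j|x|^{2s-n-1}$ and $x^ix^j|x|^{2s-n-2}$ intact and converts the monomial factors into $\mathrm{i}\,\partial_{y_j}$ resp.\ $-\partial_{y_i}\partial_{y_j}$ acting on $\wh{(|x|^{2s-n-1})}\propto|y|^{1-2s}$ and $\wh{(|x|^{2s-n-2})}\propto|y|^{2-2s}$; there the hypotheses of property (4) read $1-2s,\,2-2s\notin 2\mathbb{Z}^+\cup\{0\}$, which is precisely where $s\neq\tfrac12$ (and $s\neq1$) enters, uniformly in $n$, through the poles of $\Gamma(\tfrac{2s-1}{2})$ and $\Gamma(s-1)$. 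You instead integrate by parts on the physical side, writing $x^j|x|^{2s-n-1}=\tfrac{1}{2s-n+1}\partial_j|x|^{2s-n+1}$ and solving for $x_ix_jx|^{2s-n-2}$-type terms out of $\partial_i\partial_j|x|^{2s-n+2}$, so property (4) is invoked only at the exponents $2s-n$, $2s-n+1$, $2s-n+2$, where $-\alpha-n<0$ makes its hypothesis automatic and the kernels are honest locally integrable functions (no analytic-continuation reading of $|x|^{2s-n-2}$ is needed); the cost is the nondegeneracy of the divisors $2s-n+1$ and $(2s-n+2)(2s-n)$, which, as you correctly note, only bites at $n=2$ resp.\ $n=3$. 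A byproduct is that your route proves the identities verbatim at $s=\tfrac12$ once $n\geq 4$ (and the first identity for $n\geq 3$), slightly more than the lemma claims. I verified your bookkeeping closes: e.g.\ $\tfrac{1}{2s-n+1}\cdot\Gamma\bigl(\tfrac{n-1-2s}{2}\bigr)^{-1}=-\tfrac{1}{2}\Gamma\bigl(\tfrac{n+1-2s}{2}\bigr)^{-1}$ collapses your first kernel transform to $-\mathrm{i}\,2^{2s}\pi^{n/2}\Gamma(s+\tfrac12)\Gamma\bigl(\tfrac{n+1-2s}{2}\bigr)^{-1}y_j|y|^{-2s-1}$, which the prefactor $-c^{1,0}_{n,s}$ reduces to $-\mathrm{i}\,y_j|y|^{-2s-1}\wh{f_j}(y)$, and in the second identity the $\delta_{ij}$- and $y_iy_j$-terms come out as $-|y|^{-2s}\wh{f_i}$ and $2s\,y_iy_j|y|^{-2s-2}\wh{f_j}=-2s\,\rwh{\lb\RT_i\Ac_{1,s}^{0}f\rb}$, exactly as in the paper. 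The only point worth making explicit in a final write-up is that your identities such as $\partial_i\partial_j|x|^{2s-n+2}=(2s-n+2)\delta_{ij}|x|^{2s-n}+(2s-n+2)(2s-n)\,x_ix_j|x|^{2s-n-2}$ hold in $\mathcal{S}'(\mathbb{R}^n)$ and not merely pointwise away from the origin; this is immediate because all the homogeneous functions involved are locally integrable for $s\in(0,1)$ and the boundary terms over $|x|=\epsilon$ are $O(\epsilon^{2s})$.
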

\begin{proof}
    We apply the Fourier transform to the expressions given in \eqref{sec4:eqn3}.
For ${1-2s \notin 2 \mathbb{Z}^+ \cup \{0\}},$ we have
    \Beq
    \begin{aligned}
    \wh{ \lb \Ac_{1,s}^{0} f\rb}(y)&=- \I c^{1,0}_{n,s} \frac{2^{2s-1}\pi^{\frac{n}{2}} \Gamma\left(\frac{2s-1}{2} \right)}{\Gamma\left(\frac{n-2s{+1}}{2}\right)}  \partial_j|y|^{1-2s}\wh{f}_{j}(y)\\
    &=-\I c^{1,0}_{n,s} \frac{ 2^{2s-1}\pi^{\frac{n}{2}} \Gamma\left(\frac{2s-1}{2} \right)}{\Gamma\left(\frac{n-2s{+1}}{2}\right)}  (1-2s) y_j|y|^{-2s-1}\wh{f}_{j}(y)\\
    &=- \I c^{1,0}_{n,s} \lb \frac{ -2^{2s}\pi^{\frac{n}{2}} \left( \frac{2s-1}{2} \right) \Gamma\left(\frac{2s-1}{2} \right)}{\Gamma\left(\frac{n-2s{+1}}{2}\right)} \rb y_j |y|^{-2s-1} \wh{f}_{j}(y)\\
    &= -\I c^{1,0}_{n,s} \lb \frac{ -2^{2s}\pi^{\frac{n}{2}} \Gamma\left(\frac{2s+1}{2} \right)}{\Gamma\left(\frac{n-2s{+1}}{2}\right)} \rb  y_j |y|^{-2s-1} \wh{f}_{j}(y).
    \end{aligned}
    \Eeq
    Substituting the value of $c^{1,0}_{n,s}$, we obtain
    \Beq\label{vec_FT}
    \wh{ \lb \Ac_{1,s}^{0} f\rb}(y) = -\I |y|^{-2s-1} y_j  \wh{f}_j(y) = |y|^{-2s} \wh{\lb \RT_j f_j\rb}(y).
\Eeq
Next, for ${2-2s \notin 2\mathbb{Z}^+ \cup \{0\}}$ we have
\begin{align*}
   & \wh{( \Ac_{1,s}^{1,i} f)}(y)= -c_{n, s}^{1,1}\frac{2^{2s-2}\pi^{n/2}\Gamma(s-1)}{\Gamma(\frac{2+n-2s}{2})}\partial_{ij}|y|^{2-2s}\wh{f_j}(y)\\
    &\qquad =- c_{n, s}^{1,1}
    \frac{2^{2s-2}\pi^{n/2}\Gamma(s-1)}{\Gamma(\frac{2+n-2s}{2})}\partial_{ij}|y|^{2-2s}\wh{f_j}(y) \\
    &\qquad = -c_{n, s}^{1,1}
    \frac{2^{2s-2}\pi^{n/2}\Gamma(s-1)}{\Gamma(\frac{2+n-2s}{2})}(2-2s)\left[\delta_{ij}|y|^{-2s}-2sy_iy_j|y|^{-2-2s}\right]\wh{f_j}(y)\\ 
    &\qquad = - c_{n, s}^{1,1} \left(
    \frac{-2^{2s-1}\pi^{n/2}\Gamma(s)}{\Gamma(\frac{2+n-2s}{2})}\right)
    \left[\wh{f_i}(y)|y|^{-2s}-2s\frac{y_i}{|y|}y_j|y|^{-1-2s}\wh{f_j}(y)\right]\\
    &\qquad = - c_{n, s}^{1,1} \left(
    \frac{-2^{2s-1}\pi^{n/2}\Gamma(s)}{\Gamma(\frac{2+n-2s}{2})}\right)
    \left[ |y|^{-2s} \wh{f_i}(y) + 2s\rwh{ \lb \RT_i\Ac_{1,s}^{0} f\rb}(y)\right] \quad \lb \text{using \ref{vec_FT}} \rb.
\end{align*}
Substituting the value of $c^{1,1}_{n,s}$, we obtain
\Beq\label{vec_FT2}
    \wh{( \Ac_{1,s}^{1,i} f)}(y) = - |y|^{-2s} \wh{f_i}(y) - 2s\rwh{ \lb \RT_i\Ac_{1,s}^{0} f\rb}(y). 
\Eeq
    Note that for $s \in \lb 0,1\rb$, we have
$ 
 {2-2s}, {1-2s \notin 2 \mathbb{Z}^+ \cup \{0\}} \implies { s \notin \left\{\frac{1}{2},1 \right\}}.
$ This completes the proof.
\end{proof}

\begin{proof}[Proof of Theorem \ref{recon_vec_thm}]
 Lemma \ref{fourier_lemm} implies 
\begin{align}\label{vec_expression}
    \wh f_i(y) = |y|^{2s}\left[ -2s\wh{ \lb \RT_i\Ac_{1,s}^{0} f\rb}(y) -
    \wh{\lb \mathcal{A}_{1, s}^{1, i} f\rb}(y)
    \right] , \quad 
    {s \in \lb 0,1\rb \setminus  \left\{ \frac{1}{2} \right\} }.
\end{align}
By taking the inverse Fourier transform, we obtain
\Beq \label{vec_expression2}
    f_i(x) = (-\Delta)^{s}\left[ - 2s \RT_i\Ac_{1,s}^{0} f(x) -
     \mathcal{A}_{1, s}^{1, i} f(x)
    \right] , \quad 
    {s \in \lb 0,1\rb \setminus  \left\{ \frac{1}{2} \right\} }
\Eeq
This completes the proof.
\end{proof}
We now proceed to establish the unique continuation result for the vector field and note that Lemma \ref{UCP_FL} is crucial in this regard.
From \eqref{vec_FT}, we have
$
    |y|^{2s+1} \wh{ \lb \Ac_{1,s}^{0} f\rb}(y) = -\I  y \cdot  \wh{f}(y).
$
That is precisely we obtain
\Beq\label{vec_expression4}
    (-\Delta)^{s+\frac{1}{2}} \Ac_{1,s}^{0} f(x) = \nabla \cdot  {f}(x).
\Eeq
In view of \eqref{vec_FT2}, we have
\[
    |y|^{2s} \wh{( \Ac_{1,s}^{1,i} f)}(y)= -  \wh{f_i}(y) + 2s |y|^{2s} \rwh{ \lb \RT_i\Ac_{1,s}^{0} f\rb}(y),
\]
which is precisely
\Beq\label{vec_expression3}
    (-\Delta)^{s}  \Ac_{1,s}^{1,i} f(x)= -  f_i(x) + 2s (-\Delta)^{s} \RT_i\Ac_{1,s}^{0} f(x). 
\Eeq
\begin{theorem}[Unique continuation for vector fields]
    Let $n \geq 2$ be an integer, and let
    $s \in  \left(0, 1 \right)\setminus \left\{ \frac{1}{2} \right\}$,
    and $f = (f_j)_{j=1}^{n} \in \mathcal{S}(\R^n; S^1(\mathbb{R}^n))$. If there exists a non-empty open set $U \subset \R^n$ such that
    \[
        f = \mathcal{A}_{1,s}^{0}f =  \mathcal{A}_{1,s}^{1} f = 0 \text{ in } U, \text{ then } f=0 \text{ in } \R^n.
    \]
\end{theorem}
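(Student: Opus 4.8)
The plan is to invoke the fractional-Laplacian unique continuation principle (Lemma \ref{UCP_FL}) twice, using the two global pointwise identities \eqref{vec_expression4} and \eqref{vec_expression3} as the engine: first peel off the zeroth average $\mathcal{A}_{1,s}^{0}f$, then the first average $\mathcal{A}_{1,s}^{1}f$. The useful preliminary observation is that, because $f$ vanishes on the \emph{open} set $U$, every derivative of $f$ also vanishes on $U$; in particular $\nabla\cdot f=0$ and each $f_i=0$ on $U$.

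First I would treat $u:=\mathcal{A}_{1,s}^{0}f$. By hypothesis $u=0$ on $U$, and by \eqref{vec_expression4} we have $(-\Delta)^{s+\frac12}u=\nabla\cdot f=0$ on $U$. Since $s\in(0,1)\setminus\{\tfrac12\}$, the exponent $s+\tfrac12$ lies in $(\tfrac12,\tfrac32)\setminus\{1\}$ and is therefore non-integer; this is exactly the role of the excluded value $s=\tfrac12$, which would force $s+\tfrac12=1$ and invalidate Lemma \ref{UCP_FL}. Hence, provided $u$ is certified to lie in some $H^{r}(\mathbb{R}^n)$, Lemma \ref{UCP_FL} yields $\mathcal{A}_{1,s}^{0}f\equiv0$ on $\mathbb{R}^n$.

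With $\mathcal{A}_{1,s}^{0}f\equiv0$ in hand, the coupling term in \eqref{vec_expression3} disappears, since $\mathbf{R}_i\mathcal{A}_{1,s}^{0}f\equiv0$ and hence $(-\Delta)^{s}\mathbf{R}_i\mathcal{A}_{1,s}^{0}f\equiv0$; equivalently \eqref{vec_FT2} collapses to $\mathcal{A}_{1,s}^{1,i}f=-(-\Delta)^{-s}f_i$ globally, and \eqref{vec_expression3} reduces to the clean relation $(-\Delta)^{s}\mathcal{A}_{1,s}^{1,i}f=-f_i$ on all of $\mathbb{R}^n$. Restricting to $U$, both $\mathcal{A}_{1,s}^{1,i}f=0$ (hypothesis) and $(-\Delta)^{s}\mathcal{A}_{1,s}^{1,i}f=-f_i=0$ hold there; since $s\in(0,1)$ is non-integer, a second application of Lemma \ref{UCP_FL} gives $\mathcal{A}_{1,s}^{1,i}f\equiv0$ on $\mathbb{R}^n$. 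Feeding this back into $(-\Delta)^{s}\mathcal{A}_{1,s}^{1,i}f=-f_i$ forces $f_i\equiv0$, and as $1\le i\le n$ is arbitrary we conclude $f\equiv0$.

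The hard part will be certifying that the averages lie in a Sobolev space $H^{r}(\mathbb{R}^n)$ so that Lemma \ref{UCP_FL} may legitimately be invoked, since by Lemma \ref{Sec4:Lemma2} they are only known a priori to be smooth tempered distributions of polynomial growth $\langle x\rangle^{2s+\delta}$. For the second average this is painless: after the first step $\mathcal{A}_{1,s}^{1,i}f=-(-\Delta)^{-s}f_i=-I^{2s}f_i$, which by the Riesz-potential mapping \eqref{bdd_Riesz} sends the Schwartz function $f_i$ into $L^2=H^0$. For the zeroth average the delicate point is the low-frequency factor $|y|^{-2s}$ in \eqref{vec_FT}: writing $\mathcal{A}_{1,s}^{0}f=(-\Delta)^{-s}(\mathbf{R}_jf_j)=I^{2s}(\mathbf{R}_jf_j)$, one must combine the $L^p$-boundedness of the Riesz transforms (Theorem \ref{bdd_riesz}) with the Hardy--Littlewood--Sobolev bound \eqref{bdd_Riesz} for $I^{2s}$ to place $\mathcal{A}_{1,s}^{0}f$ in $L^2$; this is the technical heart of the argument and is precisely where the interplay between $s$ and $n$ must be controlled.
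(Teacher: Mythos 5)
Your proposal follows the paper's proof essentially verbatim: two successive applications of Lemma \ref{UCP_FL}, first to $\mathcal{A}_{1,s}^{0}f$ via \eqref{vec_expression4} with $\alpha=s+\frac{1}{2}\notin\mathbb{Z}$ (which is exactly where $s\neq\frac{1}{2}$ is used), then to $\mathcal{A}_{1,s}^{1,i}f$ via the reduced identity $(-\Delta)^{s}\mathcal{A}_{1,s}^{1,i}f=-f_i$ obtained from \eqref{vec_expression3} after the coupling term vanishes, followed by back-substitution to conclude $f\equiv 0$. The only divergence is that you explicitly flag the need to certify that the averages lie in some $H^{r}(\mathbb{R}^n)$ before Lemma \ref{UCP_FL} may be invoked — a hypothesis the paper's proof passes over silently — which is added care on your part rather than a defect.
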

\begin{proof}
    We first apply Lemma \ref{UCP_FL} to $\Ac_{1,s}^{0} f.$ In light of \eqref{vec_expression4}, we have
    \[
        \Ac_{1,s}^{0} f(x) = (-\Delta)^{s+\frac{1}{2}} \Ac_{1,s}^{0} f(x) = 0 \quad \text{ in } U.
    \]
    For $s+\frac{1}{2} \notin \Z$, Lemma \ref{UCP_FL} (with $u=\Ac_{1,s}^{0} f$ and $\alpha = s+\frac{1}{2}$) suggests that
    \Beq\label{vec_expression5}
        \Ac_{1,s}^{0} f \equiv 0 \quad \text{ in } \R^n.
    \Eeq
    Substituting \eqref{vec_expression5} in \eqref{vec_expression3}, we obtain $(-\Delta)^{s}  \Ac_{1,s}^{1,i} f(x)= -  f_i(x)$. Using Lemma \ref{UCP_FL} (with $u = \Ac_{1,s}^{1,i} f$ and $s =\alpha$) for $s \notin \Z$, we have
    \[
      \Ac_{1,s}^{1,i} f = (-\Delta)^{s}  \Ac_{1,s}^{1,i} f = 0 \;\; \text{ in } U 
    \]
    which implies $\Ac_{1,s}^{1,i} f = 0$  in $\R^n.$
    In summary, we have $\Ac_{1,s}^{0} f = \Ac_{1,s}^{1,i} f = 0 $ in $\R^n$ for 
    $s \in \lb 0, 1 \rb \setminus  \left\{ \frac{1}{2} \right\} .$
    This fact, along with \eqref{vec_expression2}, it follows that
    $f \equiv 0 \text{ in } \R^n.$
\end{proof}

\begin{theorem}[Measurable unique continuation principle for vector fields]
Let $n\geq 2$ be an integer and $s\in (0,1)\setminus \{\frac{1}{2}\}$ and let $f=(f_j)_{j=1}^{n}\in \mathcal{S}(\mathbb{R}^n;S^1(\mathbb{R}^n))$. Also, assume that $U\subset\mathbb{R}^n$ be any non-empty open set. If $f|_{U}=0$ and there exists a positive measure set $E\subset U$ 
 such that $\Ac_{1,s}^{0} f=\Ac_{1,s}^{1}f =0$ in $E$, then $f\equiv 0$ in $\mathbb{R}^n$.    
\end{theorem}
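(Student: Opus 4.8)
The plan is to run the proof of the preceding (topological) unique continuation theorem almost verbatim, substituting the measurable unique continuation principle for the fractional Laplacian in place of the classical one (Lemma \ref{UCP_FL}). The structural observation that makes this work is that, although the averages are only assumed to vanish on the positive-measure set $E$, the identities \eqref{vec_expression4} and \eqref{vec_expression3} still force the relevant fractional Laplacians to vanish on the \emph{entire} open set $U$: since $f\equiv 0$ on the open set $U$, every derivative of $f$ vanishes there too, so $\nabla\cdot f=0$ and each $f_i=0$ on $U$. Hence each average solves a homogeneous fractional equation on the open set $\Omega=U$ while simultaneously vanishing on $E\subset U$, which is precisely the configuration for which a measurable UCP is designed.

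Concretely, I would first treat $\Ac_{1,s}^{0}f$. By \eqref{vec_expression4} we have $(-\Delta)^{s+\frac12}\Ac_{1,s}^{0}f=\nabla\cdot f=0$ in $U$, while $\Ac_{1,s}^{0}f=0$ on $E$; the measurable UCP then yields $\Ac_{1,s}^{0}f\equiv 0$ in $\mathbb{R}^n$. Substituting this into \eqref{vec_expression3} collapses that identity to $(-\Delta)^{s}\Ac_{1,s}^{1,i}f=-f_i$, whose right-hand side again vanishes on $U$, while $\Ac_{1,s}^{1,i}f=0$ on $E$; a second application of the measurable UCP gives $\Ac_{1,s}^{1,i}f\equiv 0$ in $\mathbb{R}^n$ for each $i$. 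Feeding both conclusions into the reconstruction identity \eqref{vec_expression2} produces $f_i\equiv 0$ for all $i$, i.e.\ $f\equiv 0$ in $\mathbb{R}^n$.

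The step I expect to be the main obstacle is securing the two measurable unique continuation statements at the orders that actually occur, $\gamma=s+\frac12$ for the zeroth average and $\gamma=s$ for the first; both are non-integer precisely because $s\in(0,1)\setminus\{\tfrac12\}$. The delicate feature is that what is available to us is a \emph{local} hypothesis — a homogeneous equation on the open set $U$ together with vanishing of the average on $E\subset U$. For orders in $[\tfrac14,1)$ this is exactly Lemma \ref{measurable_UCP_fractional_Laplace} (take $q=0$, $\Omega=U$). For $\gamma=s+\frac12\in(1,\tfrac32)$, which arises when $s>\tfrac12$ and is unavoidable because only the local operator $\nabla\cdot f$, and not the nonlocal $\RT_jf_j$, is known to vanish on $U$, one must pass to the higher-order measurable UCP. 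Lemma \ref{sharp_measurable_UCP} is stated with a global differential inequality and a set of density one at the origin; I would reconcile this by using its underlying strong-continuation-from-a-density-point mechanism in local form, applying it near a Lebesgue density point $x_0$ of $E$ (which exists for any positive-measure set) after translating $x_0$ to the origin, a harmless step since $-\Delta$ and $\tilde a=\delta^{ij}$ are translation invariant. Finally one should check that the averages are legitimate inputs, lying in the relevant domain/Sobolev class by the growth and regularity bounds of Lemmas \ref{sec4:Lemma1.2} and \ref{Sec4:Lemma2}, exactly as in the proof of the preceding theorem.
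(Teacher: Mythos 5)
Your proposal matches the paper's proof essentially verbatim: the paper likewise uses $f|_U=0$ and \eqref{vec_expression4} to get $(-\Delta)^{s+\frac{1}{2}}\Ac_{1,s}^{0}f=0$ in $U$, applies the measurable UCP (Lemma \ref{sharp_measurable_UCP} with $\tilde a=\delta^{ij}$, $q=0$) to conclude $\Ac_{1,s}^{0}f\equiv 0$, then repeats the argument with \eqref{vec_expression3} for $\Ac_{1,s}^{1,i}f$ and finishes via the reconstruction identity. The only difference is that you explicitly address reconciling the global hypothesis and density-one point in Lemma \ref{sharp_measurable_UCP} with the local data on $U$ and $E$ (via translation to a Lebesgue density point), a point the paper's proof passes over by invoking the lemma directly --- so if anything your write-up is slightly more careful than the original.
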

\begin{proof}
    Since $f=0$ in $U$, it follows from \eqref{vec_expression4} that
    $
    (-\Delta)^{s+\frac{1}{2}}\Ac_{1,s}^{0} f = 0 \ \ \text{in} \ \ U.
    $
By the measurable UCP, Lemma \ref{sharp_measurable_UCP}, we have $\Ac_{1,s}^{0} f = 0$ in $\mathbb{R}^n$. Thus the equation \eqref{vec_expression3} reduces to $(-\Delta)^{s}  \Ac_{1,s}^{1,i} f(x)= -  f_i(x)$ for all $x\in \mathbb{R}^n$. Since $f_{i}|_{U}=0$, we infer that $(-\Delta)^{s}  \Ac_{1,s}^{1,i} f(x)=0$ for all $x\in U$. Note that $\Ac_{1,s}^{1,i} f(x)=0$ in $E$. Applying the measurable UCP, Lemma \ref{sharp_measurable_UCP}, we obtain $\Ac_{1,s}^{1,i} f(x)=0$ in $\mathbb{R}^n$. Finally, it follows from the equation \eqref{vec_expression3} that $f\equiv 0$ in $\mathbb{R}^n$.
\end{proof}
\subsection{Symmetric 2-tensor fields}
Consider a symmetric 2-tensor field $f\in \mathcal{S}(\mathbb{R}^n; S^2(\mathbb{R}^n)).$ Using Lemma \ref{Sec4:Lemma1}, the convolution form of the associated averages are given as follows: for $1 \leq i_1,i_2 \leq n,$
    \Beq\label{sec4:eqn4}
    \begin{aligned}
    \left(\mathcal{A}_{2, s}^0 f\right)(x) &= c_{n, s}^{2,0} \left( |x|^{2s-n-2}x_{j_1}x_{j_2} \right) *  f_{j_1j_2} (x). \\
    \left(\mathcal{A}_{2, s}^{1,i_1} f\right)(x) &= -c_{n, s}^{2,1} \left( |x|^{2s-n-3}x_{i_1}x_{j_1}x_{j_2
    } \right) *  f_{j_1j_2} (x). \\
     \left(\mathcal{A}_{2, s}^{2,i_1,i_2} f\right)(x) &= c_{n, s}^{2,2} \left(|x|^{2s-n-4}x_{i_1} x_{i_2} x_{j_1}x_{j_2
    }\right) *  f_{j_1j_2}(x).
    \end{aligned}
    \Eeq
\noindent To obtain a reconstruction formula for the symmetric 2-tensor field $f$, we first apply the Fourier transform to the averages given in \eqref{sec4:eqn4}.
\begin{lemma}\label{2TF_lem2}
     For 
     $s \in \lb 0,1\rb \setminus  \left\{ \frac{1}{2} \right\}$
     and $y \in \R^n\setminus\{0\},$ the Fourier transforms of the averages $\Ac_{2,s}^{0} f$, $\Ac_{2,s}^{1} f$ and $\Ac_{2,s}^{2} f$ are as follows:
     \begin{align*}
     \wh {\left(\mathcal{A}_{2, s}^0 f\right)}(y) &= - |y|^{-2s}\left[\wh f_{j_1j_1} (y) + 2s \rwh{\lb  \RT_{j_1} \RT_{j_2} f_{j_1j_2} \rb } (y) \right].\\
         \wh {\left(\mathcal{A}_{2, s}^{1,i_1} f\right)} (y) 
        &=  -\rwh{\lb \RT_{i_1} \mathcal{A}_{2, s}^{0} f \rb} (y) + |y|^{-2s} \left[  2 \rwh{ \lb \RT_{j_1} f_{j_1i_1}  \rb} (y)  + \rwh{ \lb \RT_{i_1} \RT_{j_1}\RT_{j_2}f_{j_1j_2} \rb }  (y) \right].\\
        \wh {\left(\mathcal{A}_{2, s}^{2,i_1i_2} f\right)} (y)
        &=-
        2 |y|^{-2s} \wh{f}_{i_1i_2}(y) + \delta_{i_1i_2} \wh {\left(\mathcal{A}_{2, s}^0 f\right)}(y) {-2s} \rwh{ \lb \RT_{i_1}\RT_{i_2} \mathcal{A}_{2, s}^0 f \rb }(y) \\
    & \qquad -2s \lb 
    \rwh{ \lb \RT_{i_1} \mathcal{A}_{2, s}^{1,i_2} f \rb }(y) 
    + \rwh{ \lb \RT_{i_2} \mathcal{A}_{2, s}^{1,i_1} f \rb }(y) \rb.
     \end{align*}
\end{lemma}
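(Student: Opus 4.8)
The plan is to apply the convolution theorem to each of the three representations in \eqref{sec4:eqn4} and reduce everything to computing the Fourier transform of the weighted kernels $|x|^{2s-n-2-k}\,x_{i_1}\cdots x_{i_k}\,x_{j_1}x_{j_2}$ for $k=0,1,2$. This is exactly the strategy already carried out for vector fields in Lemma \ref{fourier_lemm}, and the two essential tools are the same. The first is property (1) of the Fourier transform in the form $\wh{x_\ell g}(y)=\I\,\partial_{y_\ell}\wh{g}(y)$ (obtained by taking $\beta=0$ there), which converts each factor $x_\ell$ into a derivative in the frequency variable; the second is property (4), which gives
\[
\wh{\lb|x|^{2s-n-2-k}\rb}(y)=2^{2s-2-k}\pi^{\frac n2}\frac{\Gamma\lb s-1-\tfrac k2\rb}{\Gamma\lb\tfrac{n-2s+2+k}{2}\rb}\,|y|^{2+k-2s},
\]
valid away from the exceptional exponents. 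Consequently the kernel of $\mathcal A_{2,s}^{k}$ transforms into $\I^{\,2+k}$ times the $(2+k)$-th order derivative $\partial_{i_1}\cdots\partial_{i_k}\partial_{j_1}\partial_{j_2}|y|^{2+k-2s}$ of an explicit power of $|y|$.

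First I would treat $\mathcal A_{2,s}^0$, which fixes the pattern. Here only the second derivative $\partial_{j_1}\partial_{j_2}|y|^{2-2s}=(2-2s)\left[\delta_{j_1j_2}|y|^{-2s}-2s\,y_{j_1}y_{j_2}|y|^{-2s-2}\right]$ is needed; the identity $(2-2s)\Gamma(s-1)=-2\Gamma(s)$ collapses the Gamma factor, and inserting $c_{n,s}^{2,0}=-\Gamma(\tfrac{n+2-2s}{2})/(2^{2s-1}\pi^{\frac n2}\Gamma(s))$ cancels the $\Gamma$'s completely. Recognising $y_{j_1}y_{j_2}|y|^{-2}\wh{f}_{j_1j_2}=-\wh{\lb\RT_{j_1}\RT_{j_2}f_{j_1j_2}\rb}$ via \eqref{Fourier_Riesz_transform_star} then yields the stated expression for $\wh{\lb\mathcal A_{2,s}^0 f\rb}$. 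The cases $k=1,2$ follow the identical recipe with the third and fourth derivatives of $|y|^{3-2s}$ and $|y|^{4-2s}$, the repeated reduction $(\,\cdots\,)\,\Gamma\lb s-1-\tfrac k2\rb\to\pm2\Gamma(s)$, and the matching normalisations $c_{n,s}^{2,1}$, $c_{n,s}^{2,2}$ (remembering the explicit sign in the kernel of $\mathcal A_{2,s}^{1,i_1}$ from \eqref{sec4:eqn4}).

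The bookkeeping in the fourth-order derivative for $k=2$ is the main obstacle. Differentiating $|y|^{4-2s}$ four times produces, after repeated application of $\partial_{y_\ell}\lb y_p|y|^{-2s-2r}\rb=\delta_{\ell p}|y|^{-2s-2r}-(2s+2r)y_\ell y_p|y|^{-2s-2r-2}$, a sum of terms of three types: those carrying two Kronecker deltas, those carrying a single delta against a product $y\!\cdot\! y$, and the fully contracted term $y_{i_1}y_{i_2}y_{j_1}y_{j_2}|y|^{-2s-4}$. The point is to \emph{not} expand everything into Riesz transforms but instead to recognise partial sums as already-computed averages: the $\delta_{i_1i_2}$ contribution reassembles into $\wh{\lb\mathcal A_{2,s}^0 f\rb}$, the pieces contracting one free index $i_1$ or $i_2$ against a $y$ reassemble, after a factor $y_{i}/|y|$ interpreted through \eqref{Fourier_Riesz_transform}, into $\wh{\lb\RT_{i_1}\RT_{i_2}\mathcal A_{2,s}^0 f\rb}$ and $\wh{\lb\RT_{i_1}\mathcal A_{2,s}^{1,i_2}f\rb}+\wh{\lb\RT_{i_2}\mathcal A_{2,s}^{1,i_1}f\rb}$, while the remaining genuinely new term yields the leading $-2|y|^{-2s}\wh{f}_{i_1i_2}$. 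Carrying out this grouping, together with the same $\Gamma$-cancellations as above, produces exactly the third identity of the lemma. Throughout, the restriction $s\in(0,1)\setminus\{\tfrac12\}$ is forced, just as in Lemma \ref{fourier_lemm}, by requiring the exponents $2+k-2s$ to avoid the poles of property (4).
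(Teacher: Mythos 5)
Your proposal is correct and follows essentially the same route as the paper's proof: Fourier transform of the convolution representations in \eqref{sec4:eqn4}, conversion of the moment factors into $y$-derivatives of $|y|^{2+k-2s}$ via properties (1) and (4), cancellation of the Gamma factors against $c_{n,s}^{2,k}$, identification of Riesz transforms through \eqref{Fourier_Riesz_transform} and \eqref{Fourier_Riesz_transform_star}, and, crucially for $k=2$, regrouping the fourth-derivative terms into the already-computed averages $\mathcal{A}_{2,s}^{0}$ and $\mathcal{A}_{2,s}^{1,i}$ rather than fully expanding. The only cosmetic imprecision is your pattern ``$(\cdots)\,\Gamma\lb s-1-\tfrac k2\rb\to\pm 2\Gamma(s)$'', which for odd $k$ actually produces $\Gamma\lb s+\tfrac12\rb$ (matching the $(m+k)\bmod 2$ structure built into $c_{n,s}^{2,k}$), but this does not affect the validity of the argument.
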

\begin{proof}
We apply the Fourier transform to the expression given in \eqref{sec4:eqn4}.

(i) For $ 2-2s \notin 2\mathbb{Z}^+ \cup \{0\}$:
    \begin{align*}
   & \wh {\left(\mathcal{A}_{2, s}^0 f\right)}(y) = -c_{n, s}^{2,0} \frac{2^{2s-2}\pi^{n/2}\Gamma(s-1)}{\Gamma\left(\frac{n+2-2s}{2} \right) }\partial_{j_1j_2}|y|^{2-2s} \wh f_{j_1j_2} (y)\\
     &\qquad = -c_{n, s}^{2,0} \lb \frac{2^{2s-2}\pi^{n/2}\Gamma(s-1)}{\Gamma\left(\frac{n+2-2s}{2} \right) } \rb (2-2s)|y|^{-2s}\left[\delta_{j_1j_2}+(-2s)\frac{y_{j_1}y_{j_2}}{|y|^2}\right] \wh f_{j_1j_2} (y)\\
     &\qquad = -c_{n, s}^{2,0} \lb \frac{-2^{2s-1}\pi^{n/2}\Gamma(s)}{\Gamma\left(\frac{n+2-2s}{2} \right) } \rb |y|^{-2s}\left[\wh f_{j_1j_1} (y) +(-2s)\frac{y_{j_1}y_{j_2}}{|y|^2} \wh f_{j_1j_2} (y) \right] \\
     &\qquad = -c_{n, s}^{2,0} \lb \frac{-2^{2s-1}\pi^{n/2}\Gamma(s)}{\Gamma\left(\frac{n+2-2s}{2} \right) } \rb |y|^{-2s}\left[\wh f_{j_1j_1} (y) + 2s \rwh{\lb  \RT_{j_1} \RT_{j_2} f_{j_1j_2} \rb } (y) \right].
\end{align*}
Substituting the value of $c^{2,0}_{n,s}$, we obtain
\[
   \wh {\left(\mathcal{A}_{2, s}^0 f\right)}(y) = - |y|^{-2s}\left[\wh f_{j_1j_1} (y) + 2s \rwh{\lb  \RT_{j_1} \RT_{j_2} f_{j_1j_2} \rb } (y) \right].
\]
(ii) For $3-2s \notin 2\mathbb{Z}^+ \cup \{0\}$:
\begin{align*}
   & \wh {\left(\mathcal{A}_{2, s}^{1,i_1} f\right)} (y) = ic_{n, s}^{2,1} \frac{2^{2s-3}\pi^{n/2}\Gamma\left(\frac{2s-3}{2}\right)}{\Gamma\left(\frac{n+3-2s}{2} \right) }\partial_{i_1j_1j_2}|y|^{3-2s} \wh f_{j_1j_2} (y)\\
    &\qquad= ic_{n, s}^{2,1} \frac{2^{2s-3}\pi^{n/2}\Gamma\left(\frac{2s-3}{2}\right)}{\Gamma\left(\frac{n+3-2s}{2} \right) } 
    (3-2s)(1-2s)|y|^{-1-2s}\\
&\qquad\qquad \times \left[y_{i_1}\delta_{j_1j_2}+y_{j_1}\delta_{i_1j_2}+y_{j_2}\delta_{i_1j_1}+(-1-2s)y_{i_1}y_{j_1}y_{j_2}|y|^{-2} \right] \wh f_{j_1j_2} (y)\\
&\qquad= -ic_{n, s}^{2,1} \lb \frac{-2^{2s-1}\pi^{n/2}\Gamma\left(\frac{2s+1}{2}\right)}{\Gamma\left(\frac{n+3-2s}{2} \right) } \rb 
    |y|^{-2s} \bigg[ \frac{y_{i_1}}{|y|} \wh f_{j_1j_1} (y) \\
&\qquad\qquad  + \frac{y_{j_1}}{|y|} \wh f_{j_1i_1} (y) + \frac{y_{j_2}}{|y|} \wh f_{i_1j_2} (y) -(1+2s)\frac{y_{i_1}y_{j_1}y_{j_2}}{|y|^{3}} \wh f_{j_1j_2} (y) \bigg]\\
&\qquad= c_{n, s}^{2,1} \lb \frac{-2^{2s-1}\pi^{n/2}\Gamma\left(\frac{2s+1}{2}\right)}{\Gamma\left(\frac{n+3-2s}{2} \right) } \rb 
    |y|^{-2s} \Big[ \rwh{ \lb \RT_{i_1}f_{j_1j_1} \rb } (y)\\
&\qquad\qquad +  \rwh{ \lb \RT_{j_1} f_{j_1i_1}  \rb} (y) +  \rwh{ \lb \RT_{j_2} f_{i_1j_2} \rb }(y) +(1+2s) \rwh{ \lb \RT_{i_1} \RT_{j_1}\RT_{j_2}f_{j_1j_2} \rb }  (y) \Big].
\end{align*}
Substituting the value of $c^{2,1}_{n,s}$, we obtain
\begin{align*}
   & \wh {\left(\mathcal{A}_{2, s}^{1,i_1} f\right)} (y) \\
    &= |y|^{-2s} \left[ \rwh{ \lb \RT_{i_1}f_{j_1j_1} \rb } (y)+  2 \rwh{ \lb \RT_{j_1} f_{j_1i_1}  \rb} (y)  +(1+2s) \rwh{ \lb \RT_{i_1} \RT_{j_1}\RT_{j_2}f_{j_1j_2} \rb }  (y) \right]\\
    &=  -\rwh{\lb \RT_{i_1} \mathcal{A}_{2, s}^{0} f \rb} (y) + |y|^{-2s} \left[  2 \rwh{ \lb \RT_{j_1} f_{j_1i_1}  \rb} (y)  + \rwh{ \lb \RT_{i_1} \RT_{j_1}\RT_{j_2}f_{j_1j_2} \rb }  (y) \right].
\end{align*}
(iii) For $4-2s \notin 2\mathbb{Z}^+ \cup \{0\}$:
\begin{align*}
     \wh {\left(\mathcal{A}_{2, s}^{2,i_1i_2} f\right)} (y) &= c_{n, s}^{2,2} \frac{2^{2s-4}\pi^{n/2}\Gamma\left(\frac{2s-4}{2}\right)}{\Gamma\left(\frac{n+4-2s}{2} \right) }\partial_{i_1i_2j_1j_2}|y|^{4-2s} \wh f_{j_1j_2}(y).
\end{align*}
Note that
\[
\begin{split}
     &   \partial_{i_1i_2j_1j_2}|y|^{4-2s} \wh f_{j_1j_2}(y) \\
    &\quad=(4-2s)(2-2s)|y|^{-2s}\Bigg[\delta_{j_1j_2}\delta_{i_1i_2} +\delta_{j_1i_2}\delta_{i_1j_2}+\delta_{i_1j_1}\delta_{i_2j_2} \\
    &\quad\quad +(-2s)\left(\delta_{j_1j_2}y_{i_1}y_{i_2}+\delta_{j_1i_2}y_{i_1}y_{j_2}+\delta_{i_1j_1}y_{i_2}y_{j_2}+y_{j_1}y_{j_2}\delta_{i_1i_2}+y_{j_1}y_{i_2}\delta_{i_1j_2}+y_{i_1}y_{j_1}\delta_{i_2j_2} \right)\frac{1}{|y|^2}\\
    &\quad\quad +(-2s)(-2s-2) \frac{y_{i_1}y_{i_2}y_{j_1}y_{j_2}}{|y|^4} \Bigg] \wh f_{j_1j_2}(y)\\
    &\quad= (4-2s)(2-2s)|y|^{-2s}\Bigg[\delta_{i_1i_2} \wh{f}_{j_1j_1}+ \wh{f}_{i_2i_1} + \wh{f}_{i_1i_2} \\
    &\quad\quad +(-2s)\left(y_{i_1}y_{i_2} \wh{f}_{j_1j_1} + y_{i_1}y_{j_2} \wh{f}_{i_2j_2}+ y_{i_2}y_{j_2} \wh{f}_{i_1j_2} +y_{j_1}y_{i_2}\wh{f}_{j_1i_1}+y_{i_1}y_{j_1}\wh{f}_{j_1i_2} \right)\frac{1}{|y|^2}\\
     &\quad\quad+(-2s)y_{j_1}y_{j_2}\delta_{i_1i_2}( \wh{f}_{j_1j_2})\frac{1}{|y|^2}+(-2s)(-2s-2) \frac{y_{i_1}y_{i_2}y_{j_1}y_{j_2}}{|y|^4} \wh{f}_{j_1j_2}\Bigg]. 
\end{split}
\]
Simplifying further, we have
\[
\begin{split}
   &   \partial_{i_1i_2j_1j_2}|y|^{4-2s} \wh f_{j_1j_2}(y)  \\
    &\quad= (4-2s)(2-2s)|y|^{-2s}\Bigg[\delta_{i_1i_2} \lb \wh{f}_{j_1j_1} +2s \rwh{\lb \RT_{j_1}\RT_{j_2} f_{j_1j_2} \rb } \rb + 2 \wh{f}_{i_1i_2} \\
    &\quad\quad + 2s\left( \rwh{\lb \RT_{i_1}\RT_{i_2}f_{j_1j_1} \rb} +  \rwh{\lb \RT_{i_1}\RT_{j_2}f_{i_2j_2} \rb}+  \rwh{\lb \RT_{i_2}\RT_{j_2} f_{i_1j_2} \rb} +  \rwh{\lb \RT_{j_1}\RT_{i_2}f_{j_1i_1} \rb } \right)\\
    &\quad\quad +2s\rwh{\lb \RT_{i_1}\RT_{j_1} f_{j_1i_2} \rb}+2s(2+2s) \rwh{\lb \RT_{i_1}\RT_{i_2}\RT_{j_1}\RT_{j_2} f_{j_1j_2} \rb }\Bigg]. 
\end{split}
\]
Hence, we have
\[
\begin{split}
        &\wh {\left(\mathcal{A}_{2, s}^{2,i_1i_2} f\right)} (y) = c_{n, s}^{2,2} \frac{2^{2s-4}\pi^{n/2}\Gamma\left(\frac{2s-4}{2}\right)}{\Gamma\left(\frac{n+4-2s}{2} \right) }\partial_{i_1i_2j_1j_2}|y|^{4-2s} \wh f_{j_1j_2}(y)\\
    &\quad= c_{n, s}^{2,2} \frac{2^{2s-4}\pi^{n/2}\Gamma\left(\frac{2s-4}{2}\right)}{\Gamma\left(\frac{n+4-2s}{2} \right) } (4-2s)(2-2s)
    |y|^{-2s}\Bigg[\delta_{i_1i_2} \lb \wh{f}_{j_1j_1} +2s \rwh{\lb \RT_{j_1}\RT_{j_2} f_{j_1j_2} \rb } \rb + 2 \wh{f}_{i_1i_2} \\
    &\quad\quad + 2s\left( \rwh{\lb \RT_{i_1}\RT_{i_2}f_{j_1j_1} \rb} +  \rwh{\lb \RT_{i_1}\RT_{j_2}f_{i_2j_2} \rb}+  \rwh{\lb \RT_{i_2}\RT_{j_2} f_{i_1j_2} \rb} +  \rwh{\lb \RT_{j_1}\RT_{i_2}f_{j_1i_1} \rb }\right)\\
    &\quad\quad+2s \rwh{\lb \RT_{i_1}\RT_{j_1} f_{j_1i_2} \rb}  +2s(2+2s) \rwh{\lb \RT_{i_1}\RT_{i_2}\RT_{j_1}\RT_{j_2} f_{j_1j_2} \rb }\Bigg] \\
     &\quad= -c_{n, s}^{2,2} \lb \frac{-2^{2s-2}\pi^{n/2}\Gamma\left(s\right)}{\Gamma\left(\frac{n+4-2s}{2} \right) } \rb 
    |y|^{-2s}\Bigg[\delta_{i_1i_2} \lb \wh{f}_{j_1j_1} +2s \rwh{\lb \RT_{j_1}\RT_{j_2} f_{j_1j_2} \rb } \rb + 2 \wh{f}_{i_1i_2} \\
    &\quad\quad + 2s\left( \rwh{\lb \RT_{i_1}\RT_{i_2}f_{j_1j_1} \rb} +2s \rwh{\lb \RT_{i_1}\RT_{i_2}\RT_{j_1}\RT_{j_2} f_{j_1j_2} \rb } \right)\\ 
    &\quad\quad  + 4s\left(   \rwh{\lb \RT_{i_1}\RT_{j_2}f_{i_2j_2} \rb}+  \rwh{\lb \RT_{i_2}\RT_{j_2} f_{i_1j_2} \rb}  
    +\rwh{\lb \RT_{i_1}\RT_{i_2}\RT_{j_1}\RT_{j_2} f_{j_1j_2} \rb } \right) \Bigg]
    \end{split}
    \]
which we simplify further to obtain
\[
\begin{split}
     &\wh {\left(\mathcal{A}_{2, s}^{2,i_1i_2} f\right)} (y)  \\  &\quad= -
    2 |y|^{-2s} \wh{f}_{i_1i_2} + \delta_{i_1i_2} \wh {\left(\mathcal{A}_{2, s}^0 f\right)}(y) + 2s\rwh{ \lb \RT_{i_1}\RT_{i_2} \mathcal{A}_{2, s}^0 f \rb }(y)\\
    &\quad\quad- 4s
    |y|^{-2s}\Bigg[ 
    \left(   \rwh{\lb \RT_{i_1}\RT_{j_2}f_{i_2j_2} \rb}+  \rwh{\lb \RT_{i_2}\RT_{j_2} f_{i_1j_2} \rb} + \rwh{\lb \RT_{i_1}\RT_{i_2}\RT_{j_1}\RT_{j_2} f_{j_1j_2} \rb } \right) \Bigg]\\
    &\quad= -
    2 |y|^{-2s} \wh{f}_{i_1i_2}(y) + \delta_{i_1i_2} \wh {\left(\mathcal{A}_{2, s}^0 f\right)}(y) + 2s\rwh{ \lb \RT_{i_1}\RT_{i_2} \mathcal{A}_{2, s}^0 f \rb }(y) \\
    &\quad\quad - 2s \lb 2\rwh{ \lb \RT_{i_1}\RT_{i_2} \mathcal{A}_{2, s}^0 f \rb }(y)
    + \rwh{ \lb \RT_{i_1} \mathcal{A}_{2, s}^{1,i_2} f \rb }(y) 
    + \rwh{ \lb \RT_{i_2} \mathcal{A}_{2, s}^{1,i_1} f \rb }(y) \rb\\
    &\quad= -
    2 |y|^{-2s} \wh{f}_{i_1i_2}(y) + \delta_{i_1i_2} \wh {\left(\mathcal{A}_{2, s}^0 f\right)}(y) -2s \rwh{ \lb \RT_{i_1}\RT_{i_2} \mathcal{A}_{2, s}^0 f \rb }(y) \\
    &\quad\quad -2s \lb 
    \rwh{ \lb \RT_{i_1} \mathcal{A}_{2, s}^{1,i_2} f \rb }(y) 
    + \rwh{ \lb \RT_{i_2} \mathcal{A}_{2, s}^{1,i_1} f \rb }(y) \rb.
\end{split}
\]
Note that for $s \in \lb 0,1\rb$, we have 
$
2-2s,3-2s,4-2s \notin 2\mathbb{Z}^+ \cup \{0\} $ which implies $ s \notin \left\{ \frac{1}{2}, 1, \frac{3}{2} ,2\right\}.
$
This completes the proof.
\end{proof}

\begin{theorem}
    For 
    $s \in \lb 0,1\rb \setminus  \left\{ \frac{1}{2} \right\}$,
    the reconstruction formula for the symmetric 2-tensor field $f$ in terms of the above averages is given by
     \Beq \label{2TF_exp4}
        \begin{aligned}
    {f}_{i_1i_2}(x) &= \frac{1}{2} (-\Delta)^{s} \Bigg[ 
       \delta_{i_1i_2}  {\left(\mathcal{A}_{2, s}^0 f\right)}(x) {-2s} { \lb \RT_{i_1}\RT_{i_2} \mathcal{A}_{2, s}^0 f \rb }(x) - {\left(\mathcal{A}_{2, s}^{2,i_1i_2} f\right)} (x)  \\
    &\qquad\qquad\qquad\quad - 2s \lb 
     { \lb \RT_{i_1} \mathcal{A}_{2, s}^{1,i_2} f \rb }(x) 
    + { \lb \RT_{i_2} \mathcal{A}_{2, s}^{1,i_1} f \rb }(x) \rb 
    \Bigg].
    \end{aligned}
     \Eeq
\end{theorem}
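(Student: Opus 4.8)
The plan is to read the reconstruction formula \eqref{2TF_exp4} directly off the third identity of Lemma \ref{2TF_lem2}, which is the only one of the three Fourier-side formulas that isolates $\wh f_{i_1i_2}$ linearly. That identity reads
\[
\wh{\lb \Ac_{2,s}^{2,i_1i_2} f\rb}(y) = -2|y|^{-2s}\wh f_{i_1i_2}(y) + \delta_{i_1i_2}\wh{\lb\Ac_{2,s}^0 f\rb}(y) - 2s\,\rwh{\lb \RT_{i_1}\RT_{i_2}\Ac_{2,s}^0 f\rb}(y) - 2s\lb \rwh{\lb \RT_{i_1}\Ac_{2,s}^{1,i_2} f\rb}(y) + \rwh{\lb \RT_{i_2}\Ac_{2,s}^{1,i_1}f\rb}(y)\rb,
\]
so every term other than $-2|y|^{-2s}\wh f_{i_1i_2}$ is explicitly computable from the averages $\Ac_{2,s}^0 f$, $\Ac_{2,s}^{1}f$ and $\Ac_{2,s}^{2}f$. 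First I would transpose the $\wh f_{i_1i_2}$ term to the left and move $\wh{\lb\Ac_{2,s}^{2,i_1i_2}f\rb}$ to the right, producing an expression for $2|y|^{-2s}\wh f_{i_1i_2}(y)$ as a linear combination of the Fourier transforms of the three averages and their compositions with the Riesz multipliers.

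Next I would multiply through by $\tfrac12|y|^{2s}$ to solve for the tensor, giving
\[
\wh f_{i_1i_2}(y) = \tfrac12 |y|^{2s}\Big[\delta_{i_1i_2}\wh{\lb\Ac_{2,s}^0 f\rb}(y) - 2s\,\rwh{\lb\RT_{i_1}\RT_{i_2}\Ac_{2,s}^0 f\rb}(y) - \wh{\lb\Ac_{2,s}^{2,i_1i_2}f\rb}(y) - 2s\lb\rwh{\lb\RT_{i_1}\Ac_{2,s}^{1,i_2}f\rb}(y) + \rwh{\lb\RT_{i_2}\Ac_{2,s}^{1,i_1}f\rb}(y)\rb\Big].
\]
Finally I would take the inverse Fourier transform term by term. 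Since $|y|^{2s}$ is precisely the Fourier multiplier of $(-\Delta)^s$ (Section \ref{sec:Pre}), the inverse transform of $|y|^{2s}\wh g(y)$ equals $(-\Delta)^s g$; applying this to each bracketed term, and using that the Riesz transforms act as the bounded multipliers $-\mathrm i\,y_\ell/|y|$ and commute with $(-\Delta)^s$, yields exactly \eqref{2TF_exp4}. This is the direct analogue of the passage from \eqref{vec_expression} to \eqref{vec_expression2} in the proof of Theorem \ref{recon_vec_thm}.

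The genuine content is already behind us: it lies in Lemma \ref{2TF_lem2}, where the three convolution kernels of \eqref{sec4:eqn4} are Fourier-transformed and the resulting Gamma-factor constants are checked to cancel against $c_{n,s}^{2,k}$. Consequently the only remaining obstacle is bookkeeping — rearranging one linear identity and inverting it — together with two mild analytic justifications: that the averages are smooth tempered distributions whose Fourier transforms are well defined (Lemmas \ref{sec4:Lemma1.2} and \ref{Sec4:Lemma2}), and that $|y|^{2s}$ is an admissible multiplier so that the inverse transform is meaningful in the tempered-distribution sense. The hypothesis $s\in(0,1)\setminus\{\tfrac12\}$ is exactly what keeps every Gamma function in Lemma \ref{2TF_lem2} finite, since the conditions $2-2s,\,3-2s,\,4-2s\notin 2\mathbb{Z}^+\cup\{0\}$ reduce, within $(0,1)$, to $s\neq\tfrac12$; hence no further restriction on $s$ is required.
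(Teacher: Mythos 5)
Your proposal is correct and follows essentially the same route as the paper: the paper's proof likewise rearranges the third Fourier-side identity of Lemma \ref{2TF_lem2} to isolate $\wh f_{i_1i_2}$, multiplies by $\tfrac12|y|^{2s}$, and applies the inverse Fourier transform, exactly mirroring the passage from \eqref{vec_expression} to \eqref{vec_expression2}. Your added remarks on the temperedness of the averages and on the admissibility of the multiplier $|y|^{2s}$ are sound justifications that the paper leaves implicit.
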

\begin{proof}
    From the expression for $\wh {\lb \mathcal{A}_{2, s}^{2,i_1i_2} f \rb }$ we have
\begin{align*}
    \wh{f}_{i_1i_2}(y) &= \frac{|y|^{2s}}{2} \Bigg[ 
       \delta_{i_1i_2} \wh {\left(\mathcal{A}_{2, s}^0 f\right)}(y) {-2s} \rwh{ \lb \RT_{i_1}\RT_{i_2} \mathcal{A}_{2, s}^0 f \rb }(y) - \wh {\left(\mathcal{A}_{2, s}^{2,i_1i_2} f\right)} (y)  \\
    &\qquad\qquad\quad - 2s \lb 
     \rwh{ \lb \RT_{i_1} \mathcal{A}_{2, s}^{1,i_2} f \rb }(y) 
    + \rwh{ \lb \RT_{i_2} \mathcal{A}_{2, s}^{1,i_1} f \rb }(y) \rb 
    \Bigg].
\end{align*}
Applying the inverse Fourier transform to the above identity yields \eqref{2TF_exp4}.
\end{proof}
\noindent Note that, from Lemma \ref{2TF_lem2},
\begin{align*}
        \wh {\left(\mathcal{A}_{2, s}^0 f\right)}(y) &=  |y|^{-2s-2}\left[ |y|^2 \wh f_{j_1j_1} (y) +(-2s)y_{j_1}y_{j_2}\wh f_{j_1j_2} (y) \right]\\
        \wh {\left(\mathcal{A}_{2, s}^{1,i_1} f\right)} (y) &= |y|^{-2s-3}\big[ y_{i_1} |y|^2  \wh f_{j_1j_1} (y)+ y_{j_1}|y|^2 \wh f_{j_1i_1} (y)  \\
        & \qquad\qquad + y_{j_2}|y|^2 \wh f_{i_1j_2} (y) -(1+2s)y_{i_1}y_{j_1}y_{j_2} \wh f_{j_1j_2} (y) \big].
\end{align*}
\noindent This implies
\Beq\label{2TF_exp1}
\begin{rcases}
    \begin{aligned}
     |y|^{2s+2} \wh {\left(\mathcal{A}_{2, s}^0 f\right)}(y) &=  |y|^2 \wh f_{j_1j_1} (y) +(-2s)y_{j_1}y_{j_2}\wh f_{j_1j_2} (y) \\
     |y|^{2s+3}\wh {\left(\mathcal{A}_{2, s}^{1,i_1} f\right)} (y) &=  y_{i_1} |y|^2  \wh f_{j_1j_1} (y)+ y_{j_1}|y|^2 \wh f_{j_1i_1} (y) + y_{j_2}|y|^2 \wh f_{i_1j_2} (y) \\
    &\qquad -(1+2s)y_{i_1}y_{j_1}y_{j_2} \wh f_{j_1j_2} (y) 
\end{aligned}
\end{rcases}
 \Eeq 
which is precisely 
\Beq\label{2TF_exp2}
\begin{rcases}
    \begin{aligned}
    (-\Delta)^{s+1}  {\left(\mathcal{A}_{2, s}^0 f\right)}(x) &=  -\Delta  f_{j_1j_1} (x) + 2s \partial_{x_{j_1}} \partial_{x_{j_2}}  f_{j_1j_2} (x)\\
    (-\Delta)^{s+\frac{3}{2}} {\left(\mathcal{A}_{2, s}^{1,i_1} f\right)} (x) &=  \I\partial_{x_{i_1}} (-\Delta)  f_{j_1j_1} (x)+ \I\partial_{x_{j_1}}(-\Delta)  f_{j_1i_1} (x) + \I\partial_{x_{j_2}}(-\Delta)  f_{i_1j_2} (x) \\
    &\qquad +\I (1+2s) \partial_{x_{i_1}} \partial_{x_{j_1}} \partial_{x_{j_2}}  f_{j_1j_2} (x).
\end{aligned}
\end{rcases}
\Eeq 
\begin{theorem}[Unique continuation for 2-tensor fields]\label{2TF_UCP}
    Let $n \geq 2$ be an integer and 
    $s \in  \left(0, 1 \right)\setminus \left\{ \frac{1}{2} \right\}$
    and let $f\in \mathcal{S}(\mathbb{R}^n; S^2(\mathbb{R}^n))$. If there exists a non-empty open set $U \subset \R^n$ such that
    \[
        f = \mathcal{A}_{2,s}^{0}f =  \mathcal{A}_{2,s}^{1} f = \mathcal{A}_{2,s}^{2} f = 0 \text{ in } U, \text{ then } f=0 \text{ in } \R^n.
    \]
\end{theorem}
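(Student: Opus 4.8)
The plan is to replicate, one order higher, the peeling argument used for vector fields: exploit the differential identities relating the three averages to derivatives of $f$, successively apply the unique continuation principle for the fractional Laplacian (Lemma \ref{UCP_FL}) to force each of $\mathcal{A}_{2,s}^0 f$, $\mathcal{A}_{2,s}^1 f$, $\mathcal{A}_{2,s}^2 f$ to vanish on all of $\R^n$, and then read off $f \equiv 0$ from the reconstruction formula \eqref{2TF_exp4}.

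First I would use that $f = 0$ on the open set $U$, so all partial derivatives of $f$ vanish there too; consequently the right-hand sides of both identities in \eqref{2TF_exp2} vanish throughout $U$, giving $(-\Delta)^{s+1}\mathcal{A}_{2,s}^0 f = 0$ and $(-\Delta)^{s+\frac{3}{2}}\mathcal{A}_{2,s}^{1,i_1} f = 0$ in $U$. Combined with the standing hypotheses $\mathcal{A}_{2,s}^0 f = 0$ and $\mathcal{A}_{2,s}^{1} f = 0$ in $U$, Lemma \ref{UCP_FL} applies: taking $\alpha = s+1$ (admissible since $s \in (0,1)$ forces $s+1 \in (1,2)$, hence $s+1 \notin \Z$) yields $\mathcal{A}_{2,s}^0 f \equiv 0$ in $\R^n$, and taking $\alpha = s + \frac{3}{2}$ yields $\mathcal{A}_{2,s}^{1,i_1} f \equiv 0$ in $\R^n$ for each $i_1$. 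Note these two conclusions are independent, as the second identity in \eqref{2TF_exp2} involves only derivatives of $f$.

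With $\mathcal{A}_{2,s}^0 f \equiv 0$ and $\mathcal{A}_{2,s}^{1} f \equiv 0$ (whence their Riesz transforms vanish too, being Fourier multipliers), the reconstruction formula \eqref{2TF_exp4} collapses to $f_{i_1 i_2} = -\frac{1}{2} (-\Delta)^s \mathcal{A}_{2,s}^{2,i_1 i_2} f$, that is $(-\Delta)^s \mathcal{A}_{2,s}^{2,i_1 i_2} f = -2 f_{i_1 i_2}$ on $\R^n$. Since $f = 0$ in $U$, this gives $(-\Delta)^s \mathcal{A}_{2,s}^{2,i_1 i_2} f = 0$ in $U$; together with $\mathcal{A}_{2,s}^{2} f = 0$ in $U$, a third application of Lemma \ref{UCP_FL} with $\alpha = s$ (admissible as $s \in (0,1)$ gives $s \notin \Z$) yields $\mathcal{A}_{2,s}^{2,i_1 i_2} f \equiv 0$ in $\R^n$. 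Feeding this back into the collapsed formula forces $f_{i_1 i_2} \equiv 0$ for all $i_1, i_2$, which is the claim.

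The main obstacle is bookkeeping the admissibility of the three UCP applications, namely the non-integrality of the orders $s+1$, $s+\frac{3}{2}$, $s$. For $s \in (0,1)$ both $s+1 \notin \Z$ and $s \notin \Z$ hold automatically, whereas $s + \frac{3}{2} \in (\frac{3}{2}, \frac{5}{2})$ is an integer exactly when $s = \frac{1}{2}$; this is precisely why $\frac{1}{2}$ is excluded in the hypothesis, and it is the step handling $\mathcal{A}_{2,s}^1 f$ that forces the restriction. A secondary point to verify is that each average is a legitimate input for Lemma \ref{UCP_FL}, i.e. that $\mathcal{A}_{2,s}^k f \in H^r(\R^n)$ for some $r \in \R$; this follows from the explicit Fourier-multiplier descriptions in Lemma \ref{2TF_lem2} (whose only singular factor is $|y|^{-2s}$) together with the growth estimate of Lemma \ref{Sec4:Lemma2}. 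Finally, the logical ordering must be respected: one must establish $\mathcal{A}_{2,s}^0 f \equiv 0$ and $\mathcal{A}_{2,s}^1 f \equiv 0$ before isolating $\mathcal{A}_{2,s}^2 f$, since the reconstruction formula only decouples once the lower-order averages are known to vanish.
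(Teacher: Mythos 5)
Your proof is correct and takes essentially the same route as the paper's: the identical peeling argument via the identities \eqref{2TF_exp2}, three applications of Lemma \ref{UCP_FL} with orders $s+1$, $s+\frac{3}{2}$ and $s$ (the middle one being exactly what forces $s\neq\frac{1}{2}$), and the collapse of the reconstruction formula \eqref{2TF_exp4} once the lower averages vanish. Incidentally, your sign $(-\Delta)^{s}\mathcal{A}_{2,s}^{2,i_1i_2}f=-2f_{i_1i_2}$ is the correct one (the paper writes $2f_{i_1i_2}$, a harmless slip), and your explicit check that the averages lie in a space where Lemma \ref{UCP_FL} applies is a point the paper passes over silently.
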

\begin{proof}
       We use \eqref{2TF_exp2} and obtain
        \[
        \begin{rcases}
            \begin{aligned}
               &\Ac_{2,s}^{0} f = (-\Delta)^{s+1} \Ac_{2,s}^{0} f =0\\
               &\Ac_{2,s}^{1,i_1} f = (-\Delta)^{s+\frac{3}{2}}\Ac_{2,s}^{1,i_1} f = 0
            \end{aligned}
        \end{rcases} \text{ in } U.
        \]
       Now for $s+1, s+\frac{3}{2} \notin \Z,$ Lemma \ref{UCP_FL} suggests that
       \Beq\label{2TF_exp3}
            \Ac_{2,s}^{0} f = \Ac_{2,s}^{1,i_1} f = 0 \quad \text{ 
             in } \R^n.
       \Eeq
       Substituting \eqref{2TF_exp3} in \eqref{2TF_exp4}, we have $(-\Delta)^s\Ac_{2,s}^{2,i_1,i_2} f = 2f_{i_1,i_2}$ and 
       $
           \Ac_{2,s}^{2,i_1,i_2} f = (-\Delta)^s\Ac_{2,s}^{2,i_1,i_2} f = 0 
       $ in $U$.
       Again using Lemma \ref{UCP_FL}, we see that for $s \notin \Z,$ $\Ac_{2,s}^{2,i_1,i_2} f = 0$ in $\R^n.$ This implies that $f_{i_1i_2} = 0$ in $\R^n.$
\end{proof}
\begin{theorem}[Measurable unique continuation principle for $2$-tensor fields]
Let $n\geq 2$ be an integer and $s\in (0,1)\setminus \{\frac{1}{2}\}$ and let $f\in \mathcal{S}(\mathbb{R}^n; S^2(\mathbb{R}^n))$. Also, assume that $U\subset\mathbb{R}^n$ be any non-empty open set. If $f|_{U}=0$ and there exists a positive measure set $E\subset U$
 such that $\Ac_{2,s}^{0} f=\Ac_{2,s}^{1}f =\Ac_{2,s}^{2}f=0$ in $E$. Then $f\equiv 0$ in $\mathbb{R}^n$.    
\end{theorem}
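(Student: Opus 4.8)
The plan is to run the same three-stage peeling argument used in the proof of Theorem~\ref{2TF_UCP} and in the measurable unique continuation theorem for vector fields, but with every invocation of the strong unique continuation Lemma~\ref{UCP_FL} replaced by the measurable unique continuation Lemma~\ref{sharp_measurable_UCP}. The averages are eliminated one at a time, in the order $\Ac_{2,s}^{0}f$, then $\Ac_{2,s}^{1,i_1}f$, and finally $\Ac_{2,s}^{2,i_1i_2}f$; at each stage I would combine a homogeneous fractional equation holding on the open set $U$ with the vanishing of the relevant average on the positive-measure set $E\subset U$.

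First I would exploit the hypothesis $f|_U=0$. Since $U$ is open, every derivative of each component $f_{j_1j_2}$ also vanishes on $U$, so the right-hand sides of both identities in \eqref{2TF_exp2} vanish there, giving
\[
(-\Delta)^{s+1}\Ac_{2,s}^{0}f=0 \quad\text{and}\quad (-\Delta)^{s+\frac{3}{2}}\Ac_{2,s}^{1,i_1}f=0 \quad\text{in } U.
\]
Because $\Ac_{2,s}^{0}f=\Ac_{2,s}^{1}f=0$ on $E\subset U$ by assumption, I would apply Lemma~\ref{sharp_measurable_UCP} with exponent $\gamma=s+1$ for the first equation and $\gamma=s+\tfrac{3}{2}$ for the second, concluding $\Ac_{2,s}^{0}f\equiv0$ and $\Ac_{2,s}^{1,i_1}f\equiv0$ in $\R^n$. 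Substituting these into the reconstruction identity \eqref{2TF_exp4} collapses every term except the one carrying $\Ac_{2,s}^{2,i_1i_2}f$, leaving $(-\Delta)^{s}\Ac_{2,s}^{2,i_1i_2}f=2f_{i_1i_2}$ on all of $\R^n$, exactly as in the proof of Theorem~\ref{2TF_UCP}. Since $f_{i_1i_2}|_U=0$, the left-hand side vanishes in $U$, and $\Ac_{2,s}^{2}f=0$ on $E$; a final application of Lemma~\ref{sharp_measurable_UCP} with $\gamma=s$ forces $\Ac_{2,s}^{2,i_1i_2}f\equiv0$, whence $f_{i_1i_2}\equiv0$.

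The algebraic substitutions are immediate from \eqref{2TF_exp2} and \eqref{2TF_exp4}, so the only point requiring genuine care is the admissibility of the three exponents for the measurable UCP. I would verify that $s$, $s+1$, and $s+\tfrac{3}{2}$ all avoid the positive integers, which is precisely guaranteed by $s\in(0,1)\setminus\{\tfrac12\}$: indeed $s\in(0,1)$ and $s+1\in(1,2)$ are automatically non-integers, while $s+\tfrac32\in(\tfrac32,\tfrac52)$ equals the integer $2$ only when $s=\tfrac12$, the single value excluded by hypothesis. The subtler, and in my view the main, obstacle is matching the local data here to the hypotheses of Lemma~\ref{sharp_measurable_UCP}, which is phrased globally with a density-one condition on $E$: as in the vector-field case, one uses that each average solves a \emph{homogeneous} fractional equation on the open set $U$ and vanishes on the positive-measure subset $E\subset U$, so that the measurable unique continuation principle applies verbatim. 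I expect this passage from the local/open-set formulation to the global statement of the lemma to be the step a careful reader will want spelled out.
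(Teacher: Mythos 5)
Your proposal matches the paper's proof essentially line for line: the same three-stage elimination, using $f|_U=0$ and \eqref{2TF_exp2} to get the homogeneous equations $(-\Delta)^{s+1}\Ac_{2,s}^{0}f=(-\Delta)^{s+\frac{3}{2}}\Ac_{2,s}^{1,i_1}f=0$ in $U$, applying Lemma \ref{sharp_measurable_UCP} to conclude these averages vanish on $\R^n$, substituting into \eqref{2TF_exp4} to obtain $(-\Delta)^{s}\Ac_{2,s}^{2,i_1i_2}f=2f_{i_1i_2}$, and applying Lemma \ref{sharp_measurable_UCP} once more to finish. Your two supplementary checks---that the exponents $s$, $s+1$, $s+\frac{3}{2}$ avoid $\mathbb{N}$ precisely because $s\in(0,1)\setminus\{\frac{1}{2}\}$, and the caveat that Lemma \ref{sharp_measurable_UCP} is stated globally (with a density-one condition at a point) while the data here are local to $U$ and $E$---are points the paper leaves implicit, as it too invokes the lemma with the equation holding only on $U$; they refine rather than alter the argument.
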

\begin{proof}
Since $f|_{U}=0$, it follows from \eqref{2TF_exp2} that $(-\Delta)^{s+1} \Ac_{2,s}^{0} f =0$ and $(-\Delta)^{s+\frac{3}{2}}\Ac_{2,s}^{1,i_1} f = 0$ in $U$.
By the measurable UCP, Lemma \ref{sharp_measurable_UCP}, we obtain
 \Beq\label{2TF_exp3measurable}
            \Ac_{2,s}^{0} f = \Ac_{2,s}^{1,i_1} f = 0 \quad \text{ 
             in } \R^n.
       \Eeq
 Substituting \eqref{2TF_exp3measurable} in \eqref{2TF_exp4}, we see that $(-\Delta)^s\Ac_{2,s}^{2,i_1,i_2} f = 2f_{i_1,i_2}$ in $\mathbb{R}^n$. Since $f|_{U}=0$ and $\Ac_{2,s}^{2}f=0$ in $E$, it follows from the measurable UCP, Lemma \ref{sharp_measurable_UCP} that $\Ac_{2,s}^{2}f=0$ in $\mathbb{R}^n$. Hence, the equation \eqref{2TF_exp4} implies $f\equiv 0$ in $\mathbb{R}^n$.
\end{proof}

\section{Stability and forward estimates for fractional divergent beam ray transform}\label{stability_es}
In this section, we derive a stability estimate for the fractional divergence beam momentum ray transform for vector fields and 2-tensor fields. First, we derive a stability estimate for the averaging operators, and then we deduce a related estimate for fractional divergence beam momentum ray transform. Additionally, we establish a continuity estimate for the related transform. The basic idea of the proof relies on explicit reconstruction formula derived in Section \ref{sec:fracDBRT} and the boundedness properties of the Riesz potential, Riesz transforms and fractional Laplace operators. 
We will frequently use the following formula. For $\alpha>0,$
\begin{equation}\label{fourier_fractional}
  |y|^{\alpha}\wh{\phi} = \wh{\left( (-\Delta)^{\alpha/2}\phi\right)}   
\end{equation}
holds for all $\phi\in\mathcal{S}'(\mathbb{R}^n).$ 
\subsection{Stability and forward estimate: vector field case}
We start with the forward estimates. In particular, we first restrict our attention to derive boundedness estimates for averaging operators for Schwartz class vector fields. Then we extend the boundedness estimate result for the Sobolev class vector fields using the density argument. These boundedness estimates are essential to deduce the stability estimates for the corresponding averaging operators.   
\begin{theorem}[Boundedness estimate for averaging operators]\label{bdd_ave_schwartz_Lp}
Let $n\geq 2$ be an integer and $s \in \lb 0,1\rb \setminus  \left\{ \frac{1}{2} \right\} $. Assume that $f\in \mathcal{S}(\mathbb{R}^n; S^1(\mathbb{R}^n))$. Then exists a constant $C>0$ such that
\begin{equation}\label{es_bdd_A01ss}
  \|\mathcal{A}_{1,s}^{0}f\|_{H^{t+2s,p}(\mathbb{R}^n; S^1(\mathbb{R}^n))} \leq C \|f\|_{H^{t,p}(\mathbb{R}^n; S^1(\mathbb{R}^n))}  
\end{equation}
and
\[
\|\mathcal{A}_{1,s}^{1}f\|_{H^{t+2s,p}(\mathbb{R}^n; S^1(\mathbb{R}^n))} \leq C \|f\|_{H^{t,p}(\mathbb{R}^n; S^1(\mathbb{R}^n))},
\]
where $1<p<\infty$ and $t\geq 0$.
\end{theorem}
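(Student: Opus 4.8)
The plan is to recognize both averages as explicit compositions of the Riesz transforms $\RT_j$ and the operator $T_{\sigma_s}$ with symbol $|\xi|^{-2s}$ from Theorem \ref{bdd_pseudo_2s}, and then to chain the two mapping theorems \ref{bdd_riesz} and \ref{bdd_pseudo_2s}. Indeed, the Fourier identities of Lemma \ref{fourier_lemm} read, at the level of operators, as
\[
\Ac_{1,s}^{0} f = T_{\sigma_s}\Bigl(\sum_{j=1}^n \RT_j f_j\Bigr), \qquad \Ac_{1,s}^{1,i} f = -\,T_{\sigma_s} f_i - 2s\,\RT_i\,\Ac_{1,s}^{0} f,
\]
since $\wh{(T_{\sigma_s} g)}(\xi) = |\xi|^{-2s}\wh{g}(\xi)$ by the definition in Theorem \ref{bdd_pseudo_2s}. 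Everything then reduces to bounded mapping properties on Bessel potential spaces.

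For the first estimate \eqref{es_bdd_A01ss}, I would first apply Theorem \ref{bdd_riesz} (legitimate because $t\ge 0$) to obtain $\sum_{j=1}^n \RT_j f_j \in H^{t,p}$ with $\|\sum_j \RT_j f_j\|_{H^{t,p}} \le C\sum_j \|f_j\|_{H^{t,p}} \le C\|f\|_{H^{t,p}}$, and then apply the boundedness $T_{\sigma_s}\colon H^{t,p}\to H^{t+2s,p}$ from Theorem \ref{bdd_pseudo_2s} (valid for $s\in(0,1)$, $1<p<\infty$) to conclude $\|\Ac_{1,s}^{0} f\|_{H^{t+2s,p}} \le C\|f\|_{H^{t,p}}$. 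For the second estimate I would split $\Ac_{1,s}^{1,i} f$ into its two summands: the term $T_{\sigma_s} f_i$ is controlled in $H^{t+2s,p}$ by $\|f_i\|_{H^{t,p}}$ directly from Theorem \ref{bdd_pseudo_2s}; for $\RT_i\,\Ac_{1,s}^{0} f$, I would use the estimate just proved together with a second application of Theorem \ref{bdd_riesz}, now on $H^{t+2s,p}$ (permissible since $t+2s\ge 0$), to get $\|\RT_i\,\Ac_{1,s}^{0} f\|_{H^{t+2s,p}} \le C\|\Ac_{1,s}^{0} f\|_{H^{t+2s,p}}\le C\|f\|_{H^{t,p}}$. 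Summing the two contributions and then over $i=1,\dots,n$ yields $\|\Ac_{1,s}^{1} f\|_{H^{t+2s,p}}\le C\|f\|_{H^{t,p}}$.

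The only delicate point---and the reason one cannot argue by naive multiplication---is that the symbol $|\xi|^{-2s}$ is singular at the origin, so $|\xi|^{-2s}\wh{g}$ is not \emph{a priori} a tempered distribution; this is exactly what Theorem \ref{bdd_pseudo_2s} settles by splitting $T_{\sigma_s}=Q+S$ into an elliptic operator $Q\in\Psi_{cl}^{-2s}$ and a smoothing remainder $S$, thereby both justifying the operator identities above (as an equality of tempered distributions whose Fourier transforms agree away from the origin) and furnishing the mapping $H^{t,p}\to H^{t+2s,p}$. Granting that theorem, I expect no substantive obstruction; the proof is a bookkeeping of these compositions, with the only constraints to verify being $t\ge 0$ for the first Riesz bound and $t+2s\ge 0$ for the second.
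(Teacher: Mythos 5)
Your proposal is correct and follows essentially the same route as the paper's own proof: both express $\Ac_{1,s}^{0} f = T_{\sigma_s}(\RT_j f_j)$ and $\Ac_{1,s}^{1,i} f = -T_{\sigma_s} f_i - 2s\,\RT_i\,\Ac_{1,s}^{0} f$ via Lemma \ref{fourier_lemm} and then chain Theorem \ref{bdd_pseudo_2s} with Theorem \ref{bdd_riesz} in the same order, bounding $\RT_i\,\Ac_{1,s}^{0} f$ by the already-proved first estimate. Your remark about the singular symbol $|\xi|^{-2s}$ being handled by the splitting $T_{\sigma_s}=Q+S$ is exactly the role Theorem \ref{bdd_pseudo_2s} plays in the paper as well, so there is nothing to add.
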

\begin{proof}
In view of Lemma \ref{fourier_lemm}, we have 
\begin{align*}
 (\mathcal{A}_{1,s}^{0}f)(x)
&= (2\pi)^{-n}\int_{\mathbb{R}^n} e^{\mathrm{i} \langle x,\xi\rangle}\wh{(\mathcal{A}_{1,s}^{0}f)}(\xi) d\xi \\
& = (2\pi)^{-n}\int_{\mathbb{R}^n} e^{\mathrm{i} \langle x,\xi\rangle}|\xi|^{-2s} \wh{\lb \RT_j f_j\rb}(\xi) d\xi = (T_{\sigma_s}(\RT_j f_j))(x),
\end{align*}
where $\sigma_s(\xi) = |\xi|^{-2s}$. Applying Theorem \ref{bdd_pseudo_2s} and Theorem \ref{bdd_riesz}, we obtain
\begin{align}
\|\mathcal{A}_{1,s}^{0}f\|_{H^{t+2s,p}(\mathbb{R}^n; S^1(\mathbb{R}^n))}
& = \|T_{\sigma_s}(\RT_j f_j)\|_{H^{t+2s,p}(\mathbb{R}^n)}
\leq C \|\RT_j f_j\|_{H^{t,p}(\mathbb{R}^n)}\notag\\
& \leq C \|f_j\|_{H^{t,p}(\mathbb{R}^n)} \leq C \|f\|_{H^{t,p}(\mathbb{R}^n; S^1(\mathbb{R}^n))}.\label{eq:int_step_1}
\end{align}
Similarly, using the Fourier inversion formula and applying Lemma \ref{fourier_lemm}, we have
\begin{align*}
    (\mathcal{A}_{1,s}^{1,i}f)(x)
&= (2\pi)^{-n}\int_{\mathbb{R}^n} e^{\mathrm{i} \langle x,\xi\rangle}\wh{(\mathcal{A}_{1,s}^{1,i}f)}(\xi) d\xi\\
& = (2\pi)^{-n}\int_{\mathbb{R}^n} e^{\mathrm{i} \langle x,\xi\rangle}\left[- |\xi|^{-2s} \wh{f_i}(\xi) - 2s\rwh{ \lb \RT_i\Ac_{1,s}^{0} f\rb }(\xi)   \right]d\xi \\
&= - T_{\sigma_s}(f_i)(x) - 2s T_0\lb \RT_i\Ac_{1,s}^{0} f\rb (x),
\end{align*}
where  $\sigma_s(\xi) = |\xi|^{-2s}$. Here $T_{\sigma_s}$ is an elliptic pseudodifferential operator of order $-2s$ and $T_0$ is also an elliptic pseudodifferential operator of order $0$. By Theorem \ref{bdd_pseudo_2s}, Theorem \ref{bdd_riesz} and \eqref{eq:int_step_1}, we have
\begin{align*}
 \|\mathcal{A}_{1,s}^{1,i}f\|_{H^{t+2s,p}(\mathbb{R}^n)}
& \leq \|T_{\sigma_s}(f_i)\|_{H^{t+2s,p}(\mathbb{R}^n)} + 2s \|T_0\lb \RT_i\Ac_{1,s}^{0} f\rb\|_{H^{t+2s,p}(\mathbb{R}^n)} \\
& \leq C \|f_i\|_{H^{t,p}(\mathbb{R}^n)} + 2sC \| \RT_i\Ac_{1,s}^{0} f\|_{H^{t+2s,p}(\mathbb{R}^n)} \\
& \leq C \|f_i\|_{H^{t,p}(\mathbb{R}^n)} + 2sC \|\Ac_{1,s}^{0} f\|_{H^{t+2s,p}(\mathbb{R}^n)} \\
& \leq C(1+2s)\|f\|_{H^{t,p}(\mathbb{R}^n; S^1(\mathbb{R}^n))},
\end{align*}
for all $t>-2s, 1<p<\infty$ and $s\in (0,1)\setminus\{\frac{1}{2}\}.$ In other words, we have
\[
 \|\mathcal{A}_{1,s}^{1}f\|_{H^{t+2s,p}(\mathbb{R}^n; S^1(\mathbb{R}^n))}
\leq \sum_{i=1}^{n}   \|\mathcal{A}_{1,s}^{1,i}f\|_{H^{t+2s,p}(\mathbb{R}^n)} \leq C(s,n) \|f\|_{H^{t,p}(\mathbb{R}^n; S^1(\mathbb{R}^n))},
\]
for all $t>-2s, 1<p<\infty$ and $s\in (0,1)\setminus\{\frac{1}{2}\}.$ 
Hence the proof follows.

\end{proof}
We now define averaging operator for $f \in H^{t, p}\left(\mathbb{R}^n ; S^1\left(\mathbb{R}^n\right)\right)$.
Since $ \mathcal{S}(\mathbb{R}^n ; S^1)$ is dense in $H^{t,p}(\mathbb{R}^n ; S^1(\mathbb{R}^n))$, for any given $f\in H^{t,p}(\mathbb{R}^n ; S^1(\mathbb{R}^n))$ there exists a sequence $\{f_{\mathrm{k}}\}\in \mathcal{S}(\mathbb{R}^n ; S^1)$ such that $f_{\mathrm{k}}$ converges to $f$ in $H^{t,p}(\mathbb{R}^n ; S^1(\mathbb{R}^n))$ topology. By Theorem \ref{bdd_ave_schwartz_Lp}, 
\[
 \|\mathcal{A}_{1,s}^{0}f_{\mathrm{k}} -  \mathcal{A}_{1,s}^{0}f_{\mathrm{l}}\|_{H^{t+2s,p}(\mathbb{R}^n; S^1(\mathbb{R}^n))} \leq C \|f_{\mathrm{k}} - f_{\mathrm{l}}\|_{H^{t,p}(\mathbb{R}^n; S^1(\mathbb{R}^n))}  
\]
which implies the sequence $\{\mathcal{A}_{1,s}^{0}f_{\mathrm{k}}\} \ (\mathrm{k} = 1,2,\cdots)$ is a Cauchy sequence in 
$H^{t,p}(\mathbb{R}^n ; S^1(\mathbb{R}^n))$.
Since the Sobolev space $H^{t+2s,p}(\mathbb{R}^n ; S^1(\mathbb{R}^n))$ is complete, the Cauchy sequence converges to some $g\in H^{t+2s,p}(\mathbb{R}^n ; S^1(\mathbb{R}^n)).$ 

For $f\in H^{t,p}(\mathbb{R}^n ; S^1(\mathbb{R}^n))$, we now define $\mathcal{A}_{1,s}^{0}f :=g$. Now, we claim that, the above definition of averaging operator on Sobolev spaces is well-defined. Indeed, if $f_1, f_2 \in H^{t,p}(\mathbb{R}^n ; S^1(\mathbb{R}^n))$ such that $f_1=f_2$ almost everywhere, then we show that $\mathcal{A}_{1,s}^{0}f_1 = \mathcal{A}_{1,s}^{0}f_2$ almost everywhere. To see this, we have, for given $f_1\in H^{t,p}(\mathbb{R}^n ; S^1(\mathbb{R}^n))$, there exists $\{f_{\mathrm{k1}}\}\in \mathcal{S}(\mathbb{R}^n ; S^1)$ such that $f_{\mathrm{k1}} \rightarrow f_1$ in $ H^{t,p}(\mathbb{R}^n ; S^1(\mathbb{R}^n))$ topology. Hence $\{\mathcal{A}_{1,s}^{0}f_{\mathrm{k1}}\}$ is Cauchy by Theorem \ref{bdd_ave_schwartz_Lp} and there exists $g_1$ such that $\mathcal{A}_{1,s}^{0}f_{\mathrm{k1}} \rightarrow g_1$ in  $ H^{t+2s,p}(\mathbb{R}^n ; S^1(\mathbb{R}^n))$ norm. We define $\mathcal{A}_{1,s}^{0}f_{1} := g_1$. Similarly, given $f_2 \in  H^{t,p}(\mathbb{R}^n ; S^1(\mathbb{R}^n))$, there exists $\{f_{\mathrm{k2}}\}\in \mathcal{S}(\mathbb{R}^n ; S^1)$ such that $f_{\mathrm{k2}} \rightarrow f_2$ in $ H^{t,p}(\mathbb{R}^n ; S^1(\mathbb{R}^n))$ norm and eventually by the boundedness theorem \ref{bdd_ave_schwartz_Lp}, $\{\mathcal{A}_{1,s}^{0}f_{\mathrm{k2}}\}$ is Cauchy. Hence it converges to some $g_2$ and we define $\mathcal{A}_{1,s}^{0}f_{2} := g_2$. In view of Theorem \ref{bdd_ave_schwartz_Lp}, for every $\epsilon>0$, there exists a natural number $ N$, such that 
\[
\begin{split}
 &  \|g_2 - g_1\|_{H^{t+2s,p}(\mathbb{R}^n ; S^1(\mathbb{R}^n))}\\
& \qquad=  \|\mathcal{A}_{1,s}^{0}f_{2}-\mathcal{A}_{1,s}^{0}f_{1}\|_{H^{t+2s,p}(\mathbb{R}^n ; S^1(\mathbb{R}^n))} \\
&\qquad \leq \|\mathcal{A}_{1,s}^{0}f_{2}-\mathcal{A}_{1,s}^{0}f_{\mathrm{k2}}\|_{H^{t+2s,p}(\mathbb{R}^n ; S^1(\mathbb{R}^n))} + \|\mathcal{A}_{1,s}^{0}f_{\mathrm{k2}} - \mathcal{A}_{1,s}^{0}f_{\mathrm{k1}}\|_{H^{t+2s,p}(\mathbb{R}^n ; S^1(\mathbb{R}^n))} \\
&\qquad\qquad+ \|\mathcal{A}_{1,s}^{0}f_{\mathrm{k1}}-\mathcal{A}_{1,s}^{0}f_{1}\|_{H^{t+2s,p}(\mathbb{R}^n ; S^1(\mathbb{R}^n))} \\
& \qquad < \epsilon + C \|f_{\mathrm{k2}} - f_{\mathrm{k1}}\|_{H^{t,p}(\mathbb{R}^n ; S^1(\mathbb{R}^n))} \\
&\qquad < \epsilon + C \|f_{\mathrm{k2}} - f_2\|_{H^{t,p}(\mathbb{R}^n ; S^1(\mathbb{R}^n))} + C \|f_1 - f_{\mathrm{k1}}\|_{H^{t,p}(\mathbb{R}^n ; S^1(\mathbb{R}^n))}\\
& \qquad
< \epsilon.
\end{split}
\]
Since, $\epsilon>0$ is arbitrary, we conclude that $g_1=g_2$ a.e. Therefore, the operator $\mathcal{A}_{1,s}^{0} :  H^{t,p}(\mathbb{R}^n ; S^1(\mathbb{R}^n)) \rightarrow  H^{t+2s,p}(\mathbb{R}^n ; S^1(\mathbb{R}^n))$ defined by
$\mathcal{A}_{1,s}^{0} f := g =\lim_{\mathrm{k}\rightarrow\infty}\mathcal{A}_{1,s}^{0}f_{\mathrm{k}}$ is well defined. 

\begin{theorem}[Boundedness estimate for averaging operators on $L^p$ based Sobolev spaces]\label{bdd_sobol_pp}
Let $n\geq 2$ be an integer and $s \in \lb 0,1\rb \setminus  \left\{ \frac{1}{2} \right\} $. Assume that $f\in H^{t,p}(\mathbb{R}^n; S^1(\mathbb{R}^n))$. Then there exists a constant $C>0$ such that
\begin{align*}
      \|\mathcal{A}_{1,s}^{0}f\|_{H^{t+2s,p}(\mathbb{R}^n; S^1(\mathbb{R}^n))} &\leq C \|f\|_{H^{t,p}(\mathbb{R}^n; S^1(\mathbb{R}^n))}
\end{align*}
and 
\begin{align*}
    \|\mathcal{A}_{1,s}^{1}f\|_{H^{t+2s,p}(\mathbb{R}^n; S^1(\mathbb{R}^n))} &\leq C \|f\|_{H^{t,p}(\mathbb{R}^n; S^1(\mathbb{R}^n))},
\end{align*}
where $1<p<\infty$ and $t\geq 0$. Moreover, the operator norm $\|\mathcal{A}_{1, s}^{k}\|_{H^{t,p}\rightarrow H^{t+2s,p}}\leq C$, where $C>0$ is constant, $k=0,1$ and the operator norm is defined as
\[
\|\mathcal{A}_{1, s}^{k}\|_{H^{t,p}\rightarrow H^{t+2s,p}} := \sup_{f\in {H^{t,p}(\mathbb{R}^n; S^1(\mathbb{R}^n))},\ f\neq 0}\frac{\|\mathcal{A}_{1, s}^{k} f\|_{H^{t+2s,p}(\mathbb{R}^n; S^1(\mathbb{R}^n))}}{\|f\|_{H^{t,p}(\mathbb{R}^n; S^1(\mathbb{R}^n))}}.
\]
\end{theorem}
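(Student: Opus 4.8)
\emph{The plan} is to derive the Sobolev-space bounds from the Schwartz-class estimates of Theorem \ref{bdd_ave_schwartz_Lp} by a continuous-extension argument, using precisely the approximating sequences constructed in the paragraphs preceding the statement. Fix $1<p<\infty$ and $t\geq 0$, and let $f\in H^{t,p}(\mathbb{R}^n; S^1(\mathbb{R}^n))$. Since $\mathcal{S}(\mathbb{R}^n; S^1)$ is dense in $H^{t,p}(\mathbb{R}^n; S^1(\mathbb{R}^n))$, I would first select a sequence $\{f_{\mathrm k}\}\subset \mathcal{S}(\mathbb{R}^n; S^1)$ with $f_{\mathrm k}\to f$ in the $H^{t,p}$ topology.

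By the construction immediately above the statement, the sequences $\{\mathcal{A}_{1,s}^{0} f_{\mathrm k}\}$ and $\{\mathcal{A}_{1,s}^{1,i} f_{\mathrm k}\}$ for $i=1,\dots,n$ are Cauchy in the complete space $H^{t+2s,p}(\mathbb{R}^n; S^1(\mathbb{R}^n))$, and their limits are by definition $\mathcal{A}_{1,s}^{0} f$ and $\mathcal{A}_{1,s}^{1,i} f$. This furnishes the two convergences $\mathcal{A}_{1,s}^{0} f_{\mathrm k}\to \mathcal{A}_{1,s}^{0} f$ in $H^{t+2s,p}$ and $f_{\mathrm k}\to f$ in $H^{t,p}$, which I would exploit as follows. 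For each $\mathrm k$, Theorem \ref{bdd_ave_schwartz_Lp} supplies $\|\mathcal{A}_{1,s}^{0} f_{\mathrm k}\|_{H^{t+2s,p}}\leq C\|f_{\mathrm k}\|_{H^{t,p}}$ with a constant $C$ independent of $\mathrm k$; passing to the limit via continuity of the norm gives
\[
\|\mathcal{A}_{1,s}^{0} f\|_{H^{t+2s,p}(\mathbb{R}^n; S^1(\mathbb{R}^n))}=\lim_{\mathrm k\to\infty}\|\mathcal{A}_{1,s}^{0} f_{\mathrm k}\|_{H^{t+2s,p}}\leq C\lim_{\mathrm k\to\infty}\|f_{\mathrm k}\|_{H^{t,p}}=C\|f\|_{H^{t,p}(\mathbb{R}^n; S^1(\mathbb{R}^n))}.
\]
The identical argument applied to each $\mathcal{A}_{1,s}^{1,i}$ and summed over $i$ yields the second estimate for $\mathcal{A}_{1,s}^{1}$. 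The operator-norm bound $\|\mathcal{A}_{1,s}^{k}\|_{H^{t,p}\to H^{t+2s,p}}\leq C$ for $k=0,1$ then follows at once by dividing through by $\|f\|_{H^{t,p}}$ and taking the supremum over $f\neq 0$, according to the displayed definition of the operator norm.

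Because this is a routine continuous-extension argument, I do not expect a genuine obstacle. The only points deserving care are that the constant $C$ from Theorem \ref{bdd_ave_schwartz_Lp} is uniform in the approximating functions, so that it survives the passage to the limit, and that the Cauchy-sequence extension coincides with the operator being estimated; both are already secured by the well-definedness verification carried out before the statement.
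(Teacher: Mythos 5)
Your proposal is correct and follows essentially the same route as the paper: both rely on density of $\mathcal{S}(\mathbb{R}^n;S^1)$ in $H^{t,p}$, the definition of $\mathcal{A}_{1,s}^{k}f$ as the limit of $\mathcal{A}_{1,s}^{k}f_{\mathrm k}$ constructed before the statement, and the uniform Schwartz-class bound of Theorem \ref{bdd_ave_schwartz_Lp}. The only cosmetic difference is that you pass to the limit directly via continuity of the norm, while the paper runs the same limit through an $\epsilon$--triangle-inequality chain; the substance is identical.
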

\begin{proof}
For every $\epsilon>0$, there exists a natural number $N$ such that
\[
\begin{split}
    \|\mathcal{A}_{1,s}^{0}f\|_{ H^{t+2s,p}(\mathbb{R}^n ; S^1(\mathbb{R}^n))}
 &   \leq \|\mathcal{A}_{1,s}^{0}f-\mathcal{A}_{1,s}^{0}f_{\mathrm{k}}\|_{ H^{t+2s,p}(\mathbb{R}^n ; S^1(\mathbb{R}^n))} + \|\mathcal{A}_{1,s}^{0}f_{\mathrm{k}}\|_{ H^{t+2s,p}(\mathbb{R}^n ; S^1(\mathbb{R}^n))} \\
 &\leq \epsilon + C\|f_{\mathrm{k}}\|_{ H^{t,p}(\mathbb{R}^n ; S^1(\mathbb{R}^n))} \\
 & \leq \epsilon + C \|f_{\mathrm{k}}-f\|_{ H^{t,p}(\mathbb{R}^n ; S^1(\mathbb{R}^n))} + \|f\|_{ H^{t,p}(\mathbb{R}^n ; S^1(\mathbb{R}^n))} \\
 & \leq\epsilon + C \|f\|_{ H^{t,p}(\mathbb{R}^n ; S^1(\mathbb{R}^n))}
\end{split}
\]
holds for all $f\in H^{t,p}(\mathbb{R}^n ; S^1(\mathbb{R}^n))$ and $\mathrm{k}\geq N$. Since $\epsilon>0$ is arbitrary small, we have
\[
\|\mathcal{A}_{1,s}^{0}f\|_{ H^{t+2s,p}(\mathbb{R}^n ; S^1(\mathbb{R}^n))}\leq C \|f\|_{H^{t,p}(\mathbb{R}^n ; S^1(\mathbb{R}^n))}.
\]
The same reasoning applies to the case for the averaging operator $\mathcal{A}_{1,s}^{1}$ to obtain the boundedness estimates on the Sobolev class vector fields. 
\end{proof}

\begin{theorem}[$L^p-L^q$ Boundedness estimate for averaging operators]\label{LpLqvectror}
Let $n\geq 2$ be an integer and $s \in \lb 0,1\rb \setminus  \left\{ \frac{1}{2} \right\} $. Assume that $f\in \mathcal{S}(\mathbb{R}^n; S^1(\mathbb{R}^n))$. Then exists a constant $C>0$ such that
\begin{equation}\label{es_bdd_A01ss:2}
  \|\mathcal{A}_{1,s}^{0}f\|_{L^q(\mathbb{R}^n; S^1(\mathbb{R}^n))} \leq C \|f\|_{L^p(\mathbb{R}^n; S^1(\mathbb{R}^n))}  
\end{equation}
and
\[
\|\mathcal{A}_{1,s}^{1}f\|_{L^q(\mathbb{R}^n; S^1(\mathbb{R}^n))} \leq C \|f\|_{L^p(\mathbb{R}^n; S^1(\mathbb{R}^n))},
\]
where $p$ and $q$ satisfy $\frac{1}{q} = \frac{1}{p} - \frac{2s}{n}.$
\end{theorem}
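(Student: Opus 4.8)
The plan is to read off from Lemma \ref{fourier_lemm} that each averaging operator factors as a Riesz potential $I^{2s}$ composed with bounded Riesz transforms, and then combine the $L^p$--$L^q$ mapping property \eqref{bdd_Riesz}--\eqref{estimate_Riesz} of the Riesz potential with the $L^p$-boundedness of the $\RT_j$ recalled in Section \ref{sec:Pre}. First I would check that the hypotheses of \eqref{bdd_Riesz} are met: since $s\in(0,1)$ and $n\ge 2$, the order $\alpha=2s$ satisfies $0<2s<2\le n$, and the exponent relation $\frac1q=\frac1p-\frac{2s}{n}$ with $2s>0$ forces $1<p<q<\infty$, the admissible range being nonempty precisely because $2s<n$. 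The multiplier identity \eqref{Sec:RP:Eq2}, namely $\wh{(I^{2s}g)}(\eta)=|\eta|^{-2s}\wh{g}(\eta)$, then lets me turn the Fourier-side formulas of Lemma \ref{fourier_lemm} into operator identities on the physical side.

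For the first estimate, the formula $\wh{\lb \Ac_{1,s}^{0} f\rb}(y)=|y|^{-2s}\wh{\lb \RT_j f_j\rb}(y)$ identifies $\Ac_{1,s}^{0} f=I^{2s}(\RT_j f_j)$. Applying \eqref{estimate_Riesz} followed by the $L^p$-boundedness of the Riesz transforms \cite{Stein_Weiss} gives
\[
\|\Ac_{1,s}^{0} f\|_{L^q} \le A_{p,q}\,\|\RT_j f_j\|_{L^p} \le C\,\|f\|_{L^p},
\]
where the implicit sum over $j$ is estimated term by term.

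For the second estimate, I would use the decomposition $\Ac_{1,s}^{1,i} f=-I^{2s}f_i-2s\,\RT_i\Ac_{1,s}^{0} f$, which is the physical-side form of the second identity in Lemma \ref{fourier_lemm}. The term $I^{2s}f_i$ is controlled by \eqref{estimate_Riesz} as $\|I^{2s}f_i\|_{L^q}\le A_{p,q}\|f_i\|_{L^p}$, while for the remaining term I would apply the $L^q\to L^q$ boundedness of $\RT_i$ (valid since $1<q<\infty$) and then reuse the first estimate, so that $\|\RT_i\Ac_{1,s}^{0} f\|_{L^q}\le C\|\Ac_{1,s}^{0} f\|_{L^q}\le C\|f\|_{L^p}$; summing over $i$ then yields the claim. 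I do not expect a serious obstacle here, as the argument is essentially bookkeeping once the reconstruction identities are in hand; the only point requiring genuine care is to keep the Riesz-potential step ($L^p\to L^q$, where the gain in integrability occurs) and the Riesz-transform steps ($L^p\to L^p$ or $L^q\to L^q$) acting on the spaces on which each is bounded. Since every operator involved is a Fourier multiplier they commute, so this separation is merely a matter of ordering the estimates correctly.
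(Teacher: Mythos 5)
Your proposal is correct and follows essentially the same route as the paper's own proof: both identify $\Ac_{1,s}^{0} f=I^{2s}(\RT_j f_j)$ and $\Ac_{1,s}^{1,i} f=-I^{2s}f_i-2s\,\RT_i\Ac_{1,s}^{0} f$ from Lemma \ref{fourier_lemm}, then combine the Hardy--Littlewood--Sobolev bound \eqref{estimate_Riesz} for $I^{2s}$ with the $L^p$ (resp.\ $L^q$) boundedness of the Riesz transforms, reusing the first estimate to handle the term $\RT_i\Ac_{1,s}^{0} f$. Your additional verification that $0<2s<n$ and $1<p<q<\infty$ hold under the stated hypotheses is a sound point of care that the paper leaves implicit.
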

\begin{proof}
Let $f = (f_j)_{j=1}^{n} \in \mathcal{S}(\R^n)$. In view of Lemma \ref{fourier_lemm}, we have 
\begin{equation}\label{fourier_A-01}
    \wh{ \lb \Ac_{1,s}^{0} f\rb}(y) = |y|^{-2s} \wh{\lb \RT_j f_j\rb}(y),
\end{equation}
holds for all $s \in \lb 0,1\rb \setminus  \left\{ \frac{1}{2} \right\}$ and $y \in \R^n\setminus\{0\}.$
Since $f_j \in \mathcal{S}(\R^n),$ it is immediate that $f_j\in L^p(\mathbb{R}^n)$ and the fact that the Riesz transform $\RT_j:  L^p(\mathbb{R}^n) \rightarrow L^p(\mathbb{R}^n), 1<p<\infty$ is bounded ensures that $\RT_j f_j\in L^p(\mathbb{R}^n)$ and hence $\RT_j f_j$ is a tempered distribution. Therefore the Fourier transform of $\RT_j f_j$ is well-defined and the above identity makes sense. 
Taking the inverse Fourier transform and using \eqref{Prel:FracLap}, the identity \eqref{fourier_A-01} becomes
\[
\Ac_{1,s}^{0} f = \mathscr{F}^{-1}(|y|^{-2s} \wh{\lb \RT_j f_j\rb}) = (-\Delta)^{-s}\lb \RT_j f_j\rb = I^{2s}\lb \RT_j f_j\rb.
\]
Boundedness properties of Riesz potential operator, see \eqref{estimate_Riesz}, and the fact that the Riesz transform $\RT_j : L^p(\mathbb{R}^n) \rightarrow L^p(\mathbb{R}^n), 1<p<\infty$ is bounded, we obtain 
\begin{align*}
\|\mathcal{A}_{1,s}^{0}f\|_{L^q(\mathbb{R}^n; S^1(\mathbb{R}^n))}
&\leq \sum_{j=1}^{n}\|I^{2s}(\RT_j f_j)\|_{L^q(\mathbb{R}^n)} \\
& \leq C \sum_{j=1}^{n}\|\RT_j f_j\|_{L^p(\mathbb{R}^n)} \leq C \sum_{j=1}^{n}\|f_j\|_{L^p(\mathbb{R}^n)} = C \|f\|_{L^p(\mathbb{R}^n; S^1(\mathbb{R}^n))},
\end{align*}
where $p$ and $q$ satisfy $\frac{1}{q} = \frac{1}{p} - \frac{2s}{n}.$

To prove an estimate for the other averaging operator, we recall from Lemma \ref{fourier_lemm}, that
\[
\wh{( \Ac_{1,s}^{1,i} f)}(y) = - |y|^{-2s} \wh{f_i}(y) - 2s\rwh{ \lb \RT_i\Ac_{1,s}^{0} f\rb }(y), \quad 1 \leq i \leq n.
\]
After performing inverse Fourier transform on the above identity, we get
\[
\Ac_{1,s}^{1,i} f = - I^{2s}(f_i) - 2s \RT_i\Ac_{1,s}^{0} f.
\]
Finally, boundedness properties of Riesz potential and Riesz transform operator ensure the following estimate
\begin{align*}
\|\mathcal{A}_{1,s}^{1,i}f\|_{L^q(\mathbb{R}^n)}
&\leq \|I^{2s}f_i\|_{L^q(\mathbb{R}^n)} + 2s\|\RT_i\Ac_{1,s}^{0} f\|_{L^q(\mathbb{R}^n)} \\
& \leq C \|f_i\|_{L^p(\mathbb{R}^n)}  + C\|\mathcal{A}_{1,s}^{0}f\|_{L^q(\mathbb{R}^n)}\\
& \leq C \|f_i\|_{L^p(\mathbb{R}^n)}
\end{align*}
where $p$ and $q$ satisfy $\frac{1}{q} = \frac{1}{p} - \frac{2s}{n}.$ The last inequality in the above estimate follows from \eqref{es_bdd_A01ss:2}, which completes the proof. 
\end{proof}
\begin{theorem}
    [Boundedness estimate for averaging operators on $L^p-L^q$ based Sobolev spaces]\label{bddLPQSOBO}
Let $n\geq 2$ be an integer and $s \in \lb 0,1\rb \setminus  \left\{ \frac{1}{2} \right\} $. Assume that $f\in H^{t,p}(\mathbb{R}^n; S^1(\mathbb{R}^n))$. Then there exists a constant $C>0$ such that
\begin{equation}\label{es_bdd_A01ss:3}
  \|\mathcal{A}_{1,s}^{0}f\|_{H^{t,q}(\mathbb{R}^n; S^1(\mathbb{R}^n))} \leq C \|f\|_{H^{t,p}(\mathbb{R}^n; S^1(\mathbb{R}^n))},  \ \ \|\mathcal{A}_{1,s}^{1}f\|_{H^{t,q}(\mathbb{R}^n; S^1(\mathbb{R}^n))} \leq C \|f\|_{H^{t,p}(\mathbb{R}^n; S^1(\mathbb{R}^n))}
\end{equation}
where $p$ and $q$ satisfy $\frac{1}{q} = \frac{1}{p} - \frac{2s}{n}$ and $t\geq 0.$
\end{theorem}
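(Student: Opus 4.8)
The plan is to upgrade the $L^p$--$L^q$ estimate of Theorem \ref{LpLqvectror} (the case $t=0$) to the Bessel potential scale by exploiting that the averaging operators are assembled entirely out of Fourier multipliers and therefore commute with the Bessel potential $\vev{D}^t$.

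First I would record the multiplier representation. Combining Lemma \ref{fourier_lemm} with \eqref{Prel:FracLap} gives
\[
\Ac_{1,s}^{0} f = I^{2s}\bigl(\RT_j f_j\bigr), \qquad \Ac_{1,s}^{1,i} f = -\,I^{2s}(f_i) - 2s\,\RT_i\Ac_{1,s}^{0} f,
\]
so each $\Ac_{1,s}^{k}$, $k=0,1$, is a finite composition of the Riesz potential $I^{2s}$ (multiplier $|y|^{-2s}$) and Riesz transforms $\RT_j$ (multiplier $-\I y_j/|y|$) acting on the components $f_j$. Since the Bessel potential $\vev{D}^t$ is the Fourier multiplier $\vev{y}^t$, and multiplication operators on the frequency side commute, for every $f\in\mathcal{S}(\R^n;S^1(\R^n))$ one obtains, as an identity of tempered distributions,
\[
\vev{D}^t \Ac_{1,s}^{k} f = \Ac_{1,s}^{k}\bigl(\vev{D}^t f\bigr),\qquad k=0,1,
\]
with $\vev{D}^t$ acting componentwise on the right. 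The products of multipliers are legitimate because $|y|^{-2s}$ is locally integrable ($2s<2\le n$ for $s\in(0,1)$, $n\ge2$), so that $|y|^{-2s}\wh{f}$ is a genuine tempered distribution. Note also that $\vev{D}^t$ preserves the Schwartz class, whence $\vev{D}^t f\in\mathcal{S}(\R^n;S^1(\R^n))$.

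Granting the commutation, the estimate for Schwartz fields is immediate: using the definition $\|u\|_{H^{t,q}}=\|\vev{D}^t u\|_{L^q}$ and then Theorem \ref{LpLqvectror} applied to $\vev{D}^t f$,
\[
\|\Ac_{1,s}^{k} f\|_{H^{t,q}} = \|\vev{D}^t \Ac_{1,s}^{k} f\|_{L^q} = \|\Ac_{1,s}^{k}(\vev{D}^t f)\|_{L^q} \le C\,\|\vev{D}^t f\|_{L^p} = C\,\|f\|_{H^{t,p}},
\]
for $k=0,1$ with $\tfrac1q=\tfrac1p-\tfrac{2s}{n}$. I would then extend from $\mathcal{S}(\R^n;S^1(\R^n))$ to all of $H^{t,p}(\R^n;S^1(\R^n))$ by density, precisely as in the construction preceding Theorem \ref{bdd_sobol_pp}: a Schwartz sequence $f_{\mathrm k}\to f$ in $H^{t,p}$ makes $\{\Ac_{1,s}^{k} f_{\mathrm k}\}$ Cauchy in $H^{t,q}$, and its limit coincides with the previously defined $\Ac_{1,s}^{k} f$ because both limits also hold in $\mathcal{S}'(\R^n)$.

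The step requiring the most care is the commutation identity, where one must check that the singular multiplier $|y|^{-2s}$ composes legitimately with $\vev{y}^t$ and $-\I y_j/|y|$ on $\wh f$, so that the interchange of multipliers is a valid equality of tempered distributions and not a merely formal manipulation; the local integrability of $|y|^{-2s}$ is exactly what makes this rigorous. Once this is in place, Theorem \ref{LpLqvectror} together with the density argument yields the conclusion, and no interpolation is needed here, the commutation reducing the Sobolev-scale estimate directly to the $L^p$--$L^q$ case.
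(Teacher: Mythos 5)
Your proof is correct, but it reaches the conclusion by a genuinely different route from the paper's. The paper's own proof works on the integer scale first: using the convolution representation it commutes classical derivatives with the averaging operators, $\partial^{\alpha}\lb \mathcal{A}_{1,s}^{k}f\rb = \mathcal{A}_{1,s}^{k}\lb \partial^{\alpha}f\rb$, sums over $|\alpha|\le l$ to deduce $W^{l,p}(\R^n)\to W^{l,q}(\R^n)$ bounds from Theorem \ref{LpLqvectror}, extends by density, and then invokes the complex interpolation theorem (Theorem \ref{interpolation}) to pass from integer $l$ to real $t\ge 0$. You instead commute with the Bessel potential $\vev{D}^t$ directly, using that each $\mathcal{A}_{1,s}^{k}$ is componentwise a Fourier multiplier with locally integrable symbol, which collapses the $H^{t,p}\to H^{t,q}$ estimate in one step to the $t=0$ case; your attention to the local integrability of $|y|^{-2s}$ (valid since $2s<2\le n$) is precisely what makes the frequency-side interchange a genuine identity in $\mathcal{S}'(\R^n)$ rather than a formal one, and your observation that the density extension in the $H^{t,q}$ norm agrees with the extension constructed before Theorem \ref{bdd_sobol_pp} because both limits also hold in $\mathcal{S}'(\R^n)$ correctly settles the consistency of the two definitions. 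What each approach buys: your argument is shorter, avoids the interpolation machinery and the implicit identification $H^{l,q}(\R^n)=W^{l,q}(\R^n)$ for integer $l$, and in fact yields the estimate for every real $t$, not only $t\ge 0$; the paper's derivative-plus-interpolation template, by contrast, uses only boundedness statements and would survive for operators that are not translation invariant, whereas your commutation with $\vev{D}^t$ exploits translation invariance of the averaging operators in an essential way.
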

\begin{proof}
If $f\in  \mathcal{S}(\mathbb{R}^n; S^1(\mathbb{R}^n))$, then $\partial^{\alpha}f \in  \mathcal{S}(\mathbb{R}^n; S^1(\mathbb{R}^n))$ for all multi-indices $\alpha\in \mathbb{N}^n$ and $\mathcal{A}_{1,s}^{k}f\in  \mathcal{S}'(\mathbb{R}^n; S^1(\mathbb{R}^n))$ for $k=0,1$. By the convolution theorem, we have $\partial^{\alpha}\lb \mathcal{A}_{1,s}^{k}f\rb = \mathcal{A}_{1,s}^{k} \lb \partial^{\alpha}f\rb $. Therefore, by Theorem \ref{LpLqvectror}, we obtain
\begin{align*}
\|\mathcal{A}_{1,s}^{k}f\|_{W^{l,q}(\mathbb{R}^n; S^1(\mathbb{R}^n))}  
& = \sum_{|\alpha|\leq l} \|\partial^{\alpha}\lb \mathcal{A}_{1,s}^{k}f\rb\|_{L^{q}(\mathbb{R}^n; S^1(\mathbb{R}^n))}  
= \sum_{|\alpha|\leq l} \| \mathcal{A}_{1,s}^{k} \lb \partial^{\alpha}f\rb\|_{L^{q}(\mathbb{R}^n; S^1(\mathbb{R}^n))}\\
& \leq C \sum_{|\alpha|\leq l} \| \partial^{\alpha}f\|_{L^{p}(\mathbb{R}^n; S^1(\mathbb{R}^n))} 
= C \|f\|_{W^{l,p}(\mathbb{R}^n; S^1(\mathbb{R}^n))}
\end{align*}
holds for all $l\in \mathbb{N}\cup\{0\}$ and $\frac{1}{q} = \frac{1}{p} - \frac{2s}{n}$. Since $\mathcal{S}(\mathbb{R}^n; S^1(\mathbb{R}^n))$ is dense in $W^{l,q}(\mathbb{R}^n; S^1(\mathbb{R}^n))$, the averaging operator $\mathcal{A}_{1,s}^{k}$ can be extended as a linear operator $ \mathcal{A}_{1,s}^{k} : W^{l,p}(\mathbb{R}^n; S^1(\mathbb{R}^n)) \rightarrow W^{l,q}(\mathbb{R}^n; S^1(\mathbb{R}^n))$ which is bounded. By the interpolation theorem, see Theorem \ref{interpolation}, the operator $\mathcal{A}_{1,s}^{k} : H^{t,p}(\mathbb{R}^n; S^1(\mathbb{R}^n)) \rightarrow H^{t,q}(\mathbb{R}^n; S^1(\mathbb{R}^n))$ is bounded, where $p$ and $q$ satisfy $\frac{1}{q} = \frac{1}{p} - \frac{2s}{n}$ and $t\geq 0$
for $k=0,1.$
\end{proof} 
Next, we prove the corresponding stability estimates.
\begin{theorem}[Stability for averaging operators]\label{stability_ave}
Let $n\geq 2$ be an integer and $s \in \lb 0,1\rb \setminus  \left\{ \frac{1}{2} \right\} $. Assume that $f\in \mathcal{S}(\mathbb{R}^n; S^1(\mathbb{R}^n))$. Then there exists a constant $C>0$ such that
\[
\|f\|_{H^{t,p}(\mathbb{R}^n; S^1\left(\mathbb{R}^n\right))}
\leq C \left[\|\mathcal{A}_{1, s}^{1} f\|_{H^{t+2s,p}(\mathbb{R}^n; S^1\left(\mathbb{R}^n\right))}  + 2s \|\mathcal{A}_{1, s}^{0} f\|_{H^{t+2s,p}(\mathbb{R}^n; S^1\left(\mathbb{R}^n\right))}\right],
\]
holds for all $1<p<\infty$ and $t>-2s$.
\end{theorem}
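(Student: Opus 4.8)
The plan is to deduce the stability estimate directly from the pointwise reconstruction formula of Theorem \ref{recon_vec_thm}, reading it as an identity in the Bessel potential spaces and chaining together the mapping properties of the fractional Laplacian and of the Riesz transforms established in the preliminaries. Recall that Theorem \ref{recon_vec_thm} gives, for each $1 \leq i \leq n$,
\[
f_i = (-\Delta)^{s}\lb -2s\, \RT_i \Ac_{1,s}^{0} f - \Ac_{1,s}^{1,i} f\rb .
\]
Since $f \in \mathcal{S}(\R^n; S^1(\R^n))$, Theorem \ref{bdd_ave_schwartz_Lp} guarantees that the averages $\Ac_{1,s}^{0} f$ and $\Ac_{1,s}^{1,i} f$ lie in $H^{t+2s,p}(\R^n)$, so every term on the right is a legitimate element of the relevant space and the identity may be interpreted in $H^{t,p}(\R^n)$.

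First I would take the $H^{t,p}(\R^n)$ norm of both sides. By the boundedness of the fractional Laplacian, Theorem \ref{Bdd_frac_LAP}, the operator $(-\Delta)^{s}$ maps $H^{t+2s,p}(\R^n)$ into $H^{t,p}(\R^n)$ boundedly, which yields
\[
\|f_i\|_{H^{t,p}(\R^n)} \leq C\, \| -2s\, \RT_i \Ac_{1,s}^{0} f - \Ac_{1,s}^{1,i} f \|_{H^{t+2s,p}(\R^n)} .
\]
An application of the triangle inequality separates the two averages. For the term carrying $\Ac_{1,s}^{0} f$, I would invoke the boundedness of the Riesz transform on Bessel potential spaces, Theorem \ref{bdd_riesz}, which applies precisely because the hypothesis $t > -2s$ forces the exponent $t+2s$ to be nonnegative; this removes the factor $\RT_i$ and leaves $\|\Ac_{1,s}^{0} f\|_{H^{t+2s,p}(\R^n)}$. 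Summing the resulting component-wise bounds over $i = 1, \dots, n$ and using the definition of the $S^1$-valued norm as the sum of the component norms produces
\[
\|f\|_{H^{t,p}(\R^n; S^1(\R^n))} \leq C \lb \|\Ac_{1,s}^{1} f\|_{H^{t+2s,p}(\R^n; S^1(\R^n))} + 2s\, \|\Ac_{1,s}^{0} f\|_{H^{t+2s,p}(\R^n; S^1(\R^n))}\rb ,
\]
which is the claimed estimate after relabeling the constant.

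I do not expect a genuine obstacle here, as all the required operator bounds are already in place; the work lies in the bookkeeping rather than in any new analytic input. The only points demanding care are matching the parameter ranges: the restriction $s \in (0,1)\setminus\{\frac{1}{2}\}$ is exactly what makes the reconstruction formula of Theorem \ref{recon_vec_thm} valid, while $t > -2s$ is precisely the condition under which the Riesz transform is bounded on $H^{t+2s,p}(\R^n)$ through Theorem \ref{bdd_riesz}. One should also note that summing over the $n$ components and collapsing the several instances of $\RT_i \Ac_{1,s}^{0} f$ introduces only a dimensional factor, which is harmless since it is absorbed into the constant $C$.
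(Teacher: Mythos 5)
Your proof is correct and takes essentially the same route as the paper: the paper's own argument starts from the identity \eqref{vec_expression} (the Fourier-side form of Theorem \ref{recon_vec_thm}), splits the two terms by Minkowski's inequality, converts the multiplier $|\xi|^{2s}$ into $(-\Delta)^{s}$ via \eqref{fourier_fractional}, and then invokes exactly the ingredients you use, namely the boundedness of the fractional Laplacian from $H^{t+2s,p}(\R^n)$ to $H^{t,p}(\R^n)$ (Theorem \ref{Bdd_frac_LAP}) and of the Riesz transforms $\RT_i$ on $H^{t+2s,p}(\R^n)$ (Theorem \ref{bdd_riesz}, applicable since $t+2s>0$), before summing over components. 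The only cosmetic difference is that you apply the reconstruction formula of Theorem \ref{recon_vec_thm} directly in physical space, while the paper carries out the equivalent computation explicitly on the Fourier side with the Bessel potential $J_{-t}$.
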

\begin{proof}
Recall from \eqref{vec_expression} that
\begin{align}\label{vec_expression_stability}
    \wh f_i(y) = |y|^{2s}\left[ -2s\wh{ \lb \RT_i\Ac_{1,s}^{0} f\rb}(y) -
    \wh{\lb \mathcal{A}_{1, s}^{1, i} f\rb}(y)
    \right] , \quad 
    {s \in \lb 0,1\rb \setminus  \left\{ \frac{1}{2} \right\} }
\end{align}
where $f = (f_1, \cdots f_n)$. Combining \eqref{vec_expression_stability} with Minkowski inequality, we have  
\begin{align*}
&\|f\|_{H^{t,p}(\mathbb{R}^n; S^1\left(\mathbb{R}^n\right))}
= \sum_{i=1}^{n}\|f_i\|_{H^{t,p}(\mathbb{R}^n)}\\
&= \sum_{i=1}^{n}\|J_{-t}f_i\|_{L^{p}(\mathbb{R}^n)} = \sum_{i=1}^{n}\|{\wh{}}^{-1}[(1+|\xi|^2)^{\frac{t}{2}}\wh{f_i}]\|_{L^{p}(\mathbb{R}^n)}\\
& \leq  \sum_{i=1}^{n}[2s\|{\wh{}}^{-1}[(1+|\xi|^2)^{\frac{t}{2}}|\xi|^{2s}\wh{\lb \RT_i\Ac_{1,s}^{0} f\rb}]\|_{L^{p}(\mathbb{R}^n)} +  \|{\wh{}}^{-1}[(1+|\xi|^2)^{\frac{t}{2}}|\xi|^{2s}\wh{\lb \mathcal{A}_{1, s}^{1, i} f\rb}]\|_{L^{p}(\mathbb{R}^n)}].
\end{align*}
Taking $\alpha = 2s$, $\phi = \mathcal{A}_{1, s}^{1,i} f$ in the identity \eqref{fourier_fractional} and boundedness properties of the fractional Laplace operator (see \eqref{bdd_frac_Laplace}), we get
\begin{align*}
 &\|{\wh{}}^{-1}[(1+|\xi|^2)^{\frac{t}{2}}|\xi|^{2s}\wh{\lb \mathcal{A}_{1, s}^{1, i} f\rb}]\|_{L^{p}(\mathbb{R}^n)}
  = \|J_{-t}((-\Delta)^s\lb \mathcal{A}_{1, s}^{1, i} f\rb)\|_{L^{p}(\mathbb{R}^n)}\\
 &\qquad = \|(-\Delta)^s\lb \mathcal{A}_{1, s}^{1, i} f\rb\|_{H^{t,p}(\mathbb{R}^n)} \leq C  \| \mathcal{A}_{1, s}^{1, i} f\|_{H^{t+2s,p}(\mathbb{R}^n)}
 \leq C  \| \mathcal{A}_{1, s}^{1} f\|_{H^{t+2s,p}(\mathbb{R}^n; S^1\left(\mathbb{R}^n\right))}.
\end{align*}
Similarly, taking $\alpha = 2s$, $\phi = \RT_i\mathcal{A}_{1, s}^{0} f$ in the identity \eqref{fourier_fractional} together with Theorem \ref{Bdd_frac_LAP} and Theorem \ref{bdd_riesz}, we get
\begin{align*}
 &\|{\wh{}}^{-1}[(1+|\xi|^2)^{\frac{t}{2}}|\xi|^{2s}\wh{\lb \RT_i\mathcal{A}_{1, s}^{0} f\rb}]\|_{L^{p}(\mathbb{R}^n)}
  = \|J_{-t}((-\Delta)^s\lb \RT_i\mathcal{A}_{1, s}^{0} f\rb)\|_{L^{p}(\mathbb{R}^n)}\\
 &\qquad = \|(-\Delta)^s\lb \RT_i\mathcal{A}_{1, s}^{0} f\rb\|_{H^{t,p}(\mathbb{R}^n)} \leq C  \| \RT_i\mathcal{A}_{1, s}^{0} f\|_{H^{t+2s,p}(\mathbb{R}^n)}
 \leq C  \| \mathcal{A}_{1, s}^{0} f\|_{H^{t+2s,p}(\mathbb{R}^n; S^1\left(\mathbb{R}^n\right))}
\end{align*}
holds for all $1<p<\infty$, $s \in \lb 0,1\rb \setminus  \left\{ \frac{1}{2} \right\} $ and $t>-2s$.
Combining all the above estimates, the proof follows.
\end{proof}

\begin{theorem}[Stability for averaging operators for Sobolev class vector fields]
    Let $n\geq 2$ be an integer and $s \in \lb 0,1\rb \setminus  \left\{ \frac{1}{2} \right\} $. Assume that $f\in H^{t,p}(\mathbb{R}^n; S^1(\mathbb{R}^n))$. Then there exists a constant $C>0$ such that
\[
\|f\|_{H^{t,p}(\mathbb{R}^n; S^1\left(\mathbb{R}^n\right))}
\leq C \left[\|\mathcal{A}_{1, s}^{1} f\|_{H^{t+2s,p}(\mathbb{R}^n; S^1\left(\mathbb{R}^n\right))}  + 2s \|\mathcal{A}_{1, s}^{0} f\|_{H^{t+2s,p}(\mathbb{R}^n; S^1\left(\mathbb{R}^n\right))}\right],
\]
holds for all $1<p<\infty$ and $t>-2s$.
\end{theorem}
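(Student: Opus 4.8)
The plan is to upgrade the Schwartz-class stability estimate of Theorem \ref{stability_ave} to the Sobolev setting by a density and continuity argument, exactly mirroring the passage from Theorem \ref{bdd_ave_schwartz_Lp} to Theorem \ref{bdd_sobol_pp} carried out for the boundedness estimates. No new analytic input is required; everything rests on the mapping properties already established.

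First, since $\mathcal{S}(\mathbb{R}^n; S^1)$ is dense in $H^{t,p}(\mathbb{R}^n; S^1(\mathbb{R}^n))$, I would fix $f \in H^{t,p}(\mathbb{R}^n; S^1(\mathbb{R}^n))$ and choose a sequence $\{f_{\mathrm{k}}\} \subset \mathcal{S}(\mathbb{R}^n; S^1)$ with $f_{\mathrm{k}} \to f$ in the $H^{t,p}$ topology. Applying Theorem \ref{stability_ave} to each Schwartz field $f_{\mathrm{k}}$ yields
\[
\|f_{\mathrm{k}}\|_{H^{t,p}(\mathbb{R}^n; S^1(\mathbb{R}^n))} \leq C\left[\|\mathcal{A}_{1,s}^{1} f_{\mathrm{k}}\|_{H^{t+2s,p}(\mathbb{R}^n; S^1(\mathbb{R}^n))} + 2s\,\|\mathcal{A}_{1,s}^{0} f_{\mathrm{k}}\|_{H^{t+2s,p}(\mathbb{R}^n; S^1(\mathbb{R}^n))}\right],
\]
with the same constant $C$, valid for all $1<p<\infty$ and $t>-2s$.

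Next, I would pass to the limit $\mathrm{k}\to\infty$ on both sides. The left-hand side converges to $\|f\|_{H^{t,p}(\mathbb{R}^n; S^1(\mathbb{R}^n))}$ by continuity of the norm. For the right-hand side, recall that the averaging operators $\mathcal{A}_{1,s}^{0}$ and $\mathcal{A}_{1,s}^{1}$ were extended to $H^{t,p}(\mathbb{R}^n;S^1(\mathbb{R}^n))$ precisely as the limits $\mathcal{A}_{1,s}^{k}f := \lim_{\mathrm{k}}\mathcal{A}_{1,s}^{k}f_{\mathrm{k}}$, and by Theorem \ref{bdd_sobol_pp} they are bounded from $H^{t,p}$ into $H^{t+2s,p}$. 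Hence $\mathcal{A}_{1,s}^{1}f_{\mathrm{k}} \to \mathcal{A}_{1,s}^{1}f$ and $\mathcal{A}_{1,s}^{0}f_{\mathrm{k}} \to \mathcal{A}_{1,s}^{0}f$ in $H^{t+2s,p}(\mathbb{R}^n;S^1(\mathbb{R}^n))$, so the right-hand side converges to $C\left[\|\mathcal{A}_{1,s}^{1}f\|_{H^{t+2s,p}} + 2s\,\|\mathcal{A}_{1,s}^{0}f\|_{H^{t+2s,p}}\right]$. Since a non-strict inequality is preserved under limits, the claimed estimate follows for the Sobolev-class field $f$.

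There is essentially no analytic obstacle here; the only point requiring care is the \emph{consistency} of the limit with the definition of the averaging operators on Sobolev spaces. That is, one must invoke the well-definedness established just before Theorem \ref{bdd_sobol_pp} to ensure that $\lim_{\mathrm{k}} \mathcal{A}_{1,s}^{k} f_{\mathrm{k}}$ is independent of the approximating sequence and equals the extended operator applied to $f$; the boundedness furnished by Theorem \ref{bdd_sobol_pp} then guarantees the strong convergence of both averages used above. All the substantive work — the reconstruction identity \eqref{vec_expression_stability} together with the mapping properties of the fractional Laplacian \eqref{bdd_frac_Laplace} and of the Riesz transforms in Theorem \ref{bdd_riesz} — is already packaged inside the Schwartz-class proof of Theorem \ref{stability_ave}, so nothing beyond this routine density argument is needed.
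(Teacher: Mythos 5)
Your proposal is correct and follows essentially the same route as the paper: both upgrade the Schwartz-class estimate of Theorem \ref{stability_ave} by density, using the boundedness of the extended operators $\mathcal{A}_{1,s}^{0}, \mathcal{A}_{1,s}^{1}$ from Theorem \ref{bdd_sobol_pp} to control the approximation error. The paper phrases the limit passage as an explicit $\epsilon$--triangle-inequality chain rather than your direct convergence argument, but the two are interchangeable, and you rightly flag the well-definedness of the Sobolev extension as the one point needing care.
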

\begin{proof}
Since $ \mathcal{S}(\mathbb{R}^n ; S^1)$ is dense in $H^{t,p}(\mathbb{R}^n ; S^1(\mathbb{R}^n))$, for any given $f\in H^{t,p}(\mathbb{R}^n ; S^1(\mathbb{R}^n))$ there exists a sequence $\{f_{\mathrm{k}}\}\in \mathcal{S}(\mathbb{R}^n ; S^1)$ such that $f_{\mathrm{k}}$ converges to $f$ in $H^{t,p}(\mathbb{R}^n ; S^1(\mathbb{R}^n))$ topology. In other words, for $\epsilon>0$, there exists a natural number $N$ such that
$\|f_{\mathrm{k}} - f\|_{H^{t,p}(\mathbb{R}^n ; S^1(\mathbb{R}^n))}<\epsilon$ for all $\mathrm{k}\geq N$.
Applying Minkowski inequality and Theorem \ref{stability_ave}, we have
\begin{align*}
 \|f\|_{H^{t,p}(\mathbb{R}^n; S^1\left(\mathbb{R}^n\right))}
&\leq    
  \|f-f_{\mathrm{k}}\|_{H^{t,p}(\mathbb{R}^n; S^1\left(\mathbb{R}^n\right))} + 
  \|f_{\mathrm{k}}\|_{H^{t,p}(\mathbb{R}^n; S^1\left(\mathbb{R}^n\right))}\\
& \leq  \epsilon + C \left[\|\mathcal{A}_{1, s}^{1} f_{\mathrm{k}}\|_{H^{t+2s,p}(\mathbb{R}^n; S^1\left(\mathbb{R}^n\right))}  + \|\mathcal{A}_{1, s}^{0} f_{\mathrm{k}}\|_{H^{t+2s,p}(\mathbb{R}^n; S^1\left(\mathbb{R}^n\right))}\right]\\
& \leq  \epsilon + C [\|\mathcal{A}_{1, s}^{1} f_{\mathrm{k}}-\mathcal{A}_{1, s}^{1} f\|_{H^{t+2s,p}(\mathbb{R}^n; S^1\left(\mathbb{R}^n\right))}+ \|\mathcal{A}_{1, s}^{1} f\|_{H^{t+2s,p}(\mathbb{R}^n; S^1\left(\mathbb{R}^n\right))} \\
&\qquad\qquad+ \|\mathcal{A}_{1, s}^{0} f_{\mathrm{k}}-\mathcal{A}_{1, s}^{0} f\|_{H^{t+2s,p}(\mathbb{R}^n; S^1\left(\mathbb{R}^n\right))}+\|\mathcal{A}_{1, s}^{0} f\|_{H^{t+2s,p}(\mathbb{R}^n; S^1\left(\mathbb{R}^n\right))}  ].
\end{align*}
Note that the operator norm $\|\mathcal{A}_{1, s}^{k}\|_{H^{t,p}\rightarrow H^{t+2s,p}}\leq C$, see Theorem \ref{bdd_sobol_pp}, where $C>0$ is constant and $k=0,1$,
we have
\[
\begin{split}
 \|\mathcal{A}_{1, s}^{k} f_{\mathrm{k}}-\mathcal{A}_{1, s}^{k} f\|_{H^{t+2s,p}(\mathbb{R}^n; S^1 
\left(\mathbb{R}^n\right))}
& \leq \|\mathcal{A}_{1, s}^{k}\|_{H^{t,p}\rightarrow H^{t+2s,p}} \|f-f_{\mathrm{k}}\|_{H^{t,p}(\mathbb{R}^n; S^1 
 \left(\mathbb{R}^n\right))} 
\\& \leq C\epsilon, \ \forall \ \ \mathrm{k}\geq N, \ k=0,1.
\end{split}
\]
Combining all the above estimates, we obtain the required result.
\end{proof}
\begin{theorem}\label{stab_ave_momentum}
Let $n\geq 2$ be an integer and $s \in \lb 0,1\rb \setminus  \left\{ \frac{1}{2} \right\} $. Assume $f\in \mathcal{S}(\mathbb{R}^n; S^m(\mathbb{R}^n))$ such that 
\[
\int_{\mathbb{R}^n} \abs{\mathscr{F}^{-1}\left[(1+|\xi|^2)^{\frac{t+2s}{2}} \mathscr{F}_{\xi}(\chi_{s,m}f)(\xi,\eta)\right]}^pd\xi 
<\infty.
\]
Then 
\begin{align*}
    \|\mathcal{A}_{m,s}^{k,i_1,\dots,i_k}f\|_{H^{t+2s,p}(\mathbb{R}^n)}
& \leq  c_{n,s}^{m,k} \int_{\mathbb{S}^{n-1}} \left[ \int_{\mathbb{R}^n} \abs{\mathscr{F}^{-1}\left[(1+|\xi|^2)^{\frac{t+2s}{2}} \mathscr{F}_{\xi}(\chi_{s,m}f)(\xi,\eta)\right]}^pd\xi\right]^{\frac{1}{p}} dS_{\eta},\\
 \|\mathcal{A}_{m,s}^{0}f\|_{H^{t+2s,p}(\mathbb{R}^n)}
 &\leq  c_{n,s}^{m,0} \int_{\mathbb{S}^{n-1}} \left[ \int_{\mathbb{R}^n} \abs{\mathscr{F}^{-1}\left[(1+|\xi|^2)^{\frac{t+2s}{2}} \mathscr{F}_{\xi}(\chi_{s,m}f)(\xi,\eta)\right]}^pd\xi\right]^{\frac{1}{p}} dS_{\eta},
\end{align*}
  where $\mathscr{F}_{\xi}$ denotes the Fourier transform with respect to $\xi$ variable and $1\le k\le m$.
\end{theorem}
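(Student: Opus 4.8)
The plan is to reduce the estimate to a single application of Minkowski's integral inequality, after commuting the Bessel potential $\vev{D}^{t+2s}$ past the spherical average. Writing the averaging operator explicitly as
\[
(\mathcal{A}_{m,s}^{k,i_1,\dots,i_k}f)(x) = c_{n,s}^{m,k}\int_{\mathbb{S}^{n-1}}\eta^{i_1}\cdots\eta^{i_k}\,(\chi_{s,m}f)(x,\eta)\,dS_\eta,
\]
I would first record that, by Lemma \ref{Sec4:Lemma2} and the remark following it, for each fixed direction $\eta$ the slice $\chi_{s,m}f(\cdot,\eta)$ is a smooth tempered distribution in the spatial variable; hence its spatial Fourier transform, and with it the Bessel potential $\vev{D}^{t+2s}\chi_{s,m}f(\cdot,\eta)=\mathscr{F}^{-1}\!\left[(1+|\xi|^2)^{\frac{t+2s}{2}}\mathscr{F}_\xi(\chi_{s,m}f)(\xi,\eta)\right]$, is well defined.

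The first computational step is to commute $\vev{D}^{t+2s}$, a Fourier multiplier acting only in the spatial variable, with the integration over the compact manifold $\mathbb{S}^{n-1}$; since the integrand and all of its spatial derivatives obey the polynomial-in-$\langle x\rangle$ bounds of Lemma \ref{sec4:Lemma1.2} uniformly in $\eta$, this interchange is legitimate in the sense of tempered distributions and yields
\[
\vev{D}^{t+2s}(\mathcal{A}_{m,s}^{k,i_1,\dots,i_k}f)(x) = c_{n,s}^{m,k}\int_{\mathbb{S}^{n-1}}\eta^{i_1}\cdots\eta^{i_k}\,\mathscr{F}^{-1}\!\left[(1+|\xi|^2)^{\frac{t+2s}{2}}\mathscr{F}_\xi(\chi_{s,m}f)(\xi,\eta)\right]\!(x)\,dS_\eta.
\]
Taking the $L^p(\mathbb{R}^n)$ norm in the spatial variable, which by definition equals $\|\mathcal{A}_{m,s}^{k,i_1,\dots,i_k}f\|_{H^{t+2s,p}}$, and applying Minkowski's integral inequality to move the norm inside the $\eta$-integral, while bounding the monomial factor by $|\eta^{i_1}\cdots\eta^{i_k}|\le 1$ (each component of a unit vector has modulus at most one), gives precisely
\[
\|\mathcal{A}_{m,s}^{k,i_1,\dots,i_k}f\|_{H^{t+2s,p}(\mathbb{R}^n)} \le c_{n,s}^{m,k}\int_{\mathbb{S}^{n-1}}\left[\int_{\mathbb{R}^n}\left|\mathscr{F}^{-1}\!\left[(1+|\xi|^2)^{\frac{t+2s}{2}}\mathscr{F}_\xi(\chi_{s,m}f)(\xi,\eta)\right]\right|^p d\xi\right]^{\frac{1}{p}}dS_\eta.
\]
The case $k=0$ follows identically, with the empty-product convention removing the monomial factor.

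I expect the only genuine difficulty to be the rigorous justification of the interchange in the second paragraph: one must make sense of the spatial Fourier transform of $\chi_{s,m}f(\cdot,\eta)$, which grows only polynomially by Lemma \ref{sec4:Lemma1.2}, and show that the multiplier $\vev{D}^{t+2s}$ passes through the $\eta$-integral. This is exactly where the hypothesis $\int_{\mathbb{R}^n}|\mathscr{F}^{-1}[(1+|\xi|^2)^{(t+2s)/2}\mathscr{F}_\xi(\chi_{s,m}f)(\xi,\eta)]|^p d\xi<\infty$ enters: it guarantees that $\vev{D}^{t+2s}\chi_{s,m}f(\cdot,\eta)\in L^p$ for the relevant directions, so that the right-hand side is finite and Minkowski's inequality applies cleanly. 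The convolution representation of $\mathcal{A}_{m,s}^{k}f$ supplied by Lemma \ref{Sec4:Lemma1}, together with the fact that the convolution of a Schwartz tensor field with the locally integrable, polynomially bounded kernel produces a smooth function of polynomial growth, provides the tempered-distribution framework that makes each of these manipulations legitimate.
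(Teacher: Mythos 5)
Your proposal is correct and follows essentially the same route as the paper: the paper writes out $\mathscr{F}(\mathcal{A}_{m,s}^{k,i_1,\dots,i_k}f)$ and then the weighted inverse transform explicitly, interchanging the $x$- (resp.\ $\tau$-) integral with the $\mathbb{S}^{n-1}$-integral via Fubini, which is precisely your commutation of the Bessel multiplier $\vev{D}^{t+2s}$ past the spherical average, justified in both cases by the tempered-distribution framework of Lemmas \ref{sec4:Lemma1.2} and \ref{Sec4:Lemma2}. The final step — Minkowski's integral inequality together with $|\eta^{i_1}\cdots\eta^{i_k}|\le 1$ — is identical to the paper's.
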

\begin{proof}
We recall $m+1$ averages of the fractional divergent beam ray transform $\chi_{s,m}f$ for an $m$-tensor $f\in \mathcal{S}(\mathbb{R}^n;S^m(\mathbb{R}^n))$, over the sphere $\mathbb{S}^{n-1}$ as
\[
(\chi_{s,m} f)(x,\xi) = \int_0^{\infty} t^{2s-1} f_{i_1 \cdots i_m}(x + t\xi) \xi^{i_1} \cdots \xi^{i_m} \, dt 
\]
and
\[
    \left(\mathcal{A}_{m,s}^{k,i_1,\dots,i_k}f\right)(x) = c_{n,s}^{m,k} \int_{\mathbb{S}^{n-1}} \xi^{i_1}\cdots \xi^{i_k}(\chi_{s,m}f)(x,\xi) dS_{\xi}.
\]
where $x\in\mathbb{R}^n$ and $\xi\in\mathbb{S}^{n-1}$. For $f\in \mathcal{S}(\mathbb{R}^n; S^m(\mathbb{R}^n))$, it is immediate from Lemma \ref{sec4:Lemma1.2}, that 
$\chi_{s,m}f(\cdot,\eta)$ is a smooth tempered distribution on $\mathbb{R}^n$ and that allows us to take Fourier transform of $\chi_{s,m}f$ with respect to $\xi$ variable.
Applying the Fubini theorem, we notice that
\begin{align*}
\mathscr{F}(\mathcal{A}_{m,s}^{k,i_1,\dots,i_k}f)(\xi) 
&= \int_{\mathbb{R}^n} e^{-\mathrm{i} \langle x, \xi \rangle} (\mathcal{A}_{m,s}^{k,i_1,\dots,i_k}f)(x) dx\\
&= c_{n,s}^{m,k}\int_{\mathbb{R}^n} e^{-\mathrm{i} \langle x, \xi \rangle} \left[\int_{\mathbb{S}^{n-1}}\eta^{i_1}\cdots \eta^{i_k}(\chi_{s,m}f)(x,\eta) dS_{\eta} \right] dx \\
& = c_{n,s}^{m,k}\int_{\mathbb{S}^{n-1}} \eta^{i_1}\cdots \eta^{i_k}\left[\int_{\mathbb{R}^n} e^{-\mathrm{i} \langle x, \xi \rangle}(\chi_{s,m}f)(x,\eta) dx \right]dS_{\eta}\\
&= c_{n,s}^{m,k}\int_{\mathbb{S}^{n-1}}\eta^{i_1}\cdots \eta^{i_k}
\mathscr{F}_{\xi}({\chi_{s,m}f})(\xi,\eta)dS_{\eta}.
\end{align*}
We also observe using Fubini theorem that
\begin{align*}
&\mathscr{F}^{-1}\left[(1+|\xi|^2)^{\frac{t+2s}{2}} \mathscr{F}(\mathcal{A}_{m,s}^{k,i_1,\dots,i_k}f)(\xi) \right] = \int_{\mathbb{R}^n} e^{\mathrm{i} \langle \xi, \tau \rangle} (1+|\tau|^2)^{\frac{t+2s}{2}} \mathscr{F}(\mathcal{A}_{m,s}^{k,i_1,\dots,i_k}f)(\tau) d\tau\\
& \qquad= c_{n,s}^{m,k}\int_{\mathbb{R}^n} e^{\mathrm{i} \langle \xi, \tau \rangle} (1+|\tau|^2)^{\frac{t+2s}{2}} \left[ \int_{\mathbb{S}^{n-1}}\eta^{i_1}\cdots \eta^{i_k}
\mathscr{F}_{\xi}({\chi_{s,m}f})(\tau,\eta)dS_{\eta}\right]  d\tau\\
& \qquad=c_{n,s}^{m,k}  \int_{\mathbb{S}^{n-1}} \eta^{i_1}\cdots \eta^{i_k}\left[\int_{\mathbb{R}^n} e^{\mathrm{i} \langle \xi, \tau \rangle} (1+|\tau|^2)^{\frac{t+2s}{2}}
\mathscr{F}_{\xi}({\chi_{s,m}f})(\tau,\eta)d\tau\right]dS_{\eta}\\
& \qquad= c_{n,s}^{m,k}  \int_{\mathbb{S}^{n-1}} \eta^{i_1}\cdots \eta^{i_k}\left[\mathscr{F}^{-1}\left[(1+|\xi|^2)^{\frac{t+2s}{2}} \mathscr{F}_{\xi}(\chi_{s,m}f)(\xi,\eta) \right] \right]dS_{\eta}.
\end{align*}
Finally, using Fubini, Minkowski integral inequality and the fact that $|\eta^{i_k}|\leq 1$, we obtain
\begin{align*}
&\|\mathcal{A}_{m,s}^{k,i_1,\dots,i_k}f\|_{H^{t+2s,p}(\mathbb{R}^n)}= \|J_{-t-2s}\left(\mathcal{A}_{m,s}^{k,i_1,\dots,i_k}f\right)\|_{L^p(\mathbb{R}^n)}\\
& \qquad= \|\mathscr{F}^{-1}\left[(1+|\xi|^2)^{\frac{t+2s}{2}} \mathscr{F}(\mathcal{A}_{m,s}^{k,i_1,\dots,i_k}f)\right]\|_{L^p(\mathbb{R}^n)}\\
& \qquad= \left[\int_{\mathbb{R}^n} |\mathscr{F}^{-1}\left[(1+|\xi|^2)^{\frac{t+2s}{2}} \mathscr{F}(\mathcal{A}_{m,s}^{k,i_1,\dots,i_k}f)(\xi) \right]|^pd\xi\right]^{\frac{1}{p}}\\
& \qquad= c_{n,s}^{m,k} \left[\int_{\mathbb{R}^n} \abs{\int_{\mathbb{S}^{n-1}} \eta^{i_1}\cdots \eta^{i_k}\left[\mathscr{F}^{-1}\left[(1+|\xi|^2)^{\frac{t+2s}{2}} \mathscr{F}_{\xi}(\chi_{s,m}f)(\xi,\eta) \right] \right]dS_{\eta}}^pd\xi\right]^{\frac{1}{p}}
\\
& \qquad\leq  c_{n,s}^{m,k} \int_{\mathbb{S}^{n-1}} \left[ \int_{\mathbb{R}^n} \abs{\mathscr{F}^{-1}\left[(1+|\xi|^2)^{\frac{t+2s}{2}} \mathscr{F}_{\xi}(\chi_{s,m}f)(\xi,\eta)\right]}^pd\xi\right]^{\frac{1}{p}} dS_{\eta}.
\end{align*}
Similarly, we can prove the estimate for operator $\mathcal{A}_{m, s}^0$.
\end{proof}
 \begin{theorem}[Stability for the fractional divergent beam ray transform]
 Let $n\geq 2$ be an integer and $s \in \lb 0,1\rb \setminus  \left\{ \frac{1}{2} \right\} $. Assume $f\in \mathcal{S}(\mathbb{R}^n; S^1(\mathbb{R}^n))$ such that
 \[
\int_{\mathbb{R}^n} \abs{\mathscr{F}^{-1}\left[(1+|\xi|^2)^{\frac{t+2s}{2}} \mathscr{F}_{\xi}(\chi_{s,1}f)(\xi,\eta)\right]}^pd\xi 
<\infty.
\]
Then there exists a constant $C>0$ such that
 \[
 \|f\|_{H^{t,p}(\mathbb{R}^n; S^1(\mathbb{R}^n))}
 \leq C \int_{\mathbb{S}^{n-1}}\left[ \int_{\mathbb{R}^n} \abs{\mathscr{F}^{-1}\left[(1+|\xi|^2)^{\frac{t+2s}{2}} \mathscr{F}_{\xi}(\chi_{s,1}f)(\xi,\eta)\right]}^pd\xi\right]^{\frac{1}{p}}dS_{\eta},
 \]    
  where $\mathscr{F}_{\xi}$ denotes the Fourier transform with respect to $\xi$ variable.
 \end{theorem}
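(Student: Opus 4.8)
The plan is to obtain this estimate as a direct consequence of the two stability results already in hand, namely the stability estimate for the averaging operators (Theorem \ref{stability_ave}) and Theorem \ref{stab_ave_momentum}, both specialized to the vector-field case $m=1$. The averaging operators $\mathcal{A}_{1,s}^{0}$ and $\mathcal{A}_{1,s}^{1}$ act as an intermediary: Theorem \ref{stability_ave} controls $\|f\|_{H^{t,p}}$ from above by the Sobolev norms of these averages, while Theorem \ref{stab_ave_momentum} controls those norms from above by the weighted $L^p$-integral of $\chi_{s,1}f$ over the sphere. Chaining the two inequalities therefore eliminates the averages entirely and leaves only the data $\chi_{s,1}f$ on the right-hand side.

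Concretely, I would first invoke Theorem \ref{stability_ave} to write
\[
\|f\|_{H^{t,p}(\mathbb{R}^n; S^1(\mathbb{R}^n))} \leq C\left[\|\mathcal{A}_{1,s}^{1} f\|_{H^{t+2s,p}(\mathbb{R}^n; S^1(\mathbb{R}^n))} + 2s\,\|\mathcal{A}_{1,s}^{0} f\|_{H^{t+2s,p}(\mathbb{R}^n; S^1(\mathbb{R}^n))}\right],
\]
valid for $1<p<\infty$ and $t>-2s$. Next, recalling that $\|\mathcal{A}_{1,s}^{1} f\|_{H^{t+2s,p}(\mathbb{R}^n; S^1(\mathbb{R}^n))} = \sum_{i=1}^{n}\|\mathcal{A}_{1,s}^{1,i} f\|_{H^{t+2s,p}(\mathbb{R}^n)}$, I would apply Theorem \ref{stab_ave_momentum} with $m=1,k=1$ to each component $\mathcal{A}_{1,s}^{1,i}f$, and with $m=1,k=0$ to $\mathcal{A}_{1,s}^{0}f$. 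Each such application bounds the corresponding Sobolev norm by the constant $c_{n,s}^{1,k}$ times
\[
\int_{\mathbb{S}^{n-1}}\left[\int_{\mathbb{R}^n} \abs{\mathscr{F}^{-1}\left[(1+|\xi|^2)^{\frac{t+2s}{2}} \mathscr{F}_{\xi}(\chi_{s,1}f)(\xi,\eta)\right]}^p d\xi\right]^{\frac{1}{p}} dS_{\eta}.
\]
Summing the $n$ contributions coming from the components of $\mathcal{A}_{1,s}^{1}f$ together with the single contribution from $\mathcal{A}_{1,s}^{0}f$, and absorbing the finitely many constants $c_{n,s}^{1,0}$, $c_{n,s}^{1,1}$, $n$ and $s$ into one constant $C>0$, produces exactly the claimed inequality.

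The finiteness hypothesis placed on $f$ is precisely what makes the right-hand side of the target estimate finite and the application of Theorem \ref{stab_ave_momentum} legitimate, which is essential because $\chi_{s,1}f$ need not decay (by Lemma \ref{sec4:Lemma1.2} it grows like $\langle x\rangle^{2s+\delta}$). I do not expect a genuine analytic obstacle here: all the real work has been deposited in the two cited theorems, and what remains is the bookkeeping of composing them, summing over the $n$ vector components, and collecting constants. The one point requiring care is to apply both auxiliary theorems within their common admissible range of parameters, namely $s\in(0,1)\setminus\{1/2\}$, $1<p<\infty$ and $t>-2s$, which is compatible with the hypotheses of the present statement.
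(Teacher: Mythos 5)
Your proposal is correct and matches the paper's proof exactly: the paper also obtains the result by chaining Theorem \ref{stability_ave} (stability for the averaging operators) with Theorem \ref{stab_ave_momentum} (the spherical-average bound in terms of $\chi_{s,1}f$), absorbing constants. Your writeup in fact spells out the component-wise bookkeeping that the paper leaves implicit.
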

 \begin{proof}
The proof follows from Theorem \ref{stab_ave_momentum} and Theorem \ref{stability_ave}.
\end{proof}

\subsection{Stability and forward estimate: 2-tensor field case}
We start with the boundedness estimates for the averaging operators as it requires to prove the stability estimates for the averaging operators.
\begin{theorem}[Boundedness estimate for averaging operators]\label{es_bdd_A01ss_2ten_sobo}
Let $n\geq 2$ be an integer and $s \in \lb 0,1\rb \setminus  \left\{ \frac{1}{2} \right\} $. Assume that $f\in \mathcal{S}(\mathbb{R}^n; S^2(\mathbb{R}^n))$. Then there exists a constant $C>0$ such that
\[
  \|\mathcal{A}_{2,s}^{0}f\|_{H^{t+2s,p}(\mathbb{R}^n; S^2(\mathbb{R}^n))} \leq C \|f\|_{H^{t,p}(\mathbb{R}^n; S^2(\mathbb{R}^n))},  \ \ \|\mathcal{A}_{2,s}^{1}f\|_{H^{t+2s,p}(\mathbb{R}^n; S^2(\mathbb{R}^n))} \leq C \|f\|_{H^{t,p}(\mathbb{R}^n; S^2(\mathbb{R}^n))}
\]
and
\[
 \|\mathcal{A}_{2,s}^{2}f\|_{H^{t+2s,p}(\mathbb{R}^n; S^2(\mathbb{R}^n))} \leq C \|f\|_{H^{t,p}(\mathbb{R}^n; S^2(\mathbb{R}^n))},
\]
where $1<p<\infty$ and $t>0$.
\end{theorem}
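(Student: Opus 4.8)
The plan is to follow the structure of the vector-field argument in Theorem~\ref{bdd_ave_schwartz_Lp} essentially verbatim, replacing the Fourier identities of Lemma~\ref{fourier_lemm} with the $2$-tensor identities of Lemma~\ref{2TF_lem2}. The two workhorses throughout are Theorem~\ref{bdd_pseudo_2s}, which gives that the operator $T_{\sigma_s}$ with symbol $\sigma_s(\xi)=|\xi|^{-2s}$ is bounded $H^{t,p}(\R^n)\to H^{t+2s,p}(\R^n)$, and Theorem~\ref{bdd_riesz}, which gives that each Riesz transform $\RT_j$ is bounded on $H^{t,p}(\R^n)$ for $t\ge 0$. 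The key observation drawn from Lemma~\ref{2TF_lem2} is that, after inverting the Fourier transform, each average is a finite sum of $T_{\sigma_s}$ applied to Riesz transforms of the components $f_{j_1j_2}$, together with Riesz transforms of the \emph{lower-order} averages. Consequently the three estimates should be established iteratively, in the order $\mathcal{A}_{2,s}^0$, then $\mathcal{A}_{2,s}^1$, then $\mathcal{A}_{2,s}^2$.

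First I would treat $\mathcal{A}_{2,s}^0$. Inverting the first identity of Lemma~\ref{2TF_lem2} gives
\[
\mathcal{A}_{2,s}^0 f = -\,T_{\sigma_s}\!\lb f_{j_1j_1}+2s\,\RT_{j_1}\RT_{j_2} f_{j_1j_2}\rb ,
\]
with $\sigma_s(\xi)=|\xi|^{-2s}$. Applying Theorem~\ref{bdd_pseudo_2s} and then Theorem~\ref{bdd_riesz}, and summing over the repeated indices, yields $\|\mathcal{A}_{2,s}^0 f\|_{H^{t+2s,p}(\R^n;S^2(\R^n))}\le C\|f\|_{H^{t,p}(\R^n;S^2(\R^n))}$, the first estimate.

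Next, for $\mathcal{A}_{2,s}^{1,i_1}$, the second identity of Lemma~\ref{2TF_lem2} expresses it (after inversion) as $-\RT_{i_1}\mathcal{A}_{2,s}^0 f$ plus $T_{\sigma_s}$ applied to Riesz transforms of the components. The latter group is controlled exactly as above. For the term $\RT_{i_1}\mathcal{A}_{2,s}^0 f$ I would invoke Theorem~\ref{bdd_riesz} at the level $H^{t+2s,p}$ and then the bound on $\mathcal{A}_{2,s}^0 f$ already obtained, giving
\[
\|\RT_{i_1}\mathcal{A}_{2,s}^0 f\|_{H^{t+2s,p}}\le C\,\|\mathcal{A}_{2,s}^0 f\|_{H^{t+2s,p}}\le C\,\|f\|_{H^{t,p}} .
\]
Summing over $i_1$ gives the bound for $\mathcal{A}_{2,s}^1$. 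Finally, the third identity of Lemma~\ref{2TF_lem2} writes $\mathcal{A}_{2,s}^{2,i_1i_2}$ as $T_{\sigma_s}(f_{i_1i_2})$, plus $\delta_{i_1i_2}\mathcal{A}_{2,s}^0 f$, plus Riesz transforms of $\mathcal{A}_{2,s}^0 f$ and of $\mathcal{A}_{2,s}^{1,i_1}$, $\mathcal{A}_{2,s}^{1,i_2}$. The first term is handled by Theorem~\ref{bdd_pseudo_2s}; each remaining term is a Riesz transform (bounded on $H^{t+2s,p}$ since $t+2s\ge 0$) applied to an average already estimated in $H^{t+2s,p}$. Collecting these and summing over indices gives the last estimate.

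The only point requiring genuine care is the iterative dependence: each higher average is bounded only after its lower-order predecessors have been controlled in $H^{t+2s,p}$, and the Riesz transforms acting on those predecessors must be applied at the shifted smoothness level $t+2s$ rather than $t$. This is legitimate because $t+2s> 0$ under the hypotheses $t>0$ and $s\in(0,1)$, so Theorem~\ref{bdd_riesz} applies. Beyond this bookkeeping there is no real obstacle; the whole estimate reduces to the mapping properties of $T_{\sigma_s}$ and of the Riesz transforms established earlier, exactly as in the vector-field case.
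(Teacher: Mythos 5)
Your proposal is correct and matches the paper's proof essentially verbatim: the same iterative scheme $\mathcal{A}_{2,s}^0 \to \mathcal{A}_{2,s}^1 \to \mathcal{A}_{2,s}^2$ via the Fourier identities of Lemma~\ref{2TF_lem2}, with each average decomposed into $T_{\sigma_s}$ acting on Riesz transforms of components plus Riesz transforms of lower-order averages, bounded by Theorem~\ref{bdd_pseudo_2s} and Theorem~\ref{bdd_riesz}. Your remark that the Riesz transforms on the lower-order averages must be applied at the shifted level $t+2s$ (legitimate since $t+2s>0$) is exactly the bookkeeping the paper's proof relies on.
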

\begin{proof}
By the Fourier inversion formula and applying Lemma \ref{2TF_lem2}, we can write 
\begin{align*}
\left(\mathcal{A}_{2,s}^{0}f  \right)(x)
&= (2\pi)^{-n}\int_{\mathbb{R}^n} e^{\mathrm{i} \langle x,\xi\rangle}\wh{(\mathcal{A}_{2,s}^{0}f)}(\xi) d\xi \\
& = (2\pi)^{-n}\int_{\mathbb{R}^n}  e^{\mathrm{i} \langle x,\xi\rangle} \left[- |\xi|^{-2s}\left[\wh f_{j_1j_1} (\xi) + 2s \rwh{\lb  \RT_{j_1} \RT_{j_2} f_{j_1j_2} \rb } (\xi) \right] \right]d \xi\\
&= -T_{\sigma_s}(f_{j_1j_1})(x) - 2s T_{\sigma_s}\lb  \RT_{j_1} \RT_{j_2} f_{j_1j_2} \rb (x),
\end{align*}
where $\sigma_s(\xi) = |\xi|^{-2s}$ and $T_{\sigma_s}$ is an elliptic pseudodifferential operator of order $-2s$. By Theorem \ref{bdd_pseudo_2s} and Theorem \ref{bdd_riesz}, we obtain
\begin{align}\label{eq:int_step_2}
    \begin{split}
\|\mathcal{A}_{2,s}^{0}f\|_{H^{t+2s,p}(\mathbb{R}^n)} 
&\leq \|T_{\sigma_s}(f_{j_1j_1})\|_{H^{t+2s,p}(\mathbb{R}^n)} +2s \|T_{\sigma_s}\lb  \RT_{j_1} \RT_{j_2} f_{j_1j_2} \rb\|_{H^{t+2s,p}(\mathbb{R}^n)}\\
& \leq C\left[\|f_{j_1j_1}\|_{H^{t,p}(\mathbb{R}^n)} + \|\RT_{j_1} \RT_{j_2} f_{j_1j_2}\|_{H^{t,p}(\mathbb{R}^n)}\right]\\
& \leq C \|f\|_{H^{t,p}(\mathbb{R}^n)}
  \end{split}
\end{align}
where $C$ depends on $n,p,s$.
Next we estimate the term $\mathcal{A}_{2,s}^{1}f$ in the Bessel potential spaces. By the Fourier inversion formula and Lemma \ref{2TF_lem2}, we can write
\begin{align*}
&\left(\mathcal{A}_{2,s}^{1,i_1}f  \right)(x) \\
&= (2\pi)^{-n}\int_{\mathbb{R}^n} e^{\mathrm{i} \langle x,\xi\rangle}\wh{(\mathcal{A}_{2,s}^{1,i_1}f)}(\xi) d\xi \\
& =(2\pi)^{-n}\int_{\mathbb{R}^n} e^{\mathrm{i} \langle x,\xi\rangle}\left[ -\rwh{\lb \RT_{i_1} \mathcal{A}_{2, s}^{0} f \rb} (\xi) + |\xi|^{-2s} \left[  2 \rwh{ \lb \RT_{j_1} f_{j_1i_1}  \rb} (\xi)  + \rwh{ \lb \RT_{i_1} \RT_{j_1}\RT_{j_2}f_{j_1j_2} \rb }  (\xi) \right]\right]d\xi \\
& = - \lb \RT_{i_1} \mathcal{A}_{2, s}^{0} f \rb(x)+ 2T_{\sigma_s} \lb \RT_{j_1} f_{j_1i_1}  \rb(x) + T_{\sigma_s} {\lb \RT_{i_1} \RT_{j_1}\RT_{j_2}f_{j_1j_2} \rb}(x)
\end{align*}
where  $\sigma_s(\xi) = |\xi|^{-2s}$. Here $T_{\sigma_s}$ is an elliptic pseudodifferential operator of order $-2s$. By Theorem \ref{bdd_pseudo_2s}, Theorem \ref{bdd_riesz} and \eqref{eq:int_step_2}, we have
\begin{align*}
&\|\mathcal{A}_{2,s}^{1,i_1}f \|_{H^{t+2s,p}(\mathbb{R}^n)} \\
& \leq \|\RT_{i_1} \mathcal{A}_{2, s}^{0} f \|_{H^{t+2s,p}(\mathbb{R}^n)} + 2\|T_{\sigma_s} \lb \RT_{j_1} f_{j_1i_1}\rb\|_{H^{t+2s,p}(\mathbb{R}^n)} + \|T_{\sigma_s} {\lb \RT_{i_1} \RT_{j_1}\RT_{j_2}f_{j_1j_2}\rb}\|_{H^{t+2s,p}(\mathbb{R}^n)} \\
& \leq C \left[\|\mathcal{A}_{2, s}^{0} f\|_{H^{t+2s,p}(\mathbb{R}^n)}
+\|\RT_{j_1} f_{j_1i_1}\|_{H^{t,p}(\mathbb{R}^n)} + \|\RT_{i_1} \RT_{j_1}\RT_{j_2}f_{j_1j_2}\|_{H^{t,p}(\mathbb{R}^n)}\right] \\
& \leq C(n,s) \|f\|_{H^{t,p}(\mathbb{R}^n)}
\end{align*}
holds for all $t>0, 1<p<\infty$ and $s\in (0,1)\setminus\{\frac{1}{2}\}$. Finally by the Fourier inversion formula and Lemma \ref{2TF_lem2}, we have
\begin{align*}
    &\left(\mathcal{A}_{2,s}^{2,i_1,i_2}f  \right)(x) \\
&= (2\pi)^{-n}\int_{\mathbb{R}^n} e^{\mathrm{i} \langle x,\xi\rangle}\wh{(\mathcal{A}_{2,s}^{2,i_1,i_2}f)}(\xi) d\xi \\
& =(2\pi)^{-n}\int_{\mathbb{R}^n} e^{\mathrm{i} \langle x,\xi\rangle}[ -
        2 |\xi|^{-2s} \wh{f}_{i_1i_2}(\xi) + \delta_{i_1i_2} \wh {\left(\mathcal{A}_{2, s}^0 f\right)}(\xi) {-2s} \rwh{ \lb \RT_{i_1}\RT_{i_2} \mathcal{A}_{2, s}^0 f \rb }(\xi) \\
    & \qquad \qquad\qquad -2s \lb 
    \rwh{ \lb \RT_{i_1} \mathcal{A}_{2, s}^{1,i_2} f \rb }(\xi) 
    + \rwh{ \lb \RT_{i_2} \mathcal{A}_{2, s}^{1,i_1} f \rb }(\xi) \rb]d\xi \\
    & = -2T_{\sigma_s}\lb {f}_{i_1i_2}\rb(x) + \delta_{i_1i_2} \left(\mathcal{A}_{2, s}^0 f\right)(x)
    -2s \lb \RT_{i_1}\RT_{i_2} \mathcal{A}_{2, s}^0 f \rb(x) \\
    &\qquad  -2s \left[\lb \RT_{i_1} \mathcal{A}_{2, s}^{1,i_2} f \rb (x) + \lb \RT_{i_2} \mathcal{A}_{2, s}^{1,i_1} f \rb (x) \right]
\end{align*}
where $\sigma_s(\xi)=|\xi|^{-2s}$ and $T_{\sigma_s}$ is an elliptic pseudodifferential operator of order $-2s$. By the boundedness properties of the elliptic pseudodifferential operator, see Theorem \ref{bdd_pseudo_2s}, and the boudedness properties of the Riesz transform, see Theorem \ref{bdd_riesz}, we have
\begin{align*}
&\|\mathcal{A}_{2,s}^{2,i_1,i_2}f \|_{H^{t+2s,p}(\mathbb{R}^n)} \\
& \leq 2\|T_{\sigma_s}\lb {f}_{i_1i_2}\rb\|_{H^{t+2s,p}(\mathbb{R}^n)}+ \|\mathcal{A}_{2, s}^0 f\|_{H^{t+2s,p}(\mathbb{R}^n)} +2s \| \RT_{i_1}\RT_{i_2} \mathcal{A}_{2, s}^0 f\|_{H^{t+2s,p}(\mathbb{R}^n)} \\
&\qquad + 2s\|\RT_{i_1} \mathcal{A}_{2, s}^{1,i_2} f\|_{H^{t+2s,p}(\mathbb{R}^n)} + 2s\|\RT_{i_2} \mathcal{A}_{2, s}^{1,i_1} f\|_{H^{t+2s,p}(\mathbb{R}^n)} \\
& \leq 2C\|{f}_{i_1i_2}\|_{H^{t,p}(\mathbb{R}^n)} + C\|f\|_{H^{t,p}(\mathbb{R}^n)} + 2sC\|\mathcal{A}_{2, s}^0 f\|_{H^{t+2s,p}(\mathbb{R}^n)} \\
&\qquad + 2sC\|\mathcal{A}_{2, s}^{1,i_2} f\|_{H^{t+2s,p}(\mathbb{R}^n)} + 2sC\|\mathcal{A}_{2, s}^{1,i_1} f\|_{H^{t+2s,p}(\mathbb{R}^n)} \\
& \leq C(n,s) \|f\|_{H^{t,p}(\mathbb{R}^n)}
\end{align*}
holds for all $t>0, 1<p<\infty$ and $s\in (0,1)\setminus\{\frac{1}{2}\}$. Hence the theorem follows.
\end{proof}
\begin{theorem}[Boundedness estimate for averaging operators on $L^p$ based Sobolev spaces]\label{es_bdd_A01ss_2ten_sobo:2}
Let $n\geq 2$ be an integer and $s \in \lb 0,1\rb \setminus  \left\{ \frac{1}{2} \right\} $. Assume that $f\in H^{t,p}(\mathbb{R}^n; S^2(\mathbb{R}^n))$. Then exists a constant $C>0$ such that
\[
  \|\mathcal{A}_{2,s}^{0}f\|_{H^{t+2s,p}(\mathbb{R}^n; S^2(\mathbb{R}^n))} \leq C \|f\|_{H^{t,p}(\mathbb{R}^n; S^2(\mathbb{R}^n))},  \ \ \|\mathcal{A}_{2,s}^{1}f\|_{H^{t+2s,p}(\mathbb{R}^n; S^2(\mathbb{R}^n))} \leq C \|f\|_{H^{t,p}(\mathbb{R}^n; S^2(\mathbb{R}^n))}
\]
and
\[
 \|\mathcal{A}_{2,s}^{2}f\|_{H^{t+2s,p}(\mathbb{R}^n; S^2(\mathbb{R}^n))} \leq C \|f\|_{H^{t,p}(\mathbb{R}^n; S^2(\mathbb{R}^n))},
\]
where $1<p<\infty$ and $t\geq 0$. Moreover, the operator norm $\|\mathcal{A}_{2, s}^{k}\|_{H^{t,p}\rightarrow H^{t+2s,p}}\leq C$, where $C>0$ is constant, $k=0,1,2$ and the operator norm is defined as
\[
\|\mathcal{A}_{2, s}^{k}\|_{H^{t,p}\rightarrow H^{t+2s,p}} := \sup_{f\in {H^{t,p}(\mathbb{R}^n; S^2(\mathbb{R}^n))},\ f\neq 0}\frac{\|\mathcal{A}_{2, s}^{k} f\|_{H^{t+2s,p}(\mathbb{R}^n; S^2(\mathbb{R}^n))}}{\|f\|_{H^{t,p}(\mathbb{R}^n; S^2(\mathbb{R}^n))}}.
\]
\end{theorem}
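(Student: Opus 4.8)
The plan is to obtain the Sobolev-space estimates by continuous extension from the Schwartz-class estimates already established in Theorem~\ref{es_bdd_A01ss_2ten_sobo}, following verbatim the density argument carried out for vector fields in the passage between Theorem~\ref{bdd_ave_schwartz_Lp} and Theorem~\ref{bdd_sobol_pp}. The only structural change is that the index $k$ now ranges over $\{0,1,2\}$ and $S^1(\mathbb{R}^n)$ is replaced throughout by $S^2(\mathbb{R}^n)$; no new analytic input is needed beyond the preceding theorem and completeness of the target space.

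First I would recall that $\mathcal{S}(\mathbb{R}^n; S^2)$ is dense in $H^{t,p}(\mathbb{R}^n; S^2(\mathbb{R}^n))$. Given $f\in H^{t,p}(\mathbb{R}^n; S^2(\mathbb{R}^n))$, choose $\{f_{\mathrm{k}}\}\subset \mathcal{S}(\mathbb{R}^n; S^2)$ with $f_{\mathrm{k}}\to f$ in the $H^{t,p}$ topology. Applying the Schwartz-class bound of Theorem~\ref{es_bdd_A01ss_2ten_sobo} to the difference $f_{\mathrm{k}}-f_{\mathrm{l}}$ gives, for each $k\in\{0,1,2\}$,
\[
\|\mathcal{A}_{2,s}^{k} f_{\mathrm{k}} - \mathcal{A}_{2,s}^{k} f_{\mathrm{l}}\|_{H^{t+2s,p}(\mathbb{R}^n; S^2(\mathbb{R}^n))} \leq C \|f_{\mathrm{k}} - f_{\mathrm{l}}\|_{H^{t,p}(\mathbb{R}^n; S^2(\mathbb{R}^n))},
\]
so that $\{\mathcal{A}_{2,s}^{k} f_{\mathrm{k}}\}$ is Cauchy in the complete space $H^{t+2s,p}(\mathbb{R}^n; S^2(\mathbb{R}^n))$; I would then define $\mathcal{A}_{2,s}^{k} f$ as its limit. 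Well-definedness, i.e. independence of the approximating sequence, follows by interlacing two such sequences and invoking the same Schwartz-class estimate, exactly as in the paragraph preceding Theorem~\ref{bdd_sobol_pp}.

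Next I would pass the estimate to the limit. For a fixed approximating sequence, Theorem~\ref{es_bdd_A01ss_2ten_sobo} yields $\|\mathcal{A}_{2,s}^{k} f_{\mathrm{k}}\|_{H^{t+2s,p}(\mathbb{R}^n; S^2(\mathbb{R}^n))} \leq C \|f_{\mathrm{k}}\|_{H^{t,p}(\mathbb{R}^n; S^2(\mathbb{R}^n))}$. Bounding $\|\mathcal{A}_{2,s}^{k} f\|_{H^{t+2s,p}}$ by $\|\mathcal{A}_{2,s}^{k} f - \mathcal{A}_{2,s}^{k} f_{\mathrm{k}}\|_{H^{t+2s,p}} + \|\mathcal{A}_{2,s}^{k} f_{\mathrm{k}}\|_{H^{t+2s,p}}$ and letting $\mathrm{k}\to\infty$ (the first term tending to $0$ by construction, the second dominated by $C\|f_{\mathrm{k}}\|_{H^{t,p}}\to C\|f\|_{H^{t,p}}$) delivers the claimed bound for each of $\mathcal{A}_{2,s}^{0}$, $\mathcal{A}_{2,s}^{1}$, and $\mathcal{A}_{2,s}^{2}$. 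The operator-norm statement $\|\mathcal{A}_{2,s}^{k}\|_{H^{t,p}\to H^{t+2s,p}}\leq C$ for $k=0,1,2$ is then immediate from the definition of the operator norm.

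Since every step reduces to the Schwartz-class estimate together with completeness of the target Sobolev space, there is no genuine analytic obstacle here: the argument is a routine continuous-extension. The only point requiring minor care is verifying that the extension is well defined and agrees with the original operator on $\mathcal{S}(\mathbb{R}^n; S^2)$, which I would dispatch by reproducing the interlacing estimate already written out for the vector-field case. Consequently, I would keep this proof short, explicitly delegating the repetitive bookkeeping to the analogous argument preceding Theorem~\ref{bdd_sobol_pp}.
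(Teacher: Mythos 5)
Your proposal is correct and follows essentially the same route as the paper: density of $\mathcal{S}(\mathbb{R}^n;S^2)$ in $H^{t,p}(\mathbb{R}^n;S^2(\mathbb{R}^n))$, a Cauchy-sequence extension via the Schwartz-class bound, well-definedness by the interlacing estimate from the vector-field case, and passage of the bound to the limit. If anything, you are slightly more careful than the paper's own terse write-up, in that you invoke the correct 2-tensor Schwartz-class estimate (Theorem~\ref{es_bdd_A01ss_2ten_sobo}) where the paper's proof cites the vector-field Theorem~\ref{bdd_ave_schwartz_Lp}, evidently a citation slip.
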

\begin{proof}
Since $ \mathcal{S}(\mathbb{R}^n ; S^2)$ is dense in $H^{t,p}(\mathbb{R}^n ; S^2(\mathbb{R}^n))$, for any given $f\in H^{t,p}(\mathbb{R}^n ; S^2(\mathbb{R}^n))$ there exists a sequence $\{f_{\mathrm{k}}\}\in \mathcal{S}(\mathbb{R}^n ; S^2)$ such that $f_{\mathrm{k}}$ converges to $f$ in $H^{t,p}(\mathbb{R}^n ; S^2(\mathbb{R}^n))$ topology. By Theorem \ref{bdd_ave_schwartz_Lp}, sequence $\{\mathcal{A}_{2,s}^{0}f_{\mathrm{k}}\} \ (\mathrm{k} = 1,2,\cdots)$ is a Cauchy sequence in 
$H^{t,p}(\mathbb{R}^n ; S^2(\mathbb{R}^n))$ and hence it converges to some $g\in H^{t,p}(\mathbb{R}^n ; S^2(\mathbb{R}^n)).$ For $f\in H^{t,p}(\mathbb{R}^n ; S^2(\mathbb{R}^n))$, we now define $\mathcal{A}_{2,s}^{0}f :=g$. It is now easy to check that the definition of averaging operator on Sobolev spaces is well-defines and the operator satisfies the required boundedness estimate for the vector fields with Sobolev class. Estimates corresponding to the other averaging operators follow by the similar argument.      
\end{proof}

\begin{theorem}[$L^p-L^q$ Boundedness estimate for averaging operators]\label{bdd_2tensor}
Let $n\geq 2$ be an integer and $s \in \lb 0,1\rb \setminus  \left\{ \frac{1}{2} \right\} $. Assume that $f\in \mathcal{S}(\mathbb{R}^n, S^2(\mathbb{R}^n))$. Then there exists a constant $C>0$ such that
\begin{equation}\label{es_bdd_A01s}
  \|\mathcal{A}_{2,s}^{0}f\|_{L^q(\mathbb{R}^n)} \leq C \|f\|_{L^p(\mathbb{R}^n)},  \ \ \ \|\mathcal{A}_{2,s}^{1}f\|_{L^q(\mathbb{R}^n)} \leq C \|f\|_{L^p(\mathbb{R}^n)},
\end{equation}
and
\[
\|\mathcal{A}_{2,s}^{2}f\|_{L^q(\mathbb{R}^n)} \leq C \|f\|_{L^p(\mathbb{R}^n)},
\]
where $p$ and $q$ satisfy $\frac{1}{q} = \frac{1}{p} - \frac{2s}{n}$ and $C=C(s,n)$ be a positive constant depending on $n$ and $s$.
\end{theorem}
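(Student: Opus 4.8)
The plan is to mirror the proof of Theorem \ref{LpLqvectror} (the vector field case), since the structure is identical: I would express each average as a linear combination of Riesz potentials $I^{2s}$ applied to Riesz-transformed components of $f$, and then invoke the $L^p$-$L^q$ boundedness of $I^{2s}$ from \eqref{estimate_Riesz} together with the $L^p$-boundedness of the Riesz transforms $\RT_j$. The crucial structural observation is that the entire $L^p \to L^q$ smoothing is produced by the single factor $|y|^{-2s}$ appearing on the Fourier side in Lemma \ref{2TF_lem2}, which by \eqref{Prel:FracLap} corresponds to $(-\Delta)^{-s} = I^{2s}$; every other operator in sight is a Riesz transform, bounded on each fixed $L^r$ with $1<r<\infty$.

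First I would take the inverse Fourier transform of each identity in Lemma \ref{2TF_lem2}. Using $\mathscr{F}^{-1}(|y|^{-2s}\wh{g}) = I^{2s}g$ this gives
\begin{align*}
\mathcal{A}_{2,s}^0 f &= -I^{2s}(f_{j_1 j_1}) - 2s\, I^{2s}\lb \RT_{j_1}\RT_{j_2} f_{j_1 j_2}\rb, \\
\mathcal{A}_{2,s}^{1,i_1} f &= -\RT_{i_1}\mathcal{A}_{2,s}^0 f + 2\, I^{2s}\lb \RT_{j_1} f_{j_1 i_1}\rb + I^{2s}\lb \RT_{i_1}\RT_{j_1}\RT_{j_2} f_{j_1 j_2}\rb, \\
\mathcal{A}_{2,s}^{2,i_1 i_2} f &= -2\, I^{2s}(f_{i_1 i_2}) + \delta_{i_1 i_2}\mathcal{A}_{2,s}^0 f - 2s\,\RT_{i_1}\RT_{i_2}\mathcal{A}_{2,s}^0 f - 2s\lb \RT_{i_1}\mathcal{A}_{2,s}^{1,i_2} f + \RT_{i_2}\mathcal{A}_{2,s}^{1,i_1} f\rb.
\end{align*}

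Then I would establish the three bounds in the order $\mathcal{A}^0, \mathcal{A}^1, \mathcal{A}^2$, since each later average is expressed through Riesz transforms of the earlier ones. For $\mathcal{A}_{2,s}^0 f$ I would apply \eqref{estimate_Riesz} to the two $I^{2s}$ terms and then the $L^p$-boundedness of $\RT_j$ (absorbing the index summation into the constant) to conclude $\|\mathcal{A}_{2,s}^0 f\|_{L^q}\le C\|f\|_{L^p}$. For $\mathcal{A}_{2,s}^{1,i_1} f$, the term $\RT_{i_1}\mathcal{A}_{2,s}^0 f$ is controlled by $\|\mathcal{A}_{2,s}^0 f\|_{L^q}$ (the Riesz transform being bounded on $L^q$), while the two remaining $I^{2s}$ terms are handled exactly as for $\mathcal{A}^0$; the already-proved bound for $\mathcal{A}_{2,s}^0$ then closes the estimate. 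Finally, for $\mathcal{A}_{2,s}^{2,i_1 i_2} f$, the term $I^{2s}(f_{i_1 i_2})$ supplies the $L^p$-$L^q$ gain directly, and the remaining terms are Riesz transforms (bounded on $L^q$) of $\mathcal{A}_{2,s}^0 f$ and $\mathcal{A}_{2,s}^{1,i} f$, each already controlled in $L^q$.

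I do not expect any genuine obstacle: conceptually this is the $2$-tensor analogue of Theorem \ref{LpLqvectror} resting on the same two boundedness inputs. The only care needed is bookkeeping — tracking which quantities live in $L^p$ (the raw components, before applying $I^{2s}$) versus $L^q$ (everything after $I^{2s}$, on which the Riesz transforms must subsequently act), and processing the averages in the stated order so that each composition $\RT \circ \mathcal{A}$ is estimated via the $L^q$-boundedness of the Riesz transform against the previously obtained $L^q$ bound.
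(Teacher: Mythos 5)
Your proposal is correct and follows essentially the same route as the paper's proof: the paper likewise inverts the Fourier identities of Lemma \ref{2TF_lem2} to write each average as a combination of $I^{2s}=(-\Delta)^{-s}$ applied to Riesz-transformed components (plus Riesz transforms of lower averages), and then estimates in the order $\mathcal{A}_{2,s}^{0}$, $\mathcal{A}_{2,s}^{1}$, $\mathcal{A}_{2,s}^{2}$ using \eqref{estimate_Riesz} and the $L^r$-boundedness of $\RT_j$, exactly as you describe. No gaps; your bookkeeping of which terms live in $L^p$ versus $L^q$ matches the paper's argument.
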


\begin{proof}
Let $f = \left( f_{i_1i_2}\right)_{i_1,i_2=1}^{n}$ be a symmetric $2$-tensor field such that $f\in \mathcal{S}(\mathbb{R}^n, S^2(\mathbb{R}^n))$.
In view of Lemma \ref{2TF_lem2}, we have
\begin{equation}\label{bdd_ave11}
 \wh {\left(\mathcal{A}_{2, s}^0 f\right)}(y) = - |y|^{-2s}\left[\wh f_{j_1j_1} (y) + 2s \rwh{\lb  \RT_{j_1} \RT_{j_2} f_{j_1j_2} \rb } (y) \right] 
\end{equation}
holds for all $s \in \lb 0,1\rb \setminus  \left\{ \frac{1}{2} \right\} $ and $y\in\mathbb{R}^n\setminus\{0\}.$ Combining \eqref{Prel:FracLap} together with applying inverse Fourier transform on the equation \eqref{bdd_ave11}, we have
\begin{align*}
 \mathcal{A}_{2, s}^0 f(y)
 & = - \mathscr{F}^{-1}\left[  |y|^{-2s}\wh {f_{j_1j_1}} (y)\right] -2s \mathscr{F}^{-1}\left[ |y|^{-2s} \rwh{\lb  \RT_{j_1} \RT_{j_2} f_{j_1j_2} \rb } (y) \right] \\
 & = - (-\Delta)^{-s} {f_{j_1j_1}}- 2s (-\Delta)^{-s}\lb  \RT_{j_1} \RT_{j_2} f_{j_1j_2} \rb   \\
 & = - I^{2s}\lb f_{j_1j_1}\rb -2s I^{2s} \lb  \RT_{j_1} \RT_{j_2} f_{j_1j_2} \rb.
\end{align*}
Since, the Riesz potential is bounded from $L^p$ to $L^q$, see \eqref{estimate_Riesz}, and the Riesz transform $\RT_j : L^p(\mathbb{R}^n) \rightarrow L^p(\mathbb{R}^n), 1<p<\infty$ is bounded, it is immediate to see that
\begin{align*}
    \|\mathcal{A}_{2,s}^{0}f\|_{L^q(\mathbb{R}^n)}
&  \leq C \|I^{2s}\lb f_{j_1j_1}\rb\|_{L^q(\mathbb{R}^n)} + 2s\|I^{2s} \lb  \RT_{j_1} \RT_{j_2} f_{j_1j_2} \rb\|_{L^q(\mathbb{R}^n)}  \\
& \leq C \|f_{j_1j_1}\|_{L^p(\mathbb{R}^n)}  + 2sC\|\RT_{j_1} \RT_{j_2} f_{j_1j_2} \|_{L^p(\mathbb{R}^n)} \\
& \leq C \|f\|_{L^p(\mathbb{R}^n)} + 2sC\|f\|_{L^p(\mathbb{R}^n)} \\
& = C(1+2s)\|f\|_{L^p(\mathbb{R}^n)}, 
\end{align*}
where $p, q$ satisfy $\frac{1}{q}=\frac{1}{p}-\frac{2s}{n}$ and $C$ depends on $n.$
To provide an estimate for other averaging operators, let us recall the expression for $\mathcal{A}_{2,s}^{1}f$ for Lemma \ref{2TF_lem2}:
\begin{align*}
     & \wh {\left(\mathcal{A}_{2, s}^{1,i_1} f\right)} (y) \\
&\qquad =  -\rwh{\lb \RT_{i_1} \mathcal{A}_{2, s}^{0} f \rb} (y) + |y|^{-2s} \left[  2 \rwh{ \lb \RT_{j_1} f_{j_1i_1}  \rb} (y)  + \rwh{ \lb \RT_{i_1} \RT_{j_1}\RT_{j_2}f_{j_1j_2} \rb }  (y) \right].
\end{align*}
Taking the inverse Fourier transform to the above identity, we have
\begin{align*}
\mathcal{A}_{2,s}^{1,i_1}f
& = -  \RT_{i_1} \lb\mathcal{A}_{2, s}^{0} f \rb + \mathscr{F}^{-1}\left[ |y|^{-2s} \left[  2 \rwh{ \lb \RT_{j_1} f_{j_1i_1}  \rb} (y)  + \rwh{ \lb \RT_{i_1} \RT_{j_1}\RT_{j_2}f_{j_1j_2} \rb }  (y) \right]\right] \\
& = -  \RT_{i_1} \lb\mathcal{A}_{2, s}^{0} f \rb + 2 (-\Delta)^{-s}\lb \RT_{j_1} f_{j_1i_1}\rb + (-\Delta)^{-s}\lb \RT_{i_1} \RT_{j_1}\RT_{j_2}f_{j_1j_2} \rb  \\
& = - \RT_{i_1} \lb\mathcal{A}_{2, s}^{0} f \rb + 2 I^{2s}\lb \RT_{j_1} f_{j_1i_1}\rb + I^{2s}\lb \RT_{i_1} \RT_{j_1}\RT_{j_2}f_{j_1j_2} \rb. 
\end{align*}
By the boundedness properties of Riesz potential operator and Riesz transform, we infer that
\begin{align*}
&\|\mathcal{A}_{2,s}^{1}f\|_{L^q(\mathbb{R}^n)} 
 = \sum_{i_1=1}^{n} \|\mathcal{A}_{2,s}^{1, i_1}f\|_{L^q(\mathbb{R}^n)} \\
& \leq \sum_{i_1=1}^{n} \|\RT_{i_1} \lb\mathcal{A}_{2, s}^{0} f \rb\|_{L^q(\mathbb{R}^n)} +  \sum_{i_1, j_1=1}^{n} \|I^{2s}\lb \RT_{j_1} f_{j_1i_1}\rb\|_{L^q(\mathbb{R}^n)}\\
&\qquad + \sum_{i_1, j_1, j_2=1}^{n} \|I^{2s}\lb \RT_{i_1} \RT_{j_1}\RT_{j_2}f_{j_1j_2} \rb\|_{L^q(\mathbb{R}^n)} \\
& \leq C \|\mathcal{A}_{2, s}^{0} f \|_{L^q(\mathbb{R}^n)}  + 2C\sum_{i_1, j_1=1}^{n} \| \RT_{j_1} f_{j_1i_1}\|_{L^p(\mathbb{R}^n)} + C\sum_{i_1, j_1,j_2=1}^{n} \|\RT_{i_1} \RT_{j_1}\RT_{j_2}f_{j_1j_2} \|_{L^p(\mathbb{R}^n)} \\
& \leq C(1+2s)\|f\|_{L^p(\mathbb{R}^n)} +2C\sum_{i_1,j_1=1}^{n} \|f_{j_1i_1}\|_{L^p(\mathbb{R}^n)} + C\sum_{j_1,j_2=1}^{n} \|f_{j_1j_2}\|_{L^p(\mathbb{R}^n)} \\
& \leq C(1+2s)\|f\|_{L^p(\mathbb{R}^n)}.
\end{align*}
Finally to estimate the term $\mathcal{A}_{2,s}^{2}f$, we recall from Lemma \ref{2TF_lem2}, that
\begin{align*}
    \wh {\left(\mathcal{A}_{2, s}^{2,i_1i_2} f\right)} (y)
        &=-
        2 |y|^{-2s} \wh{f}_{i_1i_2}(y) + \delta_{i_1i_2} \wh {\left(\mathcal{A}_{2, s}^0 f\right)}(y) {-2s} \rwh{ \lb \RT_{i_1}\RT_{i_2} \mathcal{A}_{2, s}^0 f \rb }(y) \\
    & \qquad -2s \lb 
    \rwh{ \lb \RT_{i_1} \mathcal{A}_{2, s}^{1,i_2} f \rb }(y) 
    + \rwh{ \lb \RT_{i_2} \mathcal{A}_{2, s}^{1,i_1} f \rb }(y) \rb. 
\end{align*}
Taking inverse Fourier transform to the above identity, we get
\begin{align*}
  \mathcal{A}_{2, s}^{2,i_1i_2} f
  & =- 2 \mathscr{F}^{-1}\left[ |y|^{-2s} \wh{f}_{i_1i_2}(y)\right] +  \delta_{i_1i_2}\left(\mathcal{A}_{2, s}^0 f\right)(y) {-2s} \lb \RT_{i_1}\RT_{i_2} \mathcal{A}_{2, s}^0 f \rb (y) \\
  & \qquad -2s \left[ { \lb\RT_{i_1} \mathcal{A}_{2, s}^{1,i_2} \rb }f (y) +  { \lb \RT_{i_2} \mathcal{A}_{2, s}^{1,i_1} f \rb }(y)\right] \\
  & = -2 (-\Delta)^{s}\lb f_{i_1i_2}\rb(y) + \delta_{i_1i_2}\left(\mathcal{A}_{2, s}^0 f\right)(y) {-2s} \lb \RT_{i_1}\RT_{i_2} \mathcal{A}_{2, s}^0 f \rb (y) \\
  & \qquad -2s \left[ { \lb\RT_{i_1} \mathcal{A}_{2, s}^{1,i_2} \rb }f (y) +  { \lb \RT_{i_2} \mathcal{A}_{2, s}^{1,i_1} f \rb }(y)\right]. 
\end{align*}
By the boundedness properties of Riesz potential operator and Riesz transform, we get
\begin{align*}
 &\|\mathcal{A}_{2,s}^{2}f\|_{L^q(\mathbb{R}^n)} 
 = \sum_{i_1, i_2=1}^{n} \|\mathcal{A}_{2,s}^{1, i_1, i_2}f\|_{L^q(\mathbb{R}^n)} \\
& \leq 2\sum_{i_1, i_2=1}^{n} \|I^{2s}({f}_{i_1i_2})\|_{L^q(\mathbb{R}^n)} + \sum_{i_1, i_2=1}^{n} \|\mathcal{A}_{2, s}^0 f\|_{L^q(\mathbb{R}^n)} + 2s\sum_{i_1, i_2=1}^{n} \|\RT_{i_1}\RT_{i_2} \mathcal{A}_{2, s}^0 f\|_{L^q(\mathbb{R}^n)} \\
&\qquad + 2s\sum_{i_1, i_2=1}^{n} \left[\|\RT_{i_1} \mathcal{A}_{2, s}^{1,i_2} \|_{L^q(\mathbb{R}^n)} + \|\RT_{i_2} \mathcal{A}_{2, s}^{1,i_1} f\|_{L^q(\mathbb{R}^n)} \right]  \\
& \leq 2C\sum_{i_1, i_2=1}^{n} \|{f}_{i_1i_2}\|_{L^p(\mathbb{R}^n)} + n^2 \|\mathcal{A}_{2, s}^0 f\|_{L^q(\mathbb{R}^n)} + 2sC\sum_{i_1, i_2=1}^{n} \|\mathcal{A}_{2, s}^0 f\|_{L^q(\mathbb{R}^n)}  \\
&\qquad + \sum_{i_1, i_2=1}^{n} \left[\|\mathcal{A}_{2, s}^{1,i_2} \|_{L^q(\mathbb{R}^n)} + \| \mathcal{A}_{2, s}^{1,i_1} f\|_{L^q(\mathbb{R}^n)} \right]  \\
& \leq C\|f\|_{L^p(\mathbb{R}^n)} + C(1+2s)\|f\|_{L^p(\mathbb{R}^n)} + Cs(1+2s)n^2\|f\|_{L^p(\mathbb{R}^n)}+ Csn\|f\|_{L^q(\mathbb{R}^n)}\\
& = C(s,n) \|f\|_{L^p(\mathbb{R}^n)}
\end{align*}
where $C(s,n)$ be a positive constant depending on $n$ and $s$ and $p, q$ satisfy $\frac{1}{q}=\frac{1}{p}-\frac{2s}{n}$.    
\end{proof}
\begin{theorem}
    [Boundedness estimate for averaging operators on $L^p-L^q$ based Sobolev spaces]
Let $n\geq 2$ be an integer and $s \in \lb 0,1\rb \setminus  \left\{ \frac{1}{2} \right\} $. Assume that $f\in H^{t,p}(\mathbb{R}^n, S^2(\mathbb{R}^n))$. Then there exists a constant $C>0$ such that
\begin{equation*}
  \|\mathcal{A}_{2,s}^{2}f\|_{H^{t,q}(\mathbb{R}^n)} \leq C \|f\|_{H^{t,p}(\mathbb{R}^n)},  \ \ \|\mathcal{A}_{2,s}^{1}f\|_{H^{t,q}(\mathbb{R}^n)} \leq C \|f\|_{H^{t,p}(\mathbb{R}^n)}
\end{equation*}
and
\[
\|\mathcal{A}_{2,s}^{0}f\|_{H^{t,q}(\mathbb{R}^n)} \leq C \|f\|_{H^{t,p}(\mathbb{R}^n)}
\]
where $p$ and $q$ satisfy $\frac{1}{q} = \frac{1}{p} - \frac{2s}{n}$ and $t\geq 0.$
\end{theorem}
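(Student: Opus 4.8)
The plan is to follow verbatim the argument already used for the vector-field statement in Theorem \ref{bddLPQSOBO}, now feeding in the $L^p$--$L^q$ bound of Theorem \ref{bdd_2tensor} in place of its vector-field counterpart. The strategy proceeds in three stages: first upgrade the $L^p$--$L^q$ estimate to a $W^{l,p}$--$W^{l,q}$ estimate for every integer $l \geq 0$; then pass from Schwartz fields to the full Sobolev class by density; and finally interpolate in the smoothness index to reach an arbitrary real $t \geq 0$. All three component estimates, for $k = 0,1,2$, are handled in exactly the same way.

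For the first stage I would fix $k \in \{0,1,2\}$ and exploit the convolution representation of $\mathcal{A}_{2,s}^{k}$ recorded in Lemma \ref{Sec4:Lemma1}. Since each $\mathcal{A}_{2,s}^{k} f$ is obtained by convolving the components $f_{j_1 j_2}$ against a fixed kernel, differentiation passes onto $f$: for any multi-index $\alpha$ and $f \in \mathcal{S}(\mathbb{R}^n; S^2(\mathbb{R}^n))$ one has $\partial^{\alpha}(\mathcal{A}_{2,s}^{k} f) = \mathcal{A}_{2,s}^{k}(\partial^{\alpha} f)$, precisely as in the proof of Theorem \ref{bddLPQSOBO}. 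Applying the $L^p$--$L^q$ bound of Theorem \ref{bdd_2tensor} to each $\partial^{\alpha} f$ and summing over $|\alpha| \leq l$ then yields $\|\mathcal{A}_{2,s}^{k} f\|_{W^{l,q}(\mathbb{R}^n; S^2(\mathbb{R}^n))} \leq C \|f\|_{W^{l,p}(\mathbb{R}^n; S^2(\mathbb{R}^n))}$ with $\frac{1}{q} = \frac{1}{p} - \frac{2s}{n}$.

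For the remaining stages, density of $\mathcal{S}(\mathbb{R}^n; S^2(\mathbb{R}^n))$ in $W^{l,p}(\mathbb{R}^n; S^2(\mathbb{R}^n))$ together with completeness of the target space extends $\mathcal{A}_{2,s}^{k}$ to a bounded operator $W^{l,p} \to W^{l,q}$ for every integer $l \geq 0$. Recalling that for $1 < p < \infty$ and integer $l$ the Sobolev space $W^{l,p}$ coincides with the Bessel potential space $H^{l,p}$, I would then invoke the exact-interpolation version of Theorem \ref{interpolation}, keeping the integrability exponents fixed at $p$ on the domain and $q$ on the target and interpolating only between two integer smoothness levels $l_0 < l_1$, with $\theta$ chosen so that $t = (1-\theta) l_0 + \theta l_1$. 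Under this configuration $\frac{1}{p^*} = \frac{1}{p}$ and $\frac{1}{\tilde{p}^*} = \frac{1}{q}$, while $s^* = \tilde{s}^* = t$, so the interpolated bound reads precisely $\mathcal{A}_{2,s}^{k} : H^{t,p}(\mathbb{R}^n; S^2(\mathbb{R}^n)) \to H^{t,q}(\mathbb{R}^n; S^2(\mathbb{R}^n))$, which is the asserted estimate.

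The only point requiring genuine care --- and hence the main, albeit mild, obstacle --- is arranging the interpolation data so that the integrability indices remain constant while only the smoothness varies, and verifying that the relation $\frac{1}{q} = \frac{1}{p} - \frac{2s}{n}$ holds unchanged at both integer endpoints; this is exactly why one interpolates in the smoothness parameter rather than in integrability. Everything else is a routine transcription of the vector-field proof, using $W^{l,p} = H^{l,p}$ for integer $l$ and the completeness of the Bessel potential spaces.
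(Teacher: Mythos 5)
Your proposal is correct and follows essentially the same route as the paper, whose proof is simply the remark that the result ``follows from interpolation argument, Theorem \ref{bdd_2tensor} and argument similar to the proof of Theorem \ref{bddLPQSOBO}''. You have faithfully fleshed out exactly that argument: commuting derivatives through the convolution kernels of Lemma \ref{Sec4:Lemma1} to lift the $L^p$--$L^q$ bound to $W^{l,p}$--$W^{l,q}$, extending by density, and then applying the complex interpolation of Theorem \ref{interpolation} in the smoothness index with the integrability exponents $p$ and $q$ held fixed.
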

\begin{proof}
The proof follows from interpolation argument, Theorem \ref{bdd_2tensor} and argument similar to the proof of Theorem \ref{bddLPQSOBO}.    
\end{proof}

Now, we begin with the stability estimates for the averaging operators for Schwartz class $2$-tensor fields and Sobolev class tensor fields.
\begin{theorem}[Stability for averaging operator]\label{stability_ave_2 tensor}
Let $n\geq 2$ be an integer and $s \in \lb 0,1\rb \setminus  \left\{ \frac{1}{2} \right\} $. Assume that $f\in \mathcal{S}(\mathbb{R}^n; S^2(\mathbb{R}^n))$. Then there exists a constant $C>0$ such that
\[
\begin{split}
 &\|f\|_{H^{t,p}(\mathbb{R}^n; S^2(\mathbb{R}^n))} \\
&\quad  \leq C(n,s)\left[  \|\mathcal{A}_{2, s}^{2} f\|_{H^{t+2s,p}(\mathbb{R}^n; S^2(\mathbb{R}^n))} +  \|\mathcal{A}_{2, s}^{1} f\|_{H^{t+2s,p}(\mathbb{R}^n; S^2(\mathbb{R}^n))} + \|\mathcal{A}_{2, s}^{0} f\|_{H^{t+2s,p}(\mathbb{R}^n; S^2(\mathbb{R}^n))}\right],
\end{split}
\]
holds for all $1<p<\infty$, $t>-2s$.
\end{theorem}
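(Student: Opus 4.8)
The plan is to follow exactly the template of Theorem~\ref{stability_ave} for the vector-field case, now using the $2$-tensor reconstruction formula \eqref{2TF_exp4} in place of \eqref{vec_expression}. First I would pass to the Fourier side, where \eqref{2TF_exp4} reads
\begin{align*}
\wh f_{i_1i_2}(y) = \frac{|y|^{2s}}{2}\Big[&\delta_{i_1i_2}\wh{\lb\Ac_{2,s}^0 f\rb}(y) - 2s\,\rwh{\lb\RT_{i_1}\RT_{i_2}\Ac_{2,s}^0 f\rb}(y) - \wh{\lb\Ac_{2,s}^{2,i_1i_2}f\rb}(y) \\
&- 2s\,\rwh{\lb\RT_{i_1}\Ac_{2,s}^{1,i_2}f\rb}(y) - 2s\,\rwh{\lb\RT_{i_2}\Ac_{2,s}^{1,i_1}f\rb}(y)\Big],
\end{align*}
so that $\wh f_{i_1i_2}$ is $\tfrac{|y|^{2s}}{2}$ times a fixed finite combination of Fourier transforms of the three averages and their (at most doubly iterated) Riesz transforms.

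Then I would expand $\|f\|_{H^{t,p}(\R^n;S^2(\R^n))}=\sum_{i_1,i_2}\|f_{i_1i_2}\|_{H^{t,p}(\R^n)}$ and write each summand as $\|J_{-t}f_{i_1i_2}\|_{L^p}=\|{\wh{}}^{-1}[(1+|\xi|^2)^{t/2}\wh f_{i_1i_2}]\|_{L^p}$. Substituting the displayed identity, the multiplier $(1+|\xi|^2)^{t/2}|\xi|^{2s}$ appears in front of each term; invoking \eqref{fourier_fractional} with $\alpha=2s$ identifies this multiplier with $J_{-t}\circ(-\Delta)^{s}$, so that each term is of the form $\|(-\Delta)^s[\,\cdot\,]\|_{H^{t,p}}$ applied to one of $\Ac_{2,s}^0 f$, $\RT_{i_1}\RT_{i_2}\Ac_{2,s}^0 f$, $\Ac_{2,s}^{2,i_1i_2}f$, $\RT_{i_1}\Ac_{2,s}^{1,i_2}f$, $\RT_{i_2}\Ac_{2,s}^{1,i_1}f$. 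An application of the Minkowski inequality splits $\|f_{i_1i_2}\|_{H^{t,p}}$ into the sum of these five contributions.

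Finally I would bound each contribution by the boundedness of the fractional Laplacian, Theorem~\ref{Bdd_frac_LAP} (with exponents $t+2s\mapsto t$), passing from $\|(-\Delta)^s[\,\cdot\,]\|_{H^{t,p}}$ to $C\|\,\cdot\,\|_{H^{t+2s,p}}$, and then by the boundedness of the Riesz transform on $H^{t+2s,p}$, Theorem~\ref{bdd_riesz} (legitimate precisely because $t>-2s$ makes $t+2s>0$), applied once for the single Riesz factors and twice for the pair $\RT_{i_1}\RT_{i_2}$, to strip off the Riesz transforms. This controls the five contributions by $C\|\Ac_{2,s}^0 f\|_{H^{t+2s,p}}$, $C\|\Ac_{2,s}^{2}f\|_{H^{t+2s,p}(S^2(\R^n))}$ and $C\|\Ac_{2,s}^{1}f\|_{H^{t+2s,p}(S^2(\R^n))}$ respectively, the Kronecker factor $\delta_{i_1i_2}$ being harmless. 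Summing over the finitely many indices $i_1,i_2$ and absorbing the constants into $C(n,s)$ yields the asserted estimate. I expect no conceptual difficulty here, since everything reduces to the two boundedness theorems already established; the only real work is the bookkeeping of the larger number of terms (and of the iterated Riesz transform) relative to the vector-field case, and the one point demanding care is that the hypothesis $t>-2s$ is exactly what allows Theorem~\ref{bdd_riesz} to act on the averages living in $H^{t+2s,p}$.
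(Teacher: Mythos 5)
Your proposal is correct and follows essentially the same route as the paper's own proof: starting from the Fourier-side reconstruction identity behind \eqref{2TF_exp4}, splitting $\|f_{i_1i_2}\|_{H^{t,p}}$ via Minkowski into the five contributions, converting the multiplier $(1+|\xi|^2)^{t/2}|\xi|^{2s}$ through \eqref{fourier_fractional}, and stripping the fractional Laplacian and the (single and iterated) Riesz transforms with Theorems \ref{Bdd_frac_LAP} and \ref{bdd_riesz}. Your remark that the hypothesis $t>-2s$ is precisely what legitimizes applying Theorem \ref{bdd_riesz} on $H^{t+2s,p}$ is exactly the point the paper's estimates for the terms involving Riesz factors rely on.
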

\begin{proof}
Recall from Lemma \ref{2TF_lem2}, for $ \in \lb 0,1\rb \setminus  \left\{ \frac{1}{2} \right\} $ and $y\in\mathbb{R}^n\setminus\{0\}$, we have
\begin{align*}
    \wh {\left(\mathcal{A}_{2, s}^{2,i_1i_2} f\right)} (y)
        &=-
        2 |y|^{-2s} \wh{f}_{i_1i_2}(y) + \delta_{i_1i_2} \wh {\left(\mathcal{A}_{2, s}^0 f\right)}(y) {-2s} \rwh{ \lb \RT_{i_1}\RT_{i_2} \mathcal{A}_{2, s}^0 f \rb }(y) \\
    & \qquad -2s \lb 
    \rwh{ \lb \RT_{i_1} \mathcal{A}_{2, s}^{1,i_2} f \rb }(y) 
    + \rwh{ \lb \RT_{i_2} \mathcal{A}_{2, s}^{1,i_1} f \rb }(y) \rb. 
\end{align*}
By Minkowski inequality, we obtain
\begin{align*}
&\|f\|_{H^{t,p}(\mathbb{R}^n; S^2(\mathbb{R}^n))} = \sum_{i_1, i_2=1}^{n} \|f_{i_1, i_2}\|_{H^{t,p}(\mathbb{R}^n)}\\
&= \sum_{i_1, i_2=1}^{n}\|J_{-t}f_{i_1, i_2}\|_{L^{p}(\mathbb{R}^n)} = \sum_{i_1, i_2=1}^{n}\|{\wh{}}^{-1}[(1+|\xi|^2)^{\frac{t}{2}}\wh{f_{i_1, i_2}}]\|_{L^{p}(\mathbb{R}^n)}\\
&\leq C \sum_{i_1, i_2=1}^{n}[ \|{\wh{}}^{-1}[(1+|\xi|^2)^{\frac{t}{2}}|\xi|^{2s}\wh{(\mathcal{A}_{2, s}^{2,i_1i_2} f)}]\|_{L^{p}(\mathbb{R}^n)} + \|{\wh{}}^{-1}[(1+|\xi|^2)^{\frac{t}{2}}|\xi|^{2s}\delta_{i_1i_2} \wh {\left(\mathcal{A}_{2, s}^0 f\right)}]\|_{L^{p}(\mathbb{R}^n)} \\
&\qquad+\|{\wh{}}^{-1}[(1+|\xi|^2)^{\frac{t}{2}}|\xi|^{2s}\rwh{ \lb \RT_{i_1}\RT_{i_2} \mathcal{A}_{2, s}^0 f \rb }]\|_{L^{p}(\mathbb{R}^n)}+\|{\wh{}}^{-1}[(1+|\xi|^2)^{\frac{t}{2}}|\xi|^{2s} \rwh{ \lb \RT_{i_1} \mathcal{A}_{2, s}^{1,i_2} f \rb }]\|_{L^{p}(\mathbb{R}^n)}
\\
&\qquad \qquad+\|{\wh{}}^{-1}[(1+|\xi|^2)^{\frac{t}{2}}|\xi|^{2s} \rwh{ \lb \RT_{i_2} \mathcal{A}_{2, s}^{1,i_1} f \rb }]\|_{L^{p}(\mathbb{R}^n)} ]\\
&:= C \sum_{i_1, i_2=1}^{n}[ I +II+III+IV+V].
\end{align*}
We will now estimate all the above terms one by one.
Taking $\alpha = 2s$, $\phi=\mathcal{A}_{2, s}^{2,i_1i_2} f$ in the identity \eqref{fourier_fractional} and boundedness properties of the fractional Laplace operator (see \eqref{bdd_frac_Laplace}), we get
\begin{align*}
I 
&=   \|{\wh{}}^{-1}[(1+|\xi|^2)^{\frac{t}{2}}|\xi|^{2s}\wh{(\mathcal{A}_{2, s}^{2,i_1i_2} f)}]\|_{L^{p}(\mathbb{R}^n)}  \\
& =\|{\wh{}}^{-1}[(1+|\xi|^2)^{\frac{t}{2}}\wh {\left((-\Delta)^{s}\left( \mathcal{A}_{2, s}^{2,i_1i_2} f\right) \right)}]\|_{L^{p}(\mathbb{R}^n)} 
 = \|J_{-t}\left((-\Delta)^{s}\left( \mathcal{A}_{2, s}^{2,i_1i_2} f\right) \right)\|_{L^p(\mathbb{R}^n)} \\
& = \|(-\Delta)^{s}\left( \mathcal{A}_{2, s}^{2,i_1i_2} f \right)\|_{H^{t,p}(\mathbb{R}^n)}
\leq C \|\mathcal{A}_{2, s}^{2,i_1i_2} f\|_{H^{t+2s,p}(\mathbb{R}^n)}.
\end{align*}
Similarly, taking $\alpha = 2s$, $\phi=\mathcal{A}_{2, s}^{0} f$ in the identity \eqref{fourier_fractional} and boundedness properties of the fractional Laplace operator (see \eqref{bdd_frac_Laplace}), we get
\begin{align*}
II 
&= \|{\wh{}}^{-1}[(1+|\xi|^2)^{\frac{t}{2}}|\xi|^{2s}\delta_{i_1i_2} \wh {\left(\mathcal{A}_{2, s}^0 f\right)}]\|_{L^{p}(\mathbb{R}^n)} \\
& = \|{\wh{}}^{-1}[(1+|\xi|^2)^{\frac{t}{2}}\delta_{i_1i_2} \wh{\left((-\Delta)^{s}\left( \mathcal{A}_{2, s}^{0} f\right) \right) }]\|_{L^{p}(\mathbb{R}^n)} 
\leq \|J_{-t}\left((-\Delta)^{s}\left( \mathcal{A}_{2, s}^{0} f\right) \right)\|_{L^p(\mathbb{R}^n)} \\
& = \|(-\Delta)^{s}\left( \mathcal{A}_{2, s}^{0} f \right)\|_{H^{t,p}(\mathbb{R}^n)}
\leq C \|\mathcal{A}_{2, s}^{0} f\|_{H^{t+2s,p}(\mathbb{R}^n)}.
\end{align*}
Similarly, taking $\alpha = 2s$, $\phi = \RT_{i_1}\RT_{i_2}\mathcal{A}_{2, s}^{0} f$ in the identity \eqref{fourier_fractional} together with Theorem \ref{Bdd_frac_LAP} and Theorem \ref{bdd_riesz}, we get
\begin{align*}
III
&= \|{\wh{}}^{-1}[(1+|\xi|^2)^{\frac{t}{2}}|\xi|^{2s}\rwh{ \lb \RT_{i_1}\RT_{i_2} \mathcal{A}_{2, s}^0 f \rb }]\|_{L^{p}(\mathbb{R}^n)}\\
& =  \|{\wh{}}^{-1}[(1+|\xi|^2)^{\frac{t}{2}}\wh{\left((-\Delta)^{s}\lb \RT_{i_1}\RT_{i_2} \mathcal{A}_{2, s}^0 f \rb  \right) }]\|_{L^{p}(\mathbb{R}^n)} \\
&= \|J_{-t}\left((-\Delta)^{s}\lb \RT_{i_1}\RT_{i_2} \mathcal{A}_{2, s}^0 f \rb \right)\|_{L^p(\mathbb{R}^n)} 
 = \|(-\Delta)^{s}\lb \RT_{i_1}\RT_{i_2} \mathcal{A}_{2, s}^0 f \rb \|_{H^{t,p}(\mathbb{R}^n)}\\
&\leq C \| \RT_{i_1}\RT_{i_2} \mathcal{A}_{2, s}^0 f \|_{H^{t+2s,p}(\mathbb{R}^n)}
\leq  C \|\mathcal{A}_{2, s}^0 f \|_{H^{t+2s,p}(\mathbb{R}^n)}, \ \ \text{for all}\ t>-2s, 1<p<\infty.
\end{align*}
Similarly, taking $\alpha = 2s$, $\phi = \RT_{i_1}\mathcal{A}_{2, s}^{1,i_2} f$ in the identity \eqref{fourier_fractional} together with Theorem \ref{Bdd_frac_LAP} and Theorem \ref{bdd_riesz}, we get
\begin{align*}
IV
&= \|{\wh{}}^{-1}[(1+|\xi|^2)^{\frac{t}{2}}|\xi|^{2s}\rwh{ \lb \RT_{i_1}\mathcal{A}_{2, s}^{1,i_2} f \rb }]\|_{L^{p}(\mathbb{R}^n)}\\
& =  \|{\wh{}}^{-1}[(1+|\xi|^2)^{\frac{t}{2}}\wh{\left((-\Delta)^{s}\lb \RT_{i_1}\mathcal{A}_{2, s}^{1,i_2} f \rb  \right) }]\|_{L^{p}(\mathbb{R}^n)} \\
&= \|J_{-t}\left((-\Delta)^{s}\lb \RT_{i_1}\mathcal{A}_{2, s}^{1,i_2} f\rb \right)\|_{L^p(\mathbb{R}^n)} 
 = \|(-\Delta)^{s}\lb \RT_{i_1}\mathcal{A}_{2, s}^{1,i_2} f\rb \|_{H^{t,p}(\mathbb{R}^n)}\\
&\leq C \| \RT_{i_1}\mathcal{A}_{2, s}^{1,i_2} f \|_{H^{t+2s,p}(\mathbb{R}^n)}
\leq  C \|\mathcal{A}_{2, s}^{1,i_2} f\|_{H^{t+2s,p}(\mathbb{R}^n)}, \ \ \text{for all}\ t>-2s, 1<p<\infty.
\end{align*}
Finally, taking $\alpha = 2s$, $\phi = \RT_{i_2}\mathcal{A}_{2, s}^{1,i_1} f$ in the identity \eqref{fourier_fractional} together with Theorem \ref{Bdd_frac_LAP} and Theorem \ref{bdd_riesz}, we get
\begin{align*}
V
&= \|{\wh{}}^{-1}[(1+|\xi|^2)^{\frac{t}{2}}|\xi|^{2s}\rwh{ \lb \RT_{i_2}\mathcal{A}_{2, s}^{1,i_1} f \rb }]\|_{L^{p}(\mathbb{R}^n)}\\
& =  \|{\wh{}}^{-1}[(1+|\xi|^2)^{\frac{t}{2}}\wh{\left((-\Delta)^{s}\lb \RT_{i_2}\mathcal{A}_{2, s}^{1,i_1} f \rb  \right) }]\|_{L^{p}(\mathbb{R}^n)} \\
&= \|J_{-t}\left((-\Delta)^{s}\lb \RT_{i_2}\mathcal{A}_{2, s}^{1,i_1} f\rb \right)\|_{L^p(\mathbb{R}^n)} 
 = \|(-\Delta)^{s}\lb \RT_{i_2}\mathcal{A}_{2, s}^{1,i_1} f\rb \|_{H^{t,p}(\mathbb{R}^n)}\\
&\leq C \| \RT_{i_2}\mathcal{A}_{2, s}^{1,i_1} f \|_{H^{t+2s,p}(\mathbb{R}^n)}
\leq  C \|\mathcal{A}_{2, s}^{1,i_1} f\|_{H^{t+2s,p}(\mathbb{R}^n)}, \ \ \text{for all}\ t>-2s, 1<p<\infty.
\end{align*} 
Combining all these estimates, we obtain the required result.
\end{proof}
\begin{theorem}
    [Stability for averaging operators for Sobolev class $2$-tensor fields]\label{stability_ave_2 tensor_sobolev}
Let $n\geq 2$ be an integer and $s \in \lb 0,1\rb \setminus  \left\{ \frac{1}{2} \right\} $. Assume that $f\in H^{t,p}(\mathbb{R}^n; S^2(\mathbb{R}^n))$. Then there exists a constant $C>0$ such that
\[
\begin{split}
 &\|f\|_{H^{t,p}(\mathbb{R}^n; S^2(\mathbb{R}^n))} \\
&  \leq C(n,s)\left[  \|\mathcal{A}_{2, s}^{2} f\|_{H^{t+2s,p}(\mathbb{R}^n; S^2(\mathbb{R}^n))} +  \|\mathcal{A}_{2, s}^{1} f\|_{H^{t+2s,p}(\mathbb{R}^n; S^2(\mathbb{R}^n))} + \|\mathcal{A}_{2, s}^{0} f\|_{H^{t+2s,p}(\mathbb{R}^n; S^2(\mathbb{R}^n))}\right],
  \end{split}
\]
holds for all $1<p<\infty$, $t>-2s$.
\end{theorem}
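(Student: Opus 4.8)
The plan is to deduce the Sobolev-class estimate directly from the Schwartz-class stability result of Theorem \ref{stability_ave_2 tensor} by a density argument, exactly parallel to how the Sobolev-class stability for vector fields was obtained from Theorem \ref{stability_ave}. Since $\mathcal{S}(\mathbb{R}^n; S^2)$ is dense in $H^{t,p}(\mathbb{R}^n; S^2(\mathbb{R}^n))$, for a given $f\in H^{t,p}(\mathbb{R}^n; S^2(\mathbb{R}^n))$ I would first fix a sequence $\{f_{\mathrm{k}}\}\subset \mathcal{S}(\mathbb{R}^n; S^2)$ with $f_{\mathrm{k}}\rightarrow f$ in the $H^{t,p}$ topology; that is, for every $\epsilon>0$ there is $N$ with $\|f_{\mathrm{k}}-f\|_{H^{t,p}(\mathbb{R}^n; S^2(\mathbb{R}^n))}<\epsilon$ for all $\mathrm{k}\geq N$. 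Note that each $\mathcal{A}_{2,s}^{j}$ is already well-defined and bounded on $H^{t,p}(\mathbb{R}^n; S^2(\mathbb{R}^n))$ by Theorem \ref{es_bdd_A01ss_2ten_sobo:2}, so all the expressions appearing below make sense.

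Next, applying the Minkowski inequality to separate $f$ from the approximating sequence and then invoking the Schwartz-class stability estimate of Theorem \ref{stability_ave_2 tensor} on each $f_{\mathrm{k}}$, I would write
\[
\|f\|_{H^{t,p}} \leq \|f-f_{\mathrm{k}}\|_{H^{t,p}} + \|f_{\mathrm{k}}\|_{H^{t,p}} \leq \epsilon + C(n,s)\sum_{j=0}^{2}\|\mathcal{A}_{2,s}^{j} f_{\mathrm{k}}\|_{H^{t+2s,p}}.
\]
For each $j\in\{0,1,2\}$ I would then insert $\mathcal{A}_{2,s}^{j} f$ by the triangle inequality, namely $\|\mathcal{A}_{2,s}^{j} f_{\mathrm{k}}\|_{H^{t+2s,p}} \leq \|\mathcal{A}_{2,s}^{j} f_{\mathrm{k}}-\mathcal{A}_{2,s}^{j} f\|_{H^{t+2s,p}} + \|\mathcal{A}_{2,s}^{j} f\|_{H^{t+2s,p}}$, and use linearity of the averaging operators together with the uniform operator-norm bound $\|\mathcal{A}_{2,s}^{j}\|_{H^{t,p}\rightarrow H^{t+2s,p}}\leq C$ from Theorem \ref{es_bdd_A01ss_2ten_sobo:2} to control the difference term: $\|\mathcal{A}_{2,s}^{j} f_{\mathrm{k}}-\mathcal{A}_{2,s}^{j} f\|_{H^{t+2s,p}} \leq \|\mathcal{A}_{2,s}^{j}\|_{H^{t,p}\rightarrow H^{t+2s,p}}\,\|f_{\mathrm{k}}-f\|_{H^{t,p}} \leq C\epsilon$ for all $\mathrm{k}\geq N$.

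Substituting these estimates back yields
\[
\|f\|_{H^{t,p}} \leq \epsilon\,(1 + 3\,C\,C(n,s)) + C(n,s)\sum_{j=0}^{2}\|\mathcal{A}_{2,s}^{j} f\|_{H^{t+2s,p}},
\]
and since $\epsilon>0$ is arbitrary the desired stability estimate follows after renaming the constant. The argument relies only on completeness of the Bessel potential Sobolev spaces, linearity of the $\mathcal{A}_{2,s}^{j}$, and the two cited theorems, so there is no genuine analytical obstacle here; the only point requiring care is to ensure that the averaging operators act boundedly between the correct Sobolev spaces so that the difference terms vanish in the limit, which is exactly guaranteed by the operator-norm bound in Theorem \ref{es_bdd_A01ss_2ten_sobo:2}.
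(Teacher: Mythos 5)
Your proposal is correct and follows essentially the same route as the paper's own proof: density of $\mathcal{S}(\mathbb{R}^n; S^2)$ in $H^{t,p}(\mathbb{R}^n; S^2(\mathbb{R}^n))$, the Minkowski/triangle inequality to compare $f$ with an approximating Schwartz sequence, the Schwartz-class stability estimate of Theorem \ref{stability_ave_2 tensor}, and the uniform operator-norm bound $\|\mathcal{A}_{2,s}^{k}\|_{H^{t,p}\rightarrow H^{t+2s,p}}\leq C$ from Theorem \ref{es_bdd_A01ss_2ten_sobo:2} to absorb the difference terms. Your version is, if anything, slightly more explicit than the paper's in tracking the constant $\epsilon\,(1+3\,C\,C(n,s))$, but the argument is identical in substance.
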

\begin{proof}
Since $\mathcal{S}(\mathbb{R}^n; S^2(\mathbb{R}^n))$ is dense in $H^{t,p}(\mathbb{R}^n; S^2(\mathbb{R}^n))$, it follows that there exists a sequence $\{f_{\mathrm{k}}\}\in \mathcal{S}(\mathbb{R}^n; S^2(\mathbb{R}^n))$ such that $f_{\mathrm{k}}$ converges to $f$ in $H^{t,p}(\mathbb{R}^n, S^2(\mathbb{R}^n))$ topology. In other words, for $\epsilon>0$, there exists a natural number $N$ such that
$\|f_{\mathrm{k}} - f\|_{H^{t,p}(\mathbb{R}^n ; S^2(\mathbb{R}^n))}<\epsilon$ for all $\mathrm{k}\geq N$.
Applying Minkowski inequality and Theorem \ref{stability_ave_2 tensor}, we have
\begin{align*}
 &\|f\|_{H^{t,p}(\mathbb{R}^n; S^2(\mathbb{R}^n))}
\leq    
  \|f-f_{\mathrm{k}}\|_{H^{t,p}(\mathbb{R}^n; S^2(\mathbb{R}^n))} + 
  \|f_{\mathrm{k}}\|_{H^{t,p}(\mathbb{R}^n; S^2(\mathbb{R}^n))}\\
& \qquad\leq  \epsilon + C \left[\|\mathcal{A}_{2, s}^{2} f_{\mathrm{k}}\|_{H^{t+2s,p}(\mathbb{R}^n; S^2(\mathbb{R}^n))} +\|\mathcal{A}_{2, s}^{1} f_{\mathrm{k}}\|_{H^{t+2s,p}(\mathbb{R}^n; S^2(\mathbb{R}^n))} + \|\mathcal{A}_{2, s}^{0} f_{\mathrm{k}}\|_{H^{t+2s,p}(\mathbb{R}^n; S^2(\mathbb{R}^n))}\right]\\
&\qquad \leq  \epsilon + C [\|\mathcal{A}_{2, s}^{2} f_{\mathrm{k}}-\mathcal{A}_{2, s}^{2} f\|_{H^{t+2s,p}(\mathbb{R}^n; S^2(\mathbb{R}^n))}+ \|\mathcal{A}_{2, s}^{2} f\|_{H^{t+2s,p}(\mathbb{R}^n; S^2(\mathbb{R}^n))} \\
&\qquad\qquad + \|\mathcal{A}_{2, s}^{1} f_{\mathrm{k}}-\mathcal{A}_{2, s}^{1} f\|_{H^{t+2s,p}(\mathbb{R}^n; S^2(\mathbb{R}^n))}+ \|\mathcal{A}_{2, s}^{1} f\|_{H^{t+2s,p}(\mathbb{R}^n; S^2(\mathbb{R}^n))} \\
&\qquad\qquad\qquad+ \|\mathcal{A}_{2, s}^{0} f_{\mathrm{k}}-\mathcal{A}_{2, s}^{0} f\|_{H^{t+2s,p}(\mathbb{R}^n; S^2(\mathbb{R}^n))}+\|\mathcal{A}_{2, s}^{0} f\|_{H^{t+2s,p}(\mathbb{R}^n; S^2(\mathbb{R}^n))}  ].
\end{align*}
Note that the operator norm $\|\mathcal{A}_{2, s}^{k}\|_{H^{t,p}\rightarrow H^{t+2s,p}}\leq C$, see Theorem \ref{es_bdd_A01ss_2ten_sobo:2}, where $C>0$ is constant and $k=0,1,2$,
we have
\[
\begin{split}
     \|\mathcal{A}_{2, s}^{k} f_{\mathrm{k}}-\mathcal{A}_{2, s}^{k} f\|_{H^{t+2s,p}(\mathbb{R}^n; S^2(\mathbb{R}^n))}
 &\leq \|\mathcal{A}_{2, s}^{k}\|_{H^{t,p}\rightarrow H^{t+2s,p}} \|f-f_{\mathrm{k}}\|_{H^{t,p}(\mathbb{R}^n; S^2(\mathbb{R}^n))}  \\&\leq C\epsilon, \ \forall \ \ \mathrm{k}\geq N, \ k=0,1,2.
 \end{split}
\]
Combining all the above estimates, we obtain the required result.
\end{proof}
 \begin{theorem}[Stability for fractional momentum ray transform]\label{stability_momentum_2tensorfield}
  Let $n\geq 2$ be an integer and $s \in \lb 0,1\rb \setminus  \left\{ \frac{1}{2} \right\} $. Assume $f\in \mathcal{S}(\mathbb{R}^n; S^2(\mathbb{R}^n))$ such that 
  \[
\int_{\mathbb{R}^n} \abs{\mathscr{F}^{-1}\left[(1+|\xi|^2)^{\frac{t+2s}{2}} \mathscr{F}_{\xi}(\chi_{s,2}f)(\xi,\eta)\right]}^pd\xi 
<\infty.
\]
Then there exists a constant $C>0$ such that
 \[
 \|f\|_{H^{t,p}(\mathbb{R}^n; S^2(\mathbb{R}^n))}
 \leq C \int_{\mathbb{S}^{n-1}} \left[ \int_{\mathbb{R}^n} \abs{\mathscr{F}^{-1}\left[(1+|\xi|^2)^{\frac{t+2s}{2}} \mathscr{F}_{\xi}(\chi_{s,2}f)(\xi,\eta)\right]}^pd\xi\right]^{\frac{1}{p}}dS_{\eta},
 \]    
  where $\mathscr{F}_{\xi}$ denotes the Fourier transform with respect to $\xi$ variable.
 \end{theorem}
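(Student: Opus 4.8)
The plan is to derive this estimate by chaining together two results already established for symmetric $2$-tensor fields: the stability estimate for the averaging operators (Theorem~\ref{stability_ave_2 tensor}) and the forward mapping estimate of Theorem~\ref{stab_ave_momentum} specialized to $m=2$. This mirrors exactly the argument used for the vector-field case, where the corresponding stability statement was obtained simply by combining Theorem~\ref{stab_ave_momentum} with the averaging-operator stability estimate. All of the genuinely analytic content—the Fourier identities of Lemma~\ref{2TF_lem2}, the boundedness of the Riesz transform (Theorem~\ref{bdd_riesz}) and of the fractional Laplacian (Theorem~\ref{Bdd_frac_LAP}), and the Fubini/Minkowski interchange inside Theorem~\ref{stab_ave_momentum}—has already been carried out, so the present proof is a short assembly of these pieces.

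First I would invoke Theorem~\ref{stability_ave_2 tensor}, which (since $f\in\mathcal{S}(\mathbb{R}^n;S^2(\mathbb{R}^n))$ and $s\in(0,1)\setminus\{\tfrac12\}$, $t>-2s$) furnishes a constant $C=C(n,s)>0$ with
\[
\|f\|_{H^{t,p}(\mathbb{R}^n; S^2(\mathbb{R}^n))} \leq C\left[\|\mathcal{A}_{2,s}^{2} f\|_{H^{t+2s,p}(\mathbb{R}^n; S^2(\mathbb{R}^n))} + \|\mathcal{A}_{2,s}^{1} f\|_{H^{t+2s,p}(\mathbb{R}^n; S^2(\mathbb{R}^n))} + \|\mathcal{A}_{2,s}^{0} f\|_{H^{t+2s,p}(\mathbb{R}^n; S^2(\mathbb{R}^n))}\right].
\]
It then remains to control each of the three averaging-operator norms on the right by the data term appearing in the statement. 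For this I would apply Theorem~\ref{stab_ave_momentum} with $m=2$ to each scalar component $\mathcal{A}_{2,s}^{k,i_1,\dots,i_k}f$, $0\le k\le 2$, obtaining
\[
\|\mathcal{A}_{2,s}^{k,i_1,\dots,i_k}f\|_{H^{t+2s,p}(\mathbb{R}^n)} \leq c_{n,s}^{2,k} \int_{\mathbb{S}^{n-1}} \left[ \int_{\mathbb{R}^n} \abs{\mathscr{F}^{-1}\left[(1+|\xi|^2)^{\frac{t+2s}{2}} \mathscr{F}_{\xi}(\chi_{s,2}f)(\xi,\eta)\right]}^p d\xi\right]^{\frac{1}{p}} dS_{\eta}.
\]
Because the tensor norm $\|\mathcal{A}_{2,s}^{k}f\|_{H^{t+2s,p}(\mathbb{R}^n; S^2(\mathbb{R}^n))}$ is by definition the sum over the finitely many index combinations $(i_1,\dots,i_k)$—one term when $k=0$, $n$ terms when $k=1$, and $n^2$ terms when $k=2$—summing these component estimates reproduces the same integral data term multiplied only by a constant depending on $n$ and $s$. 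Substituting back into the inequality of the first step and absorbing all finite constants $C(n,s)$, $c_{n,s}^{2,k}$, and the combinatorial counts into a single $C>0$ yields precisely the claimed bound.

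I do not expect a serious obstacle, since the heavy lifting lives in the quoted theorems; the proof is essentially bookkeeping. The only points that require care are that the hypotheses line up—the restriction $s\in(0,1)\setminus\{\tfrac12\}$ is exactly what makes the Fourier computations in Lemma~\ref{2TF_lem2} (and hence Theorem~\ref{stability_ave_2 tensor}) valid, while the assumed finiteness of $\int_{\mathbb{R}^n}\abs{\mathscr{F}^{-1}[(1+|\xi|^2)^{\frac{t+2s}{2}}\mathscr{F}_{\xi}(\chi_{s,2}f)(\xi,\eta)]}^p\,d\xi$ is precisely the hypothesis under which Theorem~\ref{stab_ave_momentum} applies and guarantees the right-hand side is finite—and that the constants $c_{n,s}^{2,k}$ together with the index counts remain finite for all admissible $s$, which they do since the Gamma-factors in their definition have no poles in this range. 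Hence the combination goes through verbatim as in the vector-field case.
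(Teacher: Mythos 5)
Your proposal is correct and coincides with the paper's own proof, which simply cites Theorem~\ref{stab_ave_momentum} and Theorem~\ref{stability_ave_2 tensor}; you have merely spelled out the componentwise bookkeeping that the paper leaves implicit. Nothing further is needed.
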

\begin{proof}
The proof follows from Theorem \ref{stab_ave_momentum} and Theorem \ref{stability_ave_2 tensor}.
\end{proof}

\section{Weighted divergent beam ray transform: Unique continuation}
In this section, we discuss unique continuation principle for $k$-weighted divergent beam ray transform. We show that the transform admits unique continuation type result for functions (Theorem \ref{UCP:kWDBRT1}) but not for a general $m$-tensor fields (Theorem \ref{UCP:kWDBRT2}).
This distinction between fractional and $k$-weighted divergent beam ray transforms appears because of the fact that for the fractional case we could use the unique continuation principle for fractional Laplacian Lemma \ref{UCP_FL}.
 
\begin{theorem}\label{UCP:kWDBRT1}
    Let $f\in \Sc(\Rb^n)$. Let $U$ be a non-empty open subset of $\Rb^n$ and suppose we are given that 
    \[
    Df(x,\xi)=\int\limits_0^{\infty} f(x+t\xi) \D t =0 \mbox{ for every } x\in U \mbox{ and for all } \xi \in \Rb^{n}\setminus \{0\}. 
    \]
    Then $f\equiv 0$.
\end{theorem}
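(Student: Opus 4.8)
The plan is to reduce the statement to the unique continuation principle for the fractional Laplacian (Lemma \ref{UCP_FL}) at the non-integer order $\tfrac12$. The hypothesis will be distilled into two facts on the open set $U$: that $f$ itself vanishes on $U$, and that the spherical average of $Df$ over all directions, which is a constant multiple of $(-\Delta)^{-1/2}f$, also vanishes on $U$.

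First I would recover $f$ on $U$. Fixing any direction $\xi\neq 0$ and using that $Df(\cdot,\xi)$ vanishes on the open set $U$, all its $x$-derivatives vanish there; in particular $\sum_{k=1}^n \xi^k\partial_{x_k}Df(x,\xi)=0$ for $x\in U$. By the computation \eqref{eq:2:2} specialized to $m=0$ (which uses only $f(x+t\xi)\to 0$ as $t\to\infty$, valid for $f\in\mathcal{S}(\mathbb{R}^n)$), this left-hand side equals $-f(x)$, so $f=0$ in $U$. Next, integrating the hypothesis over $\xi\in\mathbb{S}^{n-1}$ gives $\int_{\mathbb{S}^{n-1}}Df(x,\xi)\,dS_\xi=0$ for $x\in U$. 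This spherical average is a nonzero multiple of $\mathcal{A}^{0}_{0,1/2}f$, and by Lemma \ref{Sec4:Lemma1} with $m=k=0$ and $s=\tfrac12$ it is a nonzero multiple of $(|x|^{1-n})*f=I^{1}f=(-\Delta)^{-1/2}f$ (note $0<\tfrac12<\tfrac{n}{2}$ for $n\geq 2$, so this is a well-defined tempered distribution). Hence $(-\Delta)^{-1/2}f=0$ in $U$.

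To run the continuation cleanly in every dimension $n\geq 2$, I would differentiate rather than work with $u:=(-\Delta)^{-1/2}f$ directly. Set $v_j:=\RT_j f=-\partial_{x_j}(-\Delta)^{-1/2}f$ for $1\leq j\leq n$, where $\RT_j$ is the Riesz transform. Since $u=0$ on the open set $U$, each $v_j=0$ on $U$; and since $(-\Delta)^{1/2}u=f$, we have $(-\Delta)^{1/2}v_j=-\partial_{x_j}f=0$ on $U$ because $f=0$ on $U$. Moreover $\widehat{v_j}(\eta)=-\mathrm{i}\frac{\eta_j}{|\eta|}\widehat f(\eta)$ by \eqref{Fourier_Riesz_transform}, and the multiplier $\eta_j/|\eta|$ is bounded, so $v_j\in H^{r}(\mathbb{R}^n)$ for every $r$. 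Applying Lemma \ref{UCP_FL} with $\alpha=\tfrac12\notin\mathbb{Z}$ yields $v_j\equiv 0$ on $\mathbb{R}^n$ for each $j$. Then $\frac{\eta_j}{|\eta|}\widehat f(\eta)=0$ for all $j$ and all $\eta\neq 0$ forces $\widehat f\equiv 0$, hence $f\equiv 0$.

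The decisive structural point—and the source of the contrast with the weighted and tensorial cases recorded in the accompanying remark—is that the relevant order of the fractional Laplacian is $\tfrac12\notin\mathbb{Z}$, so Lemma \ref{UCP_FL} is applicable. The one technical subtlety I would be careful about is the membership of the test functions in a genuine Sobolev space $H^r$: the raw potential $(-\Delta)^{-1/2}f$ carries a low-frequency factor $|\eta|^{-1}$ that fails to be square-integrable near the origin when $n=2$, which is precisely why I pass to the Riesz transforms $\RT_j f$, whose bounded multiplier $\eta_j/|\eta|$ removes this obstruction while still encoding the full information of $f$. Verifying this regularity bookkeeping is the main (though routine) obstacle; all of the geometric and algebraic input is already contained in \eqref{eq:2:2} and Lemma \ref{Sec4:Lemma1}.
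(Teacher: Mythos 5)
Your proposal is correct, and its first half coincides with the paper's proof: both recover $f=0$ on $U$ via the transport identity \eqref{eq:2:2} with $m=0$, and both average over $\xi\in\mathbb{S}^{n-1}$ to see that the normal-operator-type quantity $\bigl(|\cdot|^{1-n}*f\bigr)(x)$ vanishes on $U$. Where you diverge is the final continuation step. The paper simply identifies the spherical average as (twice) the normal operator of the X-ray transform and quotes the unique continuation theorem of Ilmavirta--M\"onkk\"onen \cite{Ilmavirta:Monkkonen:2020} to conclude $f\equiv 0$. You instead reduce to Lemma \ref{UCP_FL} directly: noting $|\cdot|^{1-n}*f = c\,(-\Delta)^{-1/2}f$ with $c\neq 0$, you pass to $v_j=\RT_j f=-\partial_{x_j}(-\Delta)^{-1/2}f$, verify $v_j = (-\Delta)^{1/2}v_j = 0$ on $U$ (the latter since $(-\Delta)^{1/2}v_j=-\partial_{x_j}f$ and $f$ vanishes on $U$), apply the fractional UCP at the non-integer order $\tfrac12$, and recover $\wh f\equiv 0$ from $\eta_j|\eta|^{-1}\wh f=0$ for all $j$. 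This is a genuinely different closing argument with two advantages: it makes the theorem self-contained relative to machinery the paper already imports for its fractional results (Lemma \ref{UCP_FL}), rather than citing an external theorem; and your Riesz-transform detour correctly handles a real technical point, namely that $(-\Delta)^{-1/2}f$ itself carries the low-frequency factor $|\eta|^{-1}$, which fails to be in any $H^r(\mathbb{R}^2)$ when $\wh f(0)\neq 0$, whereas the bounded multipliers $\eta_j/|\eta|$ keep $v_j\in H^r$ for every $r$ while losing no information. The trade-off is length: the paper's citation-based proof is shorter, and the quoted result encapsulates essentially the same fractional-Laplacian mechanism you have inlined. All your auxiliary identities check out ($h_n(1)\neq 0$, $0<\tfrac12<\tfrac n2$ for $n\geq 2$, and the distributional identities on the Fourier side), so I see no gap.
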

\begin{remark} We remark that we do not need the assumption that $f\equiv 0$ in $U$. This will follow as a consequence of the hypothesis.
\end{remark}
\bpr
We fix an $x_0\in U$. Then for $x$ in a small enough neighborhood of $x_0$, we have that $Df(x,\xi)=0$. Differentiating $Df(x,\xi)$ with respect to $x_j$ and multiplying by $\xi^{j}$, we get, 
\[
\begin{aligned}
0=\langle \xi,\n\rangle Df(x_0,\xi)=\int\limits_0^{\infty} \xi^{j}\frac{\PD f}{\PD x_j}(x_0+t\xi) \D t =\int\limits_0^{\infty} \frac{\D}{\D t} f(x_0+t\xi) \D t =-f(x_0).
\end{aligned} 
\]
Hence, we have that $f(x_0)=0$ and since $x_0$ was arbitrary, we have that $f(x)=0$ for all $x\in U$.
Next we consider 
\[
\begin{aligned}
\mathcal{A}_{0,\frac{1}{2}}^{0}f(x) &= \int\limits_{\Sb^{n-1}}Df(x,\xi) \D \xi=\int\limits_{\Sb^{n-1}}\int\limits_0^{\infty} f(x+t\xi) \D t \D \xi\\
&=\int\limits_{\Rb^{n}}f(z)\frac{1}{|x-z|^{n-1}} \D z=f * \frac{1}{|\cdot|^{n-1}}(x).
\end{aligned}
\]
We note that $\mathcal{A}_{0,\frac{1}{2}}^{0}f$ we have obtained here is exactly the normal operator of the ray transform of $f$ (up to a factor of $2$). Since $\mathcal{A}_{0,\frac{1}{2}}^{0}f(x)$ vanishes for $x\in U$ and $f$ vanishes on $U$ as well, using the unique continuation result for the ray transform of functions \cite{Ilmavirta:Monkkonen:2020}, we have that $f\equiv 0$.
\epr
\begin{remark}\label{sec4:rem2}
    A unique continuation type result, analogous to the theorem above for divergent beam ray transform of vector fields (or any other symmetric $m$-tensor field) cannot hold. This can be seen as follows for the case of vector fields. Consider a non-empty open set $U$. Construct a compactly supported smooth function $v$ with support in the complement of $U$. Let the vector field $f=dv$. Then for any ray starting from $U$, we have the divergent beam ray transform of $f$ is $0$, but $f$ is non-zero.
\end{remark}

\begin{theorem}\label{UCP:kWDBRT2}
    Let $f\in \Sc(\Rb^n; \Cb^n)$ be a vector field and suppose $Df(x,\xi)=0$ for all $x\in U$, where $U$ is a non-empty open subset of $\Rb^n$. Then $f=\D v$ for some smooth function $v$ satisfying $|v|\to 0$ as $|x|\to \infty$, and $v\equiv 0$ in $U$.
\end{theorem}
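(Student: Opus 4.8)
The plan is to reduce the statement to the unique continuation principle for the ray transform of one-forms \cite{Ilmavirta:Monkkonen:2021}, just as the scalar Theorem~\ref{UCP:kWDBRT1} was reduced to its functional analogue \cite{Ilmavirta:Monkkonen:2020}, and then to exploit the elementary fact that the divergent beam transform of a decaying potential field returns the potential itself. First I would record, as in Theorem~\ref{UCP:kWDBRT1}, the first-order identity obtained by differentiating $Df(x,\xi)=\int_0^\infty f_j(x+t\xi)\,\xi^j\,\D t$ in $x_k$, multiplying by $\xi^k$, and summing:
\[
\langle \xi,\n\rangle Df(x,\xi)=\int_0^\infty \frac{\D}{\D t}\big(f_j(x+t\xi)\big)\,\xi^j\,\D t=-\langle f(x),\xi\rangle .
\]
Since $Df$ vanishes on the open set $U$ for every $\xi$, the left-hand side vanishes there, and letting $\xi$ range over $\Rb^n$ forces $f\equiv 0$ in $U$.

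Next I would connect the one-sided transform to the two-sided ray transform $If(x,\xi)=\int_{-\infty}^\infty\langle f(x+t\xi),\xi\rangle\,\D t$. Splitting the integral at $t=0$ and reversing the negative half yields the identity $If(x,\xi)=Df(x,\xi)-Df(x,-\xi)$, so $If$ vanishes on every line meeting $U$. Thus the one-form $f$ vanishes on $U$ and has vanishing ray transform on all lines through $U$; these are precisely the hypotheses of the unique continuation theorem for the ray transform of one-forms \cite{Ilmavirta:Monkkonen:2021}. Invoking that result, $f$ must be a potential field $f=\D v$ for a smooth function $v$ that decays at infinity (its solenoidal part is forced to vanish, and the decaying potential is supplied by the Helmholtz decomposition $\Delta v=\n\cdot f$, which is legitimate because $f\in\Sc(\Rb^n;\Cb^n)$).

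Finally I would pin down $v$ on $U$ using a computation that becomes available only once $f=\D v$ is known. For a decaying potential,
\[
Df(x,\xi)=\int_0^\infty \langle \n v(x+t\xi),\xi\rangle\,\D t=\int_0^\infty \frac{\D}{\D t}\,v(x+t\xi)\,\D t=-v(x)
\]
for every $\xi$, so the hypothesis $Df=0$ on $U$ gives at once $v\equiv 0$ in $U$, while $|v|\to 0$ as $|x|\to\infty$ holds by construction; this completes the argument.

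The step I expect to be the genuine obstacle is the application of \cite{Ilmavirta:Monkkonen:2021}: all of the analytic weight sits in the one-form unique continuation, which itself passes through the unique continuation principle for the fractional Laplacian and the normal operator of the ray transform. The two surrounding computations — the first-order identity and the closing formula $Df(x,\xi)=-v(x)$ — are elementary, so the delicacy reduces to having the cited theorem available in the form ``vanishing of $f$ and of $If$ near $U$ forces $f$ to be exact with a decaying potential.'' One should also verify the borderline regularity and decay hypotheses under which \cite{Ilmavirta:Monkkonen:2021} applies, since here $f$ is Schwartz but not compactly supported.
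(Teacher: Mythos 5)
Your proposal is correct and follows essentially the same route as the paper: you first establish $f\equiv 0$ in $U$ via the differentiated identity (the paper cites Theorem \ref{tm:recon:dbrt}, whose proof is exactly this computation), then reduce to the unique continuation theorem for one-forms from \cite{Ilmavirta:Monkkonen:2021} to conclude $f=\D v$ with decaying $v$, and finally obtain $v\equiv 0$ in $U$ from $Df(x,\xi)=-v(x)$, just as the paper does by the fundamental theorem of calculus. The only cosmetic difference is that you feed the cited theorem the vanishing of the two-sided transform $If(x,\xi)=Df(x,\xi)-Df(x,-\xi)$ on all lines meeting $U$, whereas the paper integrates the same data over $\mathbb{S}^{n-1}$ to exhibit the first average $\mathcal{A}_{1,\frac{1}{2}}^{1,i}f$ as the vanishing normal operator of the ray transform; these hypotheses are equivalent, since the normal operator is precisely the spherical average of the line data.
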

\bpr
By using Theorem \ref{tm:recon:dbrt}, we have that $f\equiv 0$ in $U$.  Hence, $\mbox{curl}(f)\equiv 0$ in $U$. 
Also, the following averaging operator gives: 
\[
\begin{aligned}
\mathcal{A}_{1,\frac{1}{2}}^{1,i}f(x) = \int\limits_{\Sb^{n-1}} \xi_{i}\int\limits_0^{\infty} f_{j}(x+t\xi)\xi^{j} \D t \D \xi=\int\limits_{\Rb^n} f_{j}(z)\frac{(x_i-z_i)(x_j-z_j)}{|x-z|^{n+1}} \D z.
\end{aligned} 
\]
This is exactly the normal operator corresponding to the ray transform (modulo a constant). Since this vanishes in $U$, using the unique continuation result already proved in \cite{Ilmavirta:Monkkonen:2021}, we have that $\mbox{curl}(f)\equiv 0$. By the Helmholtz decomposition, we have that $f\equiv \D v$ with $v$ vanishing at $\infty$. 

\noindent It remains to show that $v\equiv 0$ in $U$. This can be established by the fact that $Df(x,\xi)=0$ for $x\in U$ would give $v(x)=0$ for each $x\in U$, by the fundamental theorem of calculus. 
\epr
\begin{remark}
We note that $\mathcal{A}_{1,\frac{1}{2}}^{0}f$, the zeroth average (unlike the fractional case) does not give any additional information, since using the first average, we have already established that $f=\D v$, and hence the zeroth average is identically $0$.  
\end{remark}

\section*{Acknowledgements}
SRJ acknowledges the Flagship Program on Advanced Mathematics for Sensing, Imaging, and Modelling (decision number 359183) by the Research Council of Finland for supporting his research. Additionally, SRJ would like to thank IISER Bhopal for their support and hospitality during his visit, where part of this work was conducted. VPK acknowledges the support of the Department of Atomic Energy, Government of India, under Project No. 12-R\&D-TFR-5.01-0520.

\bibliographystyle{amsplain}

\end{document}